\documentclass[11pt,reqno, a4paper]{amsart}
\usepackage[numbers,sort&compress]{natbib}
\usepackage{amsfonts,bbm}
\usepackage{amssymb,color}
\usepackage{amssymb}
\usepackage{fancyhdr}
\usepackage[titletoc]{appendix}
\usepackage{enumitem}
\usepackage{amsgen}
\usepackage{amscd}
\usepackage{amsmath}
\usepackage{mathrsfs}
\usepackage{cases}

\usepackage[colorlinks=true,backref]{hyperref}
\hypersetup{urlcolor=blue, citecolor=blue, linkcolor=black}

\usepackage[margin=3cm, a4paper]{geometry}

\newtheorem{thm}{Theorem}[section]

\newtheorem{lem}[thm]{Lemma}
\newtheorem{prop}[thm]{Proposition}
\newtheorem{defn}[thm]{ \bf{Definition}}
\theoremstyle{remark}
\newtheorem{remark}[thm]{Remark}

\newcommand{\EQ}[1]{\begin{align*}\begin{split} #1 \end{split}\end{align*}}
\newcommand{\EQn}[1]{\begin{align}\begin{split} #1 \end{split}\end{align}}

\setlength{\marginparwidth}{2cm}

\newcommand{\Del}[1]{}


\def\norm#1{\left\|#1\right\|}

\def\normb#1{\big\|#1\big\|}

\newcommand{\fe}{\mathrm{e}}

\def\wt#1{\widetilde{#1}}
\def\wh#1{\widehat{#1}}



\def\loe{\leqslant}
\def\goe{\geqslant}
\def\lsm{\lesssim}

\newcommand{\N}{{\mathbb N}}

\newcommand{\R}{{\mathbb R}}
\newcommand{\C}{{\mathbb C}}
\newcommand{\Z}{{\mathbb Z}}

\newcommand{\F}{{\mathcal{F}}}


\def\ds{\mathrm{\ d} s}

\newcommand{\re}{{\mathrm{Re}}}
\newcommand{\im}{{\mathrm{Im}}}


\def\ph{\varphi}

\def\De{\Delta}

\def\ga{\gamma}

\newcommand{\I}{\infty}

\def\half#1{\frac{#1}{2}}


\numberwithin{equation}{section}


\begin{document}
\title[Non-relativistic limit for KG]{Non-relativistic limit for the cubic nonlinear Klein-Gordon equations}

\author{Zhen Lei}
\address{(Z. Lei) School of Mathematical Sciences; LMNS and Shanghai Key Laboratory for Contemporary Applied Math- ematics\\
Fudan University\\
Shanghai 200433, China}
\email{zlei@fudan.edu.cn}

%

\author{Yifei Wu}
\address{(Y. Wu) Center for Applied Mathematics\\
	Tianjin University\\
	Tianjin 300072, China}
\email{yerfmath@gmail.com}
\thanks{}

\subjclass[2010]{35K05, 35B40, 35B65.}
\keywords{Nonlinear Klein-Gordon equations, Nonlinear Schr\"odinger equations, non-relativistic limit, convergence rate}

\date{}

\begin{abstract}\noindent
We investigate the non-relativistic limit of the Cauchy problem for the defocusing cubic nonlinear Klein-Gordon equations whose initial velocity contains a factor of $c^2$, with $c$ being the light speed. While the classical WKB expansion is applied to approximate these solutions, the modulated profiles can be chosen as solutions to either a Schr\"odinger-wave equation or a Schr\"odinger equation. We show that, as the light speed tends to infinity, the error function is bounded by, (1) in the case of 2D and modulated Schr\"odinger-wave profiles, $Cc^{-2}$ with $C$ being a generic constant uniformly for all time, under $H^2$ initial data; (2) in the case of both 2D and 3D and modulated Schr\"odinger profiles, $c^{-2} +(c^{-2}t)^{\alpha/4}$ multiplied by a generic constant uniformly for all time,  under $H^\alpha$ initial data with $2 \leq \alpha \leq 4$.  We also show the sharpness of the upper bounds in (1) and (2), and 
the required minimal regularity on the initial data in (2).
 One of the main tools is an improvement of the well-known result of Machihara, Nakanishi, and Ozawa in \cite{MaNaOz-KG-Limits} which may be of interest by itself. The proof also relies on \textit{a fantastic complex expansion} of the Klein-Gordon equation, \textit{introducing the leftward wave and exploring its enhanced performance} and a \textit{regularity gain mechanism} through a high-low decomposition.

\end{abstract}

\maketitle

\tableofcontents

\section{Introduction}

Consider the defocusing cubic nonlinear Klein-Gordon equation:
\EQn{\label{KGo}
	\frac{\hbar^2}{2mc^2}\partial_{tt} u - \frac{\hbar^2}{2m}\De u +\frac{mc^2}{2}u= - |u|^2 u,
}
where
$u(t,x):\R\times\R^d\rightarrow \R$ ($d = 1, 2, 3$) is a real-valued wave function describing neutral (pseudo-) scalar particles,  $c$ the speed of light,  $\hbar$ the Planck constant, and $m>0$ the rest mass of particles. When $u$ is complex it describes charged particles.  In quantum field theory, the Klein-Gordon equation is one of the most fundamental equations describing spinless scalar or pseudo-scalar particles, for example, $\pi$- and $K$-mesons. It is relativistically-invariant and can be considered as the relativistic version of Schr\"odinger equation.

A natural and major mathematical problem in relativistic quantum mechanism is to study the non-relativistic limit of solution to \eqref{KGo}. In particular, one hopes to have accurate and detailed descriptions on the convergence of solutions as the light speed goes to infinity, figure out the necessary price in terms of how much minimal regularity of the initial data is needed, and explore the underlying mechanism behind all of this. In this paper, we aim to settle these questions systematically, based on several new insights and earlier important theoretical and numerical works which will be explained below.

\subsection{Set-up and A Formal WKB Expansion}

Without loss of generality, we will set $\hbar=2, m=2$ and $c=\varepsilon^{-1}$ in the rest of the paper. Then the defocusing cubic nonlinear Klein-Gordon equation \eqref{KGo} is reduced to
\EQn{
	\label{eq:kg-eps}
	&\varepsilon^2\partial_{tt} u^\varepsilon - \De u^\varepsilon + \frac{1}{\varepsilon^2} u^\varepsilon= - |u^\varepsilon|^2 u^\varepsilon.
}
The equation \eqref{eq:kg-eps} is imposed with the following initial data
\begin{align}
u^\varepsilon(0,x) = u_0(x),\quad  \partial_tu^\varepsilon(0,x) = \frac{u_1(x)}{\varepsilon^2},
\end{align}
Note that this form of initial data is typical in view of the trivial space-independent planar waves of linear equations.

In the context of the study of non-relativistic limits, see for instance, \cite{BaoCai-KG-Limit-12, MasNak-KG-Limits} and the references therein, one employs the WKB ansatz
 \begin{align}\label{WKB}
 u^\varepsilon=\fe^{\frac{it}{\varepsilon^2}}v^\varepsilon+\fe^{-\frac{it}{\varepsilon^2}}\bar v^\varepsilon+r^\varepsilon.
 \end{align}
This ansatz serves to eliminate the pronounced time oscillations in the wave function. The profile function $v^\varepsilon$ is usually called modulated wave function of $u^\varepsilon$. It is worth mentioning that  in the case of a complex-valued wave function, $v^\varepsilon$ and $\bar v^\varepsilon$ can be replaced by the electron component $v^-$ and the positron component $v^+$, respectively, which are not necessarily required to be conjugated with each other, as demonstrated in \cite{MasNak-KG-Limits}. Then \textit{the main mathematical issue} is to study the real error $u^\varepsilon - \big(\fe^{\frac{it}{\varepsilon^2}}v^\varepsilon+\fe^{-\frac{it}{\varepsilon^2}}\bar v^\varepsilon\big)$, or equivalently, the remainder function $r_\varepsilon$, in the non-relativistic limit $\varepsilon \to 0$, including its estimates in terms of a power of $\varepsilon$, the necessary regularity on the initial data, the validation time interval and the underlying mechanism.

Substituting \eqref{WKB} into \eqref{eq:kg-eps}, we have
 \begin{align}\label{eq:r}
 &\varepsilon^2\partial_{tt} r^\varepsilon - \De r^\varepsilon + \frac{1}{\varepsilon^2} r^\varepsilon +A_\varepsilon(v^\varepsilon,r^\varepsilon)+B_\varepsilon(v^\varepsilon)\notag\\
 &\quad + \fe^{\frac{it}{\varepsilon^2}}\Big[\varepsilon^2\partial_{tt}v^\varepsilon+2i\partial_{t} v^\varepsilon - \De v^\varepsilon+ 3|v^\varepsilon|^2 v^\varepsilon\Big]\notag\\
 &\quad+\fe^{-\frac{it}{\varepsilon^2}}\Big[\varepsilon^2\partial_{tt}\bar v^\varepsilon-2i\partial_{t}\bar v^\varepsilon - \De\bar v^\varepsilon+ 3|v^\varepsilon|^2 \bar v^\varepsilon\Big]=0,
 \end{align}
 where
 \begin{align}
 A_\varepsilon(v^\varepsilon,r^\varepsilon)=\,&3\Big[\fe^{\frac{2it}{\varepsilon^2}}\big(v^\varepsilon\big)^2+\fe^{-\frac{2it}{\varepsilon^2}}\big(\bar v^\varepsilon\big)^2\Big]r^\varepsilon
 + 3\Big[\fe^{\frac{it}{\varepsilon^2}}v^\varepsilon+\fe^{-\frac{it}{\varepsilon^2}}\bar v^\varepsilon\Big](r^\varepsilon)^2+(r^\varepsilon)^3;\notag\\
 B_\varepsilon(v^\varepsilon)=\,&\fe^{\frac{3it}{\varepsilon^2}}(v^\varepsilon)^3+\fe^{-\frac{3it}{\varepsilon^2}}\big(\bar v^\varepsilon\big)^3.\label{def:B-var}
 \end{align}
 Note that we have the following relations on the initial data
 \EQn{
	\label{eq:initial-data}
	\left\{ \aligned
	& 2\re{ [v^\varepsilon(0)]}+r^\varepsilon(0)=u_0, \\
	&- \frac{2\im{ [v^\varepsilon(0)]}}{\varepsilon^2}+2\re\big[\partial_tv^\varepsilon(0)\big]+\partial_tr^\varepsilon(0)=\frac{u_1}{\varepsilon^2}.
	\endaligned
	\right.
}

There are two different kinds of quantum equation which governs the dynamics of the modulated profile $v^\varepsilon$. In case I, one formally sets $v^\varepsilon = {\rm Modulate}_w(u^\varepsilon)$ to be the solution of the following Cauchy problem for the defocusing cubic nonlinear {\it  Schr\"odinger-wave equation}:
\EQn{
	\label{eq:nls-wave}
	\left\{ \aligned
	&\varepsilon^2\partial_{tt}v^\varepsilon+2i\partial_{t} v^\varepsilon - \De v^\varepsilon= - 3|v^\varepsilon|^2 v^\varepsilon, \\
	& v^\varepsilon(0,x) = v_0(x)\triangleq \frac12(u_0-iu_1),\quad \partial_tv^\varepsilon(0,x)=v_1(x),
	\endaligned
	\right.
}
with any $v_1$ independence of $\varepsilon$.
In this case, the initial data for  $r^\varepsilon$ is
\begin{align}
r^\varepsilon(0)=0,\quad \partial_t r^\varepsilon(0)=-2\re(v_1),
\end{align}

In case II, one formally sets $v^\varepsilon = v = {\rm Modulate}_s(u^\varepsilon)$ to be the solution of the following Cauchy problem for the defocusing cubic nonlinear Schr${\rm \ddot{o}}$dinger equation \eqref{eq:nls}:
\EQn{
	\label{eq:nls}
	\left\{ \aligned
	&2i\partial_{t} v - \De v= - 3 |v|^2 v, \\
	& v(0,x) = v_0(x),
	\endaligned
	\right.
}
The initial data for $r^\varepsilon$ is given by:
\begin{align}
r^\varepsilon(0)=0,\quad \partial_t r^\varepsilon(0)=-2\mathrm{Re}\big[\partial_tv(0)\big].
\end{align}
Expansion \eqref{WKB} in this case is usually called {\it Modulated Fourier expansion}.

\subsection{Main Results}

In this subsection we will present our main results and put them in context. The main new observations, ingredients and techniques will be explained in the subsequent subsection.

Let us first consider the case when the modulated wave profile satisfies Schr\"odinger-wave equation \eqref{eq:nls-wave}.
There are extensive numerical investigations which have yielded error estimates of $r^\varepsilon$ and the length of the time interval in which the estimates remain valid. A selection of such studies can be found in  \cite{BaoCai-KG-Limit-12,BaoCai-KG-Limit-14,BaoZh-KG-Limit, SchratzZhao-KG-Limits}. We emphasis that the recent numerical result by Bao and Zhao in \cite{BaoZh-KG-Limit} involves a meticulous numerical verification showcasing that  the error function $r_\varepsilon$ exhibits a convergence with error $O(\varepsilon^2)$, uniformly and golbally in time, as $\varepsilon$ goes to 0. This convergence phenomenon is confirmed for smooth initial data $(u_0,u_1)$ with a minimum requirement of $H^2$ regularity. Regrettably, despite these compelling numerical observations,  to the best of our knowledge,  a rigorous analytic study of this phenomenon remains conspicuously absent within the current body of literature.

The first two theorems of this paper aim to establish the non-relativistic limit when the modulated wave profile satisfies Schr\"odinger-wave equation \eqref{eq:nls-wave}.
\begin{thm}\label{thm:main1-global}
Let $d=2$. Suppose that
$$
(u_0,u_1)\in H^2(\R^2) \times H^2(\R^2);\quad
v_1\in L^2(\R^2).
$$
Let $u^\varepsilon, v^{\varepsilon}$ be the corresponding solutions to the focusing cubic nonlinear Klein-Gordon equation \eqref{eq:kg-eps} and the Schr\"odinger-wave equation \eqref{eq:nls-wave}, respectively.
Then the following error estimate
\begin{align}\label{est:main1-global}
\sup\limits_{t\in \R}\big\|u^\varepsilon(t)-\fe^{\frac{it}{\varepsilon^2}}v^{\varepsilon}(t)-\fe^{-\frac{it}{\varepsilon^2}}\bar v^{\varepsilon}(t) \big\|_{L^2_x}
\le C \varepsilon^2
\end{align}
holds true globally in time. Here $C$ is a generic constant depending only on $\|(u_0,u_1)\|_{H^2\times H^2}$ and $\|v_1\|_{L^2_x}$. 
\end{thm}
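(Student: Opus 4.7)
My plan is to directly estimate the remainder $r^\varepsilon$ in the natural Klein--Gordon energy
$$E_\varepsilon(r) := \frac{1}{2}\int_{\R^2}\Big[\varepsilon^2|\partial_t r|^2 + |\nabla r|^2 + \varepsilon^{-2}|r|^2\Big]\,dx.$$
Since $r^\varepsilon(0)=0$ and $\partial_t r^\varepsilon(0)=-2\re(v_1)\in L^2$, the initial energy is $E_\varepsilon(r^\varepsilon)(0)=2\varepsilon^2\|\re(v_1)\|_{L^2}^2=O(\varepsilon^2)$; a uniform-in-time bound $E_\varepsilon(r^\varepsilon)(t)\lesssim \varepsilon^2$ would then yield $\|r^\varepsilon\|_{L^2}^2 \lesssim \varepsilon^2 E_\varepsilon \lesssim \varepsilon^4$, which is exactly \eqref{est:main1-global}. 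The core task is therefore to propagate this $O(\varepsilon^2)$ energy bound in the presence of two obstructions: the highly oscillatory but $L^2$-size $O(1)$ forcing $B_\varepsilon(v^\varepsilon)$ defined in \eqref{def:B-var}, and the trilinear interaction $A_\varepsilon(v^\varepsilon,r^\varepsilon)$.

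The first step is a normal-form correction designed to absorb $B_\varepsilon$. A direct computation gives, for any smooth amplitude $\phi(t,x)$,
$$\big(\varepsilon^2\partial_{tt}-\De+\varepsilon^{-2}\big)\big(\fe^{3it/\varepsilon^2}\phi\big) = \fe^{3it/\varepsilon^2}\Big[-\tfrac{8}{\varepsilon^2}\phi + 6i\partial_t\phi + \varepsilon^2\partial_{tt}\phi - \De\phi\Big].$$
Thus the choice $\Phi_\varepsilon := -\frac{\varepsilon^2}{8}\big[\fe^{3it/\varepsilon^2}(v^\varepsilon)^3+\overline{\fe^{3it/\varepsilon^2}(v^\varepsilon)^3}\big]$ produces $\big(\varepsilon^2\partial_{tt}-\De+\varepsilon^{-2}\big)\Phi_\varepsilon = B_\varepsilon(v^\varepsilon) + \varepsilon^2\rho_\varepsilon$, where the residual $\rho_\varepsilon$ is a bounded (in $L^\infty_t L^2_x$, assuming sufficient control on $v^\varepsilon$) combination of $\fe^{\pm 3it/\varepsilon^2}$ with $(6i\partial_t-\De+\varepsilon^2\partial_{tt})(v^\varepsilon)^3$. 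Setting $\wt r := r^\varepsilon + \Phi_\varepsilon$, the remainder equation becomes $\big(\varepsilon^2\partial_{tt}-\De+\varepsilon^{-2}\big)\wt r = -A_\varepsilon(v^\varepsilon,\wt r-\Phi_\varepsilon) + \varepsilon^2\rho_\varepsilon$. Since $\Phi_\varepsilon=O(\varepsilon^2)$ in every relevant norm, it suffices to bound $\|\wt r\|_{L^2}\lesssim \varepsilon^2$, and the initial data for $\wt r$ still verify $E_\varepsilon(\wt r)(0)=O(\varepsilon^2)$.

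Before closing the energy identity I need global, uniform-in-$\varepsilon$ control on the modulated profile, namely $\|v^\varepsilon\|_{L^\infty_tH^2_x}+\|\partial_t v^\varepsilon\|_{L^\infty_tL^2_x}+\varepsilon^2\|\partial_{tt}v^\varepsilon\|_{L^\infty_tL^2_x}\lesssim 1$. These bounds are supplied by energy and charge conservation for the defocusing, mass-subcritical 2D cubic Schr\"odinger--wave equation \eqref{eq:nls-wave}, upgraded by differentiating the equation; this is the improvement of Machihara--Nakanishi--Ozawa flagged in the abstract. With these in hand, $\rho_\varepsilon$ is uniformly bounded in $L^\infty_t L^2_x$. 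Pairing the $\wt r$-equation with $\partial_t\wt r$ and using the 2D embedding $H^1\hookrightarrow L^p$ for every finite $p$, the contribution of $A_\varepsilon(v^\varepsilon,\wt r-\Phi_\varepsilon)$ is controlled by $C(\|v^\varepsilon\|_{L^\infty}^2+\|\wt r\|_{H^1}^2)E_\varepsilon(\wt r)$ after exploiting $\|\wt r\|_{L^2}\lesssim\varepsilon\sqrt{E_\varepsilon(\wt r)}$ and $\|\partial_t\wt r\|_{L^2}\lesssim\varepsilon^{-1}\sqrt{E_\varepsilon(\wt r)}$, which is harmless once a bootstrap provides smallness of $E_\varepsilon(\wt r)$.

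The genuine obstacle is time-uniformity. Pairing $\varepsilon^2\rho_\varepsilon$ with $\partial_t\wt r$ produces $\varepsilon\sqrt{E_\varepsilon(\wt r)}$, closing Gronwall only on intervals of length $O(1)$ and giving the suboptimal $E_\varepsilon(\wt r)(t)\lesssim\varepsilon^2(1+t)$. To eliminate this secular growth I plan to iterate the normal form once more, integrating by parts in $t$ the remaining oscillation $\fe^{\pm 3it/\varepsilon^2}$ inside $\rho_\varepsilon$ so that the offending forcing is converted into a boundary term of size $\varepsilon^2$ plus a source carrying an additional $\varepsilon^2$. Done naively, this second iteration would require more regularity than $H^2$ readily provides, since $\varepsilon^2\partial_{tt}v^\varepsilon$ is only $O(1)$ in $L^2$; this is precisely where I expect to invoke the paper's fantastic complex expansion --- splitting the Klein--Gordon field into its rightward and leftward characteristic components and exploiting the enhanced dispersive performance of the leftward wave --- together with a high--low frequency decomposition to supply the missing smoothness. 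Closing this iteration under the sharp regularity $(u_0,u_1)\in H^2\times H^2$ and $v_1\in L^2$, and thereby producing a constant genuinely independent of $t$, is the part I expect to be the main technical difficulty.
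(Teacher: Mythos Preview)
Your skeleton (normal form to absorb $B_\varepsilon$, then bound the residual using uniform control on $\partial_t v^\varepsilon$) matches the paper, but two of your steps do not go through.

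First, the claim that $\|\partial_t v^\varepsilon\|_{L^\infty_t L^2_x}\lesssim 1$ follows from ``energy and charge conservation \dots\ upgraded by differentiating the equation'' is false. The basic energy of \eqref{eq:nls-wave} only gives $\varepsilon\|\partial_t v^\varepsilon\|_{L^2}\lesssim 1$, i.e.\ $\|\partial_t v^\varepsilon\|_{L^2}=O(\varepsilon^{-1})$ (the paper stresses this explicitly, see \eqref{pt-v-trival}), and differentiating in time makes things worse since $\partial_{tt}v^\varepsilon(0)=O(\varepsilon^{-2})$. This is also not the Machihara--Nakanishi--Ozawa improvement (that is Theorem~\ref{thm:main3}, a convergence statement). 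Getting the $O(1)$ bound is the entire content of Proposition~\ref{prop:v-varep}, and \emph{this} is where the rescaled complex expansion and the leftward wave enter---not at a hypothetical second normal-form stage. The mechanism is structural: after setting $w=\fe^{it}\mathcal S_\varepsilon v^\varepsilon$, the leftward wave $\mathcal W_1=\langle\nabla\rangle^{-1}(\partial_t-i\langle\nabla\rangle)w$ has initial data of size $O(\varepsilon^2)$ (because $w(0)\approx i\partial_t w(0)$) and a non-resonant nonlinearity, which together force $\|\mathcal W_1\|_{L^4_{tx}}\lesssim\varepsilon^2$; since $\partial_t(\mathcal S_\varepsilon v^\varepsilon)\sim(\langle\nabla\rangle+1)\mathcal W_1+(\langle\nabla\rangle-1)\mathcal W_2$, rescaling gives $\|\partial_t v^\varepsilon\|_{X_0}\lesssim 1$.

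Second, even granting all the profile bounds, the energy/Gronwall scheme does not close globally. Pairing $A_\varepsilon(v^\varepsilon,\wt r)$ with $\partial_t\wt r$ produces $\|v^\varepsilon(t)\|_{L^\infty_x}^2 E_\varepsilon(\wt r)$, and $\|v^\varepsilon(t)\|_{L^\infty_x}$ is merely bounded, not time-integrable; Gronwall then gives $E_\varepsilon(\wt r)(t)\lesssim \fe^{Ct}\varepsilon^2$. The source $\varepsilon^2\rho_\varepsilon$ likewise yields $\sqrt{E_\varepsilon(t)}\lesssim \fe^{Ct}\varepsilon$. A second normal form cannot fix this: the secular growth comes from the multiplicative term $A_\varepsilon$, not from $\rho_\varepsilon$. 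The paper avoids this by abandoning the energy in favour of Strichartz norms: after rescaling, it controls $\phi=\langle\nabla\rangle^{-1}(\partial_t+i\langle\nabla\rangle)R$ in $Y_0=L^4_{tx}\cap L^\infty_t H^{1/2}_x$ and uses the \emph{global} space-time bound $\|v^\varepsilon\|_{L^4_{tx}(\R)}<\infty$ (a consequence of 2D Klein--Gordon scattering, Lemma~\ref{lem:spacetime-norm-KG}) to partition $\R$ into finitely many intervals on each of which the interaction is perturbative. The number of intervals depends only on the data, so the iteration terminates with a $t$-independent constant. This scattering-driven partition has no analogue in your energy scheme and is what actually delivers the uniform-in-time bound.
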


\begin{remark}
In \cite{BaoCai-KG-Limit-12,BaoCai-KG-Limit-14,BaoZh-KG-Limit, SchratzZhao-KG-Limits}, the following expression for $v_1$ is imposed
$$
v_1=\frac i2\big[-\Delta v_0+3|v_0|^2v_0\big],
$$
for the intention of aligning it with the potential limit solution presented in equation \eqref{eq:nls}. Such a specific choice of $v_1$ is removed here. It does not change the convergence estimate \eqref{est:main1-global} at all by choosing any $v_1$.
\end{remark}

\begin{remark}
The 3D case turns out to be more challenging. Our current method fails since it relies on the basic energy estimate and its consequence of space-time estimate in Lemma \ref{lem:spacetime-norm-KG} which are substantially weaker in 3D.
\end{remark}
The convergence rate established by this theorem aligns seamlessly with the numerical findings presented in \cite{BaoZh-KG-Limit, SchratzZhao-KG-Limits}, thus rigorously corroborating the second-order convergence behavior of the remainder function. In the next theorem, we shall embark on a demonstration, aiming at showcasing that the quadratic rate of convergence with respect to $\varepsilon$ is optimal, even for smooth enough initial data.

\begin{thm}\label{thm:main1-optimal}
Let $d=2,3$. There exist a class of triplets
$$
(u_0,u_1,v_1)\in \big[\mathcal S(\R^d)\big]^3,
$$
such that the convergence rate $\varepsilon^2$ achieved in Theorem \ref{thm:main1-global} attained its optimality. More precisely,  there exist constants $\varepsilon_0>0$ and $c_0 > 0$, such that
for any $\varepsilon\in (0,\varepsilon_0]$,
\begin{align}
\sup\limits_{t\in \R}\big\|u^\varepsilon(t)-\fe^{\frac{it}{\varepsilon^2}}v^{\varepsilon}(t)-\fe^{-\frac{it}{\varepsilon^2}}\bar v^{\varepsilon} (t)\big\|_{L^2_x}
\ge c_0 \varepsilon^2.
\end{align}
\end{thm}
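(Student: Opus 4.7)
The plan is to exhibit a class of Schwartz data for which the leading $\varepsilon^2$-coefficient of $r^\varepsilon$ can be computed explicitly and shown to be non-trivial in $L^2$. A convenient and minimalist choice is $u_0 = u_1 = 0$ together with a real-valued, non-zero $v_1 \in \Sch(\R^d)$. By uniqueness for the defocusing cubic KG equation \eqref{eq:kg-eps}, the zero initial data force $u^\varepsilon \equiv 0$, so the ansatz \eqref{WKB} reduces exactly to
\[ r^\varepsilon(t) = -\fe^{\frac{it}{\varepsilon^2}}v^\varepsilon(t) - \fe^{-\frac{it}{\varepsilon^2}}\overline{v^\varepsilon(t)} = -2\,\re\bigl(\fe^{\frac{it}{\varepsilon^2}}v^\varepsilon(t)\bigr), \]
where $v^\varepsilon$ solves the Schr\"odinger-wave equation \eqref{eq:nls-wave} with $v^\varepsilon(0)=0$ and $\partial_t v^\varepsilon(0)=v_1$. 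The task thereby reduces to producing a time at which $\|\re(\fe^{it/\varepsilon^2}v^\varepsilon(t))\|_{L^2}$ is genuinely of order $\varepsilon^2$, not smaller.

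I would then analyze $v^\varepsilon$ on a short time scale by linearization. Setting $\omega_\varepsilon(\xi):=\sqrt{1+\varepsilon^2|\xi|^2}$, the characteristic roots of the linear part $\varepsilon^2\partial_{tt}+2i\partial_t+|\xi|^2=0$ of \eqref{eq:nls-wave} are $\lambda_\pm(\xi)=i(-1\pm\omega_\varepsilon(\xi))/\varepsilon^2$, yielding
\[ \wh{v_L^\varepsilon}(\xi,t)=\frac{\varepsilon^2\wh{v_1}(\xi)}{2i\,\omega_\varepsilon(\xi)}\bigl(\fe^{\lambda_+t}-\fe^{\lambda_-t}\bigr). \]
At the resonant time $t^\star := \pi\varepsilon^2/2$, Taylor-expanding $\omega_\varepsilon$ around $\varepsilon|\xi|=0$ gives $\lambda_+ t^\star=O(\varepsilon^2|\xi|^2)$ and $\lambda_- t^\star=-i\pi+O(\varepsilon^2|\xi|^2)$, hence $\wh{v_L^\varepsilon}(\xi,t^\star)=-i\varepsilon^2\wh{v_1}(\xi)+O(\varepsilon^4|\xi|^2\,|\wh{v_1}(\xi)|)$, which in physical space reads $v_L^\varepsilon(t^\star)=-i\varepsilon^2 v_1+o(\varepsilon^2)$ in $L^2$. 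Since $\fe^{it^\star/\varepsilon^2}=i$ and $v_1$ is real,
\[ r^\varepsilon(t^\star)=2\,\im\bigl(v^\varepsilon(t^\star)\bigr)=-2\varepsilon^2 v_1+o(\varepsilon^2) \qquad\text{in }L^2_x, \]
so $\|r^\varepsilon(t^\star)\|_{L^2}\geq c_0\,\varepsilon^2$ with $c_0\sim\|v_1\|_{L^2}>0$ for $\varepsilon$ small.

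The main obstacle is to upgrade these formal expansions to genuine $o(\varepsilon^2)$ errors. Two ingredients are needed: a high-frequency Schwartz tail estimate obtained by splitting Fourier space at $|\xi|=\varepsilon^{-\sigma}$ for small $\sigma>0$, which controls the tail via the rapid decay of $\wh{v_1}$; and control of the nonlinear correction $v^\varepsilon-v_L^\varepsilon$ arising from $-3|v^\varepsilon|^2v^\varepsilon$. For the latter, the conserved Schr\"odinger-wave energy gives $\|\nabla v^\varepsilon\|_{L^2}=O(\varepsilon)$ and $\|\partial_t v^\varepsilon\|_{L^2}=O(1)$ uniformly, so on $[0,t^\star]$ one has $\|v^\varepsilon\|_{L^2}=O(\varepsilon^2)$, and Sobolev/Gagliardo--Nirenberg yields $\||v^\varepsilon|^2 v^\varepsilon\|_{L^2}=O(\varepsilon^3)$ in $d=3$ (respectively $O(\varepsilon^4)$ in $d=2$). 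A direct Duhamel estimate for \eqref{eq:nls-wave}, using the pointwise Fourier bound $|\text{kernel}|\le 1/\omega_\varepsilon\le 1$, then delivers $\|v^\varepsilon-v_L^\varepsilon\|_{L^\infty_{[0,t^\star]} L^2_x}=O(\varepsilon^5)$, which is $o(\varepsilon^2)$, closing the argument.
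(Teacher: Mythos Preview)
Your argument is correct and considerably more elementary than the paper's. The paper works with nontrivial $v_0$ of the oscillating form $\delta_0 e^{-ib_0|x|^2/2}f$, and extracts the lower bound from the \emph{nonlinear} contribution: after the normal-form reduction in Section~\ref{sec:global}, the boundary terms $\phi(0)$ and $(\mathcal S_\varepsilon v_0)^3$ combine to give an $\varepsilon^{2}$-sized piece (Section~\ref{sec:sharp-global-op1}), while the Duhamel remainders are shown to be smaller via the leftward/rightward wave decomposition, the Cazenave--Weissler trick for oscillating data (Lemma~\ref{lem:conterexmp-ID}), and dispersive decay. That machinery identifies the genuine nonlinear mechanism responsible for the $\varepsilon^2$ error in the generic (nonzero $v_0$) setting.

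You instead take $u_0=u_1=0$ so that $u^\varepsilon\equiv 0$ and $r^\varepsilon=-2\,\re(e^{it/\varepsilon^2}v^\varepsilon)$, reducing the problem to a short-time estimate for the Schr\"odinger--wave equation with zero position and nonzero velocity. The lower bound then comes from the \emph{linear} part of $v^\varepsilon$ at $t^\star=\pi\varepsilon^2/2$, with the nonlinear correction $O(\varepsilon^5)$ being negligible. This is a perfectly valid proof of Theorem~\ref{thm:main1-optimal} as stated; it is shorter because it exploits the freedom to choose $v_1$ independently of $(u_0,u_1)$ and the degenerate case $u^\varepsilon\equiv 0$. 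What it does not capture is the $\varepsilon^2$ contribution from the cubic term $B_\varepsilon(v^\varepsilon)$ when $v_0\neq 0$, which is the phenomenon the paper's construction is designed to exhibit and which is more directly tied to the sharpness of the upper-bound mechanism in Theorem~\ref{thm:main1-global}.
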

We emphasis that the counterexample, or the initial data $(u_0,u_1,v_1)$  constructed in this theorem doesn't depend on $\varepsilon$ as $\varepsilon$ goes to 0. In view of the natural scaling of the cubic Klein-Gordon equation, 
it is natural to pursue the sharpness of the regularity requirement on the initial data. The sharpness on the $H^2$ regularity requirement of the initial data is out of the scope of this article and left to be an unsolved interesting problem.
%

Now let us consider the case when the modulated wave profile satisfies cubic Schr\"odinger equation \eqref{eq:nls-wave}. There are also a vast literatures focusing on obtaining precise estimates for error term $r^\varepsilon$ in the non-relativistic limit, as $\varepsilon$ approaches 0. For numerical investigations of this problem, one may refer to as \cite{BaoCaiZh-KG-Limit,BaoDong-KG-Limit,BaoZh-KG-Limit,ChCr-KG-Limit,ChMaMuVi-KG-Limit,DoXuZh-KG-Limit,SchratzZhao-KG-Limits,Zhao-KG-Limits} and the references therein. Research in \cite{BaoZh-KG-Limit,ChCr-KG-Limit,SchratzZhao-KG-Limits} has observed quadratic convergence in $\varepsilon$ for smooth data. In \cite{BaoZh-KG-Limit} and \cite{SchratzZhao-KG-Limits}, the numerical insights indicate that a reduction in the regularity of initial data might lead to a proportional decrease in the order of convergence rate. Numerical experiments of Bao and Zhao in \cite{BaoZh-KG-Limit} shows that for relatively non-smooth initial data ($(u_0,u_1)\in H^2$), the convergence rate reduces to be linear in $\varepsilon$.
In \cite{SchratzZhao-KG-Limits}, Schartz and Zhao further observed numerically that choosing the modulated Schrodinger profile of \eqref{eq:nls} necessitates more regularity of the initial data compared to choosing the modulated Schrodinger-wave profile of  \eqref{eq:nls-wave}, in order to achieve optimal quadratic convergence in $\varepsilon$. We also want to point out that there exist intriguing comparisons between the equations \eqref{eq:nls-wave} and \eqref{eq:nls} which govern the dynamics of modulated profiles in \cite{BaoZh-KG-Limit,SchratzZhao-KG-Limits}. Numerical simulations suggest that under the same $\varepsilon$ and at the same time, the error from choosing the modulated Schr\"odinger-wave profiles of \eqref{eq:nls-wave}  is much smaller than the one from  choosing the modulated Schr\"odinger profile of \eqref{eq:nls}. The model \eqref{eq:nls-wave} is a more suitable choice than \eqref{eq:nls} for studying the long-term behavior of the approximation.
It is also important to point out that comparing to using the limit model \eqref{eq:nls}, the drawback of the limit model \eqref{eq:nls-wave} lies in that it tends to exhibit high oscillations in time.

There are also certain analytic results that aim to rigorously verify these numerical findings. For instance, in \cite{MasNak-KG-Limits}, Masmoudi and Nakanishi establish an error estimate of $\varepsilon^\frac12$ within an $L^2$ framework under the assumption of finite energy and $L^2$ convergence of the initial data. It's worth noting that their result in \cite{MasNak-KG-Limits} is valid for times of order $O(1)$, which means it is applicable for any fixed time $T$. While there have been several numerical simulation results estimating error functions in relation to time dependency, the corresponding theoretical analyses in this regard are still quite limited.
Recently, Bao, Lu, and Zhang \cite{BaoLuZh-KG-Limit} extended the study to the three-dimensional problem and proved that for any
$$(u_0,u_1)\in \Big(H^{s+4\alpha}(\mathbb{R}^3)\cap L^1(\mathbb{R}^3)\Big)^2
$$
with $s>\frac32$, the error function
$$\|r(t)\|_{H^s}\lesssim (1+t)\varepsilon^\alpha
$$
holds for $\alpha=1, 2$.
This estimate is valid for times up to $O(\varepsilon^{-\frac12\alpha})$.
These analytic results
offer a solid foundation for the understanding of the error behavior and convergence rates of the system. However, it's important to highlight that despite these efforts, there remains a shortage of theoretical analyses that comprehensively capture the intricate interplay among error estimates, regularity dependency, and time dependency observed in numerical simulations. In fact, through dimensional analysis, it becomes apparent that an increase in convergence rate can be achieved at the cost of sacrificing solution regularity and shortening the time span. To demonstrate it, let us consider for $f\in L^2(\mathbb{R}^3)$:
$$
\|\fe^{it\Delta}f_\varepsilon\|_{L^2_tL^6_x([0,T]\times \R^3)}\le C\|f\|_{L^2(\R^3)},
$$
where $f_\varepsilon=\varepsilon^\frac12f(\varepsilon^2t,\varepsilon x)$. Furthermore, if we assume that $f\in \dot H^1(\mathbb{R}^3)$, we can derive:
$$
\|\fe^{it\Delta}f_\varepsilon\|_{L^2_tL^6_x([0,T]\times \R^3)}\le CT\varepsilon\|\nabla f\|_{L^2(\R^3)},
$$
thus yielding a higher convergence rate than that obtained with $L^2$ data. Consequently, all the goals including achieving a fixed convergence order, optimizing the minimal regularity, and determining the validation time interval necessitate a comprehensive exploration of the equation's underlying structure.

Our second result serves as a rigorous verifications of the above mentioned numerical ones. It also improves the existing analytic results.
\begin{thm}\label{thm:main3-local-d2}
Let $d=2, 3$ and $2\le \alpha\le 4$. Suppose that
$$
(u_0,u_1)\in H^\alpha(\R^d) \times H^\alpha(\R^d).
$$
Let $u^\varepsilon, v$ be the corresponding solutions to cubic nonlinear Klein-Gordon equation \eqref{eq:kg-eps} and cubic nonlinear Schr\"odinger equation \eqref{eq:nls}, respectively.
Then the following error estimate
\begin{align}\label{est:main3-local-d2}
\big\|u^\varepsilon(t)-\fe^{\frac{it}{\varepsilon^2}}v(t)-\fe^{-\frac{it}{\varepsilon^2}}\bar v(t) \big\|_{L^2_x}
\le C\Big(\varepsilon^2+\big(\varepsilon^2 t\big)^{\frac14\alpha}\Big)
\end{align}
holds for all $t\ge 0$, where $C$ is a generic constant depending only on $\|(u_0,u_1)\|_{H^\alpha\times H^\alpha}$.
\end{thm}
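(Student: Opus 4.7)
The plan is to decompose the error around the NLS profile directly: set $r^\varepsilon=u^\varepsilon-\fe^{it/\varepsilon^2}v-\fe^{-it/\varepsilon^2}\bar v$ and substitute into \eqref{eq:kg-eps}. Because $v$ solves \eqref{eq:nls} exactly, the two brackets in \eqref{eq:r} collapse to $\varepsilon^2\partial_{tt}v$ and $\varepsilon^2\partial_{tt}\bar v$, so $r^\varepsilon$ satisfies a driven Klein-Gordon equation whose explicit source is $B_\varepsilon(v)+2\varepsilon^2\mathrm{Re}\bigl[\fe^{it/\varepsilon^2}\partial_{tt}v\bigr]$, together with the linear-in-$r^\varepsilon$ remainder $A_\varepsilon(v,r^\varepsilon)$, with initial data $r^\varepsilon(0)=0$ and $\partial_tr^\varepsilon(0)=-2\mathrm{Re}[\partial_tv(0)]$. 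The core task is then to bound $\|r^\varepsilon(t)\|_{L^2_x}$ via the KG Duhamel formula, with $A_\varepsilon(v,r^\varepsilon)$ absorbed by a Gronwall argument once the global-in-time Strichartz bounds for the defocusing cubic NLS $v$ are invoked.

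The triply-oscillatory source $B_\varepsilon(v)=\fe^{3it/\varepsilon^2}v^3+\fe^{-3it/\varepsilon^2}\bar v^3$ is handled by a single integration by parts in time against the KG propagator $\fe^{\pm it\omega_\varepsilon}$, where $\omega_\varepsilon=\varepsilon^{-2}\sqrt{1-\varepsilon^2\Delta}$. The combined phase $3\varepsilon^{-2}\pm\omega_\varepsilon$ is bounded below by $2\varepsilon^{-2}$, so the IBP produces an extra factor $\varepsilon^2$ and an integrand involving $\partial_s(v^3)$; using the NLS to express $\partial_sv$ in terms of $\Delta v$ and $|v|^2v$, and invoking $v\in L^\infty_tH^2_x$ (which follows from $(u_0,u_1)\in H^2\times H^2$), one obtains the uniform-in-time $\varepsilon^2$ contribution in \eqref{est:main3-local-d2}.

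The main work concerns the slowly varying source $\varepsilon^2\partial_{tt}v$. Differentiating the NLS gives $\partial_{tt}v=\tfrac{1}{2i}\partial_t(\Delta v-3|v|^2v)$, so $\partial_{tt}v\in L^\infty_tL^2_x$ requires $v\in L^\infty_tH^4_x$, i.e.\ $(u_0,u_1)\in H^4\times H^4$. When $\alpha=4$, a direct triangle inequality in Duhamel gives a bound of order $\varepsilon^2t$, which is exactly $(\varepsilon^2t)^{4/4}$. When $\alpha=2$, however, $\partial_{tt}v$ lies only in $L^\infty_tH^{-2}_x$ and the naive estimate loses two derivatives. The plan here is to split the KG propagator into its rightward and leftward branches: for the leftward (non-resonant) branch the combined phase with $\fe^{it/\varepsilon^2}$ has size $2\varepsilon^{-2}$ and IBP recovers $\varepsilon^2$; for the resonant rightward branch the combined phase is Schr\"odinger-like ($\sim|\xi|^2/2$) and one invokes the improved Machihara-Nakanishi-Ozawa bound advertised in the abstract to obtain a base rate of $(\varepsilon^2t)^{1/2}$. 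Real interpolation on the Sobolev scale of the initial data between the endpoints $\alpha=2$ and $\alpha=4$ then yields the claimed intermediate rate $(\varepsilon^2t)^{\alpha/4}$ for every $\alpha\in[2,4]$, and adding the uniform $\varepsilon^2$ contribution from the previous paragraph produces \eqref{est:main3-local-d2}.

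The hardest step will be the low-regularity endpoint $\alpha=2$: the source $\partial_{tt}v$ cannot be placed in $L^2_x$ and any direct Duhamel estimate fails by two derivatives, so one genuinely needs both the rightward/leftward decomposition of the KG propagator (exploiting non-resonance of the leftward branch as the abstract advertises) and a quantitative refinement of Machihara-Nakanishi-Ozawa with sharp $(\varepsilon^2t)^{1/2}$ time dependence rather than merely bounded-time $o(1)$ convergence. A secondary difficulty is that the entire argument must stay at the $L^2_x$ level uniformly in $d=2,3$, so it cannot bootstrap off the 2D Schr\"odinger-wave estimate of Theorem \ref{thm:main1-global} nor use the $d=2$ space-time bound of Lemma \ref{lem:spacetime-norm-KG}; this forces the oscillatory analysis to be carried out purely in terms of the KG propagator and the NLS dispersive/Strichartz bounds for $v$.
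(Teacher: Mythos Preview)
Your setup and the $\alpha=4$ endpoint are essentially right and match what the paper does in the $d=3$ case: derive the driven KG equation for $r^\varepsilon$, gain $\varepsilon^2$ on $B_\varepsilon(v)$ by one temporal IBP against the non-resonant phase, and bound the $\varepsilon^2\partial_{tt}v$ source crudely by $\varepsilon^2 t\|\Delta^2 v\|_{L^\infty_tL^2_x}$ when $v_0\in H^4$. (One caveat: to get the $B_\varepsilon$ contribution bounded by $\varepsilon^2$ \emph{uniformly} in $t$, the post-IBP integrand $v^2\partial_sv$ must be placed in a dual Strichartz norm, not just $L^\infty_tL^2_x$; this uses the global NLS space-time bounds, which you mention for $A_\varepsilon$ but not here.)

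The genuine gap is your passage from $\alpha=4$ to $2\le\alpha<4$. Real interpolation on the Sobolev scale of the data does not apply: the map $(u_0,u_1)\mapsto r^\varepsilon$ is nonlinear, and the constant in the $\alpha=4$ estimate depends nonlinearly on $\|v_0\|_{H^4}$ through the NLS scattering size, so the two endpoints cannot be interpolated directly. Moreover, your $\alpha=2$ ``endpoint'' is not actually established: at $\alpha=2$ the source $\partial_{tt}v$ lies only in $L^\infty_tH^{-2}_x$, and saying the rightward branch has a Schr\"odinger-like phase does not by itself produce $(\varepsilon^2 t)^{1/2}$; Theorem~\ref{thm:main3} is a statement about $\widetilde v^\varepsilon-\widetilde v$, not something you can invoke on a Duhamel integral of $\varepsilon^2\partial_{tt}v$ without first reintroducing the Schr\"odinger-wave profile you wanted to avoid.

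What the paper actually does for $\alpha<4$ is a \emph{high--low frequency decomposition of the NLS data}, not interpolation. One sets $v_N=\mathrm{NLS}(P_{\le N}v_0)$ and $v^N=v-v_N$. The smooth estimate (your $\alpha=4$ argument) applied to $v_N$ gives a contribution $\varepsilon^2+\varepsilon^2 t\,\|\Delta^2 v_N\|_{L^\infty_tL^2_x}\lesssim \varepsilon^2+\varepsilon^2 t\,N^{4-\alpha}$, while $L^2$-stability of the defocusing cubic NLS gives $\|v^N\|_{L^\infty_tL^2_x}\lesssim\|P_{>N}v_0\|_{L^2_x}\lesssim N^{-\alpha}$. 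Optimizing $N=(\varepsilon^2 t)^{-1/4}$ yields $(\varepsilon^2 t)^{\alpha/4}$. This is the mechanism your proposal is missing; once you have it, the leftward/rightward splitting of the source $\partial_{tt}v$ and a separate $\alpha=2$ endpoint become unnecessary. (Incidentally, in $d=2$ the paper does route through the Schr\"odinger-wave profile via Theorems~\ref{thm:main1-global} and~\ref{thm:main3}; the direct argument you outline is only written out for $d=3$, though it works in both dimensions.)
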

The proof of Theorem \ref{thm:main3-local-d2} requires a substantial improvement of the famous results by Machihara, Nakanishi, and Ozawa \cite{MaNaOz-KG-Limits}, which will be stated as the third result or Theorem \ref{thm:main3} of this paper below.
The conclusion of Theorem \ref{thm:main3-local-d2} in the two-dimensional case is a direct corollary of Theorems \ref{thm:main1-global} and \ref{thm:main3} by selecting $\widetilde v_0^\varepsilon = \widetilde v_0$ and $\widetilde v_1^\varepsilon = \varepsilon^2 v_1$.
The three-dimensional case can be achieved through a slight adaptation of the argument employed in the proofs of Theorems \ref{thm:main1-global} and \ref{thm:main3}. Moreover, it is not hard to extend the restriction $2\le \alpha\le 4$
to $1\le \alpha\le 4$ with a slight modification of the argument, however, we do not pursue this generality. 

The conclusions illustrated in Theorems \ref{thm:main1-global} and \ref{thm:main3-local-d2}, alongside the numerical findings presented in \cite{BaoZh-KG-Limit} and \cite{SchratzZhao-KG-Limits}, show that choosing the modulated Schr\"odinger profiles of \eqref{eq:nls} 
necessitates higher regularity on the initial data, in contrast to choosing the modulated Schr\"odinger-wave profiles of \eqref{eq:nls-wave}. 
It is worth noting that our results exhibit three enhancements over existing ones in  \cite{MasNak-KG-Limits, BaoLuZh-KG-Limit}. Firstly, our prerequisites for regularity are notably more lenient.  Specifically, for achieving quadratic convergence, we merely stipulate that $(u_0, u_1) \in H^4(\mathbb{R}^d) \times H^4(\mathbb{R}^d)$.
Indeed, as we will demonstrate below,  {\it the regularity assumptions are sharp.}
Secondly, our result holds true for all time but not only a finite time interval. Thirdly, our framework adeptly accommodates both two and three dimensional situations, thereby broadening the scope of applicability.

As mentioned above, the third result is a substantial improvement of the famous result of Machihara, Nakanishi, and Ozawa \cite{MaNaOz-KG-Limits}, which will serve as a key step in proving Theorem \ref{thm:main3-local-d2}. To start with, let us recall some studies on the non-relativistic limits of the following Cauchy problem
\EQn{
	\label{eq:nls-wave-N}
	\left\{ \aligned
	&\varepsilon^2\partial_{tt}\widetilde{v}^\varepsilon+2i\partial_{t} \widetilde{v}^\varepsilon - \De \widetilde{v}^\varepsilon= -  |\widetilde{v}^\varepsilon|^2 \widetilde{v}^\varepsilon, \\
	& \widetilde{v}^\varepsilon(0,x) = \widetilde{v}^\varepsilon_0(x),\quad \partial_t\widetilde{v}^\varepsilon(0,x)=\frac{\widetilde{v}^\varepsilon_1(x)}{\varepsilon^2}.
	\endaligned
	\right.
}
which may be derived by applying the following modulated transformation
$$
 u^\varepsilon=\fe^{\frac{it}{\varepsilon^2}}\widetilde{v}^\varepsilon
 $$
 to the cubic nonlinear Klein-Gordon equation. In 1983, Tsutsumi \cite{Tsutsumi-KG-Limits} established that in the two-dimensional case, the limit of \eqref{eq:nls-wave-N} is the solution to the cubic nonlinear Schrödinger equation, commonly expressed as:
\EQn{
	\label{eq:nls-1}
	\left\{ \aligned
	&2i\partial_{t} \widetilde{v} - \De \widetilde{v}= - |\widetilde{v}|^2 \widetilde{v}, \\
	& \widetilde{v}(0,x) = \widetilde{v}_0(x),
	\endaligned
	\right.
}
where the convergence holds assuming that $(\widetilde{v}^\varepsilon_0, \varepsilon^{-1} \widetilde{v}^\varepsilon_1)$ converges to $(\widetilde{v}_0, 0)$ in the $H^2$ norm (note that it is not $(\widetilde{v}^\varepsilon_0, \widetilde{v}^\varepsilon_1)$). In subsequent works such as \cite{Najman-KG-Limits,Machihara-KG-Limits}, the regularity prerequisites were relaxed.  These studies demonstrated convergence in the $L^q$ space with $2\le q<6$, under the assumption of boundedness in $H^1$ and $L^2$ convergence of the initial data $(\widetilde{v}^\varepsilon_0, \varepsilon^{-1} \widetilde{v}^\varepsilon_1)$.  The $H^2$ convergence of the error function was established in \cite{BeCo-KG-Limit}, subject to certain assumptions.
A significant breakthrough in determining the precise convergence order of non-relativistic limits was achieved by Machihara, Nakanishi, and Ozawa in \cite{MaNaOz-KG-Limits}. By assuming finite energy and $L^2$ convergence of the initial data $(\widetilde{v}^\varepsilon_0, \varepsilon^{-1} \widetilde{v}^\varepsilon_1)$, the authors established that the solution to \eqref{eq:nls-wave-N} converges to \eqref{eq:nls-1} up to a fixed time, with an optimal rate of $o(\varepsilon^\frac12)$. Again, it is not $(\widetilde{v}^\varepsilon_0, \widetilde{v}^\varepsilon_1)$. 
Meanwhile, Zhao \cite{Zhao-KG-Limits} carried out numerical simulations, revealing that the solution of \eqref{eq:nls-wave-N} attains quadratic convergence in $\varepsilon$ with linear growth in time, characterized by:
\begin{align*}
\big\|\widetilde{v}^\varepsilon(t)-\widetilde{v}(t)\big\|_{H^1_x}
\le &C(1+t)\varepsilon^2,
\end{align*}
provided that the initial data $(\widetilde{v}^\varepsilon_0,\varepsilon^{-2}\widetilde{v}_1^\varepsilon)$ in \eqref{eq:nls-wave-N} maintains a certain degree of smoothness (but independent of $\varepsilon$). 
However, when the initial data's regularity is weaker, e.g., $(\widetilde{v}^\varepsilon_0,\varepsilon^{-2}\widetilde{v}_1^\varepsilon)\in H^2\times H^2$, the convergence rate diminishes to be linear in $\varepsilon$, i.e.
\begin{align*}
\big\|\widetilde{v}^\varepsilon(t)-\widetilde{v}(t)\big\|_{H^1_x}
\le &C(1+t)\varepsilon.
\end{align*}

Obviously, the results of Machihara, Nakanishi, and Ozawa in \cite{MaNaOz-KG-Limits} is not enough for our purpose since there is an extra $\varepsilon^{-1}$ in the initial velocity. Indeed, to apply the results in \cite{MaNaOz-KG-Limits, Zhao-KG-Limits} directly to the current situation, one has to take the initial velocity $\widetilde{v}^\varepsilon_1$ to be 0. Our third result aims to obtain an improvement of those existing ones, serving a key step in proving our Theorem \ref{thm:main3-local-d2}.
\begin{thm}\label{thm:main3}
Let $d=2,3$,  $s_d=d-2$, $1\le \beta\le 4$ and $\beta\le \alpha\le 4$.  Assume that
  \begin{itemize}
  \item[(1)] $\widetilde{v}_0\in H^\alpha(\R^d),\ \widetilde{v}^\varepsilon_0\in H^\beta_x(\R^d),\ \widetilde{v}_1^\varepsilon\in H^{s_d}(\R^d)$;
  \item[(2)] $(\widetilde{v}^\varepsilon_0 - \widetilde{v}_0,\ \widetilde{v}_1^\varepsilon)\to (0,0) \mbox{ in } L^2(\R^d)\times L^2(\R^d).$
  \end{itemize}
Denote the corresponding solutions to \eqref{eq:nls-wave-N} and \eqref{eq:nls-1} by $\widetilde{v}^\varepsilon$ and $\widetilde{v}$, respectively. Then for any $t>0$, one has
\begin{align}\label{est:Thm3-main}
\big\|\widetilde{v}^\varepsilon(t)-\widetilde{v}(t)\big\|_{L^2_x}
\lesssim & \big\|(\widetilde{v}^\varepsilon_0-\widetilde{v}_0, \widetilde{v}_1^\varepsilon)\big\|_{L^2_x\times L^2_x}
+\varepsilon^\beta\big\|\widetilde{v}^\varepsilon_0 - \widetilde{v}_0\big\|_{H^\beta_x} + \varepsilon^{\min\{\alpha,2\}}+\big(\varepsilon^2 t\big)^{\frac14\alpha}.
\end{align}
\end{thm}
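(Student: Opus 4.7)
The strategy is a quantitative refinement of the Machihara--Nakanishi--Ozawa diagonalization, combined with a Fourier-side analysis of the linear propagator. With $\rho_\varepsilon:=\sqrt{1-\varepsilon^2\Delta}$ and the two characteristic symbols $\lambda_\pm(\xi):=(\pm\rho_\varepsilon-1)/\varepsilon^2$, the linear operator $\varepsilon^2\partial_{tt}+2i\partial_t-\Delta$ factors as $(\partial_t-i\lambda_+)(\partial_t-i\lambda_-)$, and the natural projection
\[
\widetilde v^\varepsilon=\widetilde v_+^\varepsilon+\widetilde v_-^\varepsilon, \qquad \widetilde v_\pm^\varepsilon:=\tfrac12\bigl(1\pm\rho_\varepsilon^{-1}\bigr)\widetilde v^\varepsilon\pm\tfrac{\varepsilon^2}{2i}\rho_\varepsilon^{-1}\partial_t\widetilde v^\varepsilon
\]
reduces the second-order dynamics to two first-order evolutions: the rightward (``$+$'') mode is Schr\"odinger-like since $\lambda_+\to|D|^2/2$, whereas the leftward (``$-$'') mode carries the rapidly oscillating phase $\lambda_-\sim-2/\varepsilon^2$.

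\textbf{Bounding the leftward mode.} Evaluating at $t=0$ and using $\partial_t\widetilde v^\varepsilon(0)=\widetilde v_1^\varepsilon/\varepsilon^2$ gives $\widetilde v_-^\varepsilon(0)=\tfrac12(1-\rho_\varepsilon^{-1})\widetilde v_0^\varepsilon-\tfrac{1}{2i}\rho_\varepsilon^{-1}\widetilde v_1^\varepsilon$. Writing $\widetilde v_0^\varepsilon=\widetilde v_0+(\widetilde v_0^\varepsilon-\widetilde v_0)$ and using the symbol identity $1-\rho_\varepsilon^{-1}=\varepsilon^2|D|^2\rho_\varepsilon^{-1}(1+\rho_\varepsilon)^{-1}$ together with a Littlewood--Paley split at $|\xi|\sim 1/\varepsilon$, one obtains
\[
\|\widetilde v_-^\varepsilon(0)\|_{L^2}\lesssim \varepsilon^{\min\{\alpha,2\}}\|\widetilde v_0\|_{H^\alpha}+\varepsilon^\beta\|\widetilde v_0^\varepsilon-\widetilde v_0\|_{H^\beta}+\|\widetilde v_1^\varepsilon\|_{L^2}.
\]
Since the linear evolution of $\widetilde v_-^\varepsilon$ oscillates on the $\varepsilon^{-2}$-scale, one integration by parts in $t$ in the Duhamel integral for the cubic source gains a factor of $\varepsilon^2$, so that the above initial bound propagates globally with only $O(\varepsilon^2)$ corrections.

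\textbf{Comparing the rightward mode with NLS.} Set $e^\varepsilon:=\widetilde v_+^\varepsilon-\widetilde v$. The purely linear comparison already fixes the rate: viewed as Fourier multipliers acting on $\widetilde v_0$, the difference of propagators satisfies the pointwise bound $|e^{it\lambda_+(\xi)}-e^{it|\xi|^2/2}|\le\min(2,\mu(\xi)t)$, where $\mu(\xi):=|\lambda_+(\xi)-|\xi|^2/2|\sim\min(\varepsilon^2|\xi|^4,|\xi|^2)$. Inserting this into Plancherel and splitting the frequency at $N\sim(\varepsilon^2 t)^{-1/4}$ balances the low-frequency contribution $\varepsilon^4 t^2 N^{8-2\alpha}\|\widetilde v_0\|_{\dot H^\alpha}^2$ against the high-frequency tail $N^{-2\alpha}\|\widetilde v_0\|_{\dot H^\alpha}^2$; the common value $(\varepsilon^2 t)^{\alpha/2}$ produces, after taking square roots, the rate $(\varepsilon^2 t)^{\alpha/4}$. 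A complementary split at $N\sim 1/\varepsilon$ extracts the time-independent residual $\varepsilon^{\min\{\alpha,2\}}$ from the linear propagator error and from $\|e^\varepsilon(0)\|_{L^2}$.

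\textbf{Nonlinear closure and main obstacle.} The full nonlinear statement follows from an $L^2$ energy/Strichartz estimate on $e^\varepsilon$ driven by the linear bound of Step 3: the cubic coupling produces mixed terms of the form $\widetilde v_+^\varepsilon\widetilde v_-^\varepsilon$ that are absorbed via Step 2, while the genuine cubic difference $|\widetilde v_+^\varepsilon|^2\widetilde v_+^\varepsilon-|\widetilde v|^2\widetilde v$ is handled by the standard cubic machinery in $d=2,3$ (this is where the hypothesis $\widetilde v_1^\varepsilon\in H^{s_d}$, $s_d=d-2$, enters to keep $\widetilde v_+^\varepsilon$ in the appropriate Strichartz space uniformly in $\varepsilon$). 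A Gronwall-type argument then transfers the frequency-side linear estimate to the nonlinear solution, and summing $\|\widetilde v^\varepsilon-\widetilde v\|_{L^2}\le\|e^\varepsilon\|_{L^2}+\|\widetilde v_-^\varepsilon\|_{L^2}$ yields \eqref{est:Thm3-main}. The principal difficulty is this bootstrap: the frequency-resolved bound requires $H^\alpha$ information on the \emph{solution} $\widetilde v_+^\varepsilon$ rather than only on the NLS limit $\widetilde v$, and closing it demands careful bookkeeping of the small cross-terms produced by the leftward mode so that no additional powers of $\varepsilon$ are lost.
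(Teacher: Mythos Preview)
Your linear analysis is correct and locates the rate $(\varepsilon^2 t)^{\alpha/4}$ exactly where it comes from: the Plancherel bound on $|e^{it\lambda_+(\xi)}-e^{it|\xi|^2/2}|$ applied to $\widetilde v_0\in H^\alpha$, with the frequency cut at $N\sim(\varepsilon^2 t)^{-1/4}$. The problem is that you have identified, but not resolved, the gap in the nonlinear closure. When you write the Duhamel formula for $e^\varepsilon=\widetilde v_+^\varepsilon-\widetilde v$, the propagator mismatch reappears as a forcing term of the type $(e^{i(t-s)\lambda_+}-e^{i(t-s)|D|^2/2})$ acting not on the initial data but on the cubic nonlinearity along the flow. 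Your Plancherel argument then asks for $H^\alpha$ control of $|\widetilde v_+^\varepsilon|^2\widetilde v_+^\varepsilon$ (or equivalently of $\widetilde v_+^\varepsilon$) uniformly in $\varepsilon$, and nothing in your scheme supplies that; a Gronwall argument alone does not manufacture the missing derivatives. So the proposal, as written, has a genuine hole at the point you flag as the ``principal difficulty''.

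The paper closes this hole by an organization quite different from yours. Rather than decomposing $\widetilde v^\varepsilon$ into modes and comparing $\widetilde v_+^\varepsilon$ with $\widetilde v$, it decomposes the \emph{NLS side}: set $\widetilde v_N$ to be the NLS flow of the mollified datum $P_{\le N}\widetilde v_0$ and write
\[
\widetilde v^\varepsilon-\widetilde v=(\widetilde v^\varepsilon-\widetilde v_N)-(\widetilde v-\widetilde v_N).
\]
The second piece is pure NLS stability and costs $\|P_{>N}\widetilde v_0\|_{L^2}\lesssim N^{-\alpha}$. For the first piece one proves a ``regular'' proposition (Proposition~\ref{prop:Thm3-case1-1}) in which the forcing is $\varepsilon^4\mathcal S_\varepsilon(\partial_{tt}\widetilde v_N)$ and is estimated crudely by $\varepsilon^2 T\|\Delta^2\widetilde v_N\|_{L^\infty_t L^2_x}\lesssim \varepsilon^2 T\,N^{4-\alpha}$; crucially, \emph{all} the high regularity is demanded of $\widetilde v_N$, which is smooth and whose $H^s$ norms are propagated by the defocusing cubic NLS (Lemma~\ref{lem:spacetime-norm-NLS}). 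No regularity beyond $H^1$ (or $H^{s_d}$ for the auxiliary data) is ever needed on the Klein--Gordon side. Optimizing $N=(\varepsilon^2 T)^{-1/4}$ then gives $(\varepsilon^2 T)^{\alpha/4}$. In short: your frequency split is applied at the propagator level and gets stuck on the nonlinear solution; the paper applies the same split at the level of NLS \emph{initial data}, so that the heavy-derivative object is a smooth auxiliary NLS solution rather than $\widetilde v^\varepsilon$ itself. A secondary difference is that the paper's leftward/rightward decomposition is performed on the rescaled \emph{difference} $R=e^{it}\mathcal S_\varepsilon(\widetilde v^\varepsilon-\widetilde v_N)$, not on $\widetilde v^\varepsilon$, and no normal-form/integration-by-parts step is used in this theorem; the $\varepsilon^2$ gain comes simply from the explicit $\varepsilon^4$ in front of $\partial_{tt}\widetilde v_N$ after rescaling.
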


We emphasis that the crucial point in Theorem \ref{thm:main3} is that the condition on $\widetilde{v}_1^\varepsilon$ is less stringent so that it is applicable to Theorem \ref{thm:main3-local-d2}. While it is demanded in  \cite{MaNaOz-KG-Limits}  that $\widetilde{v}_1^\varepsilon=o(\varepsilon)$, our Theorem \ref{thm:main3} relaxes this requirement to $\widetilde{v}_1^\varepsilon=o(1)$. Our theoretical findings closely align with the observed convergence patterns presented in \cite{Zhao-KG-Limits}, which numerically indicated a convergence rate of $\varepsilon^2$ for smooth initial data and a rate of $\varepsilon$ for less smooth data. For instance, if $(\alpha, \beta, s_d) = (4, 2, 1)$ and $(\widetilde{v}_0, \widetilde{v}^\varepsilon_0, \widetilde{v}_1^\varepsilon) \in H^4(\R^d) \times H^2(\R^d) \times H^1(\R^d)$, then the bound in Theorem \ref{thm:main3} reads
$$
\big\|\widetilde{v}^\varepsilon(t)-\widetilde{v}(t)\big\|_{L^2_x}
\lesssim  \|(\widetilde{v}^\varepsilon_0-\widetilde{v}_0,\widetilde{v}_1^\varepsilon)\|_{L^2_x\times L^2_x}+(1+t)\varepsilon^2.
$$
Moreover, our result reveals that, to get an extra $\varepsilon^\frac12$ convergence rate, one only needs one additional derivative in the regularity of the initial data (see also \cite{MaNaOz-KG-Limits} wherein the optimal $\varepsilon^\frac12$ rate was established for $H^1\times L^2$ initial data).  Besides, the bound here is explicit in time. It holds for all $t > 0$, in particular, both small and large $t$.

\begin{remark}
If
$(\widetilde{v}_0, \widetilde{v}^\varepsilon_0, \widetilde{v}_1^\varepsilon) \in (H^1(\R^d)^3$, then the above bound reads
$$
\big\|\widetilde{v}^\varepsilon(t)-\widetilde{v}(t)\big\|_{L^2_x}
\lesssim  \|(\widetilde{v}^\varepsilon_0-\widetilde{v}_0,\widetilde{v}_1^\varepsilon)\|_{L^2_x\times L^2_x}+\varepsilon+t^{\frac14}\varepsilon^\frac12.
$$
This is even better than the numerical observations in \cite{Zhao-KG-Limits} where the dependence of this upper bound on time is linear.
In the case of $d=2$, the requirement on $\widetilde{v}_1$ can be relaxed to $\widetilde{v}_1 \in L^2(\R^d)$.
\end{remark}

The following theorem shows that  the regularity requirement  and the convergence bound $(1+t)\varepsilon^2$ achieved in Theorem \ref{thm:main3} attained their optimality. 
\begin{thm}\label{thm:main3-optimal}
 Let $d=2,3$,  and set 
 $
 \widetilde{v}^\varepsilon_0=\widetilde{v}_0, \widetilde{v}_1^\varepsilon=0.
 $
 Then there exist a class of 
$$
\widetilde{v}_0\in \mathcal S(\R^d),
$$
constants $\varepsilon_0>0$, $c_0>0$ and $\delta_0$, such that
for any $\varepsilon\in (0,\varepsilon_0]$, 
\begin{align}
\big\|\widetilde{v}^\varepsilon(t)-\widetilde{v}(t)\big\|_{L^2_x}
\ge c_0 t \varepsilon^2,\quad \mbox{ when } 1\le t \le \delta_0 \varepsilon^{-2} .
\end{align}
Moreover, for $1\le \alpha\le 4$, there exists an initial data $\widetilde{v}_0 \in H^\alpha(\R^d)$ satisfying 
such that 
\begin{align*}
\big\|\widetilde{v}^\varepsilon(t)-\widetilde{v}(t)\big\|_{L^2_x}
\ge c_0\big(\varepsilon^2 t\big)^{\frac14\alpha}\big|\ln \big(t\varepsilon^2\big) \big|^{-1},\quad \mbox{ when } 1\le t \le \delta_0 \varepsilon^{-2} . 
\end{align*}
\end{thm}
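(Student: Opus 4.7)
The plan is to extract the leading-order behavior of $\widetilde v^\varepsilon-\widetilde v$ from an explicit Fourier computation on the linear parts and then verify that the nonlinear remainder is strictly lower order. Write
$$
\widetilde v^\varepsilon-\widetilde v=(\widetilde v^\varepsilon_L-\widetilde v_L)+\bigl[(\widetilde v^\varepsilon-\widetilde v^\varepsilon_L)-(\widetilde v-\widetilde v_L)\bigr],
$$
where $\widetilde v^\varepsilon_L$ and $\widetilde v_L$ are the linear Schr\"odinger-wave and linear Schr\"odinger evolutions of the common datum $\widetilde v_0$ (with $\partial_t\widetilde v^\varepsilon_L(0)=0$). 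The characteristic roots of the Schr\"odinger-wave symbol are $\lambda_\pm(\xi)=i\varepsilon^{-2}(-1\pm\sqrt{1+\varepsilon^2|\xi|^2})$; for $\varepsilon|\xi|\ll 1$ the slow branch expands as $\lambda_+=i|\xi|^2/2-i\varepsilon^2|\xi|^4/8+O(\varepsilon^4|\xi|^6)$, and the initial-velocity condition makes the fast-branch amplitude of size $O(\varepsilon^2|\xi|^2)\widehat{\widetilde v_0}$. Comparing with the Schr\"odinger phase $e^{i|\xi|^2 t/2}$ yields the pointwise Fourier identity
$$
\widehat{\widetilde v^\varepsilon_L}(t,\xi)-\widehat{\widetilde v_L}(t,\xi)=\widehat{\widetilde v_0}(\xi)\bigl(e^{-i\varepsilon^2|\xi|^4 t/8}-1\bigr)e^{i|\xi|^2 t/2}+\bigl[O(\varepsilon^2|\xi|^2)+O(\varepsilon^4|\xi|^6 t)\bigr]\widehat{\widetilde v_0}(\xi),
$$
from which all subsequent estimates follow by Parseval.

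For the first claim I would take $\widetilde v_0=\delta\phi$ with $\phi\in\mathcal S(\R^d)$ a fixed profile satisfying $\|\Delta^2\phi\|_{L^2}>0$ and $\delta>0$ a small absolute constant. On the window $1\le t\le\delta_0\varepsilon^{-2}$ the phase $\varepsilon^2|\xi|^4 t/8$ is uniformly bounded on the support of $\widehat{\widetilde v_0}$ (for $\delta_0$ small), hence $|e^{-i\varepsilon^2|\xi|^4 t/8}-1|\gtrsim\varepsilon^2|\xi|^4 t$ there, and
$$
\|\widetilde v^\varepsilon_L(t)-\widetilde v_L(t)\|_{L^2}\ge\tfrac{1}{8}\delta t\varepsilon^2\|\Delta^2\phi\|_{L^2}-C\delta\varepsilon^2\|\phi\|_{H^2}.
$$
For the nonlinear bracket I would use the global-in-time upper bound of Theorem~\ref{thm:main3} together with small-data Strichartz/scattering estimates for (\ref{eq:nls-1}) and (\ref{eq:nls-wave-N}): the bracket solves a Schr\"odinger-wave equation with source $-\varepsilon^2\partial_{tt}(\widetilde v-\widetilde v_L)-(|\widetilde v^\varepsilon|^2\widetilde v^\varepsilon-|\widetilde v|^2\widetilde v)$, whose leading pieces are each controlled by $\delta^3\varepsilon^2 t$ in $L^2$ after inserting the a priori bound of Theorem~\ref{thm:main3} and the linear estimate already established. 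Choosing $\delta$ small enough that $C\delta^2$ beats the implicit constants yields the lower bound $c_0 t\varepsilon^2$.

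For the second claim I would construct borderline $H^\alpha$ data via
$$
\widehat{\widetilde v_0}(\xi)=\chi(\xi)\,|\xi|^{-d/2-\alpha}\,|\ln|\xi||^{-1},
$$
where $\chi$ is a smooth cutoff equal to $1$ for $|\xi|\ge 3$ and supported in $\{|\xi|\ge 2\}$, so that $\widetilde v_0\in H^\alpha\setminus H^{\alpha+\eta}$ for every $\eta>0$. Using $|e^{\lambda_+ t}-e^{i|\xi|^2 t/2}|^2=4\sin^2((\mu_+-|\xi|^2/2)t/2)$ with $\mu_+-|\xi|^2/2\approx-\varepsilon^2|\xi|^4/8$ and splitting the Fourier integral at the threshold $K=(\varepsilon^2 t)^{-1/4}$: on $\{|\xi|\le K\}$ one has $\sin^2\gtrsim(\varepsilon^2|\xi|^4 t)^2$ and the integral evaluates to $(\varepsilon^2 t)^{\alpha/2}|\ln(\varepsilon^2 t)|^{-2}$ for $\alpha<4$; on $\{|\xi|\ge K\}$ one writes $4\sin^2=2-2\cos$ and bounds the oscillatory $\cos$ contribution via integration by parts/stationary phase, again extracting $(\varepsilon^2 t)^{\alpha/2}|\ln(\varepsilon^2 t)|^{-2}$. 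Summing and taking square roots gives the linear lower bound $c_0(\varepsilon^2 t)^{\alpha/4}|\ln(\varepsilon^2 t)|^{-1}$, and the nonlinear remainder is treated as in the first claim.

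The principal obstacle is producing a bound on the nonlinear bracket $(\widetilde v^\varepsilon-\widetilde v^\varepsilon_L)-(\widetilde v-\widetilde v_L)$ that is strictly below the linear main rate on the long interval $t\sim\varepsilon^{-2}$: a naive Gronwall argument grows exponentially, so one must combine the cancellation in its source with the global-in-time upper bound of Theorem~\ref{thm:main3}. A secondary delicate point, specific to the $H^\alpha$ construction, is the oscillatory-integral lower bound on the high-frequency piece $\{|\xi|\ge K\}$; the logarithmic weight placed on the borderline Sobolev profile both produces and is matched by the factor $|\ln(\varepsilon^2 t)|^{-1}$, confirming that the rate $(\varepsilon^2 t)^{\alpha/4}|\ln(\varepsilon^2 t)|^{-1}$ is sharp.
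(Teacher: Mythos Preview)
Your direct Fourier analysis of the linear difference $\widetilde v^\varepsilon_L-\widetilde v_L$ is a genuinely different route from the paper's, which instead passes through the rescaled complex expansion and the leftward/rightward wave pair $\mathcal R_1,\mathcal R_2$ (Section~\ref{sec:Thm3-optimal-1}); the paper expands the rightward-wave operator as $e^{is(-\langle\nabla\rangle+1-\frac12\Delta)}=1+\tfrac{i}{8}s\Delta^2+O(s\Delta^3)$ via Mihlin--H\"ormander and shows separately that the leftward wave is non-resonant and hence strictly smaller. Both routes land on the same $\tfrac18\varepsilon^2|\xi|^4 t$ main phase, and for the smooth-data claim your argument is correct in outline. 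One omission: your secondary linear error $C\delta\varepsilon^2\|\phi\|_{H^2}$ carries the \emph{same} power of $\delta$ as the main term $\tfrac18\delta t\varepsilon^2\|\Delta^2\phi\|_{L^2}$, so to force the latter to dominate already at $t=1$ you need $\|\Delta^2\phi\|_{L^2}\gg\|\phi\|_{H^2}$. The paper secures this by taking $\phi(x)=A_0^d e^{-A_0|x|^2}$ with a large frequency-concentration parameter $A_0$; you should make such a choice explicit.

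For the $H^\alpha$ claim there is a genuine gap. Your linear Fourier computation correctly produces the main term $\delta(\varepsilon^2 t)^{\alpha/4}|\ln(\varepsilon^2 t)|^{-1}$, the logarithmic suppression coming from the $|\ln|\xi||^{-1}$ weight in $\widehat{\widetilde v_0}$. But your nonlinear bracket, bounded by invoking Theorem~\ref{thm:main3}, yields at best $O(\delta^3(\varepsilon^2 t)^{\alpha/4})$ \emph{without} any log factor, since the upper bound \eqref{est:Thm3-main} depends only on $\|\widetilde v_0\|_{H^\alpha}$ and carries no logarithmic refinement. Closing would then require $\delta^2|\ln(\varepsilon^2 t)|\ll 1$, which fails for any fixed $\delta$ as $\varepsilon\to 0$ (take $t=1$, so $|\ln(\varepsilon^2 t)|\sim|\ln\varepsilon|$). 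The paper circumvents this by lifting the high--low decomposition to the \emph{nonlinear} level: it compares $\widetilde v^\varepsilon$ not with $\widetilde v$ but with the NLS solution $\widetilde v_N$ launched from $P_{\le N}\widetilde v_0$, $N=A_0(\varepsilon^2 t)^{-1/4}$. Every Sobolev norm of $P_{\le N}\widetilde v_0$ above $H^\alpha$ then inherits the factor $(\ln N)^{-1}\sim|\ln(\varepsilon^2 t)|^{-1}$ (see \eqref{f-high-low-value}), so the nonlinear remainder $\mathcal R_j^1$ in \eqref{est:Rj1-1} carries the log as well, and the large $A_0$ can be played against the small $\delta_0$. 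Your frequency split at $K=(\varepsilon^2 t)^{-1/4}$ lives only inside the linear Fourier integral; to close, you must transplant it to the nonlinear comparison as the paper does.
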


\begin{remark}
The counterexamples constructed in this theorem are equally applicable to show the sharpness of the conclusion in Theorem \ref{thm:main3-local-d2}.  In fact, as we will see, the transition from the wave function $u^\varepsilon$ to the Schr\"dinger profile $v$ in the non-relativistic limits is primarily influenced by the transition from the modulated Schr\"dinger-wave profile $v^\varepsilon$ to the modulated Schr\"odinger profile $v$, as expounded in the proof of Theorem \ref{thm:main3}.
\end{remark}

%

\subsection{Key ingredients of the proofs}

%
%
%

%

First, we elucidate the crucial element of the proofs underlying our main theorems.

\vskip .3cm

$\bullet${\it  Analysis of Non-Resonance in the Nonlinearity: Proofs of Theorem \ref{thm:main1-global} and Theorem \ref{thm:main3-local-d2}}
\vskip .3cm

The nonlinear term $B_\varepsilon(v^\varepsilon)$ defined in \eqref{def:B-var} can be viewed as the inhomogeneous component of the corresponding equation for the residual function $r^\varepsilon$. Nevertheless, this term carries significant magnitude. Specifically, we seek to understand how it differs from $|v^\varepsilon|^2v^\varepsilon$, which originates from the nonlinear term in the limiting equation \eqref{eq:nls-wave}. 
A crucial point for proofs of Theorem \ref{thm:main1-global} and Theorem \ref{thm:main3-local-d2} is  to bound the following  time-integral
$$
\int_0^t K_\varepsilon(s)\Big[\fe^{\frac{3is}{\varepsilon^2}}(v^\varepsilon)^3+\fe^{-\frac{3is}{\varepsilon^2}}\big(\bar v^\varepsilon\big)^3\Big]\,ds, 
$$
by $O(\varepsilon^2)$.  
Here $K_\varepsilon$ is a solution operator of linear Klein-Gordon equations \eqref{eq:kg-eps}.  
However, it is not transparent. 
Our initial insight pertains to the non-resonance structure of the time-integral.
To demonstrate, we begin with a rescaling argument, transforming it into the consideration of the following time-integral
$$
\int_0^{\varepsilon^{-2}t}  \fe^{-is\langle\nabla\rangle} \Big[\fe^{3is}(\mathcal{S}_\varepsilon v^\varepsilon)^3+\fe^{-3is}\big(\mathcal{S}_\varepsilon \bar v^\varepsilon\big)^3\Big]\,ds,
$$
where $\mathcal{S}_\varepsilon f(t,x)=\varepsilon f(\varepsilon^2t,\varepsilon x)$.
Note that the estimate for the above integral is easy if its integrand
 is applied by an operator 
$
P_{\ge 1}.
$
Then the non-resonance structure comes in if we look into the following the exponential operators
$$
\fe^{-is(\langle\nabla\rangle-3)}P_{\le 1};\quad \mbox{and }\quad
\fe^{-is(\langle\nabla\rangle+3)}.
$$
Here $P_{\le 1}$ and $P_{\ge 1}$ are the frequency localized operators defined in Section \ref{sec:notations}.
By normal form transformation,
it suffices to estimate the following  time-integral (with the abuse of ignoring useless operators):
$$
\varepsilon^2 \int_0^{\varepsilon^{-2}t}  \fe^{-is\langle\nabla\rangle}\Big[\fe^{3is}(\mathcal{S}_\varepsilon v^\varepsilon)^2 \mathcal{S}_\varepsilon \partial_s v^\varepsilon+\fe^{-3is}\big(\mathcal{S}_\varepsilon \bar v^\varepsilon\big)^2 \mathcal{S}_\varepsilon  \partial_s v^\varepsilon\Big]\,ds.
$$
We emphasis that an extra $\varepsilon^2$ appears in the above expression.  
The substantial difference between $(v^\varepsilon)^3$ and $|v^\varepsilon|^2v^\varepsilon$ is presented. 
Then our next goal is to get the uniform bound  in $\varepsilon$ for $\partial_t v^\varepsilon$ within a suitable space.

It is natural to turn to the  conservation of the basic energy:
\begin{align}\nonumber
\int\big(\varepsilon^2|\partial_tv^\varepsilon|^2 + |\nabla v^\varepsilon|^2  + \frac{3}{2}|v^\varepsilon|^4\big)dx
= \int\big(\varepsilon^2|v_1|^2+\frac{1}{4}|\nabla (u_0 - iu_1)|^2  + \frac{3}{2}|u_0 - iu_1|^4\big)dx.
\end{align}
Unfortunately, this leads to the conclusion that
\begin{align}\label{pt-v-trival}
\|\partial_tv^\varepsilon\|_{L^2_x}=O(\varepsilon^{-1}).
\end{align}
This natural bound is far from our requirement. Our strategy is to gain the $\varepsilon$-loss of  $\partial_tv^\varepsilon$ by crucial observations that a magic complex extension of the system possesses non-resonance structure and a well-behaved initial data.

\vskip .3cm

$\bullet$ {\it Gaining the $\varepsilon$-loss from Time-derivative in Proof of Theorem \ref{thm:main1-global} }
\vskip .3cm

The key ingredient to  gain the $\varepsilon$-loss of  $\partial_tv^\varepsilon$ is the following proposition.
Denote the space $X_{\alpha}(I)$ with $\alpha\in\R, I\subset \R$, with its norm
$$
\|f\|_{X_{\alpha}(I)}\triangleq \big\||\nabla|^\alpha f\big\|_{L^{\frac{2(d+2)}{d}}_{tx}(I)}+\big\||\nabla|^{\alpha} f\big\|_{L^\infty_{t}L^2_x(I)}.
$$
\begin{prop}\label{prop:v-varep}
Let $\gamma\ge 0$, and $v^{\varepsilon}$ to be the solution of \eqref{eq:nls-wave}, then for any $v_0\in H^{2+\gamma}(\R^2)$ and $v_1\in H^{\gamma}(\R^2)$, there holds 
\begin{align}\label{est:v-varep-px}
\left\| \langle\nabla\rangle^\frac32 v^{\varepsilon} \right\|_{L^4_{tx}(\R)}+\big\|v^{\varepsilon} \big\|_{L^\infty_t\dot H^2_x(\R)}\le C,
\end{align}
and
\begin{align}\label{est:v-varep-pt}
\big\| |\nabla|^{\gamma} \partial_tv^{\varepsilon} \big\|_{X_0(\R)}\le C,
\end{align}
where $C$ is a generic constant depending only on $\|v_0\|_{H^{2+\gamma}(\R^2)}$.
\end{prop}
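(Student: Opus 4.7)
The plan for \eqref{est:v-varep-px} is to combine energy conservation with Strichartz estimates for the linear Schr\"odinger--wave propagator. Multiplying \eqref{eq:nls-wave} by $\partial_t\bar v^\varepsilon$ and taking real parts yields the conserved energy
\[
E(v^\varepsilon) := \int_{\R^2}\bigl(\varepsilon^2|\partial_tv^\varepsilon|^2 + |\nabla v^\varepsilon|^2 + \tfrac32|v^\varepsilon|^4\bigr)\,dx,
\]
giving $\|\nabla v^\varepsilon\|_{L^\infty_tL^2_x} + \varepsilon\|\partial_tv^\varepsilon\|_{L^\infty_tL^2_x} \le C$ uniformly in $\varepsilon$, while the conserved charge $Q(v^\varepsilon) := \|v^\varepsilon\|_{L^2}^2 + \varepsilon^2\,\mathrm{Im}\!\int \bar v^\varepsilon\partial_tv^\varepsilon\,dx$ provides an $\varepsilon$-uniform $L^2$ bound via Cauchy--Schwarz. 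Applying $|\nabla|^\alpha$ to \eqref{eq:nls-wave} for $\alpha\in\{1,2,\gamma+2\}$ and repeating the energy identity --- the 2D Sobolev embedding $H^s\hookrightarrow L^\infty$ ($s>1$) closing the cubic nonlinear terms --- propagates the higher regularity. Merging these a priori bounds with global Strichartz estimates for the linear Schr\"odinger--wave equation (essentially a modulated Klein--Gordon equation, cubic defocusing being energy-subcritical in 2D) yields \eqref{est:v-varep-px}.

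For \eqref{est:v-varep-pt}, a naive time differentiation of \eqref{eq:nls-wave} fails, since $\partial_{tt}v^\varepsilon(0) = \varepsilon^{-2}(\Delta v_0 - 3|v_0|^2v_0 - 2iv_1)$ is $O(\varepsilon^{-2})$ in $L^2$, precluding a direct Strichartz estimate on the equation satisfied by $\partial_tv^\varepsilon$. I would instead exploit the ``complex expansion'' advertised in the introduction: factor the linear operator as
\[
\varepsilon^2(\partial_t-\lambda_+)(\partial_t-\lambda_-), \qquad \lambda_\pm := \tfrac{i}{\varepsilon^2}\bigl(\pm\langle\varepsilon\nabla\rangle_1 - 1\bigr),\qquad \langle\varepsilon\nabla\rangle_1 := \sqrt{1-\varepsilon^2\Delta},
\]
and introduce the decomposition $v^\varepsilon = \tilde v_+^\varepsilon + \tilde v_-^\varepsilon$ via
\[
\tilde v_+^\varepsilon := \frac{\partial_tv^\varepsilon - \lambda_-v^\varepsilon}{\lambda_+-\lambda_-},\quad \tilde v_-^\varepsilon := \frac{\lambda_+v^\varepsilon - \partial_tv^\varepsilon}{\lambda_+-\lambda_-},\quad \lambda_+-\lambda_- = \tfrac{2i}{\varepsilon^2}\langle\varepsilon\nabla\rangle_1,
\]
so that $\partial_tv^\varepsilon = \lambda_+\tilde v_+^\varepsilon + \lambda_-\tilde v_-^\varepsilon$ and each mode satisfies the first-order equation $\partial_t\tilde v_\pm^\varepsilon - \lambda_\pm\tilde v_\pm^\varepsilon = \pm\frac{3i}{2\langle\varepsilon\nabla\rangle_1}|v^\varepsilon|^2v^\varepsilon$. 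The algebraic identity $\langle\varepsilon\nabla\rangle_1 - 1 = -\varepsilon^2\Delta/(\langle\varepsilon\nabla\rangle_1+1)$ (this is the ``leftward wave with enhanced performance'') automatically produces small initial data for the fast mode, $\||\nabla|^\gamma\tilde v_-^\varepsilon(0)\|_{L^2} \lesssim \varepsilon^2(\|v_0\|_{H^{2+\gamma}} + \|v_1\|_{H^\gamma})$. Since $\lambda_+$ carries a Schr\"odinger-type symbol $|\lambda_+(\xi)|\lesssim |\xi|^2$, the quantity $\lambda_+\tilde v_+^\varepsilon$ is controlled in $X_0$ by Strichartz applied to the Schr\"odinger-like equation it satisfies, with the cubic source bounded through \eqref{est:v-varep-px}. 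For the companion term, $|\lambda_-|\lesssim \varepsilon^{-2}$ reduces matters to establishing $\||\nabla|^\gamma\tilde v_-^\varepsilon\|_{X_0(\R)} \lesssim \varepsilon^2$, which follows from the small initial data together with a normal-form / integration-by-parts argument in the Duhamel integral for $\tilde v_-^\varepsilon$: the rapid oscillation $e^{s\lambda_-}$ allows integration by parts in $s$ to extract one factor of $\lambda_-^{-1} = O(\varepsilon^2)$ per pass.

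The main technical obstacle is closing the estimate on $\tilde v_-^\varepsilon$: the boundary term from integration by parts is handled directly by \eqref{est:v-varep-px}, but the remaining bulk term involves $\partial_s(|v^\varepsilon|^2v^\varepsilon) \sim |v^\varepsilon|^2\partial_sv^\varepsilon$, which recycles the very quantity $\partial_tv^\varepsilon$ being estimated. Closing the bootstrap requires carefully leveraging the $O(\varepsilon^2)$ gain from $\lambda_-^{-1}$ together with the global-in-time Strichartz bounds of \eqref{est:v-varep-px} (in particular the finiteness of $\|v^\varepsilon\|_{L^4_{tx}(\R)}$ uniformly in $\varepsilon$), so that the recycled $\partial_sv^\varepsilon$ appears with a small prefactor and can be absorbed back into the left-hand side. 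Simultaneously one must track the frequency-dependent transition of $\lambda_\pm$ from Schr\"odinger behavior at low frequencies to wave behavior near $|\xi|\sim \varepsilon^{-1}$, while keeping every constant uniform in $\varepsilon$.
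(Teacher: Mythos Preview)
Your decomposition $\tilde v_\pm^\varepsilon$ is, after the rescaling $\mathcal S_\varepsilon$ and the modulation $w=e^{it}\mathcal S_\varepsilon v^\varepsilon$, exactly the paper's leftward/rightward waves $\mathcal W_1,\mathcal W_2$; the observation that the fast mode has $O(\varepsilon^2)$ initial data via $\langle\varepsilon\nabla\rangle_1-1=-\varepsilon^2\Delta/(\langle\varepsilon\nabla\rangle_1+1)$ is precisely the paper's ``well-behaved initial data'' for $\mathcal W_1$. So the strategy is the same; the execution differs in two respects worth noting.

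First, the paper rescales \emph{before} decomposing: $w$ solves a standard (i.e.\ $\varepsilon$-independent) cubic Klein--Gordon equation, so all Strichartz constants are automatically uniform and the large-data scattering result (their Lemma~\ref{lem:spacetime-norm-KG}) furnishes the global $L^4_{tx}$ control on $w$ that drives the interval-splitting. Your plan to run Strichartz directly for the $\varepsilon$-dependent propagators $e^{t\lambda_\pm}$ is doable but, as you note, forces you to track the Schr\"odinger-to-wave transition near $|\xi|\sim\varepsilon^{-1}$ by hand.

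Second---and this is the more substantive point---the paper does \emph{not} integrate by parts using the bare oscillation $e^{s\lambda_-}$ against the full nonlinearity. Instead it first writes $|w|^2w$ in terms of $\mathcal W_1,\mathcal W_2$; the terms containing a factor of $\mathcal W_1$ are already $O(\varepsilon^2)$ and feed a harmless closed estimate on $\mathcal W_1$. The remaining piece $|\mathcal W_2|^2\mathcal W_2$ (at low frequency) is treated by a trilinear normal form in the \emph{profile} variable $\varphi_2=e^{-it\langle\nabla\rangle}\mathcal W_2$, whose time derivative is itself the nonlinearity: $\partial_s\varphi_2=-3e^{-is\langle\nabla\rangle}\langle\nabla\rangle^{-1}(|w|^2w)$. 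Consequently the bulk term after normal form is \emph{quintic} in $w$ and contains no $\partial_t w$ at all. This completely sidesteps the bootstrap difficulty you correctly flag: in your scheme the bulk term recycles $\partial_t v^\varepsilon$, and to absorb it you need an independent bound on $\|\lambda_+\tilde v_+^\varepsilon\|_{L^4_{tx}}\sim\|\Delta v^\varepsilon\|_{L^4_{tx}}$, which is a half-derivative stronger than what \eqref{est:v-varep-px} provides and which your sketch does not actually establish. The paper's device of differentiating the profile rather than the solution is the clean way out.
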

The estimate \eqref{est:v-varep-pt} established in the  proposition above significantly improves the one in \eqref{pt-v-trival}. 
The proof of this proposition is presented in Section \ref{sec:prop-key}. The main ideas are as follows.

%
%

(1) {\it Rescaled Complex Extension of  Equation \eqref{eq:kg-eps}}: We begin by introducing a thoughtfully chosen complex extension of $u^\varepsilon$, which enables us to producing extra small parameters in both initial data and nonlinearities.
More precisely, we introduce a judiciously designed scaling operator denoted as $\mathcal{S}_\varepsilon$, and employ the transformation $w=e^{it}\mathcal{S}_\varepsilon(v^\varepsilon)$. As a result, the function $w$ adheres to an intricately formulated {\it complex expanded} nonlinear Klein-Gordon equation \eqref{eq:kg-eps} (with the change of coefficient in nonlinear term from 1 to 3, which does not change our analysis):
$$
\partial_{tt}w-\De w+w=-3|w|^2w
$$
with specific initial data
$$
 w(0)=\mathcal{S}_\varepsilon v_0,\quad
\partial_t w(0)= i\mathcal{S}_\varepsilon v_0+\varepsilon^2 \mathcal{S}_\varepsilon v_1.
$$
Firstly, an extra gain of $\varepsilon^2$ appears, exhibiting a magic structure in the initial data. 
Secondly, it becomes evident that:
\begin{align}\label{relationship-w-wt}
w(0)=i\partial_t w(0)+O(\varepsilon^2),\quad \mbox{ in }L^2.
\end{align}
This relationship plays a crucial role in the our analysis.

(2) {\it Enhanced  Performance of the Leftward Wave.}
Another crucial ingredient of the rescaled complex extension is the inherent non-resonance structure in nonlinearity. 
Indeed, this can be seen by introducing  the {\it leftward wave} $\mathcal W_1$ and {\it rightward wave} $\mathcal W_2$,
\EQ{
	\mathcal W_j=\langle \nabla \rangle^{-1}\big(\partial_t\mp i \langle \nabla\rangle\big)w, \mbox{ for } j=1,2,
}
where these waves elegantly fulfill the following systems of equations:
\begin{align}\label{eqs:math-W-12}
\big(\partial_t\pm i \langle \nabla\rangle\big)\mathcal W_j=-\langle \nabla \rangle^{-1}(|w|^2w).
\end{align}
When compared to the rightward wave, it's somewhat surprising that the leftward wave exhibits superior behavior.  The crux of the proof is rooted in the subsequent pivotal observation:


\textcircled{1} Well-behaved Initial Data. 
By  \eqref{relationship-w-wt}, it is easy to see that the initial data  $\mathcal W_1(0)$ can be approximated as follows:
$$
\mathcal W_1(0)=\Delta \mathcal{S}_\varepsilon v_0+\varepsilon^2\mathcal{S}_\varepsilon v_1.
$$
Extra $\varepsilon^2$ smallness is present in both terms on the right-hand side of the above expression. 



\textcircled{2} Exploring Non-Resonant Structure. Ignoring some terms in nonlinearity,   we have the following equation, which characterizes the main difficulties in the nonlinearity:
$$
\big(\partial_t+ i \langle \nabla\rangle\big)\mathcal W_1=-P_{\le1} \Big[|P_{\le 1}\mathcal W_2|^2P_{\le 1}\mathcal W_2\Big]+\cdots.
$$
A non-resonant structure can be explored here. 
Indeed, the corresponding phase function of this nonlinearity is given by:
$$
\langle \xi\rangle+\langle \xi_1\rangle+\langle \xi_2\rangle-\langle \xi_3\rangle\ge 1,\quad \mbox{when } \quad |\xi|\le 1 \mbox{ and }  |\xi_j|\le 1, j=1,2,3.
$$
In conjunction with observation \textcircled{1} above, this insight allows us to obtain an additional factor of $\varepsilon^2$ through the normal-form argument.

(3) {\it Passing the properties of the leftward wave to $\partial_t \mathcal{S}_\varepsilon(v^\varepsilon)$. } 
It can be derived that (see Section \ref{subsec:keyprop-end} for the details)
$$
\mathcal W_1\sim \partial_t \mathcal{S}_\varepsilon(v^\varepsilon).
$$
This enables us to gain an additional factor of $\varepsilon^2$ in the estimation of $\partial_t \mathcal{S}_\varepsilon(v^\varepsilon)$.

%
%

\vskip .3cm

$\bullet$ {\it  Regularity Gain through the ``High-Low'' Decomposition in Proof of Theorem \ref{thm:main3}}
\vskip .3cm

The primary challenge in establishing   Theorem \ref{thm:main3} stems from the substantial discrepancy in the properties of linear operators between equations \eqref{eq:nls-wave-N} and \eqref{eq:nls-1}. The linear operators for these equations are expressed as follows:
$$
K_\varepsilon(t)=\fe^{-ic(c\pm \langle \nabla\rangle_c)t},\quad S(t)=\fe^{-\frac12it\Delta}.
$$
Here $\langle \nabla\rangle=\sqrt{|\nabla|^2+c^2}$ and $c=\varepsilon^{-1}$. 

In the previous work \cite{MaNaOz-KG-Limits}, the authors derived a convergence rate in the energy space, proportional to the square-root of $\varepsilon$, using uniform Strichartz estimates on $K_\varepsilon$ and spacetime estimates on the remainder operator: 
$$
R_\varepsilon(t)=K_\varepsilon(t)-S(t),
$$
which grows linearly in time and will lead to a quadratic one in the final conclusion. 
Furthermore, this argument relies on the uniform boundedness of the solution in $H^1$, a result stemming from the energy estimates. Specifically, solutions to \eqref{eq:nls-wave-N} exhibit energy conservation, which can be expressed as follows: 
\begin{align}\label{energy-NLS-wave}
\int\big(\varepsilon^2|\partial_t\widetilde{v}^\varepsilon|^2 + |\nabla \widetilde{v}^\varepsilon|^2  + \frac{3}{2}|\widetilde{v}^\varepsilon|^4\big)dx
= \int\big(\varepsilon^{-2}|\widetilde{v}_1^\varepsilon|^2  + |\nabla \widetilde{v}_0^\varepsilon|^2+\frac{3}{2}|\widetilde{v}_0^\varepsilon|^4\big)dx.
\end{align}
In our situation, when $(\widetilde{v}_0^\varepsilon, \widetilde{v}_1^\varepsilon)$ are bounded in $H^1(\mathbb{R}^d)\times L^2(\mathbb{R}^d)$, trivial bounds can be derived from the energy conservation, resulting in:
$$
\|\partial_t \widetilde{v}^\varepsilon\|_{L^2_x}=O(\varepsilon^{-2}),\quad
\mbox{ and }\quad \|\nabla \widetilde{v}^\varepsilon\|_{L^2_x}=O(\varepsilon^{-1}).
$$
These estimates, however, exhibit very bad dependence on $\varepsilon$ and is too weak. In contrast, in \cite{MaNaOz-KG-Limits}, the authors proposed a more stringent assumption by considering that $(\widetilde{v}_0^\varepsilon, \varepsilon^{-1} \widetilde{v}_1^\varepsilon)$ are bounded in $H^1(\mathbb{R}^d)\times L^2(\mathbb{R}^d)$ (the same assumption was also proposed in \cite{MasNak-KG-Limits}). Their stronger assumption eliminates the singularity of the solution $\widetilde{v}$ in $H^1$ and mitigates the singularity of $\partial_t\widetilde{v}$ in $L^2$.
That is, 
$$
\|\partial_t \widetilde{v}^\varepsilon\|_{L^2_x}=O(\varepsilon^{-1}),\quad
\mbox{ and }\quad \|\nabla \widetilde{v}^\varepsilon\|_{L^2_x}=O(1).
$$

Our new approach is capable of  reducing the growth of time to linear and eliminating all of these singularities with only a weaker assumption, namely that $\|(\widetilde{v}_0^\varepsilon, \widetilde{v}_1^\varepsilon)\|_{H^1_x\times L^2_x}$ is bounded.
This represents a significant advancement in understanding the non-relativistic limit of \eqref{eq:kg-eps}, as we are able to achieve a more comprehensive removal of singularities under a less restrictive condition.

Our new approach is based on specific {\it high-low frequency decomposition} of the solution, which helps to gain regularity. Define 
$$
\widetilde{v}_N=NLS (P_{\le N}\widetilde{v}_0) \quad\mbox{ and } \quad \widetilde v^N= \widetilde v-\widetilde{v}_N. 
$$ 
Here, $NLS(u_0)$ signifies the nonlinear solutions of equation \eqref{eq:nls-1} with initial data $u_0$. By implementing distinct strategies for the regular and non-regular cases, we achieve different bounds. Specifically, in the regular case, we obtain a bound for the error function $\widetilde{r}^\varepsilon$ with linear dependence on the highest regularity of the initial data, expressed as
$$
\varepsilon^2 t \|\Delta^2 \widetilde{v}_N\|_{L^2_x}. 
$$
In the non-regular case, the error bound is given by
$$
\big\|\widetilde{v}^N\big\|_{L^2_x}.
$$
Subsequently, through the ``high-low" frequency decomposition, the error bound can be derived as
$$
\Big(\varepsilon^2 t N^{4-\alpha}+ N^{-\alpha}\Big)\big\|\widetilde{v}\big\|_{H^\alpha_x}.
$$
By suitably selecting the frequency $N$, we achieve the desired estimate. 

\vskip .3cm

$\bullet$ {\it Constructions of the Counterexamples in Theorem \ref{thm:main1-optimal} and Theorem \ref{thm:main3-optimal}}
\vskip .3cm

The above high-low frequency decomposition also plays an important role in the constructions of the counterexamples to show the sharpness of the regularity. 
These constructions require a comprehensive understanding of the solution's structure, and distinct strategies  in various scenarios.

(1) Counterexample in Theorem \ref{thm:main1-optimal}. The construction need use a deep result of Cazenave and Weissler \cite{CaWe-1992}.  Let us roughly write the solution as follows,  
$$
\phi(t)=\fe^{it\langle \nabla\rangle}\phi_0
+\fe^{it\langle \nabla\rangle}\big(\mathcal{S}_\varepsilon v^\varepsilon_0\big)^3
-\big(\mathcal{S}_\varepsilon v^\varepsilon(t)\big)^3
+\mathcal N(\phi, \mathcal{S}_\varepsilon v^\varepsilon),
$$
where the Duhamel term $\mathcal N(f,g)$  takes the form of:
$$
\mathcal N(f,g)=\int_0^t \fe^{i(t-s)\langle \nabla\rangle}F(f,g)(s)\,ds.
$$
This rough expression is due to the normal form transformation, which does not change the construction of the counterexample in Theorem \ref{thm:main1-optimal}.
The term $\fe^{it\langle \nabla\rangle}\big[\phi_0+\big(\mathcal{S}_\varepsilon v^\varepsilon_0\big)^3\big]$ will be shown to play the main role for the construction, and after then the construction is more or less standard.  

\textcircled{1} Approximation of the Linear Klein-Gordon Flow. To diminish the magnitude of the Duhamel term $\mathcal N$, we employ the following approximation:
 \begin{align*}
\fe^{it\langle\nabla\rangle}=\fe^{it}\fe^{-\frac12 it\Delta}+O\big(t \Delta^2\big).
\end{align*}
The point is that each $\nabla$ introduces an additional factor of $\varepsilon$.
For a chosen time $t$ less than $\varepsilon^{-4}$, it is possible to omit the last part and deduce the linear Klein-Gordon flow to the linear Schrödinger flow.

\textcircled{2} Choosing Data for Linear Schrödinger Flow. Let us choose  a data for the  linear Schrödinger flow so that 
the solution is large in $L^\infty_tL^r_x$  but small in $L^q_tL^r_x$ for some $2\le q<+\infty$. That is, 
$$
\big\|\psi\big\|_{L^\infty_tL^r_x(\R)}\sim 1;\quad 
\big\|\fe^{-\frac12 it\Delta}\psi\big\|_{L^q_tL^r_x(\R)}\ll 1. 
$$
Inspired by the work in \cite{CaWe-1992}, we set 
$$
\psi=\fe^{-ib\frac{|x|^2}{2}}f, \quad \mbox{for }\quad f\in S(\R^d), 
$$
which leads to 
$$
\big\|\fe^{-\frac12 it\Delta}\psi\big\|_{L^q_tL^r_x(\R^+)}=\big\|\fe^{-\frac12 it\Delta}f\big\|_{L^q_tL^r_x([0,\frac1b])}.
$$
By selecting a large value for $b$, we achieve smallness in $L^q_tL^r_x$ for $2\le q<+\infty$. However, the size remains large in $L^\infty_tL^r_x$. 

By employing these two strategies, we can systematically diminish the magnitude of the term $\mathcal N$ without altering the magnitude of $\phi_0+\big(\mathcal{S}_\varepsilon v^\varepsilon_0\big)^3$. Furthermore, the magnitude of the third term can be reduced significantly over an extended time period by applying the dispersive estimate.

(2) Counterexample in Theorem \ref{thm:main3-optimal}. The construction is built upon a solution structure that can be outlined as follows:
$$
\mathcal W_j(t)=\fe^{it\langle \nabla\rangle}\mathcal W_{j,0}
+\int_0^t \fe^{\mp i(t-s)\langle \nabla\rangle} \fe^{is}\mathcal{S}_\varepsilon\big(\partial_{tt}v\big)(s)\,ds
+\mathcal N(\mathcal W_j, \mathcal{S}_\varepsilon v),
$$
where $v$ is the solution of the nonlinear Schr\"odinger equation. By neglecting non-dominant terms, approximating nonlinear solutions with solutions of linear equations, and leveraging the scaling invariance of the linear flow, we arrive at the following approximate expression:
$$
\mathcal W_j(t)=\fe^{\mp it\langle \nabla\rangle}\int_0^t \fe^{is(\pm\langle \nabla\rangle+1-\frac12\Delta)} \mathcal{S}_\varepsilon(\Delta^2 v_0)\big)\,ds+\cdots.
$$

\textcircled{1} Non-Dominance of the Leftward Wave.  
In stark contrast to the remarkable performance of the leftward wave before, we shift our focus in this situation to harness the larger magnitude of the rightward wave. Specifically,   by suitably selecting the initial data, it will be demonstrated that the upper bound of the  {\it Leftward wave} $\mathcal R_1$ is surpassed by the lower bound of the {\it Rightward wave} $\mathcal R_2$.  As previously discussed, our investigation revolves around the exponential operator:
$$
\fe^{is(\langle \nabla\rangle+1-\frac12\Delta)}P_{\le 1},
$$
and ascertain the absence of critical points within its phase. This discovery unveils the non-resonant structure of the leftward wave $\mathcal W_1$, resulting in its smaller bound. However, this property ceases to hold when we examine the exponential operator:
$
\fe^{is(-\langle \nabla\rangle+1-\frac12\Delta)}.
$

\textcircled{2} Approximation of the Linear Flow. Turning our attention to the rightward wave $\mathcal W_2$, we utilize an approximation of the linear operator:
 \begin{align*}
\fe^{is(-\langle \nabla\rangle+1-\frac12\Delta)}=1+\frac18 is \Delta^2+O(s\Delta^3).
\end{align*}
This formula yields two independent dominant terms, originating  from $1$ and $\frac18 is \Delta^2.$ The last term can be effectively  controlled by $t\varepsilon^6$. Furthermore, we also employ the ``High-Low'' Decomposition to handle terms involving high derivatives.

%
%
%

\subsection{Organization of the paper}
In Section \ref{sec:pre}, we provide notations, definitions, and relevant results. Additionally, we review existing findings on scattering behavior in the context of the nonlinear Schrödinger equation and nonlinear Klein-Gordon equations, which will serve as the basis for our subsequent analysis.
In Section \ref{sec:prop-key}, we introduce the rescaled complex expansion and define the leftward and rightward waves. We commence by establishing Strichartz-type estimates for both the leftward and rightward waves. Subsequently, we demonstrate the heightened preformation of the leftward wave. Utilizing these estimates, we ultimately present the proof of Proposition \ref{prop:v-varep}.
In Section \ref{sec:global}, we initiate with the proof of Theorem \ref{thm:main1-global}, focusing on the exploration of non-resonant structures. We then exhibit the optimal convergence rate by constructing counterexamples.
In Section \ref{sec:singe-modulate}, we provide the proof of Theorem \ref{thm:main3} through high-low frequency decomposition. Additionally, we establish the sharpness of both the convergence rate and the regularity requirement given in Theorem \ref{thm:main3-optimal}.
Finally, in Section \ref{sec:case1}, we furnish the proof for Theorem \ref{thm:main3-local-d2}.

\vskip 1.5cm

\section{Preliminary}\label{sec:pre}

\vskip .5cm
\subsection{Notations and Definition}\label{sec:notations}
For any $z\in\C$, we define $\re z$ and $\im z$ as the real and imaginary part of $z$, respectively. Denote $\langle x\rangle=\sqrt{1+x^2}$.
Let $C>0$ denote a generic positive constant. If $f\loe C g$, we write $f\lsm g$ or $f=O(g)$.

For short, we write $L^p_x=L^p_x(\R^d), H^s_x=H^s_x(\R^d)$, $L^q_tL^r_x(I\times \R^d)=L^q_tL^r_x(I)$ and  $L^q_tH^s_x(I\times \R^d)=L^q_tH^s_x(I)$. Moreover, we denote the norm 
$$
\|f\|_{X\cap Y}\triangleq \|f\|_X+\|f\|_Y. 
$$

 Let $\chi\in C_0^\I(\R^d)$ be a radial, real-valued and smooth cut-off function such that $\chi \goe 0$ satisfies
\EQ{
	\chi(\xi) \triangleq \left\{ \aligned
	&1\text{, when $|\xi|\le \frac12$,}\\
	&0\text{, when $|\xi|\ge 1$.}
	\endaligned
	\right.
}
Denote $\chi_{\le a}(\xi)=\chi(\frac{\xi}a)$ and $\chi_{a\le \cdot b}=\chi_{\le b}-\chi_{\le a}$.
We use $\wh f$ or $\F f$ to denote the Fourier transform of $f$:
\EQ{
\wh f(\xi)=\F f(\xi)\triangleq \int_{\R^d} e^{-ix\cdot\xi}f(x)\rm dx.
}
Moreover, we also need the following frequency cut-off operators: for any $N\in2^\N$,
\EQ{
P_{\loe N} f \triangleq  &\mathcal{F}^{-1}\left(\chi\Big(\frac{\xi}N\Big) \hat{f}(\xi)\right), \\
P_{> N} f \triangleq &f-P_{\loe N} f.
}

The following definition for "admissible exponent pair" will be used frequently.
\begin{defn}
For any $0\loe\gamma\loe1$, we call that the exponent pair $(q,r)\in\R^2$ is $\dot H^\ga$-$admissible$, if $\frac{2}{q}+\frac{d}{r}=\half d-\ga$, $2\loe q\loe\I$, $2\loe r\loe\I$, and $(q,r,d)\ne(2,\I,2)$. If $\ga=0$, we say that $(q,r)$ is $L^2$-$admissible$.
\end{defn}

\subsection{Useful lemmas}

 First, we recall the following Strichartz estimates, see \cite{KT98AJM} and the references therein.
\begin{lem}[Strichartz estimate]\label{lem:strichartz}
	Let $I\subseteq \R$. Suppose that $(q,r)$ and $(\wt{q},\wt{r})$ are $L_x^2$-admissible.
	Then,
	\EQ{
		\norm{ e^{it\langle\nabla\rangle}\ph}_{L_t^qL_x^r(I)} \lsm \norm{\langle\nabla\rangle^{\frac{d+2}2(\frac12-\frac1r)}\ph}_{L_x^2},
	}
	and
	\EQ{
		\normb{\int_0^t e^{i(t-s)\langle\nabla\rangle} F(s)\ds}_{L_t^qL_x^r(\R)} \lsm \norm{\langle\nabla\rangle^{\frac{d+2}2(1-\frac1r-\frac1{\wt{r}})}F}_{L_t^{\wt{q}'} L_x^{\wt{r}'}(\R)}.
	}
\end{lem}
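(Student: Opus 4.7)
The plan is to follow the classical Keel--Tao framework, which reduces the desired Strichartz bounds to frequency-localized dispersive estimates combined with an abstract $TT^*$ argument. First I would perform a Littlewood--Paley decomposition $\varphi=\sum_{N}P_N\varphi$ and analyze each dyadic piece separately, because the symbol $\langle\xi\rangle$ behaves differently at low and high frequencies: Schr\"odinger-like when $|\xi|\lsm 1$, wave-like with one degenerate principal curvature when $|\xi|\gg 1$. The key dispersive bound I aim to establish is
\EQ{
\big\|e^{it\langle\nabla\rangle}P_N\varphi\big\|_{L^\infty_x}\lsm |t|^{-d/2}\,N^{(d+2)/2}\,\|P_N\varphi\|_{L^1_x}
}
for dyadic $N\ge 1$, and the analogous bound without the $N$-factor when $N\le 1$. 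The power $N^{(d+2)/2}$ is exactly what stationary phase applied to $\int e^{i(x\cdot\xi+t\langle\xi\rangle)}\chi(\xi/N)\,\dd\xi$ yields, since the Hessian of $\langle\xi\rangle$ has $d-1$ eigenvalues of size $\langle\xi\rangle^{-1}$ together with one anomalously small radial eigenvalue of size $\langle\xi\rangle^{-3}$ (determinant $\sim \langle\xi\rangle^{-(d+2)}$).

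Interpolating this $L^1\to L^\infty$ estimate with the trivial $L^2\to L^2$ conservation yields a family of $L^{r'}\to L^r$ bounds with the correct powers of $|t|$ and $N$. Feeding these into the Keel--Tao theorem then gives the homogeneous Strichartz bound for each dyadic piece,
\EQ{
\big\|e^{it\langle\nabla\rangle}P_N\varphi\big\|_{L^q_tL^r_x}\lsm N^{\frac{d+2}{2}(\frac12-\frac1r)}\|P_N\varphi\|_{L^2_x},
}
for every $L^2$-admissible pair $(q,r)$ with $(q,r,d)\ne(2,\infty,2)$. The inhomogeneous estimate with two admissible pairs $(q,r)$ and $(\wt q,\wt r)$ follows from the bilinear form of the same $TT^*$ argument, producing the derivative count $\frac{d+2}{2}(1-\frac1r-\frac1{\wt r})$ asserted in the lemma. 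To finish one assembles the dyadic pieces: square-summing in $N$ and invoking the Littlewood--Paley square function theorem converts the weighted $\ell^2_N$-norm of $\|P_N\varphi\|_{L^2_x}$ into the Sobolev norm $\|\langle\nabla\rangle^{\frac{d+2}{2}(\frac12-\frac1r)}\varphi\|_{L^2_x}$.

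The main obstacle is the precise bookkeeping in the high-frequency dispersive bound: because one eigenvalue of the Hessian of $\langle\xi\rangle$ is anomalously small, the standard non-degenerate stationary-phase lemma cannot be applied directly, and one must instead combine an angular-dyadic decomposition with a one-dimensional van der Corput estimate on each angular slab in order to extract the sharp $N^{(d+2)/2}$ factor without loss. A secondary subtlety is the forbidden endpoint $(q,r,d)=(2,\infty,2)$, which is excluded for the usual reasons (failure of the Keel--Tao endpoint argument in the radial critical case) and is already reflected in the statement of the lemma.
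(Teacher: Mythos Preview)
The paper does not prove this lemma: it is stated as a known result with the introductory phrase ``we recall the following Strichartz estimates, see \cite{KT98AJM} and the references therein,'' and no argument is supplied. Your proposal is a correct and standard sketch of how such Klein--Gordon Strichartz estimates are actually established in the literature --- Littlewood--Paley localization, the frequency-dependent dispersive bound reflecting the degenerate Hessian of $\langle\xi\rangle$ (determinant $\sim\langle\xi\rangle^{-(d+2)}$), the Keel--Tao abstract machinery, and square-function reassembly --- so there is nothing to compare against in the paper itself.
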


\begin{remark}
It is known that there are some derivatives loss for the Strichartz estimates on the linear Klein-Gordon equations.
The loss of derivatives in these estimates pose a significant obstacle to our analysis.  In particular, the following estimates will be used frequently below: Suppose that
\begin{align*}
\phi(t)= \fe^{it\langle\nabla\rangle}\phi_0+ \int_0^t \fe^{i(t-s)\langle\nabla\rangle} \langle \nabla\rangle^{-1}F(s)\,ds,
\quad (t,x)\in \R\times \R^d.
\end{align*}
Then
\begin{align*}
\big\|\phi\big\|_{L^\frac{2(d+2)}{d}_{tx}(I)}+\big\|\langle\nabla\rangle^\frac12\phi\big\|_{L^\infty_tL^2_x(I)}
\lesssim
\big\|\langle\nabla\rangle^\frac12\phi_0\big\|_{L^2_x}+\big\|F\big\|_{L^\frac{2(d+2)}{d+4}_{tx}(I)}.
\end{align*}
\end{remark}

We also need the following dispersive estimates, see for example \cite{MaStWa-KG-decay}.
\begin{lem}[Dispersive estimate]\label{lem:dispersive}
	Let $2\le p\le +\infty$ and $N>0$.
	Then,
	\EQn{\label{eq:dispersive-1}
		\norm{e^{it\langle\nabla\rangle} P_N\ph}_{L_x^p} \lsm \langle t\rangle^{-d(\frac12-\frac1p)}\norm{\langle\nabla\rangle^{\frac{d+2}2(1-\frac2p)} P_N\ph}_{L_x^{p'}}.
	}
\end{lem}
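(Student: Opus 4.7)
The plan is to establish the endpoint case $p=\infty$ first, and then obtain the intermediate $p$ by interpolation with the trivial $L^2 \to L^2$ bound given by Plancherel. The endpoint bound amounts to showing, with $\chi_N(\xi)$ a smooth bump supported on $|\xi|\sim N$ (or $|\xi|\lesssim 1$ when $N\lesssim 1$), the pointwise kernel estimate
\begin{equation*}
\abs{K_N(t,x)}=\absbb{\int_{\R^d}\fe^{ix\cdot\xi+it\langle\xi\rangle}\chi_N(\xi)\,d\xi}\lesssim \jb{t}^{-d/2}\jb{N}^{(d+2)/2},
\end{equation*}
because then convolution with $K_N$ gives $\|e^{it\langle\nabla\rangle}P_N\varphi\|_{L^\infty}\lesssim \jb{t}^{-d/2}\jb{N}^{(d+2)/2}\|P_N\varphi\|_{L^1}$, which is precisely the $p=\infty$ case after noting $\jb{N}^{(d+2)/2}\sim \|\jb{\nabla}^{(d+2)/2}P_N(\cdot)\|\to\|\cdot\|$-style multiplier bounds.

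For the kernel estimate, the first step is to separate $|t|\le 1$ from $|t|\ge 1$. In the small-time range $\jb{t}^{-d/2}\sim 1$, so it suffices to use Bernstein together with the trivial bound $\|e^{it\langle\nabla\rangle}P_N\varphi\|_{L^\infty}\lesssim \|P_N\varphi\|_{L^\infty}\lesssim \jb{N}^{d/2}\|P_N\varphi\|_{L^2}$ and a further use of Bernstein between $L^2$ and $L^1$, which produces exactly the $\jb{N}^{(d+2)/2}$ weight needed (or more cleanly, one just invokes Young/Bernstein and the fact that the multiplier symbol lies in a bounded Fourier-$L^1$ ball uniformly in $|t|\le 1$). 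For $|t|\ge 1$ the proof is stationary phase in $\xi$ applied to the phase $\Phi(\xi)=\frac{x}{t}\cdot\xi+\langle\xi\rangle$. A direct computation gives
\begin{equation*}
\nabla_\xi\langle\xi\rangle=\frac{\xi}{\langle\xi\rangle},\qquad \nabla_\xi^2\langle\xi\rangle=\frac{1}{\langle\xi\rangle}\Big(I-\frac{\xi\otimes\xi}{\langle\xi\rangle^2}\Big),
\end{equation*}
whose determinant equals $\langle\xi\rangle^{-d-2}$. At any critical point $\xi_0$ one has $|\xi_0|\lesssim N$, hence $|\det\nabla_\xi^2\Phi(\xi_0)|^{-1/2}=\langle\xi_0\rangle^{(d+2)/2}\lesssim \jb{N}^{(d+2)/2}$. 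Standard stationary phase (with a non-stationary-phase integration by parts argument to control the case where $|\nabla_\xi\Phi|\gtrsim 1$ on the support) therefore yields the claimed kernel decay $|t|^{-d/2}\jb{N}^{(d+2)/2}$.

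Having established the $L^1\!\to\! L^\infty$ bound and the Plancherel isometry $\|e^{it\langle\nabla\rangle}P_N\varphi\|_{L^2}=\|P_N\varphi\|_{L^2}$, Riesz--Thorin interpolation between the two endpoints with parameter $\theta=1-2/p$ produces
\begin{equation*}
\|e^{it\langle\nabla\rangle}P_N\varphi\|_{L^p}\lesssim \jb{t}^{-d(\frac12-\frac1p)}\jb{N}^{\frac{d+2}{2}(1-\frac{2}{p})}\|P_N\varphi\|_{L^{p'}},
\end{equation*}
which, after recognizing the multiplier bound $\jb{N}^{\frac{d+2}{2}(1-\frac{2}{p})}\|P_N\varphi\|_{L^{p'}}\lesssim \|\jb{\nabla}^{\frac{d+2}{2}(1-\frac{2}{p})}P_N\varphi\|_{L^{p'}}$ (via Bernstein on the Littlewood--Paley piece), gives exactly \eqref{eq:dispersive-1}.

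The main technical obstacle is the stationary phase step at critical points and the matching control of the integration-by-parts regime; this requires a careful partition of the support of $\chi_N$ depending on the relative size of $|x/t|$ and $|\xi|/\langle\xi\rangle$, and one must verify that the constants do not blow up near $|\xi_0|\to 0$ or $|\xi_0|\to N$. Everything else, including the interpolation and the Bernstein reductions, is routine once this kernel bound is in hand.
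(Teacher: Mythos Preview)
The paper does not give its own proof of this lemma; it simply cites the classical reference \cite{MaStWa-KG-decay} (Marshall--Strauss--Wainger). Your sketch---the $L^1\to L^\infty$ kernel bound via stationary phase on $\langle\xi\rangle$ (whose Hessian has determinant $\langle\xi\rangle^{-(d+2)}$, producing the $\langle N\rangle^{(d+2)/2}$ loss), followed by Riesz--Thorin interpolation with the $L^2$ isometry---is exactly the standard argument behind that reference and is correct as outlined.
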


The following are Coifman-Meyer's  Multiplier Theorem, see \cite{Coifman-Meyer}.
\begin{lem}
Let $m\in  L^\infty(\R^{nd})$ be smooth away from the origin and satisfy that for any multi-indices
$\vec{\alpha}=\{\alpha_1,\alpha_2,\cdots,\alpha_n\}\in \Z^{nd}$ with $\alpha_j\le 2d+1 $ and any $\xi_1,\cdots,\xi_n\in \R^d\setminus\{0\}$,
$$
\big|\partial_{\xi_1}^{\alpha_1}\partial_{\xi_2}^{\alpha_2}\cdots \partial_{\xi_n}^{\alpha_n}m(\xi_1,\xi_2,\cdots,\xi_n)\big|
\le C(\vec \alpha)(|\xi_1|+|\xi_2|+\cdots+|\xi_n|)^{-|\vec \alpha|}.
$$
Then for any $f_j,j=1,2,\cdots,n$,
\begin{align*}
&\left\|\int_{\xi=\xi_1+\cdots+\xi_n}  \fe^{i\xi\cdot x} m(\xi_1,\xi_2,\cdots,\xi_n)\widehat{f_1}(\xi_1)\widehat{f_2}(\xi_2)
\cdots\widehat{f_n}(\xi_n)\,d\xi_1d\xi_2\cdots d\xi_n\right\|_{L^p(\R^d)}\\
&\qquad\qquad \le C(p,p_1,\cdots,p_n)\|f_1\|_{L^{p_1}(\R^d)}\|f_2\|_{L^{p_2}(\R^d)}\cdots \|f_n\|_{L^{p_n}(\R^d)},
\end{align*}
where
$$
0< p< +\infty, 1<p_j\le+\infty \mbox{ for } j=1,2,\cdots,n, \frac1p=\frac1{p_1}+\frac1{p_2}+\cdots +\frac1{p_n}.
$$
\end{lem}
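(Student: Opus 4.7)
The plan is to prove the multilinear multiplier estimate via a Littlewood-Paley decomposition of each input combined with paraproduct analysis. Writing $f_j = \sum_{N_j \in 2^{\Z}} P_{N_j} f_j$, I expand
\[
T_m(f_1,\ldots,f_n) = \sum_{N_1,\ldots,N_n} T_m\bigl(P_{N_1} f_1,\ldots,P_{N_n} f_n\bigr).
\]
The symbol hypothesis gives two key features of each dyadic block: after rescaling by $N := \max_j N_j$ the block symbol obeys Mihlin-H\"ormander type bounds with constants uniform in $N$, and the output frequency $\xi=\xi_1+\cdots+\xi_n$ is supported in $|\xi| \lsm N$. By permutation symmetry, it then suffices to treat the regime $N_1 \ge N_2 \ge \cdots \ge N_n$ and split into two subcases.

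In the high-low subcase $N_1 \gg N_2$, the output frequency satisfies $|\xi|\sim N_1$, so the dyadic block acts as a linear Calder\'on-Zygmund multiplier in $\xi_1$ applied to $P_{N_1} f_1$, with coefficients built from the lower-frequency functions and controlled pointwise by the Hardy-Littlewood maximal functions $M f_j$ for $j\ge 2$. Summing over $N_1$ via the linear Littlewood-Paley square function estimate and applying H\"older together with the Fefferman-Stein vector-valued maximal inequality (and $\tfrac1p=\sum_j \tfrac1{p_j}$) produces the target bound in this regime.

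The genuine difficulty lies in the diagonal subcase $N_1\sim\cdots\sim N_n\sim N$, where cancellation among the inputs can drive the output frequency far below the common scale $N$. Here I would rescale $\xi_j\mapsto N\xi_j$ so that the symbol lives on a fixed product of annuli, and expand it as a multiple Fourier series $\sum_{\vec k} c_{\vec k}(N)\, e^{i\vec k\cdot\xi/N}$ on a compact cube. The derivative condition up to order $2d+1$ yields the coefficient decay $|c_{\vec k}(N)| \lsm \langle \vec k\rangle^{-(2d+1)}$, uniformly in $N$, which is summable and reduces each Fourier mode to a tensor product of translated Littlewood-Paley pieces; these are controlled by the $n$-linear Littlewood-Paley square function estimate combined with H\"older. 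The sharpest technical point is summing in $N$ when the output frequency $|\xi|$ is much smaller than $N$: I plan to address this by an additional output-side Littlewood-Paley decomposition and an almost-orthogonality (Cotlar-Stein type) argument that exploits the smoothness of the dyadic cutoffs. Combining the two subcases and invoking H\"older with the assumed exponent relation then closes the estimate.
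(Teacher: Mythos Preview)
The paper does not give its own proof of this lemma: it is stated as the Coifman--Meyer multiplier theorem with a citation to \cite{Coifman-Meyer}, and is used as a black box. So there is no ``paper's proof'' to compare against; your proposal is an attempt to prove a classical result that the authors simply quote.

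That said, your outline follows the standard route to Coifman--Meyer: Littlewood--Paley decompose each input, separate into high--low (paraproduct) and diagonal regimes, and in the diagonal regime expand the rescaled symbol as a Fourier series with rapidly decaying coefficients to reduce to products of translated dyadic pieces. This is essentially how the theorem is proved in the literature. Two places where your sketch would need real work to become a proof: (i) the range $0<p<1$ cannot be handled by H\"older and Fefferman--Stein alone---one needs either Hardy-space techniques, Calder\'on--Zygmund decompositions adapted to the multilinear setting, or the full machinery of multilinear Calder\'on--Zygmund theory as in Grafakos--Torres; your description stays within the $p\ge 1$ toolbox; (ii) in the diagonal subcase, the ``Cotlar--Stein type'' summation over $N$ when the output frequency is small is not quite the right mechanism---what actually closes that sum is the square-function orthogonality on the input side (each $N$ hits a fixed dyadic shell of at least one $f_j$), not an almost-orthogonality argument on the output. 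These are not fatal gaps in strategy, but they are exactly the points where a referee would ask for detail.
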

The Coifman-Meyer  Multiplier Theorem is reduced to the Mihlin-H\"ormander Multiplier Theorem when $n=1$ and $1<p<+\infty$.

The Kato-Ponce inequality will be frequently used in this paper. The result was originally proved in \cite{Kato-Ponce} and then extended to the endpoint case in \cite{BoLi-KatoPonce, Li-KatoPonce} recently.
\begin{lem}[Kato-Ponce inequality] \label{lem:kato-Ponce}
For $s>0$, $1<p\le \infty$, $1<p_1,p_3< \infty$ and $1<p_2,p_4 \le \infty$ satisfying $\frac1p=\frac1{p_1}+\frac1{p_2}$ and $\frac1p=\frac1{p_3}+\frac1{p_4}$, the following inequality holds:
\begin{align*}
\big\||\nabla|^s(fg)\big\|_{L^p}\le C\big( \||\nabla|^sf\|_{L^{p_1}}\|g\|_{L^{p_2}}+ \||\nabla|^sg\|_{L^{p_3}}\|f\|_{L^{p_4}}\big),
\end{align*}
where the constant $C>0$ depends on $s,p,p_1,p_2,p_3,p_4$. 
\end{lem}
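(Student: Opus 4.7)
The plan is to employ the Bony-Coifman-Meyer paraproduct decomposition combined with Littlewood-Paley square-function theory. Let $\Delta_k$ denote the standard Littlewood-Paley projection onto frequencies $|\xi|\sim 2^k$ and set $S_k=\sum_{j\le k-3}\Delta_j$. Decompose
$$fg=\Pi_{\mathrm{lh}}(f,g)+\Pi_{\mathrm{hl}}(f,g)+\Pi_{\mathrm{hh}}(f,g),$$
with $\Pi_{\mathrm{lh}}(f,g)=\sum_k S_k f\cdot\Delta_k g$ (low-high), $\Pi_{\mathrm{hl}}(f,g)=\sum_k\Delta_k f\cdot S_k g$ (high-low), and $\Pi_{\mathrm{hh}}(f,g)=\sum_k\sum_{|j-k|\le 2}\Delta_j f\cdot\Delta_k g$ (diagonal). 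Each summand in the off-diagonal paraproducts has Fourier support in an annulus $\{|\xi|\sim 2^k\}$, so $|\nabla|^s$ essentially multiplies it by $2^{sk}$ and the derivative morally lands on the high-frequency factor.

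For $\Pi_{\mathrm{hl}}$ I would estimate
$$\big\||\nabla|^s\Pi_{\mathrm{hl}}(f,g)\big\|_{L^p}\lesssim\Big\|\Big(\sum_k\big|\Delta_k(|\nabla|^s f)\big|^2\,\big|S_k g\big|^2\Big)^{1/2}\Big\|_{L^p}\lesssim\big\||\nabla|^s f\big\|_{L^{p_1}}\big\|g\big\|_{L^{p_2}},$$
using the pointwise bound $|S_k g|\lesssim Mg$, the Fefferman-Stein vector-valued maximal inequality, the Littlewood-Paley characterization of $L^{p_1}$, and $L^{p_2}$-boundedness of $M$; a symmetric argument on $\Pi_{\mathrm{lh}}$ produces the second term on the right-hand side using the H\"older pairing $(p_3,p_4)$. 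The diagonal piece $\Pi_{\mathrm{hh}}$ has no single factor dominating the frequency, so I would project onto $\Delta_m$ and note that only $k\ge m-C$ contributes, since $\Delta_j f\cdot\Delta_k g$ lives at frequency $\lesssim 2^k$. Writing $2^{sm}=2^{-s(k-m)}\cdot 2^{sk}$ with $k\ge m-C$, the loss $2^{-s(k-m)}$ is summable and the derivative can be redistributed freely onto either factor, after which the sum is closed by another square-function plus maximal estimate. This freedom of redistribution in the high-high piece is exactly why the statement permits two independent H\"older splittings of $1/p$.

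The main obstacle is the endpoint regime where $p=\infty$ or $p_2=\infty$ or $p_4=\infty$. The square-function characterization of $L^p$ fails at $p=\infty$ and the Hardy-Littlewood maximal operator is unbounded on $L^\infty$, so the clean paraproduct argument sketched above does not extend. My plan in these cases is to replace the square-function control by BMO bounds on the off-diagonal paraproducts (which are $L^\infty\times L^\infty\to\mathrm{BMO}$) and to treat the high-high term via a $T(1)$-type commutator analysis in the spirit of Journ\'e's theorem, following the endpoint refinements of Bourgain-Li and Li. Rather than reprove those delicate arguments, I would cite \cite{BoLi-KatoPonce,Li-KatoPonce} as black boxes for the endpoint cases, while carrying out the paraproduct proof directly in the bulk regime $1<p<\infty$ with $1<p_j<\infty$. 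The genuine novelty of the statement over the original Kato-Ponce work lies precisely in these endpoints.
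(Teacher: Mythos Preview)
The paper does not prove this lemma; it simply states it as a known result and cites \cite{Kato-Ponce} for the original version and \cite{BoLi-KatoPonce,Li-KatoPonce} for the endpoint extensions. Your proposal --- a paraproduct decomposition for the bulk range together with a citation of Bourgain--Li and Li for the endpoints --- is precisely the standard route those references take, so there is nothing to compare: your plan is consistent with, and in fact more detailed than, what the paper itself does.
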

%
%
%

\subsection{Some known results}

We will rely on the following well-known two results for the nonlinear Schr\"odinger equation and nonlinear Klein-Gordon equation. 
Scattering for the defocusing cubic nonlinear Schr\"odinger equation in two dimensional case was first proved by Killip, Tao and Visan \cite{KTV09JEMS} in radial case, and later on extended by Dodson \cite{Dod16Duke} to the non-radial case. In three dimensional case, it was proved by Ginibre and Velo \cite{GV92CMP}. In summary, we state them as: 
\begin{lem}[Scattering for NLS]\label{lem:spacetime-norm-NLS}
	Let $d=2,3$, $s\ge 0$ and $s_d=d-2$, and let $v_0\in H^s\cap H^{s_d}(\R^d)$ and $v$ be the solution of the defocusing cubic nonlinear Schr\"odinger equation \eqref{eq:nls}. Suppose that $(q,r)$ is $L_x^2$-admissible.
	Then there hold, 
	\EQ{
		\norm{|\nabla|^s v}_{L_t^qL_x^r(\R)} \le  C\big(\norm{v_0}_{H^{s_d}_x}\big)\norm{v_0}_{\dot H^s},\quad \mbox{ for any }s \ge 0.
	}
\end{lem}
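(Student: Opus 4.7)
The plan is to combine the known scattering theorems with a standard persistence-of-regularity bootstrap. I would split the argument into two stages: first, obtain a global critical spacetime bound at the scaling-invariant regularity $H^{s_d}$ by citing the relevant scattering result; second, upgrade this to a global bound for $\||\nabla|^s v\|_{L^q_t L^r_x(\R)}$ for every $L^2_x$-admissible $(q,r)$ and every $s \ge 0$.

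For Stage 1, when $d=2$ the equation is mass-critical, so Dodson's theorem (in the non-radial case) delivers $\|v\|_{L^4_{t,x}(\R)} \le C(\|v_0\|_{L^2})$, noting $s_d=0$. When $d=3$ the equation is $\dot H^{1/2}$-critical, and the argument of Ginibre--Velo together with the interaction-Morawetz estimate gives $\|v\|_{L^{8}_t L^{4}_x(\R)} \le C(\|v_0\|_{H^{1/2}})$ (or any other $\dot H^{1/2}$-admissible critical norm), noting $s_d = 1$. In either case, by Strichartz estimates at regularity $s_d$, this also bounds $\||\nabla|^{s_d} v\|_{L^q_t L^r_x(\R)}$ for every $L^2_x$-admissible pair, confirming the statement in the case $s=s_d$.

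For Stage 2, let $(q_*, r_*)$ denote the critical spacetime pair from Stage 1. Partition $\R = \bigcup_{j=1}^{J} I_j$ into finitely many intervals on which $\|v\|_{L^{q_*}_t L^{r_*}_x(I_j)} \le \eta$, with $\eta>0$ a small universal constant and $J = J(\|v_0\|_{H^{s_d}})$. Fix an $L^2_x$-admissible pair $(q,r)$, write Duhamel's formula
\[
v(t) = e^{i(t-t_j)\Delta/2}v(t_j) + \tfrac{3i}{2}\int_{t_j}^{t} e^{i(t-\tau)\Delta/2}(|v|^2 v)(\tau)\,d\tau,
\]
and apply Strichartz on $I_j$. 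For the nonlinearity, the Kato--Ponce inequality (Lemma~\ref{lem:kato-Ponce}) together with a Hölder split allocating two factors of $v$ to $L^{q_*}_t L^{r_*}_x$ and the derivative to $\||\nabla|^s v\|_{L^q_t L^r_x}$ produces
\[
\big\||\nabla|^s (|v|^2 v)\big\|_{L^{\tilde q'}_t L^{\tilde r'}_x(I_j)} \lesssim \|v\|_{L^{q_*}_t L^{r_*}_x(I_j)}^{2}\,\big\||\nabla|^s v\big\|_{L^q_t L^r_x(I_j)}.
\]
Smallness of $\eta$ absorbs the nonlinear contribution into the left-hand side, yielding $\||\nabla|^s v\|_{L^q_t L^r_x(I_j)} \lesssim \||\nabla|^s v(t_j)\|_{L^2_x}$. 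Iterating across the $J$ intervals then gives $\||\nabla|^s v\|_{L^q_t L^r_x(\R)} \le C(\|v_0\|_{H^{s_d}})\|v_0\|_{\dot H^s}$, which is the claim.

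The only real obstacle is book-keeping: one must select the dual Strichartz pair $(\tilde q, \tilde r)$ so that the Hölder and Kato--Ponce exponents are compatible with $L^2_x$-admissibility in both $d=2$ and $d=3$. This is routine once $(q_*, r_*)$ is fixed at the critical level, but the split is dimension-dependent. The non-trivial analytic input---global-in-time boundedness of the critical norm---is entirely supplied by the cited scattering theorems.
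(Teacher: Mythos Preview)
The paper does not supply its own proof of this lemma: it is stated as a known result, with the critical spacetime bound attributed to Dodson ($d=2$) and Ginibre--Velo ($d=3$), and the persistence-of-regularity upgrade left implicit. Your two-stage outline---cite the scattering theorem for a global bound at the critical regularity, then partition time into intervals where the critical norm is small and run Strichartz plus Kato--Ponce to propagate $\dot H^s$---is exactly the standard argument behind such statements and is correct.

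One small slip: in the $d=3$ discussion you write the constant as $C(\|v_0\|_{H^{1/2}})$, but the Ginibre--Velo result is an $H^1$ scattering theorem, so the dependence should be on $\|v_0\|_{H^1}$, consistent with $s_d = d-2 = 1$ as you yourself note immediately after. This does not affect the structure of the argument.
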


The following large data scattering results for defocusing cubic nonlinear Klein-Gordon equation was proved by Killip, Stovall, Visan \cite{KillpVisan-KG}.
%
%

\begin{lem}[Large data scattering for Klein-Gordon]\label{lem:spacetime-norm-KG}
	Let $d=2,3$ and $(u_0,u_1)\in H^1(\R^d)\times L^2(\R^d)$. Suppose that $u$ is a solution of the defocusing  cubic nonlinear Klein-Gordon equation	
	\EQ{
	\left\{ \aligned
	&\partial_{tt} u - \De u + u= -|u|^2u, \\
	& u(0,x) = u_0(x),\quad  \partial_tu(0,x) = u_1(x).
	\endaligned
	\right.
}
	Then there exists a continuous function $C>0$ such that
	\EQ{
		\norm{u}_{X_{s_c}(\R)} \le  C\big(E(u_0,u_1)\big),
	}
	where
	$$
	E(u,u_t)\triangleq \big\|\nabla u\big\|_{L^2_x}^2+\big\|\partial_t u\big\|_{L^2_x}^2+\big\| u\big\|_{L^2_x}^2+\frac12\big\| u\big\|_{L^4_x}^4.
	$$
\end{lem}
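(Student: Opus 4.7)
The plan is to run the Kenig--Merle concentration-compactness/rigidity program for the defocusing cubic Klein--Gordon equation, using the Strichartz estimates of Lemma~\ref{lem:strichartz} as the basic linear tool. First I would set up the deterministic ingredients: conservation of the energy $E(u,u_t)$ together with the defocusing sign yields a uniform-in-time bound of $(u,u_t)$ in $H^1 \times L^2$; Strichartz combined with a contraction mapping provide local well-posedness in $H^1\times L^2$ and a small-data global theory, namely there is $\eta_0>0$ such that whenever $\|e^{it\langle\nabla\rangle}(u_0 - i\langle\nabla\rangle^{-1}u_1)\|_{X_{s_c}(\R)} < \eta_0$, one has $\|u\|_{X_{s_c}(\R)} \lesssim \eta_0$; and a stability/perturbation lemma shows that finiteness of $\|u\|_{X_{s_c}(\R)}$ implies scattering in $H^1\times L^2$. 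Thus it suffices to bound this norm by a function of the energy.

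Second, I would set $L(E) := \sup\{\|u\|_{X_{s_c}(\R)} : E(u_0,u_1) \le E\}$ and assume for contradiction that $E_c := \inf\{E : L(E)=\infty\}$ is finite. Applying a Bahouri--G\'erard-style linear profile decomposition for the free Klein--Gordon propagators $e^{\pm it\langle\nabla\rangle}$ to a minimizing sequence of initial data, together with the Pythagorean decomposition of the energy and a nonlinear profile analysis, one extracts a critical element $u_c$: a global solution with $E(u_c,\partial_t u_c) = E_c$ whose trajectory is pre-compact in $H^1\times L^2$ modulo spatial translations $x\mapsto x - x(t)$. The profile decomposition must simultaneously accommodate low-frequency profiles, under whose natural rescaling $e^{\pm it\langle\nabla\rangle}$ converges to the Schr\"odinger propagator (with nonlinear limit the defocusing cubic NLS, whose scattering is furnished by Lemma~\ref{lem:spacetime-norm-NLS}), and high-frequency profiles that rescale to the free wave equation.

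Third, I would rule out $u_c$ by a Morawetz-type estimate. The classical Morawetz--Strauss identity with radial weight $a(x)=|x|$ yields, for both $d=2,3$,
\[
\int_\R\!\!\int_{\R^d}\frac{|u(t,x)|^4}{|x|}\,dx\,dt \;\lesssim\; \sup_t E(u,\partial_t u),
\]
so the left-hand side is globally finite. For an almost-periodic $u_c$, a no-rapid-escape argument controls the translation parameter $x(t)$, and combined with the compactness of the trajectory in $H^1\times L^2$ this forces $u_c\equiv 0$, contradicting the divergence of $\|u_c\|_{X_{s_c}(\R)}$.

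The main obstacle will be the rigidity step in $d=2$, where the cubic nonlinearity is mass-critical for the low-frequency Schr\"odinger-like sector and the dispersive decay $\langle t\rangle^{-(d-1)/2}=\langle t\rangle^{-1/2}$ is weak. Ruling out the Schr\"odinger-type critical element requires reducing it, through a limiting argument in the low-frequency sector, to a minimal non-scattering solution for defocusing mass-critical NLS and invoking Dodson's theorem, while the wave-type critical element must be handled by a separate weighted Morawetz inequality; patching these two regimes on equal footing via a frequency-localized Morawetz estimate is the technical heart of the argument. In $d=3$ the scheme is considerably cleaner because cubic is energy-subcritical and an essentially direct Morawetz--Strauss argument suffices without full concentration-compactness.
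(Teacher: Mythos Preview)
The paper does not prove this lemma at all: it is quoted from the literature, with the one-line attribution ``proved by Killip, Stovall, Visan \cite{KillpVisan-KG}'' immediately preceding the statement. Your outline is essentially the Kenig--Merle concentration-compactness/rigidity scheme that Killip--Stovall--Visan carry out in that reference, including the low-frequency Schr\"odinger profiles (handled via Dodson's mass-critical NLS result in $d=2$) and the Morawetz-based rigidity, so your approach matches the actual source rather than diverging from it.
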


We also need the following small data scattering result, which can be found in \cite{KillpVisan-KG}. Note that 
we put a slight strong assumption on the smallness of the initial data in the following lemma for a slight strong spacetime estimates.
\begin{lem}[Small data scattering for Klein-Gordon]\label{lem:spacetime-norm-KG-sd}
	Let $d=2,3, \mu\in\R$, $s_c=\frac{d-1}{2}$,  $(u_0,u_1)\in H^1(\R^d)\times L^2(\R^d)$ and 	
	\EQ{
	\left\{ \aligned
	&\partial_{tt} u - \De u + u= -\mu |u|^2u, \\
	& u(0,x) = u_0(x),\quad  \partial_tu(0,x) = u_1(x).
	\endaligned
	\right.
}
There exists some constant $\delta_0>0$, such that
$$
\big\|\langle\nabla\rangle^\frac12 (u_0,\langle\nabla\rangle^{-1}u_1)\big\|_{\dot H^{s_c}\times \dot H^{s_c}}\le \delta_0,
$$
	then there exists a constant  $C>0$ such that
	\EQ{
		\norm{u}_{X_{s_c}(\R)} \le  C\delta_0.
	}
\end{lem}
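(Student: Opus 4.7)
The plan is a standard contraction mapping scheme based on the Strichartz estimates of Lemma \ref{lem:strichartz} and the Kato--Ponce fractional Leibniz rule of Lemma \ref{lem:kato-Ponce}. Writing the solution by Duhamel's formula as
\begin{equation*}
u(t) = \cos(t\langle\nabla\rangle)u_0 + \frac{\sin(t\langle\nabla\rangle)}{\langle\nabla\rangle}u_1 - \mu\int_0^t \frac{\sin((t-s)\langle\nabla\rangle)}{\langle\nabla\rangle}|u|^2u(s)\,ds,
\end{equation*}
I apply the Strichartz estimate at the $L^2$-admissible pair $(q,r)=\bigl(\frac{2(d+2)}{d},\frac{2(d+2)}{d}\bigr)$, whose derivative-loss exponent is exactly $\frac{d+2}{2}\bigl(\frac12-\frac{d}{2(d+2)}\bigr)=\frac12$. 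This bounds the linear contribution in the $X_{s_c}$-norm by
\begin{equation*}
\big\|\langle\nabla\rangle^{1/2}|\nabla|^{s_c}u_0\big\|_{L^2_x} + \big\|\langle\nabla\rangle^{-1/2}|\nabla|^{s_c}u_1\big\|_{L^2_x}= \big\|\langle\nabla\rangle^{1/2}(u_0,\langle\nabla\rangle^{-1}u_1)\big\|_{\dot H^{s_c}\times\dot H^{s_c}}\le \delta_0,
\end{equation*}
where the bound on the $L^\infty_tL^2_x$ piece of $X_{s_c}$ uses the trivial energy identity together with the monotonicity $\langle\nabla\rangle^{\pm 1/2}\gtrless 1$.

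For the Duhamel term I use the inhomogeneous Strichartz estimate with the same source and endpoint pair: its one-derivative loss is exactly absorbed by the $\langle\nabla\rangle^{-1}$ in the Klein--Gordon propagator, and the analogous calculation with $(q,r)=(\infty,2)$ on the left shows that both pieces of $X_{s_c}$ are bounded by the same source term, yielding
\begin{equation*}
\big\|\mathrm{Duhamel}\big\|_{X_{s_c}(\R)} \lesssim \big\||\nabla|^{s_c}(|u|^2u)\big\|_{L^{2(d+2)/(d+4)}_{tx}(\R)}.
\end{equation*}
Applying Lemma \ref{lem:kato-Ponce} to distribute the fractional derivative and H\"older in spacetime, I bound the right-hand side by a multilinear expression of the schematic form $\|u\|_{L^{p_1}_{tx}}^2\,\||\nabla|^{s_c}u\|_{L^{2(d+2)/d}_{tx}}$, in which the last factor is exactly the $X_{s_c}$-norm.

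The choice $s_c=(d-1)/2$ is calibrated precisely so that the auxiliary norm $\|u\|_{L^{p_1}_{tx}}$ can be controlled by $\|u\|_{X_{s_c}}$ purely through the homogeneous Sobolev embeddings $\dot H^{s_c}(\R^d)\hookrightarrow L^{2d}(\R^d)$ and $\dot W^{s_c,\,2(d+2)/d}(\R^d)\hookrightarrow L^\infty(\R^d)$, applied respectively to the $L^\infty_tL^2_x$ and $L^{2(d+2)/d}_{tx}$ components of $X_{s_c}$, combined with interpolation in spacetime. Collecting everything gives
\begin{equation*}
\|u\|_{X_{s_c}(\R)} \le C_0\delta_0 + C_1\,\|u\|_{X_{s_c}(\R)}^3,
\end{equation*}
and a standard continuity/bootstrap argument, starting from small time where local well-posedness holds, propagates the bound $\|u\|_{X_{s_c}(\R)}\le 2C_0\delta_0$ globally, provided $\delta_0$ is small enough that $12\,C_0^2C_1\delta_0^2<1$.

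The main technical obstacle is the nonlinear closure in the endpoint dimension $d=3$, where the critical Sobolev embedding $\dot W^{1,10/3}(\R^3)\hookrightarrow L^\infty$ is borderline. This is handled either by a Besov/Lorentz refinement, or by first interpolating the Strichartz $L^{10/3}_{tx}$-bound on $\nabla u$ against the embedding $\dot H^1\hookrightarrow L^6$ extracted from the $L^\infty_tL^2_x$ component of $X_1$, so that $\|u\|_{L^5_{tx}}\lesssim\|u\|_{X_1}$ is obtained directly without passing through the failed pointwise-in-time embedding. This is the approach taken in Killip--Stovall--Visan \cite{KillpVisan-KG}, adapted to the slightly strengthened smallness hypothesis used here.
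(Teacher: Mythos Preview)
The paper does not supply its own proof of this lemma; it is simply quoted from Killip--Stovall--Visan \cite{KillpVisan-KG}. Your sketch is exactly the standard small-data Strichartz/contraction argument used there, and you cite the same source, so there is no discrepancy with the paper's treatment.

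One minor correction worth recording: the Sobolev embedding $\dot W^{s_c,\,2(d+2)/d}(\R^d)\hookrightarrow L^\infty(\R^d)$ that you invoke for the nonlinear closure is in fact \emph{critical} already in $d=2$ (there $s_c=\tfrac12$, $2(d+2)/d=4$, and $\tfrac12-\tfrac{2}{4}=0$), not only in $d=3$; so the spacetime-interpolation workaround you describe in your last paragraph is needed in both dimensions, not just in $d=3$. The fix is the same in either case---interpolate against other admissible Strichartz pairs rather than relying on the failing pointwise embedding---and is carried out in \cite{KillpVisan-KG}.
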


  \vskip 1.5cm

\section{Proof of Propositions \ref{prop:v-varep}}\label{sec:prop-key}
 \vskip .5cm

As delineated in the introduction, a trivial bound derived from the energy conservation law takes the form:
$$
\big\|\partial_t v^\varepsilon\big\|_{L^2_x}=O\big(\varepsilon^{-1}\big).
$$
However, this estimate falls short in capturing the true magnitude of the singularity of $\partial_t v^\varepsilon$ inherent in $\varepsilon$. Its weakness becomes evident, significantly limiting our ability to achieve a desired convergence estimate.

To address this issue, our first pivotal insight revolves around the notion of a {\it rescaled complex expansion}. This strategic concept proves invaluable in mitigating the impact of the singularity. By applying appropriate transformations to  \eqref{eq:nls-wave}, we effectively reduce it to a nonlinear Klein-Gordon equation endowed with specific initial data. This refined equation facilitates the derivation of a more favorable spacetime estimate that involves time derivatives, thus contributing to a more accurate understanding of the problem's behavior.

\subsection{Rescaled complex expansion}\label{sec:RCE}

We introduce the Navier-Stokes-like scaling
 $$
\mathcal{S}_\varepsilon f(t,x)\triangleq \varepsilon f(\varepsilon^2t,  \varepsilon x).
 $$
Denote
$$
h^{\varepsilon}(t,x)\triangleq \mathcal{S}_\varepsilon\big(v^{\varepsilon}\big)(t,x),
$$
then $h^{\varepsilon}$ obeys the following equation:
\EQn{\label{eqs:h-varep}
	\left\{ \aligned
	&\partial_{tt}h^{\varepsilon}+2i\partial_{t} h^{\varepsilon} - \De h^{\varepsilon}= - 3|h^{\varepsilon}|^2 h^{\varepsilon}, \\
	& h^{\varepsilon}(0,x) =\mathcal{S}_\varepsilon v_0(x),\quad \partial_th^{\varepsilon}(0,x) =\varepsilon^2 \mathcal{S}_\varepsilon v_1(x).
	\endaligned
	\right.
}
Then we shall prove the following result.
\begin{prop}\label{prop:h-varep}
Let $h^{\varepsilon}$ be the solution of \eqref{eqs:h-varep}. Then for any $\gamma\ge 0, \beta\in [0,\gamma]$ and any $(v_0,v_1)$ satisfying 
$$
(v_0,v_1)\in H^\gamma(\R^2)\times H^{\gamma-2}(\R^2),\quad 
(v_0,v_1)\in H^1(\R^2)\times L^2(\R^2),\quad \mbox{ and }\quad 
v_1\in H^{\beta-1}(\R^2),
$$ 
there hold
\begin{align}\label{est:h-varep-px}
\big\||\nabla|^{\gamma}& P_{\le 1}h^{\varepsilon}\big\|_{L^4_{tx}\cap L^\infty_tL^2_x(\R)}
+\big\||\nabla|^{\beta-\frac12}P_{>  1}h^{\varepsilon}\big\|_{L^4_{tx}\cap L^\infty_t H^\frac12_x(\R)}
\le 
C  \varepsilon^\gamma,
\end{align}
and
\begin{align}\label{est:h-varep-pt}
\big\| |\nabla|^\gamma\partial_th^{\varepsilon} \big\|_{L^4_{tx}(\R)}\le C \varepsilon^2,
\end{align}
where $C>0$ is only dependent of $\big\|(v_0,v_1)\big\|_{H^\gamma\times H^{\gamma-2}}$ and 
$\big\|(v_0,v_1)\big\|_{H^1_x\times L^2_x}$ and $\|v_1\|_{H^{\beta-1}}$.
\end{prop}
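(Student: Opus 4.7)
The plan is to transform the problem into a defocusing cubic Klein-Gordon equation with well-prepared initial data and then recover the delicate bound on $\partial_t h^\varepsilon$ via a leftward/rightward wave decomposition. First, I would introduce the complex expansion $w(t,x) = e^{it} h^\varepsilon(t,x)$, which by direct substitution into \eqref{eqs:h-varep} satisfies
\[ \partial_{tt}w - \Delta w + w = -3|w|^2 w, \quad w(0) = \mathcal{S}_\varepsilon v_0, \quad \partial_t w(0) = i\mathcal{S}_\varepsilon v_0 + \varepsilon^2 \mathcal{S}_\varepsilon v_1. \]
The rescaling $\mathcal{S}_\varepsilon$ preserves the $L^2_x$ norm in $d=2$ and introduces only positive powers of $\varepsilon$ on derivatives, so the Klein-Gordon energy $E(w(0),\partial_t w(0))$ is uniformly bounded in $\varepsilon$ under the hypothesis $(v_0,v_1)\in H^1\times L^2$. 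Lemma \ref{lem:spacetime-norm-KG} then supplies a uniform bound on $\|w\|_{X_{1/2}(\R)}$, and Strichartz (Lemma \ref{lem:strichartz}) together with persistence of regularity propagates any additional regularity coming from $v_0\in H^\gamma$. Combined with the scaling identity $\||\nabla|^\alpha P_{\le 1}\mathcal{S}_\varepsilon f\|_{L^2}\lesssim \varepsilon^\alpha\|f\|_{\dot H^\alpha}$ and the analogous high-frequency estimate accounting for the Klein-Gordon half-derivative loss, this delivers \eqref{est:h-varep-px}.

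For \eqref{est:h-varep-pt}, the naive energy bound $\|\partial_t w\|_{L^2_x}=O(1)$ is insufficient because one then loses the cancellation between $\partial_t w$ and $iw$. I would handle this through the leftward/rightward waves
\[ \mathcal{W}_j = \langle\nabla\rangle^{-1}(\partial_t \mp i\langle\nabla\rangle)w, \quad j=1,2, \]
which produce the algebraic identity
\[ \partial_t h^\varepsilon = \tfrac{e^{-it}}{2}\bigl[(\langle\nabla\rangle+1)\mathcal{W}_1 + (\langle\nabla\rangle-1)\mathcal{W}_2\bigr]. \]
Since $\langle\nabla\rangle-1 = |\nabla|^2/(\langle\nabla\rangle+1)$, the low-frequency portion of the $\mathcal{W}_2$-term automatically carries an algebraic $|\nabla|^2$ prefactor, which converts to $\varepsilon^2$ via the rescaling, and its high-frequency portion is controlled directly by the uniform $X_{1/2}$ bound on $w$ and Strichartz. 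The entire difficulty therefore concentrates on proving the enhanced bound $\||\nabla|^\gamma\mathcal{W}_1\|_{L^4_{tx}}\lesssim \varepsilon^2$.

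The leftward wave satisfies $(\partial_t + i\langle\nabla\rangle)\mathcal{W}_1 = -\langle\nabla\rangle^{-1}(|w|^2 w)$ with
\[ \mathcal{W}_1(0) = -i\frac{|\nabla|^2}{\langle\nabla\rangle(\langle\nabla\rangle+1)}\mathcal{S}_\varepsilon v_0 + \varepsilon^2\langle\nabla\rangle^{-1}\mathcal{S}_\varepsilon v_1, \]
which is already of order $\varepsilon^2$ in $L^2_x$: the first summand absorbs two derivatives of a rescaled profile and the second carries an explicit $\varepsilon^2$-factor. To propagate this smallness through the Duhamel formula, I would perform a normal form / integration by parts in time on the low-output-frequency piece of the source, exploiting the non-resonance
\[ \langle\xi\rangle + \langle\xi_1\rangle + \langle\xi_2\rangle - \langle\xi_3\rangle \ge 1 \quad\text{for}\quad |\xi|,|\xi_j|\le 1, \]
which arises precisely because all three profiles coming from the complex expansion $w=e^{it}h^\varepsilon$ contribute with the same sign. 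Dividing by this non-vanishing phase and integrating by parts turns the cubic source into a quartic-in-$w$ expression that gains an extra $\varepsilon^2$ after time-integration, while the boundary terms inherit the $O(\varepsilon^2)$ size of $\mathcal{W}_1(0)$. The high-output-frequency pieces are handled directly by Klein-Gordon Strichartz, and the multilinear symbols produced by the division by the phase are treated by the Coifman-Meyer theorem and Lemma \ref{lem:kato-Ponce}.

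The principal obstacle will be closing this normal-form argument uniformly in $\varepsilon$: one must verify that the multipliers obtained after dividing by the phase remain admissible Coifman-Meyer symbols, that the half-derivative loss in the Klein-Gordon Strichartz estimates is paid by the $X_{1/2}$ bound on $w$, and that a continuity bootstrap on $\varepsilon^{-2}\||\nabla|^\gamma\mathcal{W}_1\|_{L^4_{tx}}$ closes globally in time without the constant blowing up as $\varepsilon\to 0$. Once the enhanced bound on $\mathcal{W}_1$ is in hand, inserting it into the algebraic identity above together with the $\mathcal{W}_2$ bound already noted yields \eqref{est:h-varep-pt} for $\gamma\ge 0$ by the corresponding persistence of regularity in the rescaled equation.
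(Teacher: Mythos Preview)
Your proposal is correct and follows essentially the same approach as the paper: the complex expansion $w=e^{it}h^\varepsilon$, the leftward/rightward wave decomposition, the algebraic identity $\partial_t h^\varepsilon = \tfrac{e^{-it}}{2}[(\langle\nabla\rangle+1)\mathcal W_1+(\langle\nabla\rangle-1)\mathcal W_2]$, the $O(\varepsilon^2)$ structure of $\mathcal W_1(0)$, and the normal-form step exploiting the low-frequency non-resonance $\langle\xi\rangle+\langle\xi_1\rangle+\langle\xi_2\rangle-\langle\xi_3\rangle\ge1$ are exactly the ingredients of the paper's proof. The only refinement the paper makes explicit, which you leave implicit, is that before the normal form one first writes $|w|^2w$ in terms of $\mathcal W_1,\mathcal W_2$ and isolates the pure $|\mathcal W_2|^2\mathcal W_2$ piece (the terms containing at least one $\mathcal W_1$ factor are absorbed directly into the bootstrap), and the resulting time-derivative term is quintic rather than quartic since $\partial_s\varphi_2$ itself carries a cubic source.
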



To prove the proposition, we define
\begin{align}\label{def:w-h}
w(t,x)=\fe^{it}h^{\varepsilon}(t,x),
\end{align}
then $w$ is governed by the following {\it complex expanded} nonlinear Klein-Gordon equation with specific initial data:
\EQn{
	\label{eqs:scaling-w}
	\left\{ \aligned
	&\partial_{tt}w-\De w+w=-3|w|^2w, \\
	& w(0)=\mathcal{S}_\varepsilon v_0,\quad
\partial_t w(0)= i\mathcal{S}_\varepsilon v_0+\varepsilon^2 \mathcal{S}_\varepsilon v_1.
	\endaligned
	\right.
}
We emphasis that the forms of $w(0)$  and $\partial_t w(0) $ are crucial for all analysis later.

For the solution $w$ to the equation \eqref{eqs:scaling-w}, it is clear that
\begin{align*}
E\big(w(0),\partial_t w(0)\big)&=\varepsilon^2\big\|\nabla v_0\big\|_{L^2_x}^2+\big\|v_0\big\|_{L^2_x}^2+\big\|iv_0+\varepsilon^2v_1\big\|_{L^2_x}^2+\frac32\varepsilon^2\big\|v_0\big\|_{L^4_x}^4\\
&\le C\big(\|(v_0,v_1)\|_{H^1_x\times L^2_x}\big).
\end{align*}
Then it follows from Lemma \ref{lem:spacetime-norm-KG} that
\begin{align}\label{est:w-spacetime}
\|w\|_{L^4_{tx}(\R)}\le C\big(\|(v_0,v_1)\|_{H^1_x\times L^2_x}\big).
\end{align}

\subsection{Leftward and rightward waves}\label{sec:LR-wave}

Denote
\EQn{\label{relationship-w-W12}
	\left\{ \aligned
	&\mathcal W_1=\langle \nabla \rangle^{-1}\big(\partial_t-i \langle \nabla\rangle\big)w, \\
	&\mathcal W_2=\langle \nabla \rangle^{-1}\big(\partial_t+i \langle \nabla\rangle\big)w.
	\endaligned
	\right.
}
$\mathcal W_1, \mathcal W_2$ are referred to the {\it leftward} and  {\it rigthward waves} respectively.
It is easy to derive  that
\EQn{
	\label{eq:w-W12}
	\left\{ \aligned
	&w=\frac i2\big(\mathcal W_1-\mathcal W_2\big), \\
	&\partial_t w=\frac 12\langle\nabla\rangle\big(\mathcal W_1+\mathcal W_2\big).
	\endaligned
	\right.
}
Moreover, $\mathcal W_1$ and $\mathcal W_2$ satisfy
\EQn{
	\label{eq:W-12-KG}
	\left\{ \aligned
	&\big(\partial_t+i \langle \nabla\rangle\big)\mathcal W_1=-3\langle\nabla\rangle^{-1}\big(|w|^2w\big), \\
	&\big(\partial_t-i \langle \nabla\rangle\big)\mathcal W_2=-3\langle\nabla\rangle^{-1}\big(|w|^2w\big)
	\endaligned
	\right.
}
with the initial datum
\EQn{
	\label{eq:W-12-KG-intialdatum}
	\left\{ \aligned
&\mathcal W_1(0)=\mathcal W_{1,0}\triangleq i\langle \nabla \rangle^{-1}\big(1- \langle \nabla\rangle\big)\mathcal{S}_\varepsilon v_0+\varepsilon^2\langle \nabla \rangle^{-1}\mathcal{S}_\varepsilon v_1;\\
&\mathcal W_2(0)=\mathcal W_{2,0}\triangleq i\langle \nabla \rangle^{-1}\big(1+ \langle \nabla\rangle\big)\mathcal{S}_\varepsilon v_0+\varepsilon^2\langle \nabla \rangle^{-1}\mathcal{S}_\varepsilon v_1.
	\endaligned
	\right.
}
We emphasis that the form of $\mathcal W_{1,0}$ is crucial, which is one of the main reasons that we introduce the complex expansion of $\varphi=\mathcal{S}_\varepsilon(v^\varepsilon)$.

\subsection{Strichartz estimates}\label{subsec:Stri}

Firstly of all, we aim to show the following Strichartz estimates for the leftward and rightward waves:
\begin{lem}
Under the same hypothesis as Proposition \ref{prop:h-varep}, then 
\begin{align}\label{est:mathcal-W12}
\big\||\nabla|^{\gamma}& P_{\le 1}\mathcal W_j\big\|_{L^4_{tx}\cap L^\infty_tL^2_{x} (\R)}
+\big\||\nabla|^{\beta-\frac12} P_{>  1}\mathcal W_j\big\|_{L^4_{tx}\cap L^\infty_t H^\frac12_{x}(\R)}
\le 
C \varepsilon^\gamma,
\end{align}
where $C>0$ is only dependent of $\big\|(v_0,v_1)\big\|_{H^\gamma\times H^{\gamma-2}}$ and 
$\big\|(v_0,v_1)\big\|_{H^1_x\times L^2_x}$ and $\|v_1\|_{H^{\beta-1}}$.
\end{lem}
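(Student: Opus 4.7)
The plan is to use the Duhamel representation arising from \eqref{eq:W-12-KG} and \eqref{eq:W-12-KG-intialdatum}, combined with Lemma \ref{lem:strichartz} in the Klein-Gordon form recorded in the remark following it. The $\varepsilon^\gamma$-smallness on the right of \eqref{est:mathcal-W12} will come entirely from the linear contribution of the initial data $\mathcal{W}_{j,0}$, while the cubic forcing will be absorbed via Kato-Ponce (Lemma \ref{lem:kato-Ponce}) and a bootstrap across subintervals on which the a priori bound $\|w\|_{L^4_{tx}(\R)}\leq C$ from \eqref{est:w-spacetime} is a small absolute constant.

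The first step is to estimate $\mathcal{W}_{j,0}=i\langle\nabla\rangle^{-1}(1\mp\langle\nabla\rangle)\mathcal{S}_\varepsilon v_0+\varepsilon^2\langle\nabla\rangle^{-1}\mathcal{S}_\varepsilon v_1$ in the norms on the left of \eqref{est:mathcal-W12}. On $P_{\leq 1}$, the multiplier $\langle\nabla\rangle^{-1}(1\mp\langle\nabla\rangle)$ is bounded, and the basic scaling identity $\||\nabla|^\gamma P_{\leq 1}\mathcal{S}_\varepsilon f\|_{L^2_x}\lesssim \varepsilon^\gamma\|f\|_{\dot H^\gamma_x}$ in $d=2$ produces the required $\varepsilon^\gamma$. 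The velocity term contributes $\varepsilon^2\cdot\varepsilon^{\gamma-2}\|v_1\|_{H^{\gamma-2}_x}\approx \varepsilon^\gamma\|v_1\|_{H^{\gamma-2}_x}$, where two extra derivatives on $P_{\leq 1}\mathcal{S}_\varepsilon v_1$ cost $\varepsilon^{-2}$ by frequency localisation. On $P_{>1}$, the factor $\langle\nabla\rangle^{-1}$ in $\mathcal{W}_{j,0}$ cancels the half-derivative $\langle\nabla\rangle^{1/2}$ appearing in the norm, reducing the estimate to $\||\nabla|^\beta P_{>1}\mathcal{S}_\varepsilon v_0\|_{L^2_x}\lesssim \varepsilon^\beta\|P_{>\varepsilon^{-1}}v_0\|_{\dot H^\beta_x}\leq \varepsilon^\gamma\|v_0\|_{\dot H^\gamma_x}$, where the hypothesis $\beta\leq\gamma$ is essential; the $v_1$-piece is handled with the auxiliary regularity $v_1\in H^{\beta-1}$.

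For the Duhamel nonlinear contribution I apply the Strichartz estimate from the remark with forcing $F=3|w|^2 w$, yielding a bound of the left of \eqref{est:mathcal-W12} on a subinterval $I$ by the initial-data term plus $\||\nabla|^\sigma(|w|^2 w)\|_{L^{4/3}_{tx}(I)}$ for $\sigma\in\{\gamma,\beta-1/2\}$; Kato-Ponce (Lemma \ref{lem:kato-Ponce}) controls the latter by $\||\nabla|^\sigma w\|_{L^4_{tx}(I)}\|w\|_{L^4_{tx}(I)}^2$. Using $w=\tfrac{i}{2}(\mathcal{W}_1-\mathcal{W}_2)$ together with Littlewood-Paley, $\||\nabla|^\sigma w\|_{L^4_{tx}(I)}$ is bounded by the very norm on the left of \eqref{est:mathcal-W12}, so partitioning $\R$ into finitely many subintervals $\{I_k\}_{k=1}^{K}$ with $\|w\|_{L^4_{tx}(I_k)}\leq \eta_0$ small and iterating closes the estimate. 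The main obstacle will be the half-derivative loss of Klein-Gordon Strichartz at high frequency, which is exactly what forces the asymmetric $|\nabla|^{\beta-1/2}\langle\nabla\rangle^{1/2}$ weight in \eqref{est:mathcal-W12}; matching this loss against the scaling of $\mathcal{S}_\varepsilon v_0$ requires $\beta\leq\gamma$, and one must verify in the bootstrap iteration that the nonlinear step is linear in the norm we are propagating, so the number $K$ of subintervals (depending only on $\|(v_0,v_1)\|_{H^1\times L^2}$) contributes a multiplicative constant that does not destroy the $\varepsilon^\gamma$-smallness.
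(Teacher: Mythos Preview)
Your overall architecture is right and matches the paper: Duhamel for \eqref{eq:W-12-KG}, Strichartz from Lemma \ref{lem:strichartz}, a bootstrap on subintervals cut so that $\|w\|_{L^4_{tx}(I_k)}$ is small by \eqref{est:w-spacetime}, and the $\varepsilon^\gamma$ coming solely from the data. The gap is in the sentence ``$\||\nabla|^\sigma w\|_{L^4_{tx}(I)}$ is bounded by the very norm on the left of \eqref{est:mathcal-W12}.'' This is false for both values of $\sigma$. For $\sigma=\gamma$ one has $\||\nabla|^\gamma w\|_{L^4}\leq \||\nabla|^\gamma P_{\le 1}w\|_{L^4}+\||\nabla|^\gamma P_{>1}w\|_{L^4}$, and the second piece is \emph{not} controlled by $\||\nabla|^{\beta-1/2}P_{>1}w\|_{L^4}$ because $\gamma>\beta-\tfrac12$ and Bernstein goes the wrong way on high frequencies. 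For $\sigma=\beta-\tfrac12$ the low-frequency piece $\||\nabla|^{\beta-1/2}P_{\le 1}w\|_{L^4}$ is likewise not dominated by $\||\nabla|^{\gamma}P_{\le 1}w\|_{L^4}$ since negative powers of $|\nabla|$ are unbounded near $\xi=0$. So a bare Kato--Ponce on $|w|^2w$ does not close.

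The paper's cure is to decompose the cubic term \emph{before} applying derivatives. For the low-frequency output one writes $|w|^2w=|P_{\le 1}w|^2P_{\le 1}w+O(P_{>1}w\cdot w^2)$; on the first summand Kato--Ponce puts the $|\nabla|^\gamma$ on a $P_{\le 1}w$ factor, while on the second the outer $|\nabla|^\gamma P_{\le 1}$ is simply bounded and one is left with $\|P_{>1}w\|_{L^4}\lesssim \||\nabla|^{\beta-1/2}P_{>1}w\|_{L^4}$. For the high-frequency output one uses $P_{>1}(|w|^2w)=P_{>1}O(P_{\gtrsim 1}w\cdot w^2)$ so that every term carries a factor supported at frequency $\gtrsim 1$; Kato--Ponce then yields pieces that reduce, via Bernstein, to $\||\nabla|^\gamma P_{\le 1}w\|_{L^4}+\||\nabla|^{\beta-1/2}P_{>1}w\|_{L^4}$. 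With this refinement the nonlinear contribution is genuinely linear in the bootstrap norm $X(I)$ times $\|w\|_{L^4(I)}^2$, and your iteration across the $K$ subintervals goes through.
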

\begin{proof}
We first consider the low-frequency part.  By \eqref{eq:W-12-KG} and Duhamel's formula,
$$
P_{\le 1}\mathcal W_j(t)=\fe^{\mp i(t-t_0)\langle \nabla\rangle}P_{\le 1} \mathcal W_j(t_0)
-3\int_{t_0}^t \fe^{\mp i(t-s)\langle \nabla\rangle}\langle\nabla \rangle^{-1} P_{\le 1} \big(|w|^2w\big)\,ds.
$$
By Lemma \ref{lem:strichartz}, we have that for any $I=[t_0,t_1]\subset \R^+$ (the negative time direction can be treated in the same way),
\begin{align}\label{est:W-12-low-1}
\big\||\nabla|^{\gamma} P_{\le 1}\mathcal W_j\big\|_{L^4_{tx}\cap L^\infty_t L^2_x(I)}
\lesssim &
\big\||\nabla|^{\gamma} P_{\le 1}\mathcal W_j(t_0)\big\|_{L^2_x}
+
\big\||\nabla|^{\gamma} P_{\le 1}\big(|w|^2w\big)\big\|_{L^\frac43_{tx}(I)} .
\end{align}
Then we estimate the second term above.

  It is obvious that
\begin{align*}
\big\||\nabla|^{\gamma} P_{\le 1}\big(|w|^2w\big)\big\|_{L^\frac43_{tx}(I)}
\le & \Big\||\nabla|^{\gamma} P_{\le 1}\left(|P_{\le 1} w|^2P_{\le 1} w\right)\Big\|_{L^\frac43_{tx}(I)}\\
&\quad +\Big\||\nabla|^{\gamma} P_{\le 1}\Big(|w|^2w-\left(|P_{\le 1} w|^2P_{\le 1} w\right)\Big)\Big\|_{L^\frac43_{tx}(I)}\\
\lesssim & \Big\||\nabla|^{\gamma} \left(|P_{\le 1} w|^2P_{\le 1} w\right)\Big\|_{L^\frac43_{tx}(I)}\\
&\quad +\Big\||w|^2w-\left(|P_{\le 1} w|^2P_{\le 1} w\right)\Big\|_{L^\frac43_{tx}(I)}.
\end{align*}
Note that
$$
|w|^2w-\left(|P_{\le 1} w|^2P_{\le 1} w\right)=O(P_{>1}w\cdot w^2),
$$
where the terms in $O(fgh)$ is the cubic combination of $f,g,h$ and their complex conjugate.  Therefore, by Bernstein's and Kato-Ponce's inequalities we further have that
\begin{align*}
\big\||\nabla|^{\gamma} P_{\le 1}\big(|w|^2w\big)\big\|_{L^\frac43_{tx}(I)}
\lesssim &
\big\||\nabla|^\gamma P_{\le 1} w \big\|_{L^4_{tx}(I)}\big\|w\big\|_{L^4_{tx}(I)}^2
+ \big\| P_{> 1} w \big\|_{L^4_{tx}(I)}\big\|w\big\|_{L^4_{tx}(I)}^2.
\end{align*}
Then by \eqref{eq:w-W12}, it gives that
\begin{align*}
\big\||\nabla|^{\gamma} P_{\le 1}\big(|w|^2w\big)\big\|_{L^\frac43_{tx}(I)}
\lesssim &
\sum\limits_{j=1,2}\Big(\big\||\nabla|^\gamma P_{\le 1} \mathcal W_j \big\|_{L^4_{tx}(I)}+ \big\| P_{> 1} \mathcal W_j \big\|_{L^4_{tx}(I)}\Big)  \big\|w\big\|_{L^4_{tx}(I)}^2.
\end{align*}
Therefore, this estimate combining with \eqref{est:W-12-low-1}, yields that there exist some absolute postive constants $C_0,C_1$, such that
\begin{align}\label{est:W-12-low}
\big\||\nabla|^{\gamma} & P_{\le 1}\mathcal W_j\big\|_{L^4_{tx}\cap L^\infty_t L^2_x(I)}
\le
 C_0\big\||\nabla|^{\gamma} P_{\le 1}\mathcal W_j(t_0)\big\|_{L^2_x} \notag\\
& +
C_1 \sum\limits_{j=1,2}\Big(\big\||\nabla|^\gamma P_{\le 1} \mathcal W_j \big\|_{L^4_{tx}(I)}+ \big\| P_{> 1} \mathcal W_j \big\|_{L^4_{tx}(I)}\Big)
\big\|w\big\|_{L^4_{tx}(I)}^2.
\end{align}

Next, we  consider the high-frequency part.   We first consider the case when $\beta\ge \frac12$.
By \eqref{eq:W-12-KG} and Duhamel's formula,
$$
P_{> 1}\mathcal W_j(t)=\fe^{\mp i(t-t_0)\langle \nabla\rangle}P_{> 1} \mathcal W_j(t_0)
-3\int_{t_0}^t \fe^{\mp i(t-s)\langle \nabla\rangle}\langle\nabla \rangle^{-1} P_{> 1} \big(|w|^2w\big)\,ds.
$$
Then by Lemma \ref{lem:strichartz}, we have that for any $\beta\in [0,\gamma]$, 
\begin{align}\label{est:W-12-high-1}
&\big\||\nabla|^{\beta-\frac12} P_{> 1}\mathcal W_j\big\|_{L^4_{tx}(I)}
+\big\||\nabla|^{\beta} P_{> 1}\mathcal W_j\big\|_{L^\infty_{t}L^2_x(I)}\notag\\
\lesssim &
\big\||\nabla|^{\beta-\frac12} \langle \nabla\rangle^\frac12 P_{> 1}\mathcal W_j(t_0)\big\|_{L^2_x}
+
\big\||\nabla|^{\beta-\frac12} P_{> 1}\big(|w|^2w\big)\big\|_{L^\frac43_{tx}(I)} \notag\\
\lesssim &
\big\||\nabla|^{\beta}  P_{> 1}\mathcal W_j(t_0)\big\|_{L^2_x}
+
\big\||\nabla|^{\beta-\frac12} P_{> 1}\big(|w|^2w\big)\big\|_{L^\frac43_{tx}(I)} .
\end{align}
Then we estimate the second term above.

   Note that
$$
P_{> 1}\big(|w|^2w\big)=P_{> 1}O\big(P_{\gtrsim 1}w\cdot w^2\big).
$$
Then it infers that
\begin{align*}
\big\||\nabla|^{\beta-\frac12} P_{> 1}\big(|w|^2w\big)\big\|_{L^\frac43_{tx}(I)}
\lesssim  & \Big\||\nabla|^{\beta-\frac12} P_{\gtrsim 1}w\Big\|_{L^4_{tx}(I)}\|w\|_{L^4_{tx}(I)}^2\\
&\quad +\Big\|P_{\gtrsim 1}w\Big\|_{L^4_{tx}(I)}\Big\||\nabla|^{\beta-\frac12} w\Big\|_{L^4_{tx}(I)}\|w\|_{L^4_{tx}(I)}.
\end{align*}
By Bernstein's inequality, we have that
\begin{align*}
\big\|P_{\gtrsim 1}w\big\|_{L^4_{tx}(I)}+&
\Big\||\nabla|^{\beta-\frac12} P_{\gtrsim 1}w\Big\|_{L^4_{tx}(I)}\\
 & \lesssim \Big\||\nabla|^{\beta-\frac12} P_{> 1}w\Big\|_{L^4_{tx}(I)}
+ \Big\||\nabla|^{\gamma} P_{\le 1}w\Big\|_{L^4_{tx}(I)};
\end{align*}
and
\begin{align*}
\Big\||\nabla|^{\beta-\frac12} w\Big\|_{L^4_{tx}(I)}
& \lesssim \Big\||\nabla|^{\beta-\frac12} P_{> 1}w\Big\|_{L^4_{tx}(I)}
+ \|w\|_{L^4_{tx}(I)}.
\end{align*}
Applying these two estimates, we further obtain that
\begin{align*}
\big\||\nabla|^{\beta-\frac12} P_{> 1}\big(|w|^2w\big)\big\|_{L^\frac43_{tx}(I)}
\lesssim  &\Big(\big\||\nabla|^\gamma P_{\le 1} w \big\|_{L^4_{tx}(I)}
+ \big\| P_{> 1} |\nabla|^{\beta-\frac12} w \big\|_{L^4_{tx}(I)}\Big) \big\|w\big\|_{L^4_{tx}(I)}^2.
\end{align*}
Then by \eqref{eq:w-W12}, it drives that
\begin{align*}
\big\||\nabla|^{\beta-\frac12} P_{> 1}\big(|w|^2w\big)\big\|_{L^\frac43_{tx}(I)}
\lesssim &
\sum\limits_{j=1,2}\Big(\big\||\nabla|^\gamma P_{\le 1} \mathcal W_j \big\|_{L^4_{tx}(I)}+ \big\| P_{> 1} |\nabla|^{\beta-\frac12} \mathcal W_j \big\|_{L^4_{tx}(I)}\Big)  \big\|w\big\|_{L^4_{tx}(I)}^2.
\end{align*}
This together with  \eqref{est:W-12-high-1}, yields that
\begin{align}\label{est:W-12-high}
&\big\||\nabla|^{\beta-\frac12} P_{> 1}\mathcal W_j\big\|_{L^4_{tx}(I)}
+\big\||\nabla|^{\beta} P_{> 1}\mathcal W_j\big\|_{L^\infty_{t}L^2_x(I)}\notag\\
\le &
 C_0\big\||\nabla|^{\beta} P_{> 1}\mathcal W_j(t_0)\big\|_{L^2_x} \notag\\
& +
C_1 \sum\limits_{j=1,2}\Big(\big\||\nabla|^\gamma P_{\le 1} \mathcal W_j \big\|_{L^4_{tx}(I)}+ \big\| P_{> 1} |\nabla|^{\beta-\frac12} \mathcal W_j \big\|_{L^4_{tx}(I)}\Big)
\big\|w\big\|_{L^4_{tx}(I)}^2.
\end{align}
Here the constants $C_0,C_1$ may vary with the ones in \eqref{est:W-12-low}, however, we still use the same notations since it is not essential in our analysis.

Combining with \eqref{est:W-12-low}, \eqref{est:W-12-high} and Bernstein's inequality, we have that
\begin{align*}
&\big\||\nabla|^{\gamma} P_{\le 1}\mathcal W_j\big\|_{L^4_{tx}\cap L^\infty_t L^2_x(I)}
+
\big\||\nabla|^{\beta-\frac12} P_{> 1}\mathcal W_j\big\|_{L^4_{tx}(I)}
+\big\||\nabla|^{\beta} P_{> 1}\mathcal W_j\big\|_{L^\infty_{t}L^2_x(I)}
\\
\le &
 2C_0\Big(\big\||\nabla|^{\gamma} P_{\le 1}\mathcal W_j(t_0)\big\|_{L^2_x}+\big\||\nabla|^{\beta}P_{>1} \mathcal W_j(t_0)\big\|_{L^2_x}\Big) \\
& +
2C_1 \sum\limits_{j=1,2}\Big(\big\||\nabla|^\gamma P_{\le 1} \mathcal W_j \big\|_{L^4_{tx}(I)}+ \big\| P_{> 1} |\nabla|^{\beta-\frac12} \mathcal W_j \big\|_{L^4_{tx}(I)}\Big)
\big\|w\big\|_{L^4_{tx}(I)}^2.
\end{align*}
Therefore, denote $X(I)$ to be
$$
X(I)
= \sum\limits_{j=1,2}\Big(\big\||\nabla|^{\gamma} P_{\le 1}\mathcal W_j\big\|_{L^4_{tx}\cap L^\infty_t L^2_x(I)}
+
\big\||\nabla|^{\beta-\frac12} P_{> 1}\mathcal W_j\big\|_{L^4_{tx}(I)}
+\big\||\nabla|^{\beta} P_{> 1}\mathcal W_j\big\|_{L^\infty_{t}L^2_x(I)}\Big),
$$
then
\begin{align}\label{est:W-12}
X(I)
\le
 4C_0\Big(\big\||\nabla|^{\gamma} P_{\le 1}\mathcal W_j(t_0)\big\|_{L^2_x}+\big\||\nabla|^{\beta}P_{>1} \mathcal W_j(t_0)\big\|_{L^2_x}\Big)
 +
4C_1
\big\|w\big\|_{L^4_{tx}(I)}^2\cdot  X(I).
\end{align}
Note that we have the uniform boundedness of $\|w\|_{L^4_{tx}(I)}$ as in \eqref{est:w-spacetime}.
Hence, there exists a constant $K=K(C_1)$ and a sequence of time intervals
$$
\bigcup\limits_{k=0}^K I_k=\R^+,
$$
 with
$$
I_0=[0, t_1], \quad I_k=(t_k, t_{k+1}] \mbox{ for } k=1,\cdots, K-1,  \quad I_{K}=[t_K, +\infty),
$$
such that for any $1\le k\le K$,
\begin{align}\label{subinterval-length-wIk}
4C_1
\big\|w\big\|_{L^4_{tx}(I_k)}^2
\le \frac12.
\end{align}
This together with \eqref{est:W-12} infers that
\begin{align}\label{est:X-Ik}
X(I_k)
\le
 8C_0\Big(\big\||\nabla|^{\gamma} P_{\le 1}\mathcal W_j(t_k)\big\|_{L^2_x}+\big\||\nabla|^{\beta}P_{>1} \mathcal W_j(t_k)\big\|_{L^2_x}\Big).
\end{align}
Furthermore, combining with \eqref{est:W-12-low} and \eqref{est:W-12-high}, we have that
\begin{align*}
\big\||\nabla|^{\gamma} P_{\le 1}&\mathcal W_j\big\|_{L^\infty_tL^2_x(I_k)}+\big\||\nabla|^{\beta}P_{>1} \mathcal W_j\big\|_{L^\infty_tL^2_x(I_k)}\\
\le &
 2C_0\Big(\big\||\nabla|^{\gamma} P_{\le 1}\mathcal W_j(t_k)\big\|_{L^2_x}+\big\||\nabla|^{\beta}P_{>1} \mathcal W_j(t_k)\big\|_{L^2_x}\Big)
  +
2C_1 X(I_k)
\big\|w\big\|_{L^4_{tx}(I_k)}^2.
\end{align*}
Inserting \eqref{subinterval-length-wIk} and \eqref{est:X-Ik} into the estimate above, we get that
\begin{align}
\label{Wj-tk-Linfty}
\big\||\nabla|^{\gamma} P_{\le 1}&\mathcal W_j\big\|_{L^\infty_tL^2_x(I_k)}+\big\||\nabla|^{\beta}P_{>1} \mathcal W_j\big\|_{L^\infty_tL^2_x(I_k)}\notag\\
\le &
 4C_0\Big(\big\||\nabla|^{\gamma} P_{\le 1}\mathcal W_j(t_k)\big\|_{L^2_x}+\big\||\nabla|^{\beta}P_{>1} \mathcal W_j(t_k)\big\|_{L^2_x}\Big).
\end{align}
In particular, the iteration implies that for any $1\le k\le K$,
\begin{align}
\label{Wj-tk-iteration}
\big\||\nabla|^{\gamma} P_{\le 1}&\mathcal W_j\big\|_{L^\infty_tL^2_x(I_k)}+\big\||\nabla|^{\beta}P_{>1} \mathcal W_j\big\|_{L^\infty_tL^2_x(I_k)}\notag\\
\le &
 (4C_0)^k\Big(\big\||\nabla|^{\gamma} P_{\le 1}\mathcal W_{j,0}\big\|_{L^2_x}+\big\||\nabla|^{\beta}P_{>1} \mathcal W_{j,0}\big\|_{L^2_x}\Big).
\end{align}
Therefore, inserting \eqref{Wj-tk-iteration} into \eqref{est:X-Ik} and \eqref{Wj-tk-Linfty}, we obtain that
\begin{align}\label{est:Wj-IKi-fin}
X(I_k)
\le  4(4C_0)^{k+1}\Big(\big\||\nabla|^{\gamma} P_{\le 1}\mathcal W_{j,0}\big\|_{L^2_x}+\big\||\nabla|^{\beta}P_{>1} \mathcal W_{j,0}\big\|_{L^2_x}\Big).
\end{align}
Note that
\begin{align*}
\big\||\nabla|^{\gamma} P_{\le 1}\mathcal W_{j,0}\big\|_{L^2_x}&+\big\||\nabla|^{\beta}P_{>1} \mathcal W_{j,0}\big\|_{L^2_x}\\
\lesssim &
\big\||\nabla|^{\gamma}\mathcal{S}_\varepsilon v_0\big\|_{L^2_x}
+\varepsilon^2\Big(\big\||\nabla|^\gamma P_{\le 1} \mathcal{S}_\varepsilon v_1\big\|_{L^2_x}+\big\||\nabla|^{\beta-1}P_{>1}\mathcal{S}_\varepsilon v_1\big\|_{L^2_x}\Big).
\end{align*}
If $\gamma\le 2$, it is controlled by 
$$
 \varepsilon^\gamma \|(v_0,v_1)\|_{H^\gamma\times (H^{\beta-1}\cap L^2)}.
$$
If $\gamma> 2$, it is controlled by 
$$
 \varepsilon^\gamma \|(v_0,v_1)\|_{H^\gamma\times (H^{\beta-1}\cap H^{\gamma-2})}.
$$
Therefore, these combining with \eqref{est:Wj-IKi-fin} implies \eqref{est:mathcal-W12} in the case of $\frac12\le \beta\le \gamma$. 

When $\beta<\frac12$, by Bernstein's inequality, we have that 
\begin{align*}
\big\||\nabla|^{\beta-\frac12} P_{> 1}\mathcal W_j\big\|_{L^4_{tx}(I)}
+\big\||\nabla|^{\beta} P_{> 1}\mathcal W_j\big\|_{L^\infty_{t}L^2_x(I)}
\le  &
\big\|P_{> 1}\mathcal W_j\big\|_{L^4_{tx}(I)}
+\big\||\nabla|^{\frac12} P_{> 1}\mathcal W_j\big\|_{L^\infty_{t}L^2_x(I)}.
\end{align*}
Then by \eqref{est:mathcal-W12} for $\beta=\frac12$, it gives that 
\begin{align*}
\big\||\nabla|^{\beta-\frac12} P_{> 1}\mathcal W_j\big\|_{L^4_{tx}(I)}
+\big\||\nabla|^{\beta} P_{> 1}\mathcal W_j\big\|_{L^\infty_{t}L^2_x(I)}
\le  &
C\varepsilon^\gamma.
\end{align*}
When $\gamma<\frac12$, by \eqref{est:W-12-low}, we have that 
\begin{align*} 
\big\||\nabla|^{\gamma} & P_{\le 1}\mathcal W_j\big\|_{L^4_{tx}\cap L^\infty_t L^2_x(I)}
\le
 C_0\big\||\nabla|^{\gamma} P_{\le 1}\mathcal W_j(t_0)\big\|_{L^2_x} \notag\\
& +
C_1 \sum\limits_{j=1,2}\big\||\nabla|^\gamma P_{\le 1} \mathcal W_j \big\|_{L^4_{tx}(I)}
\big\|w\big\|_{L^4_{tx}(I)}^2
+C_2\varepsilon^\frac12,
\end{align*}
where $C_j,j=1,2,3$ are the constants only dependent of $\|(v_0,v_1)\|_{H^1_x\times L^2_x}$.
Then arguing similarly as above, we obtain that 
\begin{align*} 
\big\||\nabla|^{\gamma} & P_{\le 1}\mathcal W_j\big\|_{L^4_{tx}\cap L^\infty_t L^2_x(I)}
\le
 C\varepsilon^\gamma.
\end{align*}
This proves \eqref{est:mathcal-W12} in the cases  of $\beta<\frac12$  and $\gamma<\frac12$, and thus finishes the proof of the lemma. 
\end{proof}

Now combining \eqref{est:mathcal-W12} with \eqref{eq:w-W12} yields that
\begin{align}\label{est:w-X0}
\big\||\nabla|^{\gamma}& P_{\le 1}w\big\|_{L^4_{tx}\cap L^\infty_tL^2_{x} (\R)}
+\big\||\nabla|^{\beta-\frac12} P_{>  1}w\big\|_{L^4_{tx}\cap L^\infty_t H^\frac12_{x}(\R)}
\le 
C \varepsilon^\gamma;
\end{align}
and
\begin{align}\label{est:wt-X0}
\big\||\nabla|^{\gamma}& P_{\le 1}\partial_t w\big\|_{L^4_{tx}\cap L^\infty_tL^2_{x} (\R)}
+\big\||\nabla|^{\beta-\frac12} P_{>  1}\partial_t w\big\|_{L^4_{tx}\cap L^\infty_t H^\frac12_{x}(\R)}
\le 
C \varepsilon^\gamma.
\end{align}

%
Scaling back, we obtain that for any $\gamma>0$,
\begin{align}
&\big\||\nabla|^{\gamma}v^{\varepsilon}\big\|_{X_0(\R)}
\le 
C;\label{est:v-gamma}
\\
&\big\||\nabla|^{\gamma}\partial_t v^{\varepsilon}\big\|_{X_0(\R)}\le
C \varepsilon^{-2}.\label{est:vt-gamma}
\end{align}
The estimate \eqref{est:v-gamma} is exactly required in Proposition \ref{prop:v-varep} for \eqref{est:v-varep-px}. However, \eqref{est:vt-gamma}  is not enough for  \eqref{est:v-varep-pt} and thus for what we need to estimates  the error term \eqref{est:main1-global}.

\subsection{An improved estimates for leftward wave}\label{subsec:Imp-Stri}

To improve the estimate \eqref{est:vt-gamma}, our key observation is that the behavior of $\mathcal W_1$ is better than  that of $\mathcal W_2$. In the following, we shall prove
\begin{lem}
Let $\gamma\ge 0$. Suppose that $(v_0,v_1)\in H^{\gamma+2}\times H^{\gamma}$, then
\begin{align}\label{est:W1-gamma}
\big\||\nabla|^{\gamma}\mathcal W_1\big\|_{L^4_{tx}(\R)}+\big\||\nabla|^{\gamma}\langle\nabla\rangle^\frac12\mathcal W_1\big\|_{L^\infty_t L^2_x(\R)}
\le
C\varepsilon^{2+\gamma},
\end{align}
where  $C$ only depends on $\| v_0\|_{H^{\gamma+2}_x}$ and $\| v_1\|_{H^{\gamma}_x}$.
\end{lem}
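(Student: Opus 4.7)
The first step is the initial-datum bound. Writing the algebraic identity $1-\langle\nabla\rangle=-|\nabla|^2/(1+\langle\nabla\rangle)$, the multiplier appearing in $\mathcal W_1(0)$ factors as $\langle\nabla\rangle^{-1}(1-\langle\nabla\rangle)=-|\nabla|^2[\langle\nabla\rangle(1+\langle\nabla\rangle)]^{-1}$, i.e.\ a bounded multiplier in $|\nabla|$ with an explicit prefactor $|\nabla|^2$. Combining this with the scaling identity $|\nabla|^\alpha\mathcal S_\varepsilon f=\varepsilon^\alpha\mathcal S_\varepsilon(|\nabla|^\alpha f)$ and with the $L^2_x$-isometry property of $\mathcal S_\varepsilon$ in $d=2$, I would conclude
$$
\big\||\nabla|^\gamma\langle\nabla\rangle^{1/2}\mathcal W_1(0)\big\|_{L^2_x}\lesssim \varepsilon^{2+\gamma}\big(\|v_0\|_{H^{\gamma+2}}+\|v_1\|_{H^\gamma}\big),
$$
noting that the $v_1$-contribution already carries an explicit $\varepsilon^2$.

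Next, I would set up a bootstrap on finitely many sub-intervals. Using the global $L^4_{tx}$ bound \eqref{est:w-spacetime}, decompose $\R=\bigcup_{k=0}^K I_k$ so that $\|w\|_{L^4_{tx}(I_k)}$ is small enough for a Strichartz estimate to close. On each $I_k$, apply the Strichartz estimate (with the $\langle\nabla\rangle^{-1}$ gain exactly as in the Remark following Lemma~\ref{lem:strichartz}) to the half-wave equation $(\partial_t+i\langle\nabla\rangle)\mathcal W_1=-3\langle\nabla\rangle^{-1}(|w|^2w)$, reducing the task to the bound $\||\nabla|^\gamma(|w|^2w)\|_{L^{4/3}_{tx}(I_k)}\lesssim\varepsilon^{2+\gamma}$. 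Substituting $w=\tfrac{i}{2}(\mathcal W_1-\mathcal W_2)$ produces trilinear terms of the form $\mathcal W_{j_1}\mathcal W_{j_2}\overline{\mathcal W_{j_3}}$; every term containing at least one factor of $\mathcal W_1$ can be absorbed onto the left-hand side by combining the bootstrap hypothesis on $\mathcal W_1$ with the $L^4_{tx}$-boundedness of $\mathcal W_2$ provided by Proposition \ref{prop:h-varep}.

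The essential difficulty is the pure $\mathcal W_2$ cubic $\mathcal W_2^2\overline{\mathcal W_2}$, which I would split dyadically in frequency. Pieces containing at least one high-frequency factor $P_{>1}\mathcal W_2$ are handled by invoking the bound $\||\nabla|^{\beta-1/2}P_{>1}\mathcal W_2\|_{L^4_{tx}}\lesssim\varepsilon^\gamma$ from Proposition \ref{prop:h-varep} with $\beta$ as large as $\gamma+2$ (which is allowed under the $H^{\gamma+2}$ hypothesis on $v_0$) and trading derivatives via Bernstein to extract the missing $\varepsilon^2$. The genuinely low-frequency contribution appears in the Duhamel integral $\int_0^t e^{-i(t-s)\langle\nabla\rangle}(P_{\le 1}\mathcal W_2)^2\overline{P_{\le 1}\mathcal W_2}\,ds$, for which I would perform a \emph{normal-form transformation}. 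In Fourier coordinates the total phase is $\Phi=\langle\xi\rangle+\langle\xi_1\rangle+\langle\xi_2\rangle-\langle\xi_3\rangle\ge 3-\sqrt{2}>1$ on $\{|\xi|,|\xi_j|\le 1\}$, so $\Phi^{-1}$ cut off to this frequency region is a bounded Coifman--Meyer symbol. Integration by parts in $s$ replaces the trilinear integral by boundary contributions at $s=0,t$ plus an interior integral containing $\partial_s\mathcal W_2$, the latter re-expressed through $(\partial_s-i\langle\nabla\rangle)\mathcal W_2=-3\langle\nabla\rangle^{-1}|w|^2w$.

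The final $\varepsilon^2$ gain is supplied by the trilinear scaling. Since $\mathcal W_2$ is, up to a unit phase, of the form $\mathcal S_\varepsilon(\cdot)$, the elementary identity $\|\mathcal S_\varepsilon f\cdot\mathcal S_\varepsilon g\cdot\mathcal S_\varepsilon h\|_{L^2_x}=\varepsilon^{3-d/2}\|fgh\|_{L^2_x}=\varepsilon^2\|fgh\|_{L^2_x}$ in $d=2$ shows that every cubic expression in $\mathcal W_2$ measured in $L^2_x$ inherits a factor of $\varepsilon^2$. Coupling this with the $\varepsilon^\gamma$ produced by the derivatives $|\nabla|^\gamma$ falling on the low-frequency factors (and using the Coifman--Meyer bound for $\Phi^{-1}$) yields $\varepsilon^{2+\gamma}$ on the boundary terms; the interior term enjoys an even better bound thanks to the extra identity $\partial_t\mathcal S_\varepsilon f=\varepsilon^2\mathcal S_\varepsilon(\partial_t f)$, together with the already known bounds on $\partial_tw$ from \eqref{est:wt-X0}. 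Iterating the closed bootstrap through the finitely many $I_k$ then yields \eqref{est:W1-gamma}. The main obstacle will be the normal-form step: verifying the Coifman--Meyer hypotheses for $\Phi^{-1}\chi_{\{|\xi|,|\xi_j|\le 1\}}$ and carefully distributing $\partial_s$ through the trilinear product so that the resulting quintic and higher-order expressions produced by the $\mathcal W_2$-equation still fit within the $L^{4/3}_{tx}$ Strichartz framework while preserving the trilinear scaling gain.
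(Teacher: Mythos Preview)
Your architecture matches the paper's proof almost exactly: the initial-datum computation via $1-\langle\nabla\rangle=-|\nabla|^2/(1+\langle\nabla\rangle)$, the splitting of $|w|^2w$ into the pure low-frequency $\mathcal W_2$ cubic plus $\mathcal W_1$-containing and high-frequency remainders, the normal-form integration by parts using the non-resonant phase $\Phi=\langle\xi\rangle+\langle\xi_1\rangle+\langle\xi_2\rangle-\langle\xi_3\rangle$, the Coifman--Meyer bound on $\Phi^{-1}\chi_{\le 1}$, the substitution $\partial_s\mathcal W_2=i\langle\nabla\rangle\mathcal W_2-3\langle\nabla\rangle^{-1}|w|^2w$, and the interval-splitting bootstrap are all present in the paper.

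The step that does not work as written is your extraction of the factor $\varepsilon^2$ on the boundary and interior terms. You invoke the product identity $\|\mathcal S_\varepsilon f\cdot\mathcal S_\varepsilon g\cdot\mathcal S_\varepsilon h\|_{L^2_x}=\varepsilon^2\|fgh\|_{L^2_x}$ by asserting that $\mathcal W_2$ is ``up to a unit phase, of the form $\mathcal S_\varepsilon(\cdot)$''. This fails for two reasons: $\mathcal W_2=\langle\nabla\rangle^{-1}(\partial_t+i\langle\nabla\rangle)(e^{it}\mathcal S_\varepsilon v^\varepsilon)$ involves the multipliers $\langle\nabla\rangle^{\pm 1}$, which do \emph{not} commute with $\mathcal S_\varepsilon$; and the trilinear operator $T$ coming from the normal form is not pointwise multiplication, so after Coifman--Meyer you are left with a product of norms, not a norm of a product. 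Similarly, your treatment of the interior term cites $\partial_t\mathcal S_\varepsilon f=\varepsilon^2\mathcal S_\varepsilon(\partial_t f)$ and the bounds \eqref{est:wt-X0} on $\partial_t w$, but after substituting the $\mathcal W_2$-equation the interior term is a \emph{quintic} in $w,\mathcal W_2$ with no time derivative present, so neither input is relevant.

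The paper obtains the $\varepsilon^2$ differently and more directly: it uses the derivative-gain estimate $\||\nabla|^{\alpha}\mathcal W_2\|_{L^4_{tx}\cap L^\infty_tL^2_x}\lesssim\varepsilon^\alpha$ already established in the preceding Strichartz lemma~\eqref{est:mathcal-W12}. For example, the $L^2_x$-boundary term is bounded by
\[
\||\nabla|^\gamma T(\mathcal W_2,\mathcal W_2,\mathcal W_2)\|_{L^2_x}\lesssim\||\nabla|^\gamma\mathcal W_2\|_{L^2_x}\,\|\nabla\mathcal W_2\|_{L^2_x}^2\lesssim\varepsilon^\gamma\cdot\varepsilon^2,
\]
placing one extra derivative on each of two factors and invoking the previous lemma with $\gamma=1$; the $L^4_{tx}$-boundary term and the quintic interior term (via $L^4_tL^2_x\cdot L^4_tL^8_x\cdot L^4_tL^8_x$ and Sobolev $\dot H^{3/4}\hookrightarrow L^8$ in $\R^2$) are handled the same way. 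This derivative-gain mechanism is the rigorous encoding of the frequency-$\varepsilon$ concentration your scaling heuristic is pointing at; replacing your scaling step by it fixes the argument.
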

\begin{proof}
To illustrate \eqref{est:W1-gamma}, we denote
$$
\varphi_1=\fe^{it\langle\nabla\rangle}\mathcal W_1;\quad
\varphi_2=\fe^{-it\langle\nabla\rangle}\mathcal W_2.
$$
Then we apply the Duhamel formula to rewrite  the equation \eqref{eq:W-12-KG}, in terms of $\varphi_1$, as
\begin{align}\label{duhamel-varphi1-1}
\varphi_1(t)=&\varphi_1(t_0)-3\int_{t_0}^t \fe^{is\langle\nabla\rangle}\langle\nabla\rangle^{-1}\big(|w(s)|^2w(s)\big).
\end{align}
According to \eqref{eq:w-W12}, we write
\begin{align}\label{nonlinear-w}
|w|^2w=&-\frac i8\big|\mathcal W_2\big|^2\mathcal W_2+N_1\big(\mathcal W_1,\mathcal W_2\big)\notag\\
=&-\frac i8P_{\le 1}\Big(\big|P_{\le 1}\mathcal W_2\big|^2P_{\le 1}\mathcal W_2\Big)+N_1\big(\mathcal W_1,\mathcal W_2\big)
+N_2\big(\mathcal W_2\big),
\end{align}
where
\begin{align*}
N_1\big(\mathcal W_1,\mathcal W_2\big)\triangleq & \frac i8\Big(\big|\mathcal W_1\big|^2\mathcal W_1-\mathcal W_1^2\overline{\mathcal W_2}-2\big|\mathcal W_1\big|^2\mathcal W_2+\mathcal W_2^2\overline{\mathcal W_1}+2\big|\mathcal W_2\big|^2\mathcal W_1\Big);\\
N_2\big(\mathcal W_2\big)\triangleq &- \frac i8\big|\mathcal W_2\big|^2\mathcal W_2+\frac i8P_{\le 1}\Big(\big|P_{\le 1}\mathcal W_2\big|^2P_{\le 1}\mathcal W_2\Big).
\end{align*}
Then by \eqref{nonlinear-w}, we further rewrite the formula \eqref{duhamel-varphi1-1} as
\begin{subequations}\label{duhamel-varphi1-2}
\begin{align}
\varphi_1(t)=&\varphi_1(t_0)-\frac 38i\int_{t_0}^t \fe^{is\langle\nabla\rangle}\langle\nabla\rangle^{-1} P_{\le 1}\Big(\big|P_{\le 1}\mathcal W_2\big|^2P_{\le 1}\mathcal W_2\Big)\,ds\label{duhamel-varphi1-2-1}\\
&\quad -3\int_{t_0}^t \fe^{is\langle\nabla\rangle}\langle\nabla\rangle^{-1} \Big(N_1\big(\mathcal W_1,\mathcal W_2\big)
+N_2\big(\mathcal W_2\big)\Big)\,ds.\label{duhamel-varphi1-2-2}
\end{align}
\end{subequations}

We first consider the term \eqref{duhamel-varphi1-2-1}. We will see that  the inherent structure inside is crucial to produce small parameters, which is another reason that we introduce the complex expansion of $h^\varepsilon=\mathcal{S}_\varepsilon(v^\varepsilon)$. Taking Fourier transform, we have
\begin{align*}
\widehat{\eqref{duhamel-varphi1-2-1}}(\xi)
=&
\frac 38i\int_{t_0}^t \int_{\xi_1+\xi_2+\xi_3=\xi}\fe^{is\big(\langle\xi\rangle+\langle\xi_1\rangle+\langle\xi_2\rangle-\langle\xi_3\rangle\big)}\langle\xi\rangle^{-1} \chi_{\le 1}(\xi)\chi_{\le 1}(\xi_1)\chi_{\le 1}(\xi_2)\chi_{\le 1}(\xi_3)\\
&\qquad\qquad \cdot\hat \varphi_2(\xi_1)\hat \varphi_2(\xi_2)\widehat{\overline{\varphi_2}}(\xi_3)\,ds d\xi_1 d\xi_2.
\end{align*}
Note that
\begin{align*}
\fe^{is\big(\langle\xi\rangle+\langle\xi_1\rangle+\langle\xi_2\rangle-\langle\xi_3\rangle\big)}=\frac{d}{ds}\Big(\fe^{is\big(\langle\xi\rangle+\langle\xi_1\rangle+\langle\xi_2\rangle-\langle\xi_3\rangle\big)}\Big) \frac{1}{i(\langle\xi\rangle+\langle\xi_1\rangle+\langle\xi_2\rangle-\langle\xi_3\rangle)}.
\end{align*}
Then using integration by parts, we further get that
\begin{align*}
\widehat{\eqref{duhamel-varphi1-2-1}}(\xi)
=&
\frac 38 \int_{\xi_1+\xi_2+\xi_3=\xi}\fe^{is\big(\langle\xi\rangle+\langle\xi_1\rangle+\langle\xi_2\rangle-\langle\xi_3\rangle\big)}\langle\xi\rangle^{-1}\frac{ \chi_{\le 1}(\xi)\chi_{\le 1}(\xi_1)\chi_{\le 1}(\xi_2)\chi_{\le 1}(\xi_3)}{\langle\xi\rangle+\langle\xi_1\rangle+\langle\xi_2\rangle-\langle\xi_3\rangle}\\
&\qquad\qquad \cdot\hat \varphi_2(\xi_1)\hat \varphi_2(\xi_2)\widehat{\overline{\varphi_2}}(\xi_3)\,d\xi_1d\xi_2\Big|_{t_0}^t\\
&-\frac 38\int_{t_0}^t \int_{\xi_1+\xi_2+\xi_3=\xi}\fe^{is\big(\langle\xi\rangle+\langle\xi_1\rangle+\langle\xi_2\rangle-\langle\xi_3\rangle\big)}\langle\xi\rangle^{-1}\frac{ \chi_{\le 1}(\xi)\chi_{\le 1}(\xi_1)\chi_{\le 1}(\xi_2)\chi_{\le 1}(\xi_3)}{\langle\xi\rangle+\langle\xi_1\rangle+\langle\xi_2\rangle-\langle\xi_3\rangle}\\
&\qquad\qquad \cdot\partial_s\Big[\hat \varphi_2(\xi_1)\hat \varphi_2(\xi_2)\widehat{\overline{\varphi_2}}(\xi_3)\Big]\,d\xi_1d\xi_2\,ds.
\end{align*}
Denote the triple operator $T(\cdot,\cdot,\cdot)$ by
\begin{align*}
\mathcal F\Big(T(f_1,f_2,f_3)\Big)(\xi)=&-\frac 38\langle\xi\rangle^{-1} \int_{\xi_1+\xi_2+\xi_3=\xi}\frac{ \chi_{\le 1}(\xi)\chi_{\le 1}(\xi_1)\chi_{\le 1}(\xi_2)\chi_{\le 1}(\xi_3)}{\langle\xi\rangle+\langle\xi_1\rangle+\langle\xi_2\rangle-\langle\xi_3\rangle}\\
&\qquad\qquad \cdot\widehat{f_1}(\xi_1)\widehat{f_2}(\xi_2)\widehat{f_3}(\xi_3)\,d\xi_1d\xi_2.
\end{align*}
Then it allows us to write
\begin{align*}
\eqref{duhamel-varphi1-2-1}
=&
-\fe^{it\langle\nabla\rangle}T(\mathcal W_2, \mathcal W_2, \mathcal W_2)(t)+\fe^{it_0\langle\nabla\rangle}T(\mathcal W_2, \mathcal W_2, \mathcal W_2)(t_0)\\
&\quad +\int_{t_0}^t \fe^{is\langle\nabla\rangle}T\big(\fe^{-is\langle\nabla\rangle}\partial_s\varphi_2,  \mathcal W_2,  \mathcal W_2\big)(s)\,ds.
\end{align*}
Here we shall slightly abuse notation and write $\hat f(\xi)$ for both itself and its complex conjugate (i.e. $\widehat{\overline{f}}(\xi)$), since it has no influence in our analysis.

Therefore, inserting the above into \eqref{duhamel-varphi1-2}, we obtain that
\begin{align*}
\varphi_1(t)=&\varphi_1(t_0)-\fe^{it\langle\nabla\rangle}T(\mathcal W_2, \mathcal W_2, \mathcal W_2)(t)+\fe^{it_0\langle\nabla\rangle}T(\mathcal W_2, \mathcal W_2, \mathcal W_2)(t_0)\\
&\quad + \int_{t_0}^t \fe^{is\langle\nabla\rangle}T\big(\fe^{-is\langle\nabla\rangle}\partial_s\varphi_2,  \mathcal W_2,  \mathcal W_2\big)(s)\,ds\\
&\quad -3\int_{t_0}^t \fe^{is\langle\nabla\rangle} \langle\nabla\rangle^{-1}\Big(N_1\big(\mathcal W_1,\mathcal W_2\big)
+N_2\big(\mathcal W_2\big)\Big)\,ds.
\end{align*}
Then one can drive that
\begin{align*}
\mathcal W_1(t)=&\fe^{-i(t-t_0)\langle\nabla\rangle}\mathcal W_1(t_0)-T(\mathcal W_2, \mathcal W_2, \mathcal W_2)(t)+\fe^{-i(t-t_0)\langle\nabla\rangle}T(\mathcal W_2, \mathcal W_2, \mathcal W_2)(t_0)\\
&\quad + \int_{t_0}^t \fe^{-i(t-s)\langle\nabla\rangle}T\big(\fe^{-is\langle\nabla\rangle}\partial_s\varphi_2,  \mathcal W_2,  \mathcal W_2\big)(s)\,ds\\
&\quad -3\int_{t_0}^t \fe^{-i(t-s)\langle\nabla\rangle}\langle\nabla\rangle^{-1} \Big(N_1\big(\mathcal W_1,\mathcal W_2\big)
+N_2\big(\mathcal W_2\big)\Big)\,ds.
\end{align*}
Therefore, by Strichartz estimates Lemma \ref{lem:strichartz} we get that for any $\gamma\ge 0$, any $t_0\in I\subset \R$,
\begin{align*}
&\big\||\nabla|^{\gamma}\mathcal W_1\big\|_{L^4_{tx}(I)}+\big\||\nabla|^{\gamma}\langle\nabla\rangle^\frac12\mathcal W_1\big\|_{L^\infty_t L^2_x(I)}\\
\lesssim &
\big\||\nabla|^{\gamma}\langle\nabla\rangle^\frac12\mathcal W_1(t_0)\big\|_{L^2_x}
+\big\||\nabla|^\gamma T(\mathcal W_2, \mathcal W_2, \mathcal W_2)\big\|_{L^4_{tx}(I)}\\
&+\big\||\nabla|^{\gamma} T(\mathcal W_2, \mathcal W_2, \mathcal W_2)\big\|_{L^\infty_tL^2_x}
+\big\||\nabla|^\gamma T\big(\fe^{-is\langle\nabla\rangle}\partial_s\varphi_2,  \mathcal W_2,  \mathcal W_2\big)\big\|_{L^\frac43_{tx}(I)}\\
&+\big\||\nabla|^\gamma N_1\big(\mathcal W_1,\mathcal W_2\big) \big\|_{L^\frac43_{tx}(I)}
+\big\||\nabla|^\gamma N_2\big(\mathcal W_2\big) \big\|_{L^\frac43_{tx}(I)}\\
\triangleq &
\Pi_1+\cdots +\Pi_6.
\end{align*}
Below we will  estimate $\Pi_1,\Pi_2,\cdots ,\Pi_6$ one by one.

\noindent {\bf Estimate on $\Pi_1$.} Note that  $\langle \nabla\rangle-1=\frac{|\nabla|^2}{1+\langle\nabla\rangle}$. When $t_0=0$,  by \eqref{eq:W-12-KG-intialdatum} we have 
\begin{align}\label{est:mathcal-w0}
\big\||\nabla|^{\gamma}\langle\nabla\rangle^\frac12\mathcal W_1(0)\big\|_{L^2_x}
\lesssim &
\big\||\nabla|^{\gamma}\langle\nabla\rangle^{-\frac12}\big( \langle \nabla\rangle-1\big)\mathcal{S}_\varepsilon v_0\big\|_{L^2_x}
+\varepsilon^2\big\||\nabla|^{\gamma}\langle\nabla\rangle^{-\frac12}\mathcal{S}_\varepsilon v_1\big\|_{L^2_x}\notag\\
\lesssim &
\big\||\nabla|^{\gamma+2}\mathcal{S}_\varepsilon v_0\big\|_{L^2_x}
+\varepsilon^2\big\||\nabla|^{\gamma}\mathcal{S}_\varepsilon \big(\re(v_1)\big)\big\|_{L^2_x}\notag\\
\le  &  C\varepsilon^{\gamma+2}.
\end{align}
Here and below in this proof, $C$ only depends on $\| v_0\|_{H^{\gamma+2}_x}$ and $\| v_1\|_{H^{\gamma}_x}$.

\noindent {\bf Estimate on $\Pi_2$.}
Note that
$$
\frac{1}{\langle\xi\rangle+\langle\xi_1\rangle+\langle\xi_2\rangle-\langle\xi_3\rangle}\in C^\infty\left[\big(\overline{B_1}\big)^4\right],
$$
where $B_1\in \R^2$ is the unit ball.  Then by Coifman-Meyer's Theorem and \eqref{est:mathcal-W12}, we have that
\begin{align*}
\big\||\nabla|^\gamma T(\mathcal W_2, \mathcal W_2, \mathcal W_2)\big\|_{L^4_{tx}(\R)}
\lesssim &
\big\||\nabla|^\gamma \mathcal W_2\big\|_{L^4_{tx}(\R)}\big\|\nabla \mathcal W_2\big\|_{L^\infty_{t}L^2_x(\R)}^2\\
\le & C  \varepsilon^{\gamma+2}.
\end{align*}

\noindent {\bf Estimate on $\Pi_3$.}
Similarly, by Coifman-Meyer's Theorem and \eqref{est:mathcal-W12}, we have that
\begin{align*}
\big\||\nabla|^\gamma T(\mathcal W_{2}, \mathcal W_{2}, \mathcal W_{2})(t
_0)\big\|_{L^2_{x}(\R^2)}
\lesssim &
\big\||\nabla|^\gamma \mathcal W_{2}(t_0)\big\|_{L^2_{x}}\big\|\nabla \mathcal W_{2}(t_0)\big\|_{L^2_x}^2\\
\le & C  \varepsilon^{\gamma+2}.
\end{align*}

\noindent {\bf Estimate on $\Pi_4$.}
By \eqref{eq:W-12-KG}, we have that
$$
\partial_t\varphi_2=-3\fe^{-it\langle\nabla\rangle}\langle\nabla\rangle^{-1}\big(|w(t)|^2w(t)\big).
$$
Therefore, arguing similarly as the estimation on $\Pi_2$, by \eqref{est:mathcal-W12} and \eqref{eq:w-W12}  we obtain that
\begin{align*}
&\big\||\nabla|^\gamma T\big(\fe^{-is\langle\nabla\rangle}\partial_s\varphi_2,  \mathcal W_2,  \mathcal W_2\big)\big\|_{L^\frac43_{tx}(\R)}\\
\lesssim &
\big\||\nabla|^\gamma \fe^{-is\langle\nabla\rangle}\partial_s\varphi_2\big\|_{L^4_tL^2_{x}(\R)}\big\| \mathcal W_2\big\|_{L^4_tL^8_{x}(\R)}^2\\
&\qquad+\big\|\fe^{-is\langle\nabla\rangle}\partial_s\varphi_2\big\|_{L^4_tL^2_{x}(\R)}\big\||\nabla|^\gamma  \mathcal W_2\big\|_{L^4_tL^8_{x}(\R)}\big\|\mathcal W_2\big\|_{L^4_tL^8_{x}(\R)}\\
\lesssim &
\big\||\nabla|^\gamma w\big\|_{L^4_{tx}(\R)}
\big\||\nabla|^\frac34 w\big\|_{L^\infty_{t}L^2(\R)}^2
\big\||\nabla|^\frac14 \mathcal W_2\big\|_{L^\infty_{t}L^2_x(\R)}^2\\
\le & C  \varepsilon^{\gamma+2}.
\end{align*}

\noindent {\bf Estimate on $\Pi_5$.}
By Kato-Ponce's inequality and \eqref{est:mathcal-W12}, we have that
\begin{align*}
&\big\||\nabla|^\gamma N_1\big(\mathcal W_1,\mathcal W_2\big) \big\|_{L^\frac43_{tx}(I)}\\
\lesssim &
\big\||\nabla|^\gamma \mathcal W_1\big\|_{L^4_{tx}(I)}
\Big(\big\|\mathcal W_1\big\|_{L^4_{tx}(I)}^2+\big\|\mathcal W_2\big\|_{L^4_{tx}(I)}^2\Big)\\
&\quad +\big\||\nabla|^\gamma \mathcal W_2\big\|_{L^4_{tx}(I)}\big\||\mathcal W_1\big\|_{L^4_{tx}(I)}
\Big(\big\|\mathcal W_1\big\|_{L^4_{tx}(I)}+\big\||\mathcal W_2\big\|_{L^4_{tx}(I)}\Big)\\
\le &
C \Big(\big\||\nabla|^\gamma \mathcal W_1\big\|_{L^4_{tx}(I)}
+\varepsilon^\gamma \big\|\mathcal W_1\big\|_{L^4_{tx}(I)}\Big)\Big(\big\|\mathcal W_1\big\|_{L^4_{tx}(I)}+\big\||\mathcal W_2\big\|_{L^4_{tx}(I)}\Big).
\end{align*}

\noindent {\bf Estimate on $\Pi_6$.}
By Kato-Ponce's and Bernstein's inequalities, and \eqref{est:mathcal-W12}, we have that
\begin{align*}
&\big\||\nabla|^\gamma N_2\big(\mathcal W_2\big) \big\|_{L^\frac43_{tx}(\R)}\\
\lesssim &
\big\||\nabla|^\gamma P_{>1}\mathcal W_2\big\|_{L^4_{tx}(\R)}
\big\|\mathcal W_2\big\|_{L^4_{tx}(\R)}^2\\
\le &
C
\varepsilon^{2+\gamma}.
\end{align*}

Combining the estimates on $\Pi_2$--$\Pi_6$, we obtain that
\begin{align}\label{est:mathcal-W1-improved}
&\big\||\nabla|^{\gamma}\mathcal W_1\big\|_{L^4_{tx}(I)}+\big\||\nabla|^{\gamma}\langle\nabla\rangle^\frac12\mathcal W_1\big\|_{L^\infty_t L^2_x(I)}\notag\\
\le  &
C_1\big\||\nabla|^{\gamma}\langle\nabla\rangle^\frac12\mathcal W_1(t_0)\big\|_{L^2_x}+C_2\varepsilon^{2+\gamma}\notag\\
&\quad +C_3\Big(\big\||\nabla|^\gamma \mathcal W_1\big\|_{L^4_{tx}(I)}
+\varepsilon^\gamma \big\|\mathcal W_1\big\|_{L^4_{tx}(I)}\Big)\Big(\big\|\mathcal W_1\big\|_{L^4_{tx}(I)}+\big\|\mathcal W_2\big\|_{L^4_{tx}(I)}\Big),
\end{align}
where  $C_j,j=1,2,3$ only depend on $\| v_0\|_{H^{\gamma+2}_x}$  and $\| v_1\|_{H^{\gamma}_x}$.

To proceed, we will do estimates inductively in $\gamma$.

First, for $\gamma=0$, we derive  from \eqref{est:mathcal-W12} that  there exists $C_0>0$ such that
$$
\big\|\mathcal W_1\big\|_{L^4_{tx}(\R)}+\big\||\mathcal W_2\big\|_{L^4_{tx}(\R)}\le C_0.
$$
Therefore, there exists a constant $K=K(C_0,C_3)$ and a sequence of time intervals
$$
\bigcup\limits_{k=0}^K I_k=\R^+,
$$
(the negative time direction can be treated similarly) with
$$
I_0=[0, T_1], \quad I_k=(T_k, T_{k+1}] \mbox{ for } k=1,\cdots, K-1,  \quad I_{K}=[T_K, +\infty),
$$
such that
\begin{align}\label{subinterval-length-W12}
C_3\Big(\big\|\mathcal W_1\big\|_{L^4_{tx}(I)}+\big\|\mathcal W_2\big\|_{L^4_{tx}(I)}\Big)
\le \frac1{4}.
\end{align}

Therefore, by \eqref{est:mathcal-W1-improved}, for any $1\le k\le K$, we have that
\begin{align*}
\big\|\mathcal W_1\big\|_{L^4_{tx}(I_k)}+\big\|\langle\nabla\rangle^\frac12\mathcal W_1\big\|_{L^\infty_t L^2_x(I_k)}
\le
2C_1\big\|\langle\nabla\rangle^\frac12\mathcal W_1(t_k)\big\|_{L^2_x}+2C_2\varepsilon^{2}.
\end{align*}
By iteration, we obtain that
\begin{align*}
\big\|\mathcal W_1\big\|_{L^4_{tx}(\R)}+\big\|\langle\nabla\rangle^\frac12\mathcal W_1\big\|_{L^\infty_t L^2_x(\R)}
\le
(2C_1)^{K+1}\Big(\big\|\langle\nabla\rangle^\frac12\mathcal W_{1,0}\big\|_{L^2_x}+2C_2\varepsilon^{2}\Big).
\end{align*}
Applying \eqref{est:mathcal-w0}, we get
\begin{align}\label{est:W1-0}
\big\|\mathcal W_1\big\|_{L^4_{tx}(\R)}+\big\|\langle\nabla\rangle^\frac12\mathcal W_1\big\|_{L^\infty_t L^2_x(\R)}
\le
C\varepsilon^{2},
\end{align}
where  $C$ only depends on $\| v_0\|_{H^{2}_x}$.

Second, for general $\gamma>0$, we insert  \eqref{est:W1-0} into \eqref{est:mathcal-W1-improved} to get
\begin{align*}
\big\||\nabla|^{\gamma}\mathcal W_1\big\|_{L^4_{tx}(I)}&+\big\||\nabla|^{\gamma}\langle\nabla\rangle^\frac12\mathcal W_1\big\|_{L^\infty_t L^2_x(I)}\\
\le  
&
\tilde C_1\big\||\nabla|^{\gamma}\langle\nabla\rangle^\frac12\mathcal W_1(t_0)\big\|_{L^2_x}+\tilde C_2\varepsilon^{2+\gamma}\\
&\quad +\tilde C_3\big\||\nabla|^\gamma \mathcal W_1\big\|_{L^4_{tx}(I)}
\Big(\big\|\mathcal W_1\big\|_{L^4_{tx}(I)}+\big\|\mathcal W_2\big\|_{L^4_{tx}(I)}\Big),
\end{align*}
where  $\tilde C_j,j=1,2,3$ only depend on $\| v_0\|_{H^{\gamma+2}_x}$ and $\| v_1\|_{H^{\gamma}_x}$.
Then arguing similarly as above, we obtain \eqref{est:W1-gamma}.
\end{proof}

Note that Proposition \ref{prop:h-varep} is followed from \eqref{def:w-h}, \eqref{relationship-w-W12}, \eqref{est:mathcal-W12} and \eqref{est:W1-gamma}.

\subsection{End of the proof}\label{subsec:keyprop-end}
Now we are in the position to improve the estimate on $ \partial_tv^{\varepsilon} $. Let $v^{\varepsilon}$ to be the solution of \eqref{eq:nls-wave}, note that
\begin{align}\label{relationship-v-var}
\big\| |\nabla|^{\gamma} \partial_tv^{\varepsilon} \big\|_{L^4_{tx}(\R)}
=&\varepsilon^{-2-\gamma}\big\| |\nabla|^{\gamma} \partial_t\mathcal{S}_\varepsilon \big(v^{\varepsilon}\big) \big\|_{L^4_{tx}(\R)}.
\end{align}
Moreover, by \eqref{relationship-w-W12}, we have that
\begin{align*}
\partial_t\mathcal{S}_\varepsilon \big(v^{\varepsilon}\big)
=&\partial_t\Big(\fe^{-it}w\Big)\\
=&\fe^{-it} \big(-iw+\partial_tw\big)\\
=&\frac12\fe^{-it} \Big[\big(\langle\nabla\rangle+1\big)\mathcal W_1+\big(\langle\nabla\rangle-1\big)\mathcal W_2\Big].
\end{align*}
Therefore, by \eqref{est:mathcal-W12} and \eqref{est:W1-gamma}, 
\begin{align*}
\big\| |\nabla|^{\gamma} \partial_t\mathcal{S}_\varepsilon \big(v^{\varepsilon}\big) \big\|_{L^4_{tx}(\R)}
\lesssim  &
\big\| |\nabla|^{\gamma}\big(\langle\nabla\rangle+1\big)\mathcal W_1\big\|_{L^4_{tx}(\R)}
+\big\| |\nabla|^{\gamma}\big(\langle\nabla\rangle-1\big)\mathcal W_2\big\|_{L^4_{tx}(\R)}\\
\lesssim &
 \big\| |\nabla|^{\gamma}\mathcal W_1\big\|_{L^4_{tx}(\R)}
+\big\| |\nabla|^{\gamma+2}\mathcal W_1\big\|_{L^4_{tx}(\R)}
+\big\| |\nabla|^{\gamma+2}\mathcal W_2\big\|_{L^4_{tx}(\R)}\\
\lesssim &
C\varepsilon^{2+\gamma},
\end{align*}
where  $C$ only depends on $\| v_0\|_{H^{\gamma+2}_x}$ and $\| v_1\|_{H^{\gamma}_x}$. Therefore, this last estimate together with \eqref{relationship-v-var} yields that
\begin{align*}
\big\| |\nabla|^{\gamma} \partial_tv^{\varepsilon} \big\|_{L^4_{tx}(\R)}
\le C.
\end{align*}
This finishes the proof of the key proposition.

 \vskip 1.5cm

\section{Schr\"odinger-wave profile and Uniform-in-time nonrelativistic limit}\label{sec:global}

\vskip .5cm

 Suppose that $v^{\varepsilon}$ is the solution of \eqref{eq:nls-wave}, then by \eqref{eq:r}-\eqref{eq:initial-data},  we obtain that
\EQn{
	\label{eq:kg-r-2}
	\left\{ \aligned
	&\varepsilon^2\partial_{tt} r - \De r + \frac{1}{\varepsilon^2} r +A_\varepsilon\big(v^{\varepsilon},r\big)+B_\varepsilon\big(v^{\varepsilon}\big)
	=0, \\
	& r(0,x) = 0,\quad  \partial_tr(0,x) = -2\re(v_1).
	\endaligned
	\right.
}

Arguing similarly as in Section \ref{sec:prop-key}, we use the scaling argument and denote
$$
R(t,x)=\mathcal{S}_\varepsilon r(t,x),\quad h^{\varepsilon}(t,x)=\mathcal{S}_\varepsilon \big(v^{\varepsilon}\big)(t,x),
$$
then by \eqref{eq:kg-r-2}, $R$ obeys the following equation:
\begin{align}\label{eq:psi-scaling-2}
\partial_{tt} R - \De R + R +G\big(h^{\varepsilon},R\big)+F_1\big(h^{\varepsilon}\big)=0,
\end{align}
where
 \begin{align*}
 G(f,g)=&3\Big[\fe^{2it}f^2+\fe^{-2it}\big(\bar f\big)^2\Big]g
 + 3\Big[\fe^{it}f+\fe^{-it}\bar f\Big]g^2+g^3;\\
F_1(f)=&\fe^{3it}f^3+\fe^{-3it}\big(\bar f\big)^3.
 \end{align*}
Moreover, the initial data of $R$ is that
\begin{align}\label{eq:psi0-scaling}
R(0)=0,\quad \partial_{t} R(0)=-2\varepsilon^2\mathcal{S}_\varepsilon \big(\re(v_1)\big).
\end{align}

 Denote
\begin{align}\label{def:phi}
\phi\triangleq  \langle \nabla\rangle^{-1}\big(\partial_t +i\langle \nabla\rangle \big)R,
\end{align}
then we have that
\begin{align}\label{phi-varphi}
R= \im\> \phi;\quad  \partial_t R= \re\big(\langle \nabla\rangle \phi\big).
\end{align}
Moreover, $\phi$ obeys the following equation
\begin{align} \label{eq:phi}
\big(\partial_t -i\langle \nabla\rangle \big) \phi +\langle \nabla\rangle^{-1}\Big[G\big(h^{\varepsilon},R\big)+F_1\big(h^{\varepsilon}\big)\Big]=0,
\end{align}
with the initial data
\begin{align}\label{phi-initialdata}
\phi(0)=-2\varepsilon^2\langle \nabla\rangle^{-1} \mathcal{S}_\varepsilon \big(\re(v_1)\big).
\end{align}
By Duhamel's formula, we have that
\begin{align}\label{Duh-1}
\phi(t)= \fe^{i(t-t_0)\langle\nabla\rangle}\phi(t_0)- \int_{t_0}^t \fe^{i(t-s)\langle\nabla\rangle} \langle \nabla\rangle^{-1}\Big[G\big(h^{\varepsilon},R)+F_1\big(h^{\varepsilon}\big)\Big]\,ds.
\end{align}
Now we focus on the term
$$
\int_{t_0}^t \fe^{i(t-s)\langle\nabla\rangle} \langle \nabla\rangle^{-1} F_1\big(h^{\varepsilon}\big)\,ds,
$$
which can be split into the following two terms:
$$
\int_{t_0}^t \fe^{i(t-s)\langle\nabla\rangle} \langle \nabla\rangle^{-1}P_{\le 1} F_1\big(h^{\varepsilon}\big)\,ds
+\int_{t_0}^t \fe^{i(t-s)\langle\nabla\rangle} \langle \nabla\rangle^{-1}P_{> 1} F_1\big(h^{\varepsilon}\big)\,ds.
$$
From  the definition of $F_1$, we have  that
\begin{align}
&\int_{t_0}^t \fe^{i(t-s)\langle\nabla\rangle} \langle \nabla\rangle^{-1}P_{\le 1} F_1\big(h^{\varepsilon}\big)\,ds\notag\\
=&\fe^{it\langle\nabla\rangle} \int_{t_0}^t \fe^{-is(\langle\nabla\rangle-3)} \langle \nabla\rangle^{-1}P_{\le 1} \big(h^{\varepsilon}\big)^3\,ds\notag\\
&\quad +\fe^{it\langle\nabla\rangle} \int_{t_0}^t \fe^{-is(\langle\nabla\rangle+3)} \langle \nabla\rangle^{-1}P_{\le 1} \big(\overline{h^{\varepsilon}}\big)^3\,ds.\label{F1-12}
\end{align}
Note that
\begin{align}\label{differ}
\fe^{-is(\langle\nabla\rangle\pm 3)}P_{\le 1} =\frac{d}{ds}\Big(\fe^{-is(\langle\nabla\rangle\pm3)}\Big) \frac{1}{-i(\langle\nabla\rangle\pm3)}P_{\le 1}.
\end{align}
Then using this formula and  integration by parts, we get that
\begin{align}
 & \int_{t_0}^t \fe^{-is(\langle\nabla\rangle-3)} \langle \nabla\rangle^{-1}P_{\le 1} \big(h^{\varepsilon}\big)^3\,ds\notag\\
=& \fe^{-is(\langle\nabla\rangle-3)}  \frac{1}{-i(\langle\nabla\rangle-3)}\langle \nabla\rangle^{-1}P_{\le 1} \big(h^{\varepsilon}\big)^3\Big|_{s=t_0}^{s=t}\notag\\
&\quad-3 \int_{t_0}^t \fe^{-is(\langle\nabla\rangle-3)} \frac{1}{-i(\langle\nabla\rangle-3)} \langle \nabla\rangle^{-1}P_{\le 1} \Big[\big(h^{\varepsilon}\big)^2\> \partial_sh^{\varepsilon}\Big]\,ds.\label{Integ-1}
\end{align}
Similarly,
\begin{align}
 & \int_{t_0}^t \fe^{-is(\langle\nabla\rangle+3)} \langle \nabla\rangle^{-1}P_{\le 1} \big(h^{\varepsilon}\big)^3\,ds\notag\\
=& \fe^{-is(\langle\nabla\rangle+3)}  \frac{1}{-i(\langle\nabla\rangle+3)}\langle \nabla\rangle^{-1}P_{\le 1} \big(\overline{h^{\varepsilon}}\big)^3\Big|_{s=t_0}^{s=t}\notag\\
&\quad-3 \int_{t_0}^t \fe^{-is(\langle\nabla\rangle+3)} \frac{1}{-i(\langle\nabla\rangle+3)} \langle \nabla\rangle^{-1}P_{\le 1} \Big[\big(\overline{h^{\varepsilon}})^2\> \partial_s\overline{h^{\varepsilon}}\Big]\,ds.\label{Integ-2}
\end{align}

Now we insert the terms \eqref{Integ-1}, \eqref{Integ-2}  into \eqref{Duh-1}, and obtain that
\begin{subequations}\label{est:Duh-mod}
\begin{align}
\phi(t)=& \fe^{i(t-t_0)\langle\nabla\rangle}\phi(t_0)\\
&\quad
-\int_{t_0}^t \fe^{i(t-s)\langle\nabla\rangle} \langle \nabla\rangle^{-1}\Big[G\big(h^{\varepsilon},R)+P_{> 1}F_1\big(h^{\varepsilon}\big)\Big]\,ds\label{Duh-mod-1}\\
&\quad +   \frac{\fe^{3it} }{i(\langle\nabla\rangle-3)}\langle \nabla\rangle^{-1}P_{\le 1} \Big(h^{\varepsilon}(t)\Big)^3
-  \frac{\fe^{i(t-t_0)\langle \nabla \rangle}\fe^{3it_0}}{i(\langle\nabla\rangle-3)}\langle \nabla\rangle^{-1}P_{\le 1} \Big(h^{\varepsilon}(t_0)\Big)^3\label{Duh-mod-2}\\
&\quad +  \frac{\fe^{3it} }{i(\langle\nabla\rangle+3)}\langle \nabla\rangle^{-1}P_{\le 1} \Big(\overline{h^{\varepsilon}(t)}\Big)^3
-  \frac{\fe^{i(t-t_0)\langle \nabla \rangle}\fe^{-3it_0}}{i(\langle\nabla\rangle+3)}\langle \nabla\rangle^{-1}P_{\le 1} \Big(\overline{h^{\varepsilon}(t_0)}\Big)^3\label{Duh-mod-3}\\
&\quad-3\fe^{it\langle\nabla\rangle}  \int_{t_0}^t \fe^{-is(\langle\nabla\rangle-3)} \frac{1}{i(\langle\nabla\rangle-3)} \langle \nabla\rangle^{-1}P_{\le 1} \Big[\big(h^{\varepsilon}\big)^2\> \partial_s h^\varepsilon\Big]\,ds\label{Duh-mod-4}\\
&\quad-3\fe^{it\langle\nabla\rangle} \int_{t_0}^t \fe^{-is(\langle\nabla\rangle+3)} \frac{1}{i(\langle\nabla\rangle+3)} \langle \nabla\rangle^{-1}P_{\le 1} \Big[\big(\overline{h^\varepsilon})^2\> \partial_s\big(\overline{h^\varepsilon}\big)\Big]\,ds.\label{Duh-mod-5}
\end{align}
\end{subequations}

We shall give the estimates on the following norm:
$$
\big\|\phi\big\|_{Y_0(\R)}\triangleq \big\|\phi\big\|_{L^4_{tx}(\R)}+\big\|\langle\nabla\rangle^\frac12\phi\big\|_{L^\infty_tL^2_x(\R)}.
$$

For the linear term, we have that
\begin{align} \label{est:linear-phi0}
\big\| \fe^{it\langle\nabla\rangle}\phi(t_0)\big\|_{Y_0(\R)}
\lesssim &
\big\| \langle \nabla\rangle^{\frac12}\phi(t_0)\big\|_{L^2_x}.
\end{align}
In the following subsection, we will consider the nonlinear terms.

 \vskip .5cm

\subsection{Nonlinear estimates}\label{sec:nonlinear}
 \vskip .5cm

Now we estimate \eqref{Duh-mod-1}--\eqref{Duh-mod-5} terms by terms. Let $t_0\in I\subset \R$ be an time interval.

 \vskip .5cm

\noindent {\bf Estimate on \eqref{Duh-mod-1}.}

By Lemma \ref{lem:strichartz}, we have that
\begin{align*}
\big\|\eqref{Duh-mod-1}\big\|_{Y_0(I)}
\lesssim &
\left\|G\big(h^{\varepsilon},R\big)\right\|_{Z_0(I)}+\left\|P_{> 1}F_1\big(h^{\varepsilon}\big)\right\|_{Z_0(I)},
\end{align*}
where
\begin{align*}
\|f\|_{Z_0(I)}&\triangleq \|f\|_{L^\frac43_{tx}(I)}.
\end{align*}
For $G\big(h^{\varepsilon},R\big)$, by H\"older's inequality,
\begin{align*}
\left\|G\big(h^{\varepsilon},R\big)\right\|_{Z_0(I)}
\lesssim &
\sum\limits_{j=0}^2\big\|h^{\varepsilon}\big\|_{L^4_{tx}(I)}^j\big\|R\big\|_{L^4_{tx}(I)}^{3-j}.
\end{align*}
Note that
$$
\big\|\mathcal{S}_\varepsilon f\big\|_{L^4_{tx}(I)}=\big\|f\big\|_{L^4_{tx}(\varepsilon^2I)}.
$$
Therefore,
\begin{align*}
\left\|G\big(h^{\varepsilon},R\big)\right\|_{Z_0(I)}
\lesssim &
\sum\limits_{j=0}^2\big\|v^{\varepsilon}\big\|_{L^4_{tx}(\varepsilon^2I)}^j\big\|R\big\|_{L^4_{tx}(I)}^{3-j}.
\end{align*}

For $F_1\big(h^{\varepsilon}\big)$, since
$$
P_{> 1}F_1(f)=\fe^{3it}\big(P_{\gtrsim 1}f\>f^2\big)+\fe^{-3it}\big(P_{\gtrsim 1}\bar f\>\bar f^2\big)
$$
Then by H\"older's inequality we have that
\begin{align*}
\big\|P_{> 1}F_1\big(h^{\varepsilon}\big)\big\|_{Z_0(\R)}
\lesssim &
\big\|P_{\gtrsim 1}h^{\varepsilon}\big\|_{L^4_{tx}(\R)}\big\|h^{\varepsilon}\big\|_{L^4_{tx}(\R)}^2.
\end{align*}
Note that
$$
\big\|P_{\gtrsim 1}h^{\varepsilon}\big\|_{L^4_{tx}(\R)}\lesssim \big\||\nabla|^2 P_{\le 1}h^{\varepsilon}\big\|_{L^4_{tx}(\R)}
+\big\|P_{>  1}h^{\varepsilon}\big\|_{L^4_{tx}(\R)}.
$$
Then by Proposition \ref{prop:h-varep} we have that
\begin{align*}
\big\|P_{> 1}F_1\big(h^{\varepsilon}\big)\big\|_{Z_0(I)}
\le &
C\varepsilon^2,
\end{align*}
where the constant $C>0$ only depends on $\| v_0\|_{H^{2}_x}$ and $\| v_1\|_{L^2_x}$.

Together with the estimates above, we obtain that
\begin{align*}
\big\|\eqref{Duh-mod-1}\big\|_{Y_0(I)}
\lesssim &
\sum\limits_{j=0}^2\big\|v^{\varepsilon}\big\|_{L^4_{tx}(\varepsilon^2I)}^j\big\|R\big\|_{L^4_{tx}(I)}^{3-j}
+\varepsilon^2.
\end{align*}

 \vskip .3cm

\noindent {\bf Estimates on \eqref{Duh-mod-2} and \eqref{Duh-mod-3}.}  Note that
\begin{align}\label{est:bound-nabal3}
\left\|\frac{1}{\langle\nabla\rangle\pm3}\langle \nabla\rangle^{-\frac12}P_{\le 1}f\right\|_{L^r_x(\R^d)}
\lesssim \|f\|_{L^r_x(\R^d)},\quad \mbox{ for any } 1\le r\le \infty.
\end{align}
Applying this estimate, Lemma \ref{lem:strichartz} and Proposition \ref{prop:v-varep}, we have that
\begin{align*}
\big\|\eqref{Duh-mod-2}\big\|_{Y_0(I)}&+\big\|\eqref{Duh-mod-3}\big\|_{Y_0(I)}\\
\lesssim &
\Big\|\frac{1}{\langle\nabla\rangle\pm3}\langle \nabla\rangle^{-1}P_{\le 1}\Big(h^{\varepsilon}\Big)^3\Big\|_{Y_0(I)}+\Big\|\frac{1}{\langle\nabla\rangle\pm3}\langle \nabla\rangle^{-\frac12}P_{\le 1}\Big(h^{\varepsilon}(t_0)\Big)^3\Big\|_{L^2_x}\\
\lesssim &
\Big\|\Big(h^{\varepsilon}\Big)^3\Big\|_{X_0(I)}+\Big\|\Big(h^{\varepsilon}(t_0)\Big)^3\Big\|_{L^2_x}\\
\lesssim &
\big\|h^{\varepsilon}\big\|_{X_0(I)}\big\|h^{\varepsilon}\big\|_{L^\infty_{tx}(I)}^2+\big\|h^{\varepsilon}(t_0)\big\|_{L^2_x(\R^2)}\big\|h^{\varepsilon}(t_0)\big\|_{L^\infty_x}^2\\
\lesssim &
\varepsilon^2\big\|v^{\varepsilon}\big\|_{X_0(\R)}\big\|v^{\varepsilon}\big\|_{L^\infty_{tx}(\R)}^2+\varepsilon^2\big\|v^{\varepsilon}(t_0)\big\|_{L^2_x}\big\|v^{\varepsilon}(t_0)\big\|_{L^\infty_x}^2\\
\le & C\big(\|(v_0,v_1)\|_{H^{2}_x\times L^2_x}\big) \varepsilon^2.
\end{align*}

 \vskip .3cm

\noindent {\bf Estimates on \eqref{Duh-mod-4} and \eqref{Duh-mod-5}.}

By Lemma \ref{lem:strichartz}, \eqref{est:bound-nabal3} and Proposition \ref{prop:h-varep}, we have that
\begin{align*}
\big\|\eqref{Duh-mod-4}\big\|_{Y_0(I)}+\big\|\eqref{Duh-mod-5}\big\|_{Y_0(I)}
\lesssim &
\Big\|\frac{1}{\langle\nabla\rangle\pm3}P_{\le 1}\Big(\big(h^{\varepsilon}\big)^2\> \partial_s h^{\varepsilon}\Big)\Big\|_{Z_0(I)}\\
\lesssim &
\Big\|\big(h^{\varepsilon}\big)^2\> \partial_s h^{\varepsilon}\Big\|_{Z_0(I)}\\
\lesssim &
\big\|h^{\varepsilon}\big\|_{L^4_{tx}(\R)}^2\big\|\partial_s h^{\varepsilon}\big\|_{L^4_{tx}(\R)}\\
\le & C\big(\|(v_0,v_1)\|_{H^{2}_x\times L^2_x}\big) \varepsilon^2.
\end{align*}
%
%

Now together with the estimates on \eqref{est:Duh-mod} and \eqref{est:linear-phi0}, it infers that
\begin{align*}
\big\|\phi\big\|_{Y_0(I)}
\le&
C_1\big\|\langle \nabla\rangle^{\frac12}\phi(t_0)\big\|_{L^2(\R^d)}+C_2\varepsilon^2
+C_3\big\|v^\varepsilon\big\|_{L^4_{tx}(\varepsilon^2I)}^2\big\|R\big\|_{L^4_{tx}(I)}
\\
&+C_4\Big(\big\|R\big\|_{L^4_{tx}(I)}^2+\big\|R\big\|_{L^4_{tx}(I)}^3\Big),
\end{align*}
where   $C_j,j=1,\cdots,4$ only depend on $\|(v_0,v_1)\|_{H^{2}_x\times L^2_x}$.
Moreover, note that
\begin{align*}
\big\|R\big\|_{L^4_{tx}(I)}\le \big\|\phi\big\|_{L^4_{tx}(I)}.
\end{align*}
Therefore,
\begin{align}\label{est:phi-L4-int}
\big\|\phi\big\|_{Y_0(I)}
\le&
C_1\big\|\langle \nabla\rangle^{\frac12}\phi(t_0)\big\|_{L^2(\R^2)}+C_2\varepsilon^2
+C_3\big\|v^{\varepsilon}\big\|_{L^4_{tx}(\varepsilon^2I)}^2\big\|\phi\big\|_{L^4_{tx}(I)}
\notag\\
&+C_4\Big(\big\|\phi\big\|_{L^4_{tx}(I)}^2+\big\|\phi\big\|_{L^4_{tx}(I)}^3\Big).
\end{align}

\subsection{Proof of Theorem \ref{thm:main1-global}}\label{sec:proof-thm1}

By Proposition \ref{prop:v-varep}, we have that
\begin{align}\label{est:uniform-v}
\big\|v^{\varepsilon}\big\|_{L^4_{tx}(\R)} \le & C\big(\|(v_0,v_1)\|_{H^2\times L^2}\big).
\end{align}

Therefore, there exists a constant $K=K(\|(v_0,v_1)\|_{H^2\times L^2},C_2,C_3)$ (but independent of $\varepsilon$) and a sequence of time intervals
$$
\bigcup\limits_{k=0}^K J_k=\R^+,
$$
(the negative time direction can be treated similarly) with
$$
J_0=[0, t_1], \quad J_k=(t_k, t_{k+1}] \mbox{ for } k=1,\cdots, K-1,  \quad J_{K}=[t_K, +\infty),
$$
such that
\begin{align}\label{subinterval-length}
C_3\big\|v^{\varepsilon}\big\|_{L^4_{tx}(J_k)}^2
\le \frac12.
\end{align}

Denote
$$
I_k=\varepsilon^{-2}J_k.
$$
Then by \eqref{est:phi-L4-int}, we obtain that for any $k\le K$,
\begin{align}\label{est:phi-L4-int-boot}
\big\|\phi\big\|_{Y_0(I_k)}
\le&
2C_1\big\|\langle \nabla\rangle^{\frac12}\phi(t_k)\big\|_{L^2(\R^2)}+2C_2\varepsilon^2
+2C_4\Big(\big\|\phi\big\|_{L^4_{tx}(I_k)}^2+\big\|\phi\big\|_{L^4_{tx}(I_k)}^3\Big).
\end{align}

Moreover, denote
$$
a_0(\varepsilon)\triangleq \Big(\varepsilon^2\big\|\langle \nabla\rangle^{\frac12}v_1\big\|_{L^2(\R^2)}+4C_2\varepsilon^2\Big)\cdot \big(4C_1\big)^K.
$$
Choosing $\varepsilon_0$ suitably small  such that for any $\varepsilon\in [0, \varepsilon_0]$, we have that
\begin{align}\label{small-a0}
C_4\big(a_0(\varepsilon)+a_0(\varepsilon)^2\big)\le \frac14.
\end{align}

Suppose that for some $k: 0\le k\le K-1$,
\begin{align}\label{iteration-k}
\big\|\langle \nabla\rangle^{\frac12}\phi(t_k)\big\|_{L^2_x}
\le
\varepsilon^2\big\|\langle \nabla\rangle^{\frac12}v_1\big\|_{L^2(\R^2)}\cdot \big(4C_1\big)^k
+
4C_2\varepsilon^2\sum\limits_{j=1}^k \big(4C_1\big)^{j-1},
\end{align}
then
$$
\big\| \langle \nabla\rangle^{\frac12}\phi(t_k)\big\|_{L^2(\R^d)}\le a_0.
$$
Therefore, applying these to \eqref{est:phi-L4-int-boot} for the time interval $I_{k}$ and bootstrap, we first obtain that
$$
\big\|\phi\big\|_{L^4_{tx}(I_{k})} \le a_0,
$$
and thus inserting into \eqref{est:phi-L4-int-boot} for  $I_{k}$ again, we obtain that
\begin{align*}
\big\|\langle \nabla\rangle^{\frac12}\phi(t_{k+1})\big\|_{L^2_x}
\le
\varepsilon^2\big\|\langle \nabla\rangle^{\frac12}v_1\big\|_{L^2(\R^2)}\cdot \big(4C_1\big)^{k+1}
+
4C_2\varepsilon^2\sum\limits_{j=1}^{k+1} \big(4C_1\big)^{j-1}.
\end{align*}
Hence, \eqref{iteration-k} holds for any $k: 0\le k\le K$ and thus
$$
\big\| \langle \nabla\rangle^{\frac12}\phi\big\|_{L^\infty_tL^2_x(\R)}\le a_0=C\varepsilon^2.
$$
for some $\varepsilon$-independent $C>0$.
Then by \eqref{phi-varphi}, we
 $$
\big\|R\big\|_{L^\infty_tL^2_x(\R)}\le C\varepsilon^2.
$$
Scaling back, we obtain that
  $$
\big\|r\big\|_{L^\infty_tL^2_x(\R)}\le C\varepsilon^2.
$$
This finishes the proof of Theorem \ref{thm:main1-global}.

\begin{remark} \label{rem:3d-smalldata}
A direct usage of the same argument, we can dealt with the three dimensional cases for small data. The conclusion can be read as follows:
Let $s\ge\frac12$, there exists a constant $\delta_0>0$ such that the following properties hold. Suppose that
$$
(u_0,u_1)\in H^{2+s}(\R^3) \times H^{2+s}(\R^3);\quad
\big\|(u_0,u_1)\big\|_{\dot H^\frac12\times \dot H^\frac12}\le \delta_0; \quad
v_1\in H^s(\R^3).
$$
Then
\begin{align*}
\big\|u^\varepsilon(t)-\fe^{\frac{it}{\varepsilon^2}}v^{\varepsilon}(t)-\fe^{-\frac{it}{\varepsilon^2}}\bar v^{\varepsilon}(t) \big\|_{H^s_x}
\le C \varepsilon^2.
\end{align*}
The small data assumption is used to assure the analogous estimate of \eqref{est:uniform-v}:
$$
\big\|v^{\varepsilon}\big\|_{X_\frac12(\R)}<+\infty
$$
 holds in three dimensional case.
A more refined estimate can further lower the restriction $s\ge \frac12$ to $s\ge0$.
\end{remark}

\subsection{Optimality}\label{sec:sharp-global-op1}

\vskip .5cm

In this subsection, we give the proof of Theorem \ref{thm:main1-optimal}.  Let $\delta_0, a_0,b_0$ are positive constants.   Denote 
$$
f(x)=|x|^{-1}\chi_{a_0\le |\cdot| \le 2a_0}(x),
$$
and 
\begin{align}\label{v0-example}
\big(v_0,v_1\big)\triangleq \delta_0 \fe^{-i\frac{b_0 |x|^2}{2}} (f,g),
\end{align}
where $g\in \mathcal S$ are independent of $\delta_0, a_0,b_0$. 
In the following, we set the parameters satisfying 
\begin{align}
\label{relationships}
0<\varepsilon_0\ll \delta_0\ll a_0\ll b_0^{-1}\ll1.
\end{align}

Moreover, assume that
there exists a constant $c_0>0$ which is independent of $b_0$ such that
\begin{align}\label{lower-bound-v1v0}
\big\|-2\re(v_1)+\frac i2\big(v_0\big)^3-\frac i4\big(\overline{v_0}\big)^3\big\|_{L^2_x}\ge c_0\delta_0^3 \|f\|_{L^6_x}^3.
\end{align}

\begin{remark}
The existence of such $\big(v_0,v_1\big)$ is easy to check. Take an example, we set $v_1=0$, then
 \begin{align*}
\big| \frac i2\big(v_0\big)^3-\frac i4\big(\overline{v_0}\big)^3\big|
=\frac12\delta_0^3 |f|^3 \Big|2\fe^{-i\frac{3b_0 |x|^2}{2}}-\fe^{i\frac{3b_0 |x|^2}{2}}\Big|
\ge \frac12 \delta_0^3 |f|^3.
 \end{align*}
 Then in this case we can set $c_0=\frac18$.
\end{remark}

\subsubsection{Approximation}
Firstly, we consider the behavior of the solution to \eqref{eqs:scaling-w} with $v_0$ given by \eqref{v0-example}.
Treating the same as in Section \ref{sec:prop-key}, we define $\mathcal W_j,j=1,2$ as in \eqref{relationship-w-W12}.
Then by Duhamel's formula, we may write that for $j=1,2$,
\begin{align}
\mathcal W_j(t)=&\fe^{\mp it\langle \nabla\rangle}\mathcal W_{j,0}
-3\int_0^t \fe^{\mp i(t-s)\langle \nabla\rangle}\langle\nabla \rangle^{-1}   \big(|w|^2w\big)\,ds\notag\\
\triangleq &\fe^{\mp it\langle \nabla\rangle}\mathcal W_{j,0}
+\mathcal N_j(w).\label{def:W12}
\end{align}

For the initial data \eqref{v0-example}, we have that there exists a positive constant $C_{a_0, b_0}$ only depending on $a_0, b_0$ such that
\begin{align*}
\Big\|\fe^{\mp it\langle \nabla\rangle}\mathcal W_{j,0} \Big\|_{X_{s_c}(\R)}
\le &
C_{a_0, b_0}\delta_0.
\end{align*}
Then arguing similarly as the proof in Section \ref{sec:prop-key},  we have that
\begin{align}
\big\|w \big\|_{X_{s_c}(\R)}
\le
C_{a_0, b_0}\delta_0.
\label{w-L4-sd}
\end{align}
Moreover, by the proof in Sections \ref{subsec:Stri} and \ref{subsec:Imp-Stri}, we further have that for any $\gamma\ge -s_c$,
\begin{align}
\big\||\nabla|^\gamma w\big\|_{X_{s_c}(\R)}
\le & C_{a_0, b_0}\varepsilon^\gamma;\label{w-L4-ga-sd}\\
\big\||\nabla|^\gamma\mathcal N_j(w)\big\|_{X_{s_c}(\R)}
\le & C_{a_0, b_0}\delta_0^3 \varepsilon^\gamma; \label{Nwj-L4-sd}\\
\big\|\mathcal N_1(w)\big\|_{X_{s_c}(\R)}
\le
& C_{a_0, b_0}\delta_0^3\varepsilon^2. \label{Nw1-L4-sd}
\end{align}
Furthermore, by \eqref{eq:w-W12}, we may write
\begin{align}\label{Duh-w}
w(t)=&\frac i2\big(\mathcal W_1-\mathcal W_2\big)\notag\\
= &\frac i2 \Big[\fe^{-it \langle \nabla\rangle}\mathcal W_{1,0}-\fe^{it \langle \nabla\rangle}\mathcal W_{2,0}\Big]+\frac i2\big(\mathcal N_1(w)-\mathcal N_2(w)\big).
\end{align}
By \eqref{eq:W-12-KG-intialdatum} and \eqref{Duh-w}, we further have that
\begin{align}\label{Duh-w-2}
w(t)
= &-\frac 12 \left[\fe^{-it \langle \nabla\rangle}\langle \nabla\rangle^{-1}\big(1-\langle \nabla\rangle\big)-\fe^{it \langle \nabla\rangle}\langle \nabla\rangle^{-1}\big(1+\langle \nabla\rangle\big)\right]\mathcal{S}_\varepsilon v_0+\mathcal N(w),
\end{align}
where the term $\mathcal N(w)$ reads  as
 \begin{align*}
\mathcal N(w)\triangleq\frac i2\big(\mathcal N_1(w)-\mathcal N_2(w)\big)
+\frac i2  \varepsilon^2\langle \nabla \rangle^{-1}\Big[\fe^{-it \langle \nabla\rangle}-\fe^{it \langle \nabla\rangle}\Big]\mathcal{S}_\varepsilon g.
\end{align*}

$\bullet$ Approximation of $h^\varepsilon$.
Recall that
$$
h^\varepsilon=\fe^{-it}w,
$$
then by \eqref{Duh-w-2}, we rewrite 
\begin{align}\label{apprx-h-1}
h^\varepsilon(t)
= &-\frac 12 \left[\fe^{-it (\langle \nabla\rangle+1)}\langle \nabla\rangle^{-1}\big(1-\langle \nabla\rangle\big)-\fe^{it (\langle \nabla\rangle-1)}\langle \nabla\rangle^{-1}\big(1+\langle \nabla\rangle\big)\right]\mathcal{S}_\varepsilon v_0\notag\\
&\qquad +\fe^{-it}\mathcal N(w),
\end{align}
with
$$
h^\varepsilon(0)=\mathcal{S}_\varepsilon v_0.
$$
Moreover, note that
\begin{align}\label{na-apprx}
\langle \nabla\rangle=1-\big(1+\langle\nabla\rangle\big)^{-1}\Delta.
\end{align}
Then from \eqref{apprx-h-1}, these allow us to rewrite $h^\varepsilon(t)$ as
\begin{align}\label{apprx-h-1-op2}
h^\varepsilon(t)
= &\fe^{it (\langle \nabla\rangle-1)}
\mathcal{S}_\varepsilon v_0 +\fe^{-it}\mathcal N_{\bot}(w),
\end{align}
where 
\begin{align*}
\mathcal N_{\bot}(w)
\triangleq &
\frac i2\fe^{-it}\big(\mathcal N_1(w)-\mathcal N_2(w)\big)
-\frac 12\varepsilon^2 \fe^{-it (\langle \nabla\rangle+1)}\mathcal{S}_\varepsilon(\Delta v_0)\\
&\quad -\frac 12\varepsilon^2\fe^{-it} \left[\fe^{-it \langle \nabla\rangle}-\fe^{it\langle \nabla\rangle}\right]\langle \nabla\rangle^{-1}\big(1+\langle \nabla\rangle\big)^{-1}\mathcal{S}_\varepsilon(\Delta v_0)\\
&\quad+\frac i2  \varepsilon^2\langle \nabla \rangle^{-1}\Big[\fe^{-it \langle \nabla\rangle}-\fe^{it \langle \nabla\rangle}\Big]\mathcal{S}_\varepsilon g.
\end{align*}
By \eqref{Nwj-L4-sd}, and noting that 
\begin{align}\label{est:S-vep-v0}
\big\||\nabla|^\gamma \mathcal{S}_\varepsilon v_0\big\|_{L^2_x}
\le C_{a_0, b_0}\delta_0  \varepsilon^{\gamma-s_c},
\end{align}
we  have that  for any $\gamma\ge -s_c$, 
\begin{align}\label{est:mathcal-N-h}
\big\||\nabla|^{\gamma}\mathcal N_{\bot}(w)\big\|_{X_{s_c}(\R)}
\le 
C_{a_0, b_0}\>\delta_0 \varepsilon^{\gamma} \big(\varepsilon^2+\delta_0^2\big).
\end{align}
A direct consequence of these two estimates and Lemma \ref{lem:strichartz} is that  
\begin{align}
\big\||\nabla|^{\gamma} h^\varepsilon\big\|_{X_{s_c}(\R)}
\le
 C_{a_0, b_0}\delta_0  \varepsilon^{\gamma}.
 \label{h-L4-sd}
\end{align}

$\bullet$ Approximation of $\partial_t  h^\varepsilon$.
Moreover,  by \eqref{eq:w-W12},
\begin{align*}
\partial_t h^\varepsilon
&=\fe^{-it}(\partial_t w-iw)\notag\\
&=\frac12\fe^{-it}\Big[\big(\langle\nabla\rangle+1\big)\mathcal W_1+\big(\langle\nabla\rangle-1\big)\mathcal W_2\Big].
\end{align*}
Then by \eqref{def:W12} and \eqref{eq:W-12-KG-intialdatum}, we obtain that
\begin{align}\label{apprx-ht-1}
\partial_t h^\varepsilon(t)
&=\frac12\fe^{-it}\Big[\big(\langle\nabla\rangle+1\big)\fe^{-it\langle\nabla\rangle}\mathcal W_{1,0}+\big(\langle\nabla\rangle-1\big)\fe^{it\langle\nabla\rangle}\mathcal W_{2,0}\Big]\notag\\
&\qquad + \frac12\fe^{-it}\Big[\big(\langle\nabla\rangle+1\big)\mathcal N_1(w)+\big(\langle\nabla\rangle-1\big)\mathcal N_2(w)\Big]\notag\\
&=\frac i2 \langle\nabla\rangle^{-1} \Delta \Big[\fe^{-it(\langle\nabla\rangle+1)}-\fe^{it(\langle\nabla\rangle-1)}\Big]\mathcal{S}_\varepsilon v_0 +\widetilde{\mathcal N}(w),
\end{align}
where we denote
\begin{align*}
\widetilde{\mathcal N}(w)\triangleq & \frac12 \fe^{-it}\Big[\big(\langle\nabla\rangle+1\big)\mathcal N_1(w)+\big(\langle\nabla\rangle-1\big)\mathcal N_2(w)\Big]\\
& +\frac 12\varepsilon^2  \langle\nabla\rangle^{-1} \Big[(\langle\nabla\rangle+1)\fe^{-it(\langle\nabla\rangle+1)}+(\langle\nabla\rangle-1)\fe^{it(\langle\nabla\rangle-1)}\Big]\mathcal{S}_\varepsilon v_1.
\end{align*}

Moreover, by \eqref{apprx-ht-1}, we have that 
\begin{align}\label{apprx-ht-1-op2}
\partial_t h^\varepsilon(t)
&=\frac i2  \Delta \Big[\fe^{-it(\langle\nabla\rangle+1)}-\fe^{it(\langle\nabla\rangle-1)}\Big]\mathcal{S}_\varepsilon v_0 +\widetilde{\mathcal N}_{\bot}(w),
\end{align}
where we denote
\begin{align*}
\widetilde{\mathcal N}_{\bot}(w)\triangleq &\widetilde{\mathcal N}(w) +\frac i2 \langle \nabla\rangle^{-1}\big(1+\langle \nabla\rangle\big)^{-1} \Delta^2 \Big[\fe^{-it(\langle\nabla\rangle+1)}-\fe^{it(\langle\nabla\rangle-1)}\Big]\mathcal{S}_\varepsilon v_0.
\end{align*}
By \eqref{Nwj-L4-sd} and \eqref{est:S-vep-v0}, we also have that   for any $\gamma\ge -s_c$, 
\begin{align}\label{est:mathcal-N-tilde-h}
\big\||\nabla|^{\gamma}\widetilde{\mathcal N}_{\bot}(w)\big\|_{X_{s_c}(\R)}
\le 
C_{a_0, b_0}\>\delta_0\varepsilon^{2+\gamma-s_c} \big(\varepsilon^2+\delta_0^2\big).
\end{align}

\subsubsection{Basic estimates on initial datum}
In this subsubsection, we give the basic estimates on $v_0$ and $f$. 
A direct computation gives that 
\begin{equation}\label{est:f}
\begin{aligned}
\|f\|_{H^\gamma_x}\sim &a_0^{s_c-\gamma}, \quad \mbox{ for any }\gamma\ge0;\\
\||x|^\gamma f\|_{L^2_x}\sim  &a_0^{s_c+\gamma}, \quad \mbox{ for any }\gamma\ge0;\\
\|f\|_{L^r_x}\sim  &a_0^{-1+\frac dr}, \quad \mbox{ for any } r\in [1, +\infty].
\end{aligned}
\end{equation}
Our analysis below also involves the estimates on $\Delta v_0$. Note that 
\begin{align*}
\Delta v_0=\delta_0 \fe^{-i\frac{b_0 |x|^2}{2}}\Big[c_1b_0\big(d+x\cdot \nabla\big)f+c_2b_0^2|x|^2f+\Delta f\Big],
\end{align*}
for some absolute constants $c_1\in \C,c_2\in \C$.
Applying \eqref{est:f}, we have that there exists an absolute constant $C>0$ such that 
\begin{align}\label{est:delta-v0}
\big\|\Delta v_0\big\|_{L^2_x}
\le  C\delta_0 \Big(b_0a_0^{s_c}+b_0^2a_0^{s_c+2}+a_0^{s_c-2}\Big).
\end{align}
Denote the space $\mathcal X_{s_c}(I)$ (which is a slight modification of $X_{s_c}(I)$) by its norm 
$$
\|f\|_{\mathcal X_{s_c}(I)}\triangleq \|f\|_{L^q_tL^r_x(I)},
$$
where the parameter pair $(q,r)$ satisfying that for arbitrary small constant $\epsilon>0$, 
$$
\frac1q=\frac14+\epsilon, \frac1r=\frac14-\epsilon, \mbox{ when } d=2;
\quad 
(q,r)=(4, 6), \mbox{ when } d=3.
$$
In the following, we need the estimate on 
$$
\big\|\fe^{is \langle\nabla\rangle}
\mathcal{S}_\varepsilon v_0\big\|_{\mathcal X_{s_c}(I)},
$$
which involves the Strichartz estimates of the Schr\"odinger flow on the oscillating data. The latter will be presented in the subsubsection below.  

\subsubsection{Estimates on the oscillating data}

Now we need the following result. 
\begin{lem}
\label{lem:conterexmp-ID}
Given $T_0>0$. Let $T_0>0$ and $I=[0,T_0]$, then there exist constant $\varepsilon_0$ such that
for any   $\varepsilon\in [0,\varepsilon_0]$,
\begin{align*}
\Big\|\fe^{it\langle \nabla\rangle}\mathcal{S}_\varepsilon\Big(\fe^{-i\frac{b_0 |x|^2}{2}}f\Big) \Big\|_{\mathcal X_{s_c}(\varepsilon^{-2}I)}
\le &
\rho(b_0)
+
C_{a_0,b_0} \varepsilon^2 T_0^{1+\frac1q},
\end{align*}
where $\rho(b_0)\to 0$ when $b_0\to+\infty$.
\end{lem}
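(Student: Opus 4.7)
My strategy is to approximate the half Klein--Gordon group by a modulated Schr\"odinger group on the low-frequency input $\mathcal{S}_\varepsilon\psi$ with $\psi=\fe^{-ib_0|x|^2/2}f$, and then to exploit the Cazenave--Weissler pseudo-conformal (lens) transform to trade the quadratic phase for a vanishingly short Schr\"odinger time interval. Concretely, I would decompose
$$
\fe^{it\langle\nabla\rangle}\mathcal{S}_\varepsilon\psi = \fe^{it}\fe^{-\frac{it}{2}\Delta}\mathcal{S}_\varepsilon\psi + \mathcal{E}(t),\quad \mathcal{E}(t):=\big(\fe^{it\langle\nabla\rangle}-\fe^{it}\fe^{-\frac{it}{2}\Delta}\big)\mathcal{S}_\varepsilon\psi,
$$
so that the main term produces the $\rho(b_0)$ contribution and the remainder $\mathcal{E}$ produces the $C_{a_0,b_0}\varepsilon^2 T_0^{1+1/q}$ contribution.

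\textbf{Main term via the lens transform.} The scaling $\mathcal{S}_\varepsilon$ intertwines with the free Schr\"odinger flow, $\fe^{-\frac{it}{2}\Delta}\mathcal{S}_\varepsilon = \mathcal{S}_\varepsilon \fe^{-\frac{i\varepsilon^2 t}{2}\Delta}$; since the admissibility $2/q + d/r = d/2-s_c$ (equal to $1$ in both dimensions) makes $\mathcal{S}_\varepsilon$ isometric on $L^q_tL^r_x(\varepsilon^{-2}I)$, the substitution $s=\varepsilon^2 t$ yields
$$
\big\|\fe^{it}\fe^{-\frac{it}{2}\Delta}\mathcal{S}_\varepsilon\psi\big\|_{\mathcal{X}_{s_c}(\varepsilon^{-2}I)}=\big\|\fe^{-\frac{is}{2}\Delta}\psi\big\|_{L^q_sL^r_x([0,T_0])}.
$$
Next, the classical lens identity
$$
\fe^{-\frac{is}{2}\Delta}\big(\fe^{-ib_0|y|^2/2}f\big)(x)=(1+b_0s)^{-d/2}\fe^{-\frac{ib_0|x|^2}{2(1+b_0s)}}\big(\fe^{-\frac{i\tau}{2}\Delta}f\big)\Big(\frac{x}{1+b_0s}\Big),\ \ \tau=\frac{s}{1+b_0s},
$$
combined with the change of variables $\tau=s/(1+b_0s)$ (so $ds=(1+b_0s)^2\,d\tau$) and the same admissibility, gives the isometric transfer
$$
\big\|\fe^{-\frac{is}{2}\Delta}\psi\big\|_{L^q_sL^r_x([0,T_0])}=\big\|\fe^{-\frac{i\tau}{2}\Delta}f\big\|_{L^q_\tau L^r_x([0,\,T_0/(1+b_0T_0)])}.
$$
Since $T_0/(1+b_0T_0)\leqslant 1/b_0$ and $f\in\mathcal{S}(\R^d)$ has finite global Schr\"odinger Strichartz norm by Lemma \ref{lem:strichartz}, the right-hand side tends to $0$ as $b_0\to\infty$, which produces $\rho(b_0)$.

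\textbf{Error term via Taylor expansion.} From the Duhamel-type identity
$$
\mathcal{E}(t)=i\int_0^t \fe^{i(t-s)\langle\nabla\rangle}\fe^{is(1-\Delta/2)}\,M\,\mathcal{S}_\varepsilon\psi\, ds,\quad M:=1-\tfrac{\Delta}{2}-\langle\nabla\rangle,
$$
and the Taylor expansion $\langle\xi\rangle=1+|\xi|^2/2-|\xi|^4/8+O(|\xi|^6)$, the symbol of $M$ is $O(|\xi|^4)$ near the origin. Using Minkowski in $s$, the Sobolev embedding $H^{d(1/2-1/r)}_x\hookrightarrow L^r_x$, the $H^\sigma_x$-unitarity of both propagators, and the scaling identity $\||\nabla|^\sigma \mathcal{S}_\varepsilon g\|_{L^2}=\varepsilon^{1-d/2+\sigma}\|g\|_{\dot H^\sigma}$ with $\sigma=d(1/2-1/r)+4$, I bound
$$
\|\mathcal{E}(t)\|_{L^r_x}\leqslant C\,t\,\|M\mathcal{S}_\varepsilon\psi\|_{H^{d(1/2-1/r)}_x}\leqslant C_{a_0,b_0}\,t\,\varepsilon^{5-d/r}.
$$
Taking the $L^q_t$-norm over $[0,\varepsilon^{-2}T_0]$ produces the factor $(\varepsilon^{-2}T_0)^{1+1/q}$, and the critical admissibility $2/q+d/r=1$ makes the $\varepsilon$-exponents collapse to $\varepsilon^{5-d/r-2-2/q}=\varepsilon^{3-(d/r+2/q)}=\varepsilon^2$, yielding exactly $\|\mathcal{E}\|_{\mathcal{X}_{s_c}(\varepsilon^{-2}I)}\leqslant C_{a_0,b_0}\,\varepsilon^2 T_0^{1+1/q}$.

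\textbf{Anticipated obstacle.} The delicate point is precisely this error bookkeeping: the four-derivative low-frequency gain of $M$ together with the $\mathcal{S}_\varepsilon$-scaling of $H^\sigma$ must cancel the $\varepsilon^{-2-2/q}$ blow-up from the long interval $\varepsilon^{-2}I$ down to exactly $\varepsilon^2$. A subsidiary issue is that $M$ gains only one derivative at high frequency; here one splits $\mathcal{S}_\varepsilon\psi$ via $P_{\leqslant 1}+P_{>1}$ and uses that the Schwartz decay of $\hat\psi$ (with explicit but harmless $b_0$-dependence coming from $\fe^{-ib_0|x|^2/2}$) renders the high-frequency piece of $\mathcal{S}_\varepsilon\psi$ negligibly small, contributing only to the $C_{a_0,b_0}$ prefactor.
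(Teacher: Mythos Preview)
Your proof is correct and follows essentially the same strategy as the paper: the identical decomposition $\fe^{it\langle\nabla\rangle}=\fe^{it}\fe^{-\frac12 it\Delta}+(\text{error})$, the identical treatment of the main term via the $\mathcal S_\varepsilon$-isometry (using $2/q+d/r=1$) together with the Cazenave--Weissler lens transform to compress the Schr\"odinger time interval to $[0,1/b_0]$, and the same exponent bookkeeping on the error. The only technical variation is in the error term: the paper writes it as $t\Delta^2 T_t$ with $T_t=(\fe^{it\langle\nabla\rangle}-\fe^{it}\fe^{-\frac12 it\Delta})/(t\Delta^2)$ and invokes the Mihlin--H\"ormander theorem to get $L^r\to L^r$ boundedness of $T_t$ uniformly in $t$, which handles low and high frequencies at once and lands directly on $\|\Delta^2\mathcal S_\varepsilon\psi\|_{L^r_x}$; your Duhamel-plus-Sobolev route reaches the same $\varepsilon^2 T_0^{1+1/q}$ bound but requires the extra $P_{\le 1}/P_{>1}$ split you flagged.
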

\begin{proof}
Denote the operator
$$
T_t  =\frac{\fe^{it\langle\nabla\rangle}-\fe^{it}\fe^{-\frac12 it\Delta}}{t\Delta^2}.
$$
Then by the Mihlin-H\"ormander Multiplier Theorem,  we have the $L^p\mapsto L^p$ boundedness which is uniform in time: for any $1<p<+\infty$,
\begin{align*}
\big\|T_t h\big\|_{L^p}\le C\|h\|_{L^p},
\end{align*}
where the constant $C>0$ is independent of $t$ and $h$.  Now we write
\begin{align}\label{apprx-linearKG}
\fe^{it\langle\nabla\rangle}=\fe^{it}\fe^{-\frac12 it\Delta}+t \Delta^2 T_t.
\end{align}
Then we have that
\begin{align}\label{est:Sbf-12}
\Big\|\fe^{ it\langle \nabla\rangle}& \mathcal{S}_\varepsilon\Big(\fe^{-i\frac{b_0 |x|^2}{2}}f\Big) \Big\|_{\mathcal X_{s_c}(\varepsilon^{-2}I)}\notag\\
\le  &
\Big\|\fe^{it}\fe^{-\frac12 it\Delta}\mathcal{S}_\varepsilon\Big(\fe^{-i\frac{b_0 |x|^2}{2}}f\Big) \Big\|_{\mathcal X_{s_c}(\varepsilon^{-2}I)}
+
\Big\|t \Delta^2 T_t \mathcal{S}_\varepsilon\Big(\fe^{-i\frac{b_0 |x|^2}{2}}f\Big)\Big\|_{\mathcal X_{s_c}(\varepsilon^{-2}I)}\notag\\
\lesssim &
\Big\|\fe^{-\frac12 it\Delta}\mathcal{S}_\varepsilon\Big(\fe^{-i\frac{b_0 |x|^2}{2}}f\Big) \Big\|_{\mathcal X_{s_c}(\varepsilon^{-2}I)}
+
\varepsilon^{-2}T_0 \Big\|\Delta^2\mathcal{S}_\varepsilon\Big(\fe^{-i\frac{b_0 |x|^2}{2}}f\Big)\Big\|_{\mathcal X_{s_c}(\varepsilon^{-2}I)}.
\end{align}
For the first term in \eqref{est:Sbf-12}, by changing the variable, we have that
\begin{align}\label{est:Sbf-scaling}
\Big\|\fe^{-\frac12 it\Delta}\mathcal{S}_\varepsilon\Big(\fe^{-i\frac{b_0 |x|^2}{2}}f\Big) \Big\|_{\mathcal X_{s_c}(\varepsilon^{-2}I)}
=&
\Big\|\fe^{-\frac12 it\Delta}\Big(\fe^{-i\frac{b_0 |x|^2}{2}}f\Big) \Big\|_{\mathcal X_{s_c}(I)}.
\end{align}
Now we need the following formula (see e.g. \cite{CaWe-1992}),
\begin{align*}
\fe^{-\frac12 it\Delta}\Big(\fe^{-i\frac{b_0 |x|^2}{2}}f\Big)
=& (1+b_0t)^{-\frac d2}\> \fe^{\frac{ib_0}{4(1+b_0t)}|x|^2} \fe^{-it_{b_0} \Delta}f\Big(\frac x{1+b_0t}\Big),
\mbox{ with } t_{b_0}=\frac t{1+b_0t}.
\end{align*}
This gives that for any $r\in [1,+\infty]$,
\begin{align*}
\Big\|\fe^{-\frac12 it\Delta}\Big(\fe^{-i\frac{b_0 |x|^2}{2}}f\Big) \Big\|_{L^r_x}
=&
(1+b_0t)^{-\frac d2+\frac dr}\big\|\fe^{-\frac12 it_{b_0}\Delta}f \big\|_{L^r_x}.
\end{align*}
Inserting this into the right-hand side of \eqref{est:Sbf-scaling},  gives that
\begin{align*}
\Big\|\fe^{-\frac12 it\Delta}\Big(\fe^{-i\frac{b_0 |x|^2}{2}}f\Big) \Big\|_{\mathcal X_{s_c}(\R)}
\le &
\Big\|(1+b_0t)^{-s_c-\frac{2}q}\big\|\fe^{-\frac12 it_{b_0}\Delta}f \big\|_{L^r_x} \Big\|_{L^q_t(\R)}\\
\le &
\Big\|(1+b_0t)^{-\frac 2q}\big\|\fe^{-\frac12 it_{b_0}\Delta}f \big\|_{L^r_x} \Big\|_{L^q_t(\R)}\\
=&
\Big\|\big\|\fe^{-\frac12 it\Delta}f \big\|_{L^r_x} \Big\|_{L^q_t([0,\frac1b])}\\
\le & \big\|\fe^{-\frac12 it\Delta}f \big\|_{\mathcal X_{s_c}([0,\frac1b])}.
\end{align*}
Note that
$$
\big\|\fe^{-\frac12 it\Delta}f \big\|_{\mathcal X_{s_c}(\R)}\le \|f\|_{\dot H^{s_c}_x}\lesssim 1.
$$
Then we obtain that there exists a function $\rho:\R^+\mapsto \R^+$ such that
\begin{align}\label{est:Sbf-1}
\Big\|\fe^{-\frac12 it\Delta}\Big(\fe^{-i\frac{b_0 |x|^2}{2}}f\Big) \Big\|_{\mathcal X_{s_c}(\R)}
= &\rho(b_0),
\end{align}
where $\rho(b_0)\to 0$ when $b_0\to+\infty$.

For the second term in \eqref{est:Sbf-12}, we have that
\begin{align}\label{est:Sbf-2}
\Big\|\Delta^2\mathcal{S}_\varepsilon\Big(\fe^{-i\frac{b_0 |x|^2}{2}}f\Big)\Big\|_{\mathcal X_{s_c}(\varepsilon^{-2}I)}
\lesssim & \varepsilon^4\Big\|\Delta^2\Big(\fe^{-i\frac{b_0 |x|^2}{2}}f\Big)\Big\|_{\mathcal X_{s_c}(I)}\notag\\
\lesssim & \varepsilon^4 T_0^{\frac1q} \Big\|\Delta^2\Big(\fe^{-i\frac{b_0 |x|^2}{2}}f\Big)\Big\|_{L^r_x}\notag\\
\le  &C_{a_0,b_0} \varepsilon^4 T_0^{\frac1q}.
\end{align}
Therefore, inserting the estimates \eqref{est:Sbf-1} and \eqref{est:Sbf-2} into \eqref{est:Sbf-12}, we have that
\begin{align*}
\Big\|\fe^{ it\langle \nabla\rangle}& \mathcal{S}_\varepsilon\Big(\fe^{-i\frac{b_0 |x|^2}{2}}f\Big) \Big\|_{\mathcal X_{s_c}(\varepsilon^{-2}I)}
\le
\rho(b_0)
+
C_{a_0,b_0} \varepsilon^2 T_0^{1+\frac1q}.
\end{align*}
Thus we obtain the desired estimate.
\end{proof}

\subsubsection{Behavior of the remainder function}

Next, we consider the behavior of the remainder function $r^\varepsilon$. Applying the same precess and notations in Section \ref{sec:global}, we define  $\phi$ as in \eqref{def:phi} and conclude  that for any $\gamma\ge-s_c$,
\begin{align}\label{est: Xsc-phi}
\|\phi\|_{\mathcal X_{s_c}(\varepsilon^{-2}I)}\le C_{a_0,b_0}\delta_0 \varepsilon^{2};\quad
\||\nabla|^\gamma\phi\|_{X_{s_c}(\varepsilon^{-2}I)}\le C_{a_0,b_0}\delta_0\varepsilon^{2+\gamma}.
\end{align}
See Remark \ref{rem:3d-smalldata} for the explanation of the three dimensional cases.

Denote $T^\varepsilon=\varepsilon^{-2}T$ and $T\in [T_0-1,T_0]$. As in \eqref{est:Duh-mod}, we write that
\begin{subequations}\label{est:Duh-mod'}
\begin{align}
\phi(T^\varepsilon)=&\fe^{iT^\varepsilon\langle\nabla\rangle} \phi(0)\label{Duh-mod'-0}\\
&\quad+ \int_0^{T^\varepsilon} \fe^{i(T^\varepsilon-s)\langle\nabla\rangle} \langle \nabla\rangle^{-1}G\big(h^{\varepsilon},R)\,ds\label{Duh-mod'-1}\\
&\quad +\int_0^{T^\varepsilon} \fe^{i(T^\varepsilon-s)\langle\nabla\rangle} \langle \nabla\rangle^{-1}P_{> 1}F_1\big(h^{\varepsilon}\big)\,ds\label{Duh-mod'-1'}\\
&\quad -   \frac{\fe^{3iT^\varepsilon} }{i(\langle\nabla\rangle-3)}\langle \nabla\rangle^{-1}P_{\le 1} \big(h^{\varepsilon}(T^\varepsilon)\big)^3
-   \frac{\fe^{3iT^\varepsilon} }{i(\langle\nabla\rangle+3)}\langle \nabla\rangle^{-1}P_{\le 1} \Big(\overline{h^{\varepsilon}(T^\varepsilon)}\Big)^3\label{Duh-mod'-2}\\
&\quad
+  \frac{\fe^{iT^\varepsilon\langle \nabla \rangle}}{i(\langle\nabla\rangle-3)}\langle \nabla\rangle^{-1}P_{\le 1} \Big(\mathcal{S}_\varepsilon v_0\Big)^3
+  \frac{\fe^{iT^\varepsilon\langle \nabla \rangle}}{i(\langle\nabla\rangle+3)}\langle \nabla\rangle^{-1}P_{\le 1} \Big(\overline{\mathcal{S}_\varepsilon v_0}\Big)^3\label{Duh-mod'-3}\\
&\quad+3\fe^{iT^\varepsilon\langle\nabla\rangle} \int_0^{T^\varepsilon} \fe^{-is(\langle\nabla\rangle-3)} \frac{1}{i(\langle\nabla\rangle-3)} \langle \nabla\rangle^{-1}P_{\le 1} \Big[\big(h^{\varepsilon}\big)^2\> \partial_s h^\varepsilon\Big]\,ds\label{Duh-mod'-4}\\
&\quad+3\fe^{iT^\varepsilon\langle\nabla\rangle} \int_0^{T^\varepsilon} \fe^{-is(\langle\nabla\rangle+3)} \frac{1}{i(\langle\nabla\rangle+3)} \langle \nabla\rangle^{-1}P_{\le 1} \Big[\big(\overline{h^\varepsilon})^2\> \partial_s\big(\overline{h^\varepsilon}\big)\Big]\,ds.\label{Duh-mod'-5}
\end{align}
\end{subequations}

 {\bf Lower bound of \eqref{Duh-mod'-0}+\eqref{Duh-mod'-3}.}

By \eqref{phi-initialdata} and \eqref{na-apprx}, we write \eqref{Duh-mod'-0} and  \eqref{Duh-mod'-3} as
$$
  \eqref{Duh-mod'-0}
  =-2\varepsilon^2\fe^{iT^\varepsilon\langle \nabla\rangle}\mathcal{S}_\varepsilon \big(\re(v_1)\big)+\mathcal R_1,
 $$
 and
$$
  \eqref{Duh-mod'-3}
  =\frac i4 \fe^{iT^\varepsilon\langle \nabla\rangle}\Big(\mathcal{S}_\varepsilon v_0\Big)^3+\mathcal R_2,
 $$
 where for $j=1,2$,
 \begin{align*}
\big\|\mathcal R_j\big\|_{L^\infty_t L^2_x(\R\times \R^2)}
\lesssim &
\varepsilon^2\big\|\Delta \mathcal{S}_\varepsilon \big(\re(v_1)\big)\big\|_{L^2_x}+\big\|\Delta \mathcal{S}_\varepsilon v_0\big\|_{L^2_x} \big\|\mathcal{S}_\varepsilon v_0\big\|_{L^\infty_x}^2\\
\le &
C_{a_0,b_0}\delta_0 \varepsilon^{4-s_c}.
\end{align*}
Hence, there give that
\begin{align*}
\eqref{Duh-mod'-0}+\eqref{Duh-mod'-3}
=&\fe^{iT^\varepsilon\langle \nabla\rangle}\Big[-2\varepsilon^2\mathcal{S}_\varepsilon \big(\re(v_1)\big)+\frac i2\Big(\mathcal{S}_\varepsilon v_0\Big)^3-\frac i4\Big(\overline{\mathcal{S}_\varepsilon v_0}\Big)^3\Big]+\mathcal R_1+\mathcal R_2\\
=&\varepsilon^2\fe^{iT^\varepsilon\langle \nabla\rangle}\mathcal{S}_\varepsilon \Big[-2\re(v_1)+\frac i2\big(v_0\big)^3-\frac i4\big(\overline{v_0}\big)^3\Big]
+\mathcal R_1+\mathcal R_2.
\end{align*}
From \eqref{lower-bound-v1v0}, we get that 
\begin{align*}
&\Big\|\fe^{iT^\varepsilon\langle \nabla\rangle}\mathcal{S}_\varepsilon \Big[-2\re(v_1)+\frac i2\big(v_0\big)^3-\frac i4\big(\overline{v_0}\big)^3\Big]\Big\|_{L^2_x}\\
= &\Big\|\mathcal{S}_\varepsilon \Big[-2\re(v_1)+\frac i2\big(v_0\big)^3-\frac i4\big(\overline{v_0}\big)^3\Big]\Big\|_{L^2_x}\\
= &\varepsilon^{-s_c}  \Big\|-2\re(v_1)+\frac i2\big(v_0\big)^3-\frac i4\big(\overline{v_0}\big)^3\Big\|_{L^2_x}\\
\ge &c_0\delta_0^3\varepsilon^{-s_c}\|f\|_{L^6_x}^3.
\end{align*}
where $c_0$ is determined in \eqref{lower-bound-v1v0}. Then by \eqref{est:f}, it further gives that there exists some absolute constant $\tilde c_0>0$ such that 
\begin{align*}
&\Big\|\fe^{iT^\varepsilon\langle \nabla\rangle}\mathcal{S}_\varepsilon \Big[-2\re(v_1)+\frac i2\big(v_0\big)^3-\frac i4\big(\overline{v_0}\big)^3\Big]\Big\|_{L^2_x}
\ge 
\tilde c_0 a_0^{-2+s_c}\delta_0^3 \varepsilon^{-s_c} .
\end{align*}
Therefore, it implies that
 \begin{align*}
\big\|\eqref{Duh-mod'-0}+\eqref{Duh-mod'-3}\big\|_{L^2_x}
\ge &\varepsilon^2\Big\|\fe^{iT^\varepsilon\langle \nabla\rangle}\mathcal{S}_\varepsilon \Big[-2\re(v_1)+\frac i2\big(v_0\big)^3-\frac i4\big(\overline{v_0}\big)^3\Big]\Big\|_{L^2_x}\\
&\qquad -\big\|\mathcal R_1\big\|_{L^2_x}-\big\|\mathcal R_2\big\|_{L^2_x}\\
\ge &\tilde c_0a_0^{-2+s_c}\delta_0^3\varepsilon^{2-s_c}-C_{a_0,b_0}\delta_0 \varepsilon^{4-s_c}.
\end{align*}

%
%

 {\bf Upper bound of \eqref{Duh-mod'-1}.}  To do this, we denote the space $\mathcal Z_{s_c}(I)$ by its norm 
$$
\|f\|_{\mathcal Z_{s_c}(I)}\triangleq \|f\|_{L^{q'}_tL^{r'}_x(I)},
$$
where the parameter pair $(q,r)$ satisfying that for arbitrary small constant $\epsilon>0$, 
$$
\frac1q=\frac12-\epsilon, \frac1r=\epsilon, \mbox{ when } d=2;
\quad 
(q,r)=(2, 6), \mbox{ when } d=3.
$$
 Then by Lemma \ref{lem:strichartz}, we have that
\begin{align*}
\big\|\eqref{Duh-mod'-1}\big\|_{L^2_x}
\lesssim & \Big\|G\big(h^{\varepsilon},R)\Big\|_{\mathcal Z_{s_c}(\varepsilon^{-2}I)}.
\end{align*}
Then by H\"older's inequality, we further get that
\begin{align*}
\big\|\eqref{Duh-mod'-1}\big\|_{L^2_x}
\lesssim &\big\|R\big\|_{L^\infty_tL^2_x(\varepsilon^{-2}I)} \sum\limits_{j=0}^2\big\|h^{\varepsilon}\big\|_{\mathcal X_{s_c}(\varepsilon^{-2}I)}^j\big\|R\big\|_{\mathcal X_{s_c}(\varepsilon^{-2}I)}^{2-j}\\
\lesssim &\big\|\phi\big\|_{L^\infty_tL^2_x(\varepsilon^{-2}I)} \sum\limits_{j=0}^2\big\|h^{\varepsilon}\big\|_{\mathcal X_{s_c}(\varepsilon^{-2}I)}^j\big\|\phi\big\|_{\mathcal X_{s_c}(\varepsilon^{-2}I)}^{2-j}.
\end{align*}
By \eqref{apprx-h-1-op2} and Lemma \ref{lem:conterexmp-ID}, we have that 
\begin{align*} 
\big\|h^{\varepsilon}\big\|_{\mathcal X_{s_c}(\varepsilon^{-2}I)}
\lesssim &
\big\|\fe^{it (\langle \nabla\rangle-1)}
\mathcal{S}_\varepsilon v_0\big\|_{\mathcal X_{s_c}(\varepsilon^{-2}I)} +\big\|\fe^{-it}\mathcal N_{\bot}(w)\big\|_{\mathcal X_{s_c}(\varepsilon^{-2}I)}\\
\lesssim &
\big\|\fe^{it \langle \nabla\rangle}
\mathcal{S}_\varepsilon v_0\big\|_{\mathcal X_{s_c}(\varepsilon^{-2}I)} +\big\|\mathcal N_{\bot}(w)\big\|_{\mathcal X_{s_c}(\varepsilon^{-2}I)}\\
\le &
\delta_0\rho(b_0)
+C_{a_0, b_0,T_0}\>\delta_0 \big(\varepsilon^2+\delta_0^2\big).
\end{align*}
Applying this estimate and  \eqref{est: Xsc-phi}, we obtain that
\begin{align*}
\big\|\eqref{Duh-mod'-1}\big\|_{L^2_x}
\le  &\delta_0^3\varepsilon^{2-s_c}\Big(\rho(b_0)
+C_{a_0, b_0,T_0}\> \big(\varepsilon^2+\delta_0^2\big)\Big),
\end{align*}
where as above, the constant $C_{a_0, b_0,T_0}>0$ only depends on $a_0, b_0$ and $T_0$.

 {\bf Upper bound of \eqref{Duh-mod'-1'}.} Similar as above, by Lemma \ref{lem:strichartz} and \eqref{est: Xsc-phi},  we have that
\begin{align*}
\big\|\eqref{Duh-mod'-1'}\big\|_{L^2_x}
\lesssim &\Big\|P_{> 1}F_1\big(h^{\varepsilon}\big)\Big\|_{L^q_{t}L^r_x(\varepsilon^{-2}I)}
\end{align*}
where $(q,r)$ is the pair of
$$
q=\frac{4(d+2)}{5d},\quad
r=\frac{2d(d+2)}{d^2+d+8}.
$$
Then by H\"older's and Bernstein's inequalities, we have that 
\begin{align*}
\big\|\eqref{Duh-mod'-1'}\big\|_{L^2_x}
\lesssim &\big\|P_{\gtrsim 1}h^{\varepsilon}\big\|_{L^\infty_tL^2_x(\varepsilon^{-2}I)}^\frac12\big\|P_{\gtrsim 1}|\nabla|^{s_c} h^{\varepsilon}\big\|_{L^\frac{2(d+2)}{d}_{tx}(\varepsilon^{-2}I)}^\frac12\big\||\nabla|^{s_c}h^{\varepsilon}\big\|_{L^
\frac{2(d+2)}{d}_{tx}(\varepsilon^{-2}I)}^2\\
\lesssim &\big\||\nabla|^{4} h^{\varepsilon}\big\|_{L^\infty_tL^2_x(\varepsilon^{-2}I)}^\frac12\big\||\nabla|^{4} h^{\varepsilon}\big\|_{L^\frac{2(d+2)}{d}_{tx}(\varepsilon^{-2}I)}^\frac12\big\||\nabla|^{s_c}h^{\varepsilon}\big\|_{L^
\frac{2(d+2)}{d}_{tx}(\varepsilon^{-2}I)}^2\\
\lesssim &\big\||\nabla|^{4-s_c}h^{\varepsilon}\big\|_{X_{s_c}(\varepsilon^{-2}I)}\big\|h^{\varepsilon}\big\|_{X_{s_c}(\varepsilon^{-2}I)}^2.
\end{align*}
By \eqref{h-L4-sd}, we obtain that 
\begin{align*}
\big\|\eqref{Duh-mod'-1'}\big\|_{L^2_x}
\le  C_{a_0, b_0}\delta_0^3  \varepsilon^{4-s_c}.
\end{align*}

 {\bf Upper bound of \eqref{Duh-mod'-2}.}
 By \eqref{est:bound-nabal3}, \eqref{apprx-h-1-op2} and \eqref{est:mathcal-N-h}, we have that
\begin{align}\label{est:Duh-mod'-2-1}
\big\| \eqref{Duh-mod'-2}\big\|_{L^2_x}
\lesssim &
\big\|h^{\varepsilon}(T^\varepsilon)\big\|_{L^6_x}^3\notag\\
\lesssim &
\big\|\fe^{iT^\varepsilon\langle \nabla\rangle}\mathcal{S}_\varepsilon v_0\big\|_{L^6_x}^3
+\big\|\fe^{-it}\mathcal N_{\bot}(w)\big\|_{L^6_x}^3.
\end{align}
 By Lemma \ref{lem:dispersive},  we have that
 \begin{align*}
 \big\|\fe^{iT^\varepsilon\langle \nabla\rangle}\mathcal{S}_\varepsilon v_0\big\|_{L^6_x}
 \lesssim & \langle T^\varepsilon\rangle^{-\frac d3}  \big\|\langle \nabla\rangle^{\frac{d+2}{3}}\mathcal{S}_\varepsilon v_0\big\|_{L^\frac65_x}.
 \end{align*}
 Note that
 \begin{align*}
\big\|\langle \nabla\rangle^{\frac{d+2}{3}}\mathcal{S}_\varepsilon v_0\big\|_{L^\frac65_x}
\le &
\big\|\mathcal{S}_\varepsilon v_0\big\|_{L^\frac65_x}
+\big\| |\nabla|^{\frac{d+2}{3}}\mathcal{S}_\varepsilon v_0\big\|_{L^\frac65_x}\\
=  &
 \varepsilon^{1-\frac56 d} \big\|v_0\big\|_{L^\frac65_x}
 +\varepsilon^{\frac53-\frac12 d} \big\| |\nabla|^{\frac{d+2}{3}}v_0\big\|_{L^\frac65_x}.
\end{align*}
Note that by \eqref{est:f}, 
\begin{align*}
\big\|v_0\big\|_{L^\frac65_x}
\le \delta_0  \big\|f\big\|_{L^\frac65_x}
\le C\delta_0;\quad 
 \big\| |\nabla|^{\frac{d+2}{3}}v_0\big\|_{L^\frac65_x}
 \le C_{a_0,b_0}.
\end{align*}
We further have that 
 \begin{align*}
\big\|\langle \nabla\rangle^{\frac{d+2}{3}}\mathcal{S}_\varepsilon v_0\big\|_{L^\frac65_x}
 \le  &
C \varepsilon^{1-\frac56 d}
 +C_{a_0,b_0}\varepsilon^{\frac53-\frac12 d}.
\end{align*}
 Set $T_0\ge 2$ and thus $T\ge1$,
 then it reduces that
  \begin{align*}
 \big\|\fe^{iT^\varepsilon\langle \nabla\rangle}\mathcal{S}_\varepsilon v_0\big\|_{L^6_x}
 \le &  \big(T^\varepsilon\big)^{-\frac d3} C \varepsilon^{1-\frac56 d}
 +C_{a_0,b_0}\varepsilon^{\frac53-\frac12 d}\\
 \le &  T_0^{-\frac d3} \varepsilon^{\frac13(2-s_c)}
 \Big(C+C_{a_0,b_0}\varepsilon^{\frac23}\Big).
 \end{align*}
 Moreover, 
 \begin{align*}
 \big\|\fe^{-it}\mathcal N_{\bot}(w)\big\|_{L^6_x}
 \lesssim 
  \big\||\nabla|^\frac d3\mathcal N_{\bot}(w)\big\|_{L^6_x}
\le C_{a_0,b_0}\delta_0^3 \varepsilon^{1-\frac d6}.
 \end{align*}
 Inserting these two estimates above into \eqref{est:Duh-mod'-2-1}, we obtain that
 \begin{align*}
\big\| \eqref{Duh-mod'-2}\big\|_{L^2_x}
\le &
T_0^{-d}\delta_0^3 \varepsilon^{2-s_c}
 \Big(C+C_{a_0,b_0}(\delta_0^2+\varepsilon^2\big)\Big).
\end{align*}

 {\bf Upper bound of \eqref{Duh-mod'-4} and  \eqref{Duh-mod'-5}.}
 We only focus on the term \eqref{Duh-mod'-4}, since the other term  \eqref{Duh-mod'-5} can be treated similarly. 
 
 By \eqref{apprx-h-1-op2} and  \eqref{apprx-ht-1-op2}, we rewrite it as 
 \begin{align*}
\eqref{Duh-mod'-4}
 = &
 \frac 32i \fe^{iT^\varepsilon\langle\nabla\rangle} \int_0^{T^\varepsilon} \fe^{-is(\langle\nabla\rangle-3)} \frac{1}{i(\langle\nabla\rangle-3)} \langle \nabla\rangle^{-1}P_{\le 1}\notag\\
 &\quad  \Bigg\{\Big[\fe^{is (\langle \nabla\rangle-1)}
\mathcal{S}_\varepsilon v_0 +\fe^{-is}\mathcal N_{\bot}(w)\Big]^2\> \Big[ \Delta \Big(\fe^{-is(\langle\nabla\rangle+1)}-\fe^{is(\langle\nabla\rangle-1)}\Big)\mathcal{S}_\varepsilon v_0 +\widetilde{\mathcal N}_{\bot}(w)\Big]\Bigg\}\,ds.
\end{align*}
Therefore, it gives that 
\begin{align*}
\eqref{Duh-mod'-4}
= &
\mathcal P_{\bot}+\mathcal R_{\bot},
\end{align*}
where the  terms $\mathcal P_{\bot}$ and $\mathcal R_{\bot}$ are defined as 
\begin{align*}
\mathcal P_{\bot}\triangleq &
- \frac 34 \fe^{iT^\varepsilon\langle\nabla\rangle} \int_0^{T^\varepsilon} \fe^{-is(\langle\nabla\rangle-3)} 
\langle \nabla\rangle^{-1}P_{\le 1}\Bigg\{\Big[\fe^{is (\langle\nabla\rangle-1)}
\mathcal{S}_\varepsilon v_0\Big]^2\\
&\qquad \qquad\cdot  \Big[ \Delta \Big(\fe^{-is(\langle\nabla\rangle+1)}-\fe^{is(\langle\nabla\rangle-1)}\Big)\mathcal{S}_\varepsilon v_0 \Big]\Bigg\}\,ds;\\
\mathcal R_{\bot}
\triangleq & 
\frac 32i \fe^{iT^\varepsilon\langle\nabla\rangle} \int_0^{T^\varepsilon} \fe^{-is\langle\nabla\rangle} 
T(\nabla) \Bigg\{\Big[\fe^{is \langle\nabla\rangle}
\mathcal{S}_\varepsilon v_0\Big]^2\> \Big[ \Delta \Big(\fe^{-is(\langle\nabla\rangle+1)}-\fe^{is(\langle\nabla\rangle-1)}\Big)\mathcal{S}_\varepsilon v_0 \Big]\Bigg\}\,ds\\
&\quad+
 \frac 32i \fe^{iT^\varepsilon\langle\nabla\rangle} \int_0^{T^\varepsilon} \fe^{-is\langle\nabla\rangle} \frac{1}{i(\langle\nabla\rangle-3)} \langle \nabla\rangle^{-1}P_{\le 1}\notag\\
 &\qquad  \Bigg\{\Big[2\fe^{is \langle \nabla\rangle}
\mathcal{S}_\varepsilon v_0\cdot \mathcal N_{\bot}(w) +\big(\mathcal N_{\bot}(w)\big)^2\Big]
 \Big[ \Delta \Big(\fe^{-is\langle\nabla\rangle}-\fe^{is\langle\nabla\rangle}\Big)\mathcal{S}_\varepsilon v_0 +\widetilde{\mathcal N}_{\bot}(w)\Big]\Bigg\}\,ds,
 \end{align*}
 where $T(\nabla)=\frac{1}{i(\langle\nabla\rangle-3)} P_{\le 1}+\frac1{2i}$. 
 Note that for any $1<p<+\infty$, 
\begin{align*}
\big\|T(\nabla)f\big\|_{L^p_x}
\lesssim \|\Delta f\|_{L^p_x}.
\end{align*}
Then using this estimate, Lemma \ref{lem:strichartz},  \eqref{est:mathcal-N-h} and \eqref{est:mathcal-N-tilde-h}, we have that 
\begin{align*}
\big\|\mathcal R_{\bot}\big\|_{L^2_x}
\le  &
C_{a_0, b_0}\delta_0^3 \varepsilon^{2-s_c} 
\big(\delta_0^2+\varepsilon^2\big). 
\end{align*}
 
Now we consider $\mathcal P_{\bot}$. To do this,  we split  it as 
$$
\mathcal P_{\bot}=\mathcal P_{\bot,1}+\mathcal P_{\bot,2}, 
$$
where 
\begin{align*}
\mathcal P_{\bot,1}\triangleq &
- \frac 34\varepsilon^2 \fe^{iT^\varepsilon\langle\nabla\rangle} \int_0^{T^\varepsilon} \fe^{-is\langle\nabla\rangle} 
 \langle \nabla\rangle^{-1}P_{\le 1}\Big[\Big(\fe^{is \langle\nabla\rangle}
\mathcal{S}_\varepsilon v_0\Big)^2\> \fe^{-is\langle\nabla\rangle}\mathcal{S}_\varepsilon\big(\Delta v_0\big) \Big]\,ds;\\
\mathcal P_{\bot,2}\triangleq &
- \frac 34\varepsilon^2 \fe^{iT^\varepsilon\langle\nabla\rangle} \int_0^{T^\varepsilon} \fe^{-is(\langle\nabla\rangle-3)} 
 \langle \nabla\rangle^{-1}P_{\le 1}\Big[\Big(\fe^{is (\langle\nabla\rangle-1)}
\mathcal{S}_\varepsilon v_0\Big)^2\> \fe^{is(\langle\nabla\rangle-1)}\mathcal{S}_\varepsilon\big(\Delta v_0\big) \Big]\,ds.
\end{align*}

For $\mathcal P_{\bot,1}$, by Lemma \ref{lem:strichartz}, we have that 
\begin{align*}
\big\|\mathcal P_{\bot,1}\big\|_{L^2_x}
\lesssim &
\big\|\fe^{is \langle\nabla\rangle}
\mathcal{S}_\varepsilon v_0\big\|_{\mathcal X_{s_c}([0,T^\varepsilon])}^2 \big\|\fe^{is \langle\nabla\rangle}
\mathcal{S}_\varepsilon \big(\Delta v_0\big)\big\|_{L^\infty_tL^2_x([0,T^\varepsilon])}\\
\lesssim &
\delta_0^3\varepsilon^{2-s_c}\Big\|\fe^{it\langle \nabla\rangle}\mathcal{S}_\varepsilon\Big(\fe^{-i\frac{b_0 |x|^2}{2}}f\Big) \Big\|_{\mathcal X_{s_c}([0,T^\varepsilon])}^2 \big\|\Delta v_0\big\|_{L^2_x}.
\end{align*}
Therefore, by \eqref{est:delta-v0} and Lemma \ref{lem:conterexmp-ID}, we obtain that 
\begin{align*}
\big\|\mathcal P_{\bot,1}\big\|_{L^2_x}
\lesssim &
\delta_0^3\varepsilon^{2-s_c}\Big(\rho(b_0)
+
C_{a_0,b_0} \varepsilon^2 T_0^{1+\frac1q}\Big)\Big(b_0a_0^{s_c}+b_0^2a_0^{s_c+2}+a_0^{s_c-2}\Big)\\
\le& \delta_0^3\varepsilon^{2-s_c}\Big(\rho(b_0)a_0^{s_c-2}+C_{b_0}a_0^{s_c}+C_{a_0,b_0,T_0} \varepsilon^2\Big).
\end{align*}

For $\mathcal P_{\bot,1}$, integration by parts, we have that 
\begin{align*}
\mathcal P_{\bot,2}= &
\frac 34\varepsilon^2\fe^{iT^\varepsilon\langle\nabla\rangle}\Bigg[ 
\frac{\fe^{-is(\langle\nabla\rangle-3)} }{(\langle\nabla\rangle-3)\langle \nabla\rangle} P_{\le 1}\Big[\Big(\fe^{is (\langle\nabla\rangle-1)}
\mathcal{S}_\varepsilon v_0\Big)^2\> \fe^{is(\langle\nabla\rangle-1)}\mathcal{S}_\varepsilon\big(\Delta v_0\big) \Big|_0^{T^\varepsilon} \\
&\quad - \int_0^{T^\varepsilon} 
\frac{\fe^{-is(\langle\nabla\rangle-3)} }{(\langle\nabla\rangle-3)\langle \nabla\rangle}P_{\le 1} \partial_s\Big[\Big(\fe^{is (\langle\nabla\rangle-1)}
\mathcal{S}_\varepsilon v_0\Big)^2\> \fe^{is(\langle\nabla\rangle-1)}\mathcal{S}_\varepsilon\big(\Delta v_0\big) \Big]\,ds\Bigg].
\end{align*}
Since 
$$
\partial_s \fe^{is(\langle\nabla\rangle-1)}=i(\langle\nabla\rangle-1) \fe^{is(\langle\nabla\rangle-1)}
=\frac{i|\nabla|^2}{1+\langle\nabla\rangle}\fe^{is(\langle\nabla\rangle-1)},
$$
by Lemma \ref{lem:strichartz}, we have that 
\begin{align*}
\big\|\mathcal P_{\bot,2}\big\|_{L^2_x}
\lesssim &
\delta_0^3C_{a_0,b_0} \varepsilon^4.
\end{align*}
Combining with the estimates on $\mathcal P_{\bot,1}, \mathcal P_{\bot,2}$ and $\mathcal R_{\bot}$ above, we obtain that 
 \begin{align*}
\big\| \eqref{Duh-mod'-4}\big\|_{L^2_x}
\le  &
\delta_0^3\varepsilon^{2-s_c}\Big(\rho(b_0)a_0^{s_c-2}+C_{b_0}a_0^{s_c}+C_{a_0, b_0}\delta_0^2+C_{a_0,b_0,T_0} \varepsilon^2\Big).
\end{align*}
Arguing similarly, we also have that 
 \begin{align*}
\big\| \eqref{Duh-mod'-5}\big\|_{L^2_x}
\le  &
\delta_0^3\varepsilon^{2-s_c}\Big(\rho(b_0)a_0^{s_c-2}+C_{b_0}a_0^{s_c}+C_{a_0, b_0}\delta_0^2+C_{a_0,b_0,T_0} \varepsilon^2\Big).
\end{align*}

Combining the estimates on \eqref{est:Duh-mod'},   we obtain that
for any $T\in [T_0-1,T_0]$ and  any $\varepsilon\in (0,\varepsilon_0]$,
 \begin{align*} 
\big\|\phi(T^\varepsilon)\big\|_{L^2_x}
\ge &\delta_0^3\varepsilon^{2-s_c}\Big[ \tilde c_0a_0^{s_c-2}
-\Big(\rho(b_0)a_0^{s_c-2}+C_{b_0}a_0^{s_c}+T_0^{-d}+C_{a_0, b_0}\delta_0^2+C_{a_0,b_0,T_0}\delta_0^{-2} \varepsilon^2\Big)\Big].
\end{align*}
Choosing $b_0$ suitably large such that $\rho(b_0)\le \frac14\tilde c_0$ and $T_0\ge 2$, and by the relationship \eqref{relationships}, we obtain that 
 \begin{align}\label{phi-lbd}
\big\|\phi(T^\varepsilon)\big\|_{L^2_x}
\ge &\frac12 \tilde c_0a_0^{s_c-2}\delta_0^3\varepsilon^{2-s_c}.
\end{align}

Now we are in the position to prove Theorem \ref{thm:main1-optimal}. Argue for contradiction, and suppose that for any $T\in [T_0-1,T_0]$,  
\begin{align}\label{Assup-contrid}
\big\| R(T^\varepsilon)\big\|_{L^2_x}
< & \frac1{8} \tilde c_0a_0^{s_c-2}\delta_0^3\varepsilon^{2-s_c}.
\end{align}
Applying \eqref{phi-varphi}, it is equivalent to 
\begin{align*}
\big\|\im \>\phi(T^\varepsilon)\big\|_{L^2_x}
< & \frac1{8} \tilde c_0a_0^{s_c-2}\delta_0^3\varepsilon^{2-s_c},
\end{align*}
Then from \eqref{phi-lbd},  it leads that for any  $T\in [T_0-1,T_0]$, 
\begin{align*}
\big\|\re \>\phi(T^\varepsilon)\big\|_{L^2_x}
\ge & \frac{3}{8} \tilde c_0a_0^{s_c-2}\delta_0^3\varepsilon^{2-s_c}.
\end{align*}
Using \eqref{phi-varphi} again, it drives that 
\begin{align*}
\big\|\partial_t R(T^\varepsilon)\big\|_{L^2_x}
\ge & \frac{3}{8} \tilde c_0a_0^{s_c-2}\delta_0^3\varepsilon^{2-s_c}.
\end{align*}
By Mean Value Theorem, we have that 
$$
R(T^\varepsilon+1)= R(T^\varepsilon)+ \partial_tR(\tilde T^\varepsilon),\quad \mbox{for some } \tilde T^\varepsilon\in [T^\varepsilon,T^\varepsilon+1]. 
$$
Therefore, 
\begin{align*}
\big\|R(T^\varepsilon+1)\big\|_{L^2_x}
\ge &  \big\|\partial_tR(\tilde T^\varepsilon)\big\|_{L^2_x}-\big\|R(T^\varepsilon)\big\|_{L^2_x}\\
\ge &  \frac{3}{8} \tilde c_0a_0^{s_c-2}\delta_0^3\varepsilon^{2-s_c}- \frac1{8} \tilde c_0a_0^{s_c-2}\delta_0^3\varepsilon^{2-s_c}\\
=&\frac{1}{4} \tilde c_0a_0^{s_c-2}\delta_0^3\varepsilon^{2-s_c}\\
>& \frac1{8}\tilde c_0a_0^{s_c-2}\delta_0^3\varepsilon^{2-s_c}.
\end{align*}
This is a contradiction with \eqref{Assup-contrid} and thus there exists  $T\in [T_0-1,T_0]$, such that 
$$
\big\| R(T^\varepsilon)\big\|_{L^2_x}
\ge   \frac1{8} \tilde c_0a_0^{s_c-2}\delta_0^3\varepsilon^{2-s_c}.
$$
Scaling back, we obtain that
  $$
\big\|r(T)\big\|_{L^2_x}\ge \frac18\tilde c_0a_0^{s_c-2}\delta_0^3\varepsilon^{2}.
$$
This completes the proof of Theorem \ref{thm:main1-optimal}.

  \vskip 1.5cm

\section{Single modulated limit}\label{sec:singe-modulate}
 \vskip .5cm

 In this section, we shall give the proof of Theorem \ref{thm:main3}.

  \subsection{Formulation}\label{sec:Th3-formula}

 Denote that $r=\widetilde{v}^\varepsilon-\widetilde{v}$, then it is the solution of
 \EQn{
	\label{eq:kg-r-3}
	\left\{ \aligned
	&\varepsilon^2\partial_{tt} r +2i\partial_t r- \De r  +\varepsilon^2 \partial_{tt} \widetilde{v}+\big|\widetilde{v}^\varepsilon\big|^2\widetilde{v}^\varepsilon-|\widetilde{v}|^2\widetilde{v}
	=0, \\
	& r(0,x) = r_0(x),\quad  \partial_tr(0,x) = \varepsilon^{-2} r_1(x),
	\endaligned
	\right.
}
where 
$$
r_0\triangleq \widetilde{v}^\varepsilon_0-\widetilde{v}_0;\quad 
r_1\triangleq \widetilde{v}^\varepsilon_1(x)-\frac{\varepsilon^2}{2i}(\Delta \widetilde{v}_0-\mu |\widetilde{v}_0|^2\widetilde{v}_0).
$$
Moreover,  we denote
$$
R=\fe^{it} \mathcal{S}_\varepsilon  r,
$$
then
 \EQn{
	\label{eq:kg-R-3}
	\left\{ \aligned
	&\partial_{tt} R - \De R + R +\varepsilon^4 \fe^{it}  \mathcal{S}_\varepsilon( \partial_{tt} \widetilde{v})+\fe^{it} \Big(\big|\mathcal{S}_\varepsilon \widetilde{v}^\varepsilon\big|^2\mathcal{S}_\varepsilon \widetilde{v}^\varepsilon-|\mathcal{S}_\varepsilon \widetilde{v}|^2\mathcal{S}_\varepsilon \widetilde{v}\Big)
	=0, \\
	& R(0,x) = \mathcal{S}_\varepsilon r_0,\quad  \partial_t R(0,x) =i \mathcal{S}_\varepsilon r_0+ \mathcal{S}_\varepsilon r^\varepsilon_1.
	\endaligned
	\right.
}

Similar as Section \ref{sec:LR-wave}, we  denote
\EQ{
	\left\{ \aligned
	&\mathcal R_1=\langle \nabla \rangle^{-1}\big(\partial_t-i \langle \nabla\rangle\big)R, \\
	&\mathcal R_2=\langle \nabla \rangle^{-1}\big(\partial_t+i \langle \nabla\rangle\big)R.
	\endaligned
	\right.
}
Then
\EQn{
	\label{eq:R-R12}
	\left\{ \aligned
	&R=\frac i2\big(\mathcal R_1-\mathcal R_2\big), \\
	&\partial_t R=\frac 12\langle\nabla\rangle\big(\mathcal R_1+\mathcal R_2\big),
	\endaligned
	\right.
}
and the initial datum
\EQn{
	\label{eq:R-12-KG-intialdatum}
	\left\{ \aligned
&\mathcal R_1(0)=\mathcal R_{1,0}\triangleq \langle \nabla \rangle^{-1}\Big[i\big(1- \langle \nabla\rangle\big)\mathcal{S}_\varepsilon r_0+\mathcal{S}_\varepsilon r^\varepsilon_1\Big];\\
&\mathcal R_2(0)=\mathcal R_{2,0}\triangleq \langle \nabla \rangle^{-1}\Big[i\big(1+ \langle \nabla\rangle\big)\mathcal{S}_\varepsilon r_0+\mathcal{S}_\varepsilon r^\varepsilon_1\Big].
	\endaligned
	\right.
}
Moreover,  $\mathcal R_j$ obeys the following Duhamel's formula,
\begin{align}\label{Duh-Rj-th3}
\mathcal R_j(t)=&\fe^{\mp i(t-t_0)\langle \nabla\rangle} \mathcal R_j(t_0)\notag\\
& -\int_{t_0}^t \fe^{\mp i(t-s)\langle \nabla\rangle}\fe^{is} \langle\nabla \rangle^{-1} \Big[\varepsilon^4 \mathcal{S}_\varepsilon( \partial_{tt} \widetilde{v})+\big|\mathcal{S}_\varepsilon \widetilde{v}^\varepsilon\big|^2\mathcal{S}_\varepsilon \widetilde{v}^\varepsilon-|\mathcal{S}_\varepsilon \widetilde{v}|^2\mathcal{S}_\varepsilon \widetilde{v}\Big]\,ds.
\end{align}

%

\subsection{Proof of Theorem \ref{thm:main3}}

  \subsubsection{Regular cases}\label{sec:Thm3-case1-1}

  In this subsubsection, we consider the case when $\alpha=4$.  The main result is
  \begin{prop}\label{prop:Thm3-case1-1}
  Let $d=2,3$, $T>0$ and $\beta\in [1,2]$, then there exists $\delta_0>0$ such that the following property hold.  Assume that
  \begin{itemize}
  \item[(1)] $v_0\in H^{4+s_c}(\R^d),\widetilde{v}^\varepsilon_0\in H^\beta_x(\R^d), v_1^\varepsilon\in H^{s_c}(\R^d)$;
  \item[(2)] $(\widetilde{v}^\varepsilon_0-\widetilde{v}_0,\widetilde{v}_1^\varepsilon)\to (0,0) \mbox{ in } H^{s_c}(\R^d).$
  \end{itemize}
  When $d=3$,  additionally assume that
    \begin{itemize}
    \item[(3)] $\varepsilon^2 T\|\widetilde{v}\|_{L^\infty_t\dot H^{\frac92}_x}\le \delta_0$.
  \end{itemize}
Let $\widetilde{v}^\varepsilon, \widetilde{v}$ be the corresponding solutions to  \eqref{eq:nls-wave-N} and \eqref{eq:nls-1} respectively, then
for any $\beta\ge 1$,
  \begin{align*}
\sup\limits_{t\in [0,T]}\big\|\widetilde{v}^\varepsilon(t)-\widetilde{v}(t)\big\|_{L^2_x}
\le  &C\Big( \|(\widetilde{v}^\varepsilon_0-\widetilde{v}_0,\widetilde{v}_1^\varepsilon)\|_{L^2_x\times L^2_x}+\varepsilon^\beta\big\|\widetilde{v}^\varepsilon_0-\widetilde{v}_0\big\|_{H^\beta_x} \\
&\quad+\varepsilon^2\big\|\Delta \widetilde{v}_0\big\|_{L^2_x}+\varepsilon^2 T\|\Delta^2 \widetilde{v}\|_{L^\infty_t L^2_x}\Big),
\end{align*}
where the constant $C>0$ only depends on $\|\widetilde{v}_0\|_{H^1_x}$, $\big\|(\widetilde{v}_0^\varepsilon,\widetilde{v}_1^\varepsilon)\big\|_{H^1_x\times L^2_x}$.
  \end{prop}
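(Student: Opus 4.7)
The plan is to mimic the argument used for Theorem \ref{thm:main1-global}, working in the rescaled complex-expansion variables $R=\fe^{it}\mathcal{S}_\varepsilon r$ and the associated leftward/rightward waves $\mathcal{R}_1,\mathcal{R}_2$ introduced in Section \ref{sec:Th3-formula}. Applying the Strichartz estimate of Lemma \ref{lem:strichartz} to the Duhamel formula \eqref{Duh-Rj-th3}, the control on $\|\mathcal{R}_j\|_{L^\infty_t L^2_x([0,\varepsilon^{-2}T])}$ splits into three contributions: (i) the initial-data term $\|\mathcal{R}_{j,0}\|_{L^2_x}$; (ii) the inhomogeneous source $\varepsilon^4 \langle\nabla\rangle^{-1}\mathcal{S}_\varepsilon(\partial_{tt}\widetilde v)$; and (iii) the cubic difference $\langle\nabla\rangle^{-1}(|\mathcal{S}_\varepsilon\widetilde v^\varepsilon|^2 \mathcal{S}_\varepsilon\widetilde v^\varepsilon-|\mathcal{S}_\varepsilon\widetilde v|^2 \mathcal{S}_\varepsilon\widetilde v)$. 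Recovering $r$ via \eqref{eq:R-R12} and rescaling $\|r(\tau)\|_{L^2_x}=\varepsilon^{d/2-1}\|R(\varepsilon^{-2}\tau)\|_{L^2_x}$, the claimed bound will follow once each contribution is shown to be at most $\varepsilon^{1-d/2}$ times the right-hand side of the proposition.

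For (i), I would exploit that the multiplier $m_1(\xi)=|\xi|^2/(\langle\xi\rangle(1+\langle\xi\rangle))$ appearing in $\mathcal{R}_{1,0}$ satisfies $|m_1(\xi)|\lesssim|\xi|^\beta$ for $|\xi|\le1$ and $|m_1(\xi)|\lesssim1$ for $|\xi|\ge1$ whenever $\beta\le 2$. After rescaling this produces $\|m_1(D)\mathcal{S}_\varepsilon r_0\|_{L^2_x}\lesssim \varepsilon^{1-d/2}(\|r_0\|_{L^2_x}+\varepsilon^\beta\|r_0\|_{H^\beta_x})$, which in combination with the trivial $\|\langle\nabla\rangle^{-1}\mathcal{S}_\varepsilon r_1^\varepsilon\|_{L^2_x}\lesssim \varepsilon^{1-d/2}\|r_1^\varepsilon\|_{L^2_x}$ and the definition of $r_1^\varepsilon$ delivers exactly $\|\widetilde v_1^\varepsilon\|_{L^2_x}+\varepsilon^2\|\Delta\widetilde v_0\|_{L^2_x}$ plus a cubic remainder $\varepsilon^2\|\widetilde v_0\|_{L^6_x}^3$ that is absorbed into the constant via the $H^1$ hypothesis on $\widetilde v_0$ and Sobolev embedding.

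For (ii), the $L^1_t L^2_x\to L^\infty_t L^2_x$ estimate gives a bound by $\varepsilon^4\int_0^{\varepsilon^{-2}T}\|\mathcal{S}_\varepsilon\partial_{tt}\widetilde v\|_{L^2_x}\,ds=\varepsilon^{3-d/2}\int_0^T\|\partial_{tt}\widetilde v\|_{L^2_x}\,d\tau$. Differentiating the Schr\"odinger equation \eqref{eq:nls-1} twice expresses $\partial_{tt}\widetilde v$ as $-\tfrac14\Delta^2\widetilde v$ plus derivatives of $|\widetilde v|^2\widetilde v$; by Kato--Ponce and the NLS scattering estimates of Lemma \ref{lem:spacetime-norm-NLS} (which depend only on $\|\widetilde v_0\|_{H^1_x}$) all these auxiliary terms are controlled, yielding, after the scaling conversion, precisely the $\varepsilon^2 T\|\Delta^2\widetilde v\|_{L^\infty_t L^2_x}$ contribution in the target estimate.

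The main obstacle is (iii): closing a Strichartz/Gronwall bootstrap for $|\mathcal{S}_\varepsilon\widetilde v^\varepsilon|^2\mathcal{S}_\varepsilon\widetilde v^\varepsilon-|\mathcal{S}_\varepsilon\widetilde v|^2\mathcal{S}_\varepsilon\widetilde v=O\bigl((|\mathcal{S}_\varepsilon\widetilde v|^2+|\mathcal{S}_\varepsilon\widetilde v^\varepsilon|^2)\,R\bigr)$. For this I need uniform spacetime control on both $\mathcal{S}_\varepsilon\widetilde v$ and $\mathcal{S}_\varepsilon\widetilde v^\varepsilon$: the former comes from the NLS scattering bounds of Lemma \ref{lem:spacetime-norm-NLS}, and the latter from Proposition \ref{prop:h-varep} when $d=2$; in $d=3$ the extra smallness hypothesis (3) of the proposition, combined with the scaling identities $\|\mathcal{S}_\varepsilon f\|_{L^4_{tx}}=\varepsilon^{(2-d)/4}\|f\|_{L^4_{tx}}$, permits a perturbative argument. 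I would then apply the finite interval-decomposition trick from Section \ref{sec:proof-thm1} to partition $[0,\varepsilon^{-2}T]$ into $K$ subintervals on each of which the cubic Strichartz factor is $\le \tfrac12$, iterate the resulting linear bound on $\|\langle\nabla\rangle^{1/2}\mathcal{R}_j\|_{L^\infty_tL^2_x}\cap\|\mathcal{R}_j\|_{L^4_{tx}}$, and sum to obtain the claimed estimate after undoing the rescaling.
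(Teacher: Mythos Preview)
Your overall structure matches the paper's proof exactly: the same rescaled variables, the same three contributions (i)--(iii), the same use of Lemma~\ref{lem:spacetime-norm-NLS} for $\widetilde v$, and the same interval-decomposition iteration. Parts (i) and (ii) are handled correctly (modulo the cosmetic point that the uniform $S_0$ bound on $\mathcal{S}_\varepsilon\widetilde v^\varepsilon$ in $d=2$ comes from Lemma~\ref{lem:spacetime-norm-KG} applied to the complex expansion, not from Proposition~\ref{prop:h-varep} per se).

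There is, however, a genuine gap in your treatment of (iii) when $d=3$. You write that you need uniform spacetime control on $\mathcal{S}_\varepsilon\widetilde v^\varepsilon$, and that assumption~(3) together with the scaling identity $\|\mathcal{S}_\varepsilon f\|_{L^4_{tx}}=\varepsilon^{(2-d)/4}\|f\|_{L^4_{tx}}$ gives this perturbatively. This does not work: in $d=3$ the energy of $\phi=\fe^{it}\mathcal{S}_\varepsilon\widetilde v^\varepsilon$ is \emph{not} uniformly bounded (the $L^2$ part scales like $\varepsilon^{-1}$), so Lemma~\ref{lem:spacetime-norm-KG} does not yield a uniform $S_{1/2}$ bound on $\widetilde v^\varepsilon$. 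The paper states this explicitly. The correct mechanism is different: instead of bounding $\mathcal{S}_\varepsilon\widetilde v^\varepsilon$, you substitute $\mathcal{S}_\varepsilon\widetilde v^\varepsilon=e^{-it}R+\mathcal{S}_\varepsilon\widetilde v$ so that the cubic difference becomes
\[
O\bigl(|\mathcal{S}_\varepsilon\widetilde v|^2\,R\bigr)+O\bigl(|R|^2\,R\bigr).
\]
The first term is linear in $R$ and is absorbed by the interval decomposition using only $\|\widetilde v\|_{S_{1/2}}$; the second is superlinear and is closed by a bootstrap on $\|R\|_{S_{1/2}}$. Assumption~(3) enters precisely here: it makes the source contribution $\varepsilon^{4}|I|\,\|\widetilde v\|_{L^\infty_t\dot H^{9/2}_x}$ small enough that the bootstrap hypothesis $\|R\|_{S_{1/2}}\ll 1$ propagates through all subintervals. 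Without this rewriting, your interval partition cannot be chosen independently of $\varepsilon$ in three dimensions.
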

\begin{proof}
First,  we denote
\begin{align*}
\big\|f\big\|_{S_{s_c}(I)}\triangleq & \big\||\nabla|^{s_c}f\big\|_{L^
\frac{2(d+2)}{d}_{tx}(I)};\\
\big\|f\big\|_{Y_{s_c}(I)}\triangleq & \big\|f\big\|_{S_{s_c}(I)}+\big\||\nabla|^{s_c}\langle\nabla\rangle^\frac12f\big\|_{L^\infty_tL^2_x(I)};
\end{align*}
and
\begin{align*}
\|f\|_{Z_{s_c}(I)}&\triangleq \|f\|_{L^\frac43_{tx}(I\times\R^2)}, \mbox{when } d=2; \\
\|f\|_{Z_{s_c}(I)}&\triangleq \big\||\nabla|^\frac12 f\big\|_{L^\frac{10}9_tL^{\frac{30}{17}}_x(I\times\R^3)}, \mbox{when } d=3.
\end{align*}
Then by \eqref{Duh-Rj-th3} and Lemma \ref{lem:strichartz}, we have that for any $\gamma\ge -s_c$ and any $I=[t_0,t_1]\subset \R$,
\begin{align}\label{est:Thm3-Rj-case1}
\big\||\nabla|^\gamma\mathcal R_j\big\|_{Y_{s_c}(I)}
\lesssim &
\big\||\nabla|^\gamma\langle\nabla\rangle^\frac12\mathcal R_j(t_0)\big\|_{\dot H^{s_c}_x}
+
\varepsilon^4\big\||\nabla|^{s_c+\gamma}\mathcal{S}_\varepsilon( \partial_{tt} \widetilde{v})\big\|_{L^1_tL^2_x(I)}\notag\\
&\qquad +\Big\||\nabla|^\gamma\Big(\big|\mathcal{S}_\varepsilon \widetilde{v}^\varepsilon\big|^2\mathcal{S}_\varepsilon \widetilde{v}^\varepsilon-|\mathcal{S}_\varepsilon \widetilde{v}|^2\mathcal{S}_\varepsilon \widetilde{v}\Big) \Big\|_{Z_{s_c}(I)}.
\end{align}

For second term, we have that
\begin{align}\label{est:Thm3-Rj-case1-pttv-1}
\big\||\nabla|^{s_c+\gamma} \mathcal{S}_\varepsilon( \partial_{tt} \widetilde{v})\big\|_{L^1_tL^2_x(I)}
\lesssim &
| I |\big\||\nabla|^{s_c+\gamma} \mathcal{S}_\varepsilon( \partial_{tt} \widetilde{v})\big\|_{L^\infty_t L^2_x(\R)}\notag\\
= &
\varepsilon^\gamma| I | \big\||\nabla|^{s_c+\gamma}  \partial_{tt} \widetilde{v}\big\|_{L^\infty_t L^2_x(\R)}.
\end{align}
Now we consider the estimates on $\partial_{tt}\widetilde{v}$. From the equation \eqref{eq:nls}, we have the rough form that
$$
\partial_{tt}\widetilde{v}= \Delta^2v+O\big(\widetilde{v}^2\Delta \widetilde{v}+\widetilde{v}^2(\nabla \widetilde{v})^2+\widetilde{v}^5\big).
$$
Therefore, by Gagliardo-Nirenberg's inequality, we have that
$$
\big\||\nabla|^{s_c+\gamma}  \partial_{tt} \widetilde{v}\big\|_{L^\infty_t L^2_x(\R)}
\le C(\|\widetilde{v}\|_{L^\infty_t\dot H^{s_c}_x(\R)})\big\|\widetilde{v}\big\|_{L^\infty_t \dot H^{4+s_c+\gamma}_x(\R)}.
$$
By Lemma \ref{lem:spacetime-norm-NLS}, we further get
\begin{align}\label{est:vptt-gamma}
\big\||\nabla|^{s_c+\gamma}  \partial_{tt} \widetilde{v}\big\|_{L^\infty_t L^2_x(\R)}
\le C(\|\widetilde{v}_0\|_{\dot H^{s_c}_x)}\big\|\widetilde{v}\big\|_{L^\infty_t\dot H^{4+s_c+\gamma}_x(\R)}.
\end{align}
Inserting this into \eqref{est:Thm3-Rj-case1-pttv-1}, we obtain that
\begin{align}\label{est:Thm3-Rj-case1-pttv}
\big\||\nabla|^{s_c+\gamma} \mathcal{S}_\varepsilon( \partial_{tt} \widetilde{v})\big\|_{L^1_tL^2_x(I)}
\lesssim
C(\|\widetilde{v}_0\|_{L^2_x})\>\varepsilon^\gamma| I | \big\|\widetilde{v}\big\|_{L^\infty_t \dot H^{4+s_c+\gamma}_x(\R)} .
\end{align}

For the third term, by Kato-Ponce's inequality,  we have that when $d=3$, for any $-\frac12\le \gamma<0$,
\begin{align}\label{est:Thm3-Rj-case1-Sv-better}
&\Big\||\nabla|^\gamma\Big(\big|\mathcal{S}_\varepsilon \widetilde{v}^\varepsilon\big|^2\mathcal{S}_\varepsilon \widetilde{v}^\varepsilon-|\mathcal{S}_\varepsilon \widetilde{v}|^2\mathcal{S}_\varepsilon \widetilde{v}\Big)  \Big\|_{Z_{s_c}(I)}\notag\\
\lesssim &
\big\||\nabla|^\gamma \mathcal{S}_\varepsilon r\big\|_{S_{s_c}(I)}
\Big(\big\|\mathcal{S}_\varepsilon \widetilde{v}^\varepsilon\big\|_{S_{s_c}(I)}^2
+\big\|\mathcal{S}_\varepsilon \widetilde{v}\big\|_{S_{s_c}(I)}^2\Big)
\notag\\
\lesssim &
\big\||\nabla|^\gamma R\big\|_{S_{s_c}(I)}
\Big(\big\|R\big\|_{S_{s_c}(I)}^2
+\big\|\mathcal{S}_\varepsilon \widetilde{v}\big\|_{S_{s_c}(I)}^2\Big).
\end{align}
For  $ \gamma\ge 0$, then by Kato-Ponce's inequality, there is an additional term,  and the corresponding estimate reads
\begin{align}\label{est:Thm3-Rj-case1-Sv}
&\Big\||\nabla|^\gamma\Big(\big|\mathcal{S}_\varepsilon \widetilde{v}^\varepsilon\big|^2\mathcal{S}_\varepsilon \widetilde{v}^\varepsilon-|\mathcal{S}_\varepsilon \widetilde{v}|^2\mathcal{S}_\varepsilon \widetilde{v}\Big)  \Big\|_{Z_{s_c}(I)}\notag\\
\lesssim &
\big\||\nabla|^\gamma \mathcal{S}_\varepsilon r\big\|_{S_{s_c}(I)}
\Big(\big\|\mathcal{S}_\varepsilon \widetilde{v}^\varepsilon\big\|_{S_{s_c}(I)}^2
+\big\|\mathcal{S}_\varepsilon \widetilde{v}\big\|_{S_{s_c}(I)}^2\Big)
\notag\\
&\quad +
\big\|\mathcal{S}_\varepsilon r\big\|_{S_{s_c}(I)}\big\||\nabla|^\gamma  \mathcal{S}_\varepsilon \widetilde{v}\big\|_{S_{s_c}(I)}
\Big(\big\|\mathcal{S}_\varepsilon \widetilde{v}^\varepsilon\big\|_{S_{s_c}(I)}
+\big\|\mathcal{S}_\varepsilon \widetilde{v}\big\|_{S_{s_c}(I)}\Big)\notag\\
\lesssim &
\big\||\nabla|^\gamma R\big\|_{S_{s_c}(I)}
\Big(\big\|\mathcal{S}_\varepsilon \widetilde{v}^\varepsilon\big\|_{S_{s_c}(I)}^2
+\big\|\mathcal{S}_\varepsilon \widetilde{v}\big\|_{S_{s_c}(I)}^2\Big)
\notag\\
&\quad +
C\varepsilon^\gamma\big\|R\big\|_{S_{s_c}(I)}
\Big(\big\|\mathcal{S}_\varepsilon \widetilde{v}^\varepsilon\big\|_{S_{s_c}(I)}
+\big\|\mathcal{S}_\varepsilon \widetilde{v}\big\|_{S_{s_c}(I)}\Big),
\end{align}
where in the last step we have used
$$
\big\||\nabla|^\gamma  \mathcal{S}_\varepsilon \widetilde{v}\big\|_{S_{s_c}(\R)}
= \varepsilon^\gamma
\big\||\nabla|^\gamma  \widetilde{v}\big\|_{S_{s_c}(\R)}
\le C\varepsilon^\gamma,
$$
and the positive constant $C$ only depend on $\|\widetilde{v}_0\|_{H^1_x}$ and $\|\widetilde{v}_0\|_{\dot H^{s_c+\gamma}}$.

$\bullet$ When $d=2$, by \eqref{est:Thm3-Rj-case1}, \eqref{est:Thm3-Rj-case1-pttv} and \eqref{est:Thm3-Rj-case1-Sv},
we have that
\begin{align}\label{est:Thm3-Rj-case1-d2}
\big\||\nabla|^\gamma \mathcal R_j\big\|_{Y_0(I)}
\lesssim &
\big\||\nabla|^\gamma \langle\nabla\rangle^\frac12\mathcal R_j(t_0)\big\|_{L^2_x}
+
\varepsilon^{4+\gamma}|I|  \big\|\widetilde{v}\big\|_{L^\infty_t \dot H^{4+\gamma}_x(\R)} \notag\\
&\qquad +\big\||\nabla|^\gamma R\big\|_{S_{0}(I)}
\Big(\big\|\mathcal{S}_\varepsilon \widetilde{v}^\varepsilon\big\|_{S_{0}(I)}^2
+\big\|\mathcal{S}_\varepsilon \widetilde{v}\big\|_{S_{0}(I)}^2\Big)
\notag\\
&\qquad +
\varepsilon^\gamma\big\|R\big\|_{S_{0}(I)}
\Big(\big\|\mathcal{S}_\varepsilon  \widetilde{v}^\varepsilon\big\|_{S_{0}(I)}
+\big\|\mathcal{S}_\varepsilon  \widetilde{v}\big\|_{S_{0}(I)}\Big).\notag\\
= &
\big\||\nabla|^\gamma \langle\nabla\rangle^\frac12\mathcal R_j(t_0)\big\|_{L^2_x}
+
\varepsilon^{4+\gamma}|I|  \big\|\widetilde{v}\big\|_{L^\infty_t \dot H^{4+\gamma}_x(\R)} \notag\\
&\qquad +\big\||\nabla|^\gamma R\big\|_{S_{0}(I)}
\Big(\big\|\widetilde{v}^\varepsilon\big\|_{S_{0}(\varepsilon^2I)}^2
+\big\|\widetilde{v}\big\|_{S_{0}(\varepsilon^2I)}^2\Big)
\notag\\
&\qquad +
\varepsilon^\gamma\big\|R\big\|_{S_{0}(I)}
\Big(\big\| \widetilde{v}^\varepsilon\big\|_{S_{0}(\varepsilon^2I)}
+\big\| \widetilde{v}\big\|_{S_{0}(\varepsilon^2I)}\Big).
\end{align}
Moreover, arguing similarly as in Section \ref{sec:RCE}, we have that
\begin{align}\label{est:Thm3-ve-L4tx}
\big\|  \widetilde{v}^\varepsilon\big\|_{S_0(\R)}
\le  C\big(\big\|(\widetilde{v}_0^\varepsilon,\widetilde{v}_1^\varepsilon)\big\|_{H^1_x\times L^2_x}\big).
\end{align}
Indeed, consider
$$
\phi=\fe^{it} \mathcal{S}_\varepsilon \widetilde{v}^\varepsilon,
$$
then it solves the following Cauchy problem:
	\EQ{
	\left\{ \aligned
	&\partial_{tt} \phi - \De \phi + \phi+|\phi|^2\phi=0, \\
	& \phi(0,x) = \mathcal{S}_\varepsilon \widetilde{v}^\varepsilon_0(x),\quad  \partial_t\phi(0,x) =i\mathcal{S}_\varepsilon \widetilde{v}^\varepsilon_0(x) +\mathcal{S}_\varepsilon \widetilde{v}^\varepsilon_1(x).
	\endaligned
	\right.
}
Note that
\begin{align*}
E(\phi(0,x),\partial_t\phi(0,x))
\lesssim & \big\|\widetilde{v}^\varepsilon_0\big\|_{L^2_x}^2+\big\|\widetilde{v}^\varepsilon_1\big\|_{L^2_x}^2
+\varepsilon^2 \big( \big\|\nabla \widetilde{v}^\varepsilon_0\big\|_{L^2_x}^2+ \big\|\widetilde{v}^\varepsilon_0\big\|_{L^4}^4\big)\\
\le &  C\big(\big\|(\widetilde{v}_0^\varepsilon,\widetilde{v}_1^\varepsilon)\big\|_{H^1_x\times L^2_x}\big).
\end{align*}
Then \eqref{est:Thm3-ve-L4tx} follows from Lemma \ref{lem:spacetime-norm-KG}.
Moreover, from Lemma  \ref{lem:spacetime-norm-NLS},
\begin{align}\label{est:Thm3-v-L4tx}
\big\|  \widetilde{v}\big\|_{S_0(\R)}\le  C\big(\big\|\widetilde{v}_0\big\|_{L^2_x}\big).
\end{align}
We also need the following estimates.  By \eqref{eq:R-12-KG-intialdatum}, Sobolev's and Bernstein's inequalities,
\begin{align}\label{est:Thm3-Rj0-gamma}
\big\||\nabla|^\gamma \langle\nabla\rangle^\frac12\mathcal R_{j,0}\big\|_{L^2_x}
\lesssim &
\big\||\nabla|^\gamma \langle\nabla\rangle^\frac12 \mathcal{S}_\varepsilon r_0\big\|_{L^2_x}+
\big\||\nabla|^\gamma \mathcal{S}_\varepsilon r_1^\varepsilon\big\|_{L^2_x}\notag\\
\lesssim &
\big\||\nabla|^\gamma (\mathcal{S}_\varepsilon r_0,\mathcal{S}_\varepsilon \widetilde{v}_1^\varepsilon)\|_{L^2_x}+\big\|P_{\ge1}|\nabla|^\gamma \langle\nabla\rangle^\frac12 \mathcal{S}_\varepsilon r_0\big\|_{L^2_x}
+\varepsilon^2\big\||\nabla|^\gamma \Delta \widetilde{v}_0\big\|_{L^2_x}\notag\\
\lesssim&
 \big\||\nabla|^\gamma (\mathcal{S}_\varepsilon r_0,\mathcal{S}_\varepsilon \widetilde{v}_1^\varepsilon)\|_{L^2_x}+ \| \mathcal{S}_\varepsilon r_0\|_{\dot H^{\beta+\gamma}}
 +\varepsilon^2\big\||\nabla|^\gamma \Delta \widetilde{v}_0\big\|_{L^2_x}\notag\\
\lesssim &
\varepsilon^\gamma\big\||\nabla|^\gamma (r_0,\widetilde{v}_1^\varepsilon)\|_{L^2_x}+\varepsilon^{\beta+\gamma} \|r_0\|_{H^{\beta+\gamma}}+\varepsilon^2\big\||\nabla|^\gamma \Delta \widetilde{v}_0\big\|_{L^2_x}.
\end{align}

First, we consider the case when $\gamma=0$. Then by \eqref{est:Thm3-Rj-case1-d2}, we have that
\begin{align}\label{est:Thm3-Rj-case1-d2-2}
\big\| \mathcal R_j\big\|_{Y_0(I)}
\le &
C_0\big\|\langle\nabla\rangle^\frac12\mathcal R_j(t_0)\big\|_{L^2_x}
+
C_1\varepsilon^{4}|I|   \big\|\Delta^2\widetilde{v}\big\|_{L^\infty_tL^2_x(\R)} \notag\\
&\qquad
+C_2 \big\|R\big\|_{S_{0}(I)}
\Big(\big\| \widetilde{v}^\varepsilon\big\|_{S_{0}(\varepsilon^2I)}
+\big\| \widetilde{v}\big\|_{S_{0}(\varepsilon^2I)}\Big).
\end{align}
From \eqref{est:Thm3-ve-L4tx} and \eqref{est:Thm3-v-L4tx}, there exists a constant $K=K(C_0,C_3)$ and a sequence of time intervals
$$
\bigcup\limits_{k=0}^K J_k=\R^+,
$$
(the negative time direction can be treated similarly) with
$$
J_0=[0, t_1], \quad J_k=(t_k, t_{k+1}] \mbox{ for } k=1,\cdots, K-1,  \quad J_{K}=[t_K, +\infty),
$$
such that
\begin{align}\label{subinterval-length-v-vep-d2}
C_2\Big(\big\| \widetilde{v}^\varepsilon\big\|_{S_{0}(J_k)}
+\big\| \widetilde{v}\big\|_{S_{0}(J_k)}\Big)
\le \frac12.
\end{align}
For any fixed $T>0$, denote
$$
I_k=\varepsilon^{-2}J_k\cap [0,T], \quad\mbox{and } \quad
T_k=\varepsilon^{-2} t_k.
$$
Then by \eqref{est:Thm3-Rj-case1-d2-2} and \eqref{subinterval-length-v-vep-d2}, it drives that for any $1\le k\le K$,
\begin{align*}
\big\| \mathcal R_j\big\|_{Y_0(I_k)}
\le &
2C_0\big\|\langle\nabla\rangle^\frac12\mathcal R_j(T_k)\big\|_{L^2_x}
+
2C_1\varepsilon^{4}T   \big\|\Delta^2\widetilde{v}\big\|_{L^\infty_tL^2_x(\R)}.
\end{align*}
By iteration, we obtain that
for any finite interval $I\subset \R$,
$$
\big\|\mathcal R_j\big\|_{Y_0(I)}
\le C\big(\|\widetilde{v}_0\|_{H^1_x}\big)\Big[
\big\|\langle\nabla\rangle^\frac12\mathcal R_{j,0}\big\|_{L^2_x}
+
\varepsilon^{4}|I|  \big\|\Delta^2\widetilde{v}\big\|_{L^\infty_tL^2_x(\R)} \Big].
$$
Therefore, by \eqref{est:Thm3-Rj0-gamma} we get that
\begin{align*}
\big\|\mathcal R_j\big\|_{Y_0(I)}
\le C\big(\|\widetilde{v}_0\|_{H^1_x}\big)\Big[ \big\|(r_0,\widetilde{v}_1^\varepsilon)\|_{L^2_x}+\varepsilon^
\beta \|r_0\|_{H^\beta}+\varepsilon^2\big\|\Delta \widetilde{v}_0\big\|_{L^2_x}+\varepsilon^4|I|\big\|\Delta^2\widetilde{v}\big\|_{L^\infty_tL^2_x(\R)}\Big].
\end{align*}
Inserting this estimate into \eqref{est:Thm3-Rj-case1-d2} and using the same argument, we also get that
\begin{align*}
\big\||\nabla|^\gamma \mathcal R_j\big\|_{Y_0(I)}
\le  C\Big[
\big\||\nabla|^\gamma\langle\nabla\rangle^\frac12\mathcal R_{j,0}\big\|_{L^2_x}
+\varepsilon^{\gamma}\big\|\mathcal R_j\big\|_{Y_0(I)}
+
\varepsilon^{4+\gamma}|I| \big\||\nabla|^\gamma \partial_{tt} \widetilde{v}\big\|_{L^\infty_t L^2_x(\R)}\Big],
\end{align*}
where  the positive constant $C$ only depend on $\norm{\widetilde{v}_0}_{H^1_x}$ and $\norm{\widetilde{v}_0}_{\dot H^{s_c+\gamma}}$.
By \eqref{est:Thm3-Rj0-gamma}, we obtain that
\begin{align}\label{est:R-12-upp-d2}
\big\||\nabla|^\gamma \mathcal R_j\big\|_{Y_0(I)}
\le C \varepsilon^{\gamma} \Big[ \big\|(r_0,\widetilde{v}_1^\varepsilon)\|_{H^\gamma}+\varepsilon^\beta \|r_0\|_{H^{\beta+\gamma}}+\varepsilon^2\big\||\nabla|^\gamma \Delta \widetilde{v}_0\big\|_{L^2_x}+\varepsilon^4|I|\big\|\Delta^2\widetilde{v}\big\|_{L^\infty_t H^\gamma_x(\R)}\Big].
\end{align}
In particular, by \eqref{eq:R-R12}, this gives that
\begin{align*}
\big\||\nabla|^\gamma \mathcal{S}_\varepsilon r\big\|_{L^\infty_tL^2_x(I)}
\le & C \varepsilon^{\gamma} \Big[ \big\|(r_0,\widetilde{v}_1^\varepsilon)\|_{H^\gamma}+\varepsilon^\beta \|r_0\|_{H^{\beta+\gamma}}\\
&\quad +\varepsilon^2\big\||\nabla|^\gamma \Delta \widetilde{v}_0\big\|_{L^2_x}+\varepsilon^4|I|\big\|\Delta^2\widetilde{v}\big\|_{L^\infty_t H^\gamma_x(\R)}\Big].
\end{align*}
Scaling bark, and choosing $\gamma=0$, we obtain that for any $T<+\infty$,
\begin{align*}
\big\| r\big\|_{L^\infty_tL^2_x([0,T])}
\le C\Big[ \big\|(r_0,\widetilde{v}_1^\varepsilon)\|_{L^2_x}+\varepsilon^\beta \|r_0\|_{H^\beta_x}+\varepsilon^2\big\|\Delta \widetilde{v}_0\big\|_{L^2_x}+\varepsilon^2T\big\|\Delta^2\widetilde{v}\big\|_{L^\infty_t L^2_x(\R)}\Big],
\end{align*}
where  the positive constant $C$ only depend on $\norm{\widetilde{v}_0}_{H^1_x}$ and $\norm{\widetilde{v}_0}_{L^2_x}$.
This proves the proposition when $d=2$.

$\bullet$ When $d=3$,  we need to assume that $I: \varepsilon^{4} |I| \|\widetilde{v}\|_{L^\infty_t\dot H^{\frac92}_x} \le \delta_0 $ ($\delta_0$ will be determined later).
First, by \eqref{est:Thm3-Rj-case1}, \eqref{est:Thm3-Rj-case1-pttv} and \eqref{est:Thm3-Rj-case1-Sv-better},
we have that for any $-\frac12\le \gamma<0$,
\begin{align}\label{est:Thm3-Rj-case1-d3-better}
\big\||\nabla|^\gamma\mathcal R_j\big\|_{Y_\frac12(I)}
\lesssim &
\big\||\nabla|^\gamma\langle\nabla\rangle^\frac12\mathcal R_j(t_0)\big\|_{\dot H^\frac12}
+
\varepsilon^{4+\gamma}|I| \big\||\nabla|^\gamma\partial_{tt} \widetilde{v}\big\|_{L^\infty_t \dot H^\frac12_x(\R)}\notag\\
&\qquad +\big\||\nabla|^\gamma R\big\|_{S_\frac12(I)}
\Big(\big\| R\big\|_{S_\frac12(I)}^2
+\big\|\mathcal{S}_\varepsilon \widetilde{v}\big\|_{S_\frac12(I)}^2\Big)\notag\\
=&
\big\||\nabla|^\gamma\langle\nabla\rangle^\frac12\mathcal R_j(t_0)\big\|_{\dot H^\frac12}
+
\varepsilon^{4+\gamma}|I| \big\||\nabla|^\gamma\partial_{tt} \widetilde{v}\big\|_{L^\infty_t \dot H^\frac12_x(\R)}\notag\\
&\qquad +\big\||\nabla|^\gamma R\big\|_{S_\frac12(I)}
\Big(\big\| R\big\|_{S_\frac12(I)}^2
+\big\|\widetilde{v}\big\|_{S_\frac12(\varepsilon^2I)}^2\Big).
\end{align}
For $\gamma\ge 0$, applying \eqref{est:Thm3-Rj-case1-Sv} instead, we have that
\begin{align}\label{est:Thm3-Rj-case1-d3}
\big\||\nabla|^\gamma\mathcal R_j\big\|_{Y_\frac12(I)}
\lesssim &
\big\||\nabla|^\gamma\langle\nabla\rangle^\frac12\mathcal R_j(t_0)\big\|_{\dot H^\frac12}
+
\varepsilon^{4+\gamma}|I| \big\||\nabla|^\gamma\partial_{tt} \widetilde{v}\big\|_{L^\infty_t \dot H^\frac12_x(\R)}\notag\\
&\qquad +\big\||\nabla|^\gamma R\big\|_{S_\frac12(I)}
\Big(\big\| R\big\|_{S_\frac12(I)}^2
+\big\|\widetilde{v}\big\|_{S_\frac12(\varepsilon^2I)}^2\Big)\notag\\
&\qquad +C \varepsilon^\gamma \big\|R\big\|_{S_\frac12(I)}
\Big(\big\| R\big\|_{S_\frac12(I)}
+\big\|\widetilde{v}\big\|_{S_\frac12(\varepsilon^2I)}\Big),
\end{align}
where the positive constant $C$ only depend on $\norm{\widetilde{v}_0}_{H^1_x}$ and $\norm{\widetilde{v}_0}_{\dot H^{\frac12+\gamma}}$.
Similar as the two dimensional case, we have that
\begin{align}
\big\|  \widetilde{v}\big\|_{S_\frac12(\R)}\le &  C\big(\big\|\widetilde{v}_0\big\|_{H^1_x}\big);\label{est:Thm3-v-Lqtx-3d}\\
\big\||\nabla|^\gamma\langle\nabla\rangle^\frac12\mathcal R_{j,0}\big\|_{\dot H^\frac12}
\le &C\Big(\varepsilon^\gamma\big\|(r_0,\widetilde{v}_1^\varepsilon)\|_{\dot H^{\frac12+\gamma}}+\varepsilon^{\beta+\gamma} \|r_0\|_{H^{\frac12+\beta+\gamma}}+\varepsilon^2\big\|\Delta \widetilde{v}_0\big\|_{\dot H^{\frac12+\gamma}}\Big) \label{est:Thm3-v-Rj0-3d}
\end{align}
(the difference from the two dimensional case is that,  we have no boundedness of $S_\frac12(\R)$-norm on $\widetilde{v}^\varepsilon$ the in three dimensional case).
In particular, from \eqref{est:Thm3-Rj-case1-d3}  we further get that
for any $\gamma\ge 0$, 
\begin{align}\label{est:Thm3-Rj-case1-d3-2}
\sum\limits_{j=1,2}\big\||\nabla|^\gamma\mathcal R_j\big\|_{Y_\frac12(I)}
\le  &
C_1\sum\limits_{j=1,2}\big\||\nabla|^\gamma\langle\nabla\rangle^\frac12\mathcal R_j(t_0)\big\|_{\dot H^\frac12}
+
C_2\varepsilon^{4+\gamma}|I| \big\|\Delta^2\widetilde{v}\big\|_{L^\infty_t \dot H^{\frac12+\gamma}_x(\R)}\notag\\
&\qquad +C_3\big\||\nabla|^\gamma R\big\|_{S_\frac12(I)}
\Big(\big\| R\big\|_{S_\frac12(I)}^2
+\big\|\widetilde{v}\big\|_{S_\frac12(\varepsilon^2I)}^2\Big)\notag\\
&\qquad +C_4 \varepsilon^\gamma \big\|R\big\|_{S_\frac12(I)}
\Big(\big\| R\big\|_{S_\frac12(I)}
+\big\|\widetilde{v}\big\|_{S_\frac12(\varepsilon^2I)}\Big).
\end{align}

First, we treat the case when $\gamma=0$.
Repeating the similar process as the case of $d=2$, we split $\R$ by several subinterval (also denoted by $J_k$) as
$$
\bigcup\limits_{k=0}^K J_k=\R^+,
$$
with
$$
J_0=[0, t_1], \quad J_k=(t_k, t_{k+1}] \mbox{ for } k=1,\cdots, K-1,  \quad J_{K}=[t_K, +\infty),
$$
such that
\begin{align}\label{subinterval-length-2}
C_3\big\| \widetilde{v}\big\|_{S_\frac12(J_k)}^2+C_4\big\|\widetilde{v}\big\|_{S_\frac12(J_k)}
\le \frac14.
\end{align}
Denote
$$
I_k=\varepsilon^{-2}J_k\cap [0,T], \quad\mbox{and } \quad
T_k=\varepsilon^{-2} t_k.
$$
Then by \eqref{est:Thm3-Rj-case1-d3-2} and \eqref{subinterval-length-2}, we obtain that for any $k\le K$,
\begin{align} \label{est:Thm3-Rj-case1-d3-3}
\sum\limits_{j=1,2}\big\|\mathcal R_j\big\|_{Y_\frac12(I_k)}
\le  &
\frac43C_1\sum\limits_{j=1,2}\big\||\nabla|^\frac12\langle\nabla\rangle^\frac12\mathcal R_j(T_k)\big\|_{L^2_x}
+
\frac43C_2\varepsilon^4|I_k| \big\||\nabla|^\frac12\Delta^2\widetilde{v}\big\|_{L^\infty_tL^2_x(\R)}\notag\\
&
+\frac43\Big(C_3\big\|R\big\|_{S_\frac12(I_k)}^3+C_4\big\|R\big\|_{S_\frac12(I_k)}^2\Big)\notag\\
\le  &
\frac43C_1\sum\limits_{j=1,2}\big\||\nabla|^\frac12\langle\nabla\rangle^\frac12\mathcal R_j(T_k)\big\|_{L^2_x}
+
\frac43C_2\varepsilon^4T \big\||\nabla|^\frac12\Delta^2\widetilde{v}\big\|_{L^\infty_tL^2_x(\R)}\notag\\
&
+\frac43\Big(C_3\big\|R\big\|_{S_\frac12(I_k)}^3+C_4\big\|R\big\|_{S_\frac12(I_k)}^2\Big).
\end{align}

Denote
$$
c_0\triangleq \big\|\langle\nabla\rangle^\frac12\mathcal R_{j,0}\big\|_{\dot H^\frac12};\quad
C_0\triangleq C_2 \big\||\nabla|^\frac12\Delta^2\widetilde{v}\big\|_{L^\infty_tL^2_x(\R)}.
$$
Then by \eqref{est:Thm3-v-Rj0-3d},
\begin{align*}
c_0\le C\Big(\big\|(r_0,\widetilde{v}_1^\varepsilon)\|_{L^2_x}+\varepsilon^\beta \|r_0\|_{H^{\frac12+\beta}}\Big).
\end{align*}
Moreover, denote
$$
a_0\triangleq \big(2C_1\big)^K\Big[c_0+2C_0\varepsilon^4T \Big].
$$
Choosing $c_0, \varepsilon_0$ and $\delta_0$ suitably small  such that for any $\varepsilon\in [0, \varepsilon_0], I: \varepsilon^4 |I| \|\widetilde{v}\|_{L^\infty_t\dot H^{\frac92}_x}  \le \delta_0$, we have that
\begin{align}\label{small-a0}
\frac43\big(C_3a_0^2+C_4a_0\big)\le \frac14.
\end{align}

Suppose that for some $0\le k\le K-1$,
\begin{align}\label{iteration-k-local}
\sum\limits_{j=1,2}\big\||\nabla|^\frac12\langle\nabla\rangle^\frac12\mathcal R_j(T_k)\big\|_{L^2_x}
\le
c_0\big(2C_1\big)^k+2C_0\sum\limits_{j=0}^k \big(2C_1\big)^{j-1}.
\end{align}
then
$$
2C_1\sum\limits_{j=1,2}\big\||\nabla|^\frac12\langle\nabla\rangle^\frac12\mathcal R_j(T_k)\big\|_{L^2_x}
+
2 C_0\varepsilon^4T\le a_0.
$$
Therefore, by \eqref{est:Thm3-Rj-case1-d3-3} for $I_k$ and \eqref{small-a0}, and applying the bootstrap argument on $S_\frac12(I_k)$ norm first and obtain that
$$
\big\|R\big\|_{S_\frac12(I_k)}
\le
c_0\big(2C_1\big)^k+2C_0\sum\limits_{j=0}^k \big(2C_1\big)^{j-1}
\le a_0.
$$
Then inserting it back in \eqref{est:Thm3-Rj-case1-d3-3} to obtain the estimate on  $Y_\frac12(I_k)$,  we have that
(for which we choose $\beta=\frac12$)
\begin{align*}
\sum\limits_{j=1,2}\big\|\mathcal R_j\big\|_{Y_\frac12(I_{k+1})}
\le
c_0\big(2C_1\big)^{k+1}+2C_0\sum\limits_{j=0}^{k+1} \big(2C_1\big)^{j-1}.
\end{align*}
This proves \eqref{iteration-k-local} for $k+1$ and thus it holds for any $1\le l\le K$.
Therefore, we obtain that
\begin{align*}
\sum\limits_{j=1,2}\big\|\mathcal R_j\big\|_{Y_\frac12(I)}
\le & C\big(\|\widetilde{v}_0\|_{H^1_x}\big)\Big[ \big\|(r_0,\widetilde{v}_1^\varepsilon)\|_{\dot H^\frac12}+\varepsilon^\frac12 \|r_0\|_{H^1_x}\\
&\quad+\varepsilon^2\big\|\Delta \widetilde{v}_0\big\|_{\dot H^{\frac12}}+\varepsilon^4 T  \big\||\nabla|^\frac12\Delta^2\widetilde{v}\big\|_{L^\infty_tL^2_x(\R)}\Big].
\end{align*}
From assumptions (1)--(3), choosing $\varepsilon_0$ and $\delta_0$ suitably small, then the last estimate above gives that
\begin{align*}
C_3\big\|R\big\|_{S_\frac12(I)}^2\le \frac12.
\end{align*}
Inserting this estimate into \eqref{est:Thm3-Rj-case1-d3} and using the same argument, we also get that for any $\gamma\ge0$,
\begin{align*}
\sum\limits_{j=1,2}\big\||\nabla|^\gamma \mathcal R_j\big\|_{Y_\frac12(I)}
\le  C\Big[
\big\||\nabla|^\gamma\langle\nabla\rangle^\frac12\mathcal R_{j,0}\big\|_{\dot H^\frac12}
+\varepsilon^{\gamma}\big\|\mathcal R_j\big\|_{Y_\frac12(I)}
+
\varepsilon^{4+\gamma}|I| \big\||\nabla|^\gamma \Delta^2 \widetilde{v}\big\|_{L^\infty_t \dot H^\frac12_x(\R)}\Big],
\end{align*}
where  the positive constant $C$ only depend on $\norm{\widetilde{v}_0}_{H^1_x}$ and $\norm{\widetilde{v}_0}_{\dot H^{\frac12+\gamma}}$.
Inserting it into \eqref{est:Thm3-Rj-case1-d3-better} instead, we get that for any $-\frac12\le \gamma<0$,
\begin{align*}
\sum\limits_{j=1,2}\big\||\nabla|^\gamma \mathcal R_j\big\|_{Y_\frac12(I)}
\le  C\Big[
\big\||\nabla|^\gamma\langle\nabla\rangle^\frac12\mathcal R_{j,0}\big\|_{\dot H^\frac12}
+
\varepsilon^{4+\gamma}|I| \big\||\nabla|^\gamma \Delta^2 \widetilde{v}\big\|_{L^\infty_t \dot H^\frac12_x(\R)}\Big],
\end{align*}
where  the positive constant $C$ only depend on $\norm{\widetilde{v}_0}_{H^1_x}$.

Applying these two estimates and treating similarly as in two dimensional case, we can obtain a uniform bound of
$\big\||\nabla|^\gamma \mathcal R_j\big\|_{L^\infty_t \dot H^\frac12_x(I)}$ as
\begin{align*}
\sum\limits_{j=1,2}\big\||\nabla|^\gamma \mathcal R_j\big\|_{L^\infty_t \dot H^\frac12_x(I)}
\le & C\varepsilon^{\gamma} \Big[ \big\|(r_0,\widetilde{v}_1^\varepsilon)\|_{H^{\gamma+\frac12}}+\varepsilon^\beta \|r_0\|_{H^{\beta+\gamma+\frac12}}\\
&\quad +\varepsilon^2\big\||\nabla|^\gamma \Delta \widetilde{v}_0\big\|_{\dot H^{\frac12}}+\varepsilon^4|I|\big\|\Delta^2\widetilde{v}\big\|_{L^\infty_t \dot H^{\gamma+\frac12}_x(\R)}\Big].
\end{align*}
Choosing $\gamma=-\frac12$, it gives that
\begin{align}\label{est:R-12-upp-d3}
\sum\limits_{j=1,2}\big\|\mathcal R_j\big\|_{L^\infty_t L^2_x(I)}
\le & C\varepsilon^{s-\frac12} \Big[ \big\|(r_0,\widetilde{v}_1^\varepsilon)\|_{L^2_x}+\varepsilon^\beta \|r_0\|_{H^\beta_x}
\notag\\
&\quad +\varepsilon^2\big\|\Delta \widetilde{v}_0\big\|_{L^2_x}+\varepsilon^4|I|\big\|\Delta^2\widetilde{v}\big\|_{L^\infty_t L^2_x(\R)}\Big],
\end{align}
and thus 
\begin{align*}
\big\|R\big\|_{L^\infty_t L^2_x(I)}
\le \varepsilon^{s-\frac12} C\Big[ \big\|(r_0,\widetilde{v}_1^\varepsilon)\|_{L^2_x}+\varepsilon^\beta \|r_0\|_{H^\beta_x}+\varepsilon^2\big\|\Delta \widetilde{v}_0\big\|_{L^2_x}+\varepsilon^4|I|\big\|\Delta^2\widetilde{v}\big\|_{L^\infty_t L^2_x(\R)}\Big].
\end{align*}
Scaling bark,  it implies that for any
$$
T: \varepsilon^2 T\|\widetilde{v}\|_{L^\infty_t\dot H^{4+\frac12}_x}\le \delta_0,
$$
it holds that
\begin{align*}
\big\| r\big\|_{L^\infty_tL^2_x([0,T])}
\le  C\Big[ \big\|(r_0,\widetilde{v}_1^\varepsilon)\|_{L^2_x}+\varepsilon^\beta \|r_0\|_{H^\beta_x}+\varepsilon^2\big\|\Delta \widetilde{v}_0\big\|_{L^2_x}+\varepsilon^2T\big\|\Delta^2\widetilde{v}\big\|_{L^\infty_t L^2_x(\R)}\Big].
\end{align*}
This proves the proposition when $d=3$.
\end{proof}

  \subsubsection{Non-regular cases}\label{sec:Thm3-case1-2}

In this subsubsection, we continue to consider the case when $\alpha\le 4$.  The main result is
  \begin{prop}\label{prop-Th3-case2}
  Let $d=2,3$, $T>0$,  $0\le \alpha\le 4$ and $\beta\in [1,2]$ with $\beta\le \alpha$, then there exists $\delta_0>0$ such that the following property hold.  Assume that
  \begin{itemize}
  \item[(1)] $\widetilde{v}_0\in H^1\cap H^\alpha(\R^d),\widetilde{v}^\varepsilon_0\in H^1\cap  H^\beta_x(\R^d), \widetilde{v}_1^\varepsilon\in H^{s_c}(\R^d)$;
  \item[(2)] $(\widetilde{v}^\varepsilon_0-\widetilde{v}_0,\widetilde{v}_1^\varepsilon)\to (0,0) \mbox{ in } H^{s_c}(\R^d).$
  \end{itemize}
  When $d=3$,  additionally assume that
    \begin{itemize}
    \item[(3)] $\varepsilon^2 T \le \delta_0$.
  \end{itemize}
Let $\widetilde{v}^\varepsilon, \widetilde{v}$ be the corresponding solutions to  \eqref{eq:nls-wave-N} and \eqref{eq:nls-1} respectively, then
for any $\beta\ge 1$,
  \begin{align*}
\sup\limits_{t\in [0,T]}\big\|\widetilde{v}^\varepsilon(t)-\widetilde{v}(t)\big\|_{L^2_x}
\lesssim & \|(\widetilde{v}^\varepsilon_0-\widetilde{v}_0,\widetilde{v}_1^\varepsilon)\|_{L^2_x\times L^2_x}
+\varepsilon^\beta\big\|\widetilde{v}^\varepsilon_0-\widetilde{v}_0\big\|_{H^\beta_x}+\varepsilon^\alpha+\big(\varepsilon^2 T\big)^{\frac14\alpha}.
\end{align*}
  \end{prop}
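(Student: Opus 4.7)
I plan to reduce Proposition~\ref{prop-Th3-case2} to the regular case Proposition~\ref{prop:Thm3-case1-1} via the high-low frequency decomposition of the limiting NLS datum $\widetilde v_0$ that was previewed in the introduction. Fix a dyadic parameter $N\ge 1$ to be optimized at the end. Let $\widetilde v_{0,N}:=P_{\le N}\widetilde v_0$ and let $\widetilde v_N$ denote the solution of the cubic NLS \eqref{eq:nls-1} with initial datum $\widetilde v_{0,N}$. Bernstein's inequality produces the Sobolev losses
$$\|\widetilde v_{0,N}\|_{H^{4+s_c}}\lesssim N^{(4+s_c-\alpha)_+}\|\widetilde v_0\|_{H^\alpha},\qquad \|\widetilde v_0-\widetilde v_{0,N}\|_{H^\gamma}\lesssim N^{\gamma-\alpha}\|\widetilde v_0\|_{H^\alpha}\quad(\gamma\le\alpha),$$
and persistence of regularity for the defocusing cubic NLS (energy-subcritical in $d=2,3$) combined with Lemma~\ref{lem:spacetime-norm-NLS} propagates these bounds to $\widetilde v_N$, yielding in particular $\|\Delta^2 \widetilde v_N\|_{L^\infty_t L^2_x}\lesssim N^{(4-\alpha)_+}$ uniformly in time.

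Next I split $\widetilde v^\varepsilon-\widetilde v=(\widetilde v^\varepsilon-\widetilde v_N)+(\widetilde v_N-\widetilde v)$. For the first piece I invoke Proposition~\ref{prop:Thm3-case1-1} with the limiting NLS datum replaced by $\widetilde v_{0,N}\in H^{4+s_c}$, keeping the Schr\"odinger-wave datum $(\widetilde v_0^\varepsilon,\widetilde v_1^\varepsilon)$ fixed. The effective initial-data differences satisfy, via Bernstein,
$$\|\widetilde v_0^\varepsilon-\widetilde v_{0,N}\|_{L^2}\lesssim \|\widetilde v_0^\varepsilon-\widetilde v_0\|_{L^2}+N^{-\alpha},\quad \|\widetilde v_0^\varepsilon-\widetilde v_{0,N}\|_{H^\beta}\lesssim \|\widetilde v_0^\varepsilon-\widetilde v_0\|_{H^\beta}+N^{\beta-\alpha},\quad \|\Delta \widetilde v_{0,N}\|_{L^2}\lesssim N^{(2-\alpha)_+},$$
so Proposition~\ref{prop:Thm3-case1-1} controls $\|\widetilde v^\varepsilon-\widetilde v_N\|_{L^\infty_t L^2_x([0,T])}$ by the sum of these quantities plus $\varepsilon^2 T\, N^{4-\alpha}$. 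The tail $\widetilde v_N-\widetilde v$ is handled by $L^2$-stability of the defocusing cubic NLS—mass-subcritical stability in 2D (using global $L^4_{tx}$ boundedness from Lemma~\ref{lem:spacetime-norm-NLS}) and a Strichartz perturbation argument on solutions with bounded $H^{s_c}$ Strichartz norms in 3D—yielding $\|\widetilde v_N-\widetilde v\|_{L^\infty_t L^2_x}\lesssim \|\widetilde v_{0,N}-\widetilde v_0\|_{L^2}\lesssim N^{-\alpha}$. Summing the two pieces gives
$$\|\widetilde v^\varepsilon-\widetilde v\|_{L^\infty_t L^2_x([0,T])}\lesssim \|(\widetilde v_0^\varepsilon-\widetilde v_0,\widetilde v_1^\varepsilon)\|_{L^2\times L^2}+\varepsilon^\beta\|\widetilde v_0^\varepsilon-\widetilde v_0\|_{H^\beta}+N^{-\alpha}+\varepsilon^2 N^{(2-\alpha)_+}+\varepsilon^2 T\, N^{4-\alpha}.$$

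I then optimize by taking $N\sim(\varepsilon^2 T)^{-1/4}$ (which is $\ge 1$ under the proposition's smallness assumption on $\varepsilon^2 T$; the complementary regime $\varepsilon^2 T\gtrsim 1$ renders the target bound trivial by the $O(1)$ mass bound), so that $N^{-\alpha}$ and $\varepsilon^2 T\, N^{4-\alpha}$ each match $(\varepsilon^2 T)^{\alpha/4}$. A brief Young's-inequality computation absorbs the intermediate term into $\varepsilon^2 N^{(2-\alpha)_+}\lesssim \varepsilon^{\min(\alpha,2)}+(\varepsilon^2 T)^{\alpha/4}$. The main obstacle is not the decomposition itself—Proposition~\ref{prop:Thm3-case1-1} does the conceptual work—but two ancillary verifications: (i) the persistence-of-regularity bound $\|\Delta^2\widetilde v_N\|_{L^\infty_t L^2_x}\lesssim N^{4-\alpha}$ together with its $\dot H^{1/2}$-analogue needed for hypothesis (3) of Proposition~\ref{prop:Thm3-case1-1} in three dimensions, both of which follow by Strichartz iteration on top of Lemma~\ref{lem:spacetime-norm-NLS}; and (ii) the compatibility of that smallness hypothesis with the chosen $N$, which reduces to $\varepsilon^2 T\, N^{9/2-\alpha}\lesssim (\varepsilon^2 T)^{(\alpha-1/2)/4}\le \delta_0$ and is automatic for $\alpha>1/2$ once $\varepsilon^2 T\le \delta_0'$ as assumed.
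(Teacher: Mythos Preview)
Your proposal is essentially the paper's own proof: the same high--low decomposition $\widetilde v_0=P_{\le N}\widetilde v_0+P_{>N}\widetilde v_0$, the same application of Proposition~\ref{prop:Thm3-case1-1} to the pair $(\widetilde v^\varepsilon,\widetilde v_N)$, the same $L^2$-stability bound $\|\widetilde v-\widetilde v_N\|_{L^\infty_tL^2_x}\lesssim N^{-\alpha}$ obtained by Strichartz iteration against Lemma~\ref{lem:spacetime-norm-NLS}, and the same optimizing choice $N=(\varepsilon^2 T)^{-1/4}$. The only cosmetic difference is that for the $d=3$ compatibility check the paper bounds $\|\widetilde v_N\|_{L^\infty_t\dot H^{9/2}}$ via the $H^1$ norm of $\widetilde v_0$ (giving the condition $\varepsilon^2 T\,N^{7/2}\lesssim\delta_0$) rather than via $H^\alpha$ as you do, but both lead to the same conclusion once $\varepsilon^2 T\le\delta_0$.
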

\begin{proof}
Let $N=N(\varepsilon)>0$ be a parameter which satisfies that
$$
N(\varepsilon)\to \infty, \quad \mbox{ when }\quad \varepsilon\to 0,
$$
and will be determined later.
Denote $\widetilde{v}_N $ to be the solution to  the following equation
\EQ{
	\left\{ \aligned
	&2i\partial_{t} \widetilde{v} - \De \widetilde{v}= - |\widetilde{v}|^2 \widetilde{v}, \\
	& \widetilde{v}(0,x) = P_{\le N}\widetilde{v}_0(x),
	\endaligned
	\right.
}
and denote $\widetilde{v}^N =\widetilde{v}-\widetilde{v}_N $. Then $\widetilde{v}^N $ is the solution to the  following equation
\EQ{
	\left\{ \aligned
	&2i\partial_{t} \widetilde{v}^N  - \De \widetilde{v}^N  = - 3\big(|\widetilde{v}|^2 \widetilde{v}-|\widetilde{v}_N |^2 \widetilde{v}_N \big), \\
	& \widetilde{v}^N (0,x) = P_{> N}\widetilde{v}_0(x).
	\endaligned
	\right.
}
Then by Lemma \ref{lem:spacetime-norm-NLS}, there exists some constant $C>0$ which only depends on $\|\widetilde{v}_0\|_{H^1_x}$, such that 
\begin{align}\label{est:v-spacetime}
\big\|\widetilde{v}\big\|_{S_{s_c}(\R)}
+\big\|\widetilde{v}_N \big\|_{S_{s_c}(\R)}
\le C.
\end{align}
Moreover, by Lemma \ref{lem:strichartz}, we have that for any $I=[t_0,t_1]\subset \R$,
\begin{align*}
\big\|\widetilde{v}^N \big\|_{L^\infty_tL^2_x(I)}
\lesssim &
\big\|\widetilde{v}^N (t_0)\big\|_{L^2_x}
+\big\|\widetilde{v}^N \big\|_{L^\infty_tL^2_x(I)}
\Big(\big\|\widetilde{v}\big\|_{S_{s_c}(I)}^2
+\big\|\widetilde{v}_N \big\|_{S_{s_c}(I)}^2\Big).
\end{align*}
By \eqref{est:v-spacetime} and then arguing similarly as in Section \ref{sec:Thm3-case1-1}, we obtain that
\begin{align}\label{est:Thm3-w-case2-d2}
\big\|\widetilde{v}^N \big\|_{L^\infty_tL^2_x(\R)}
\le &
C\big\|P_{> N}\widetilde{v}_0\big\|_{L^2_x}
\le CN^{-\alpha}\big\|\widetilde{v}_0\big\|_{H^\alpha_x},
\end{align}
where the constant $C>0$ only depends on $\|\widetilde{v}_0\|_{H^1_x}$.

Now we write
$$
\widetilde{v}^\varepsilon(t)-\widetilde{v}(t)=\widetilde{v}^\varepsilon(t)-\widetilde{v}_N (t)-\widetilde{v}^N .
$$
Note that under the hypothesis in the proposition, $P_{\le N}\widetilde{v}_0, \widetilde{v}_0^\varepsilon \widetilde{v}_1^\varepsilon$ verify the assumptions in Proposition \ref{prop:Thm3-case1-1}. Then we have that
\begin{align*}
\sup\limits_{t\in [0,T]}\big\|\widetilde{v}^\varepsilon(t)-\widetilde{v}_N (t)\big\|_{L^2_x}
\le  &C\Big( \|(\widetilde{v}^\varepsilon_0-P_{\le N}\widetilde{v}_0,\widetilde{v}_1^\varepsilon)\|_{L^2_x\times L^2_x}+\varepsilon^\beta\big\|\widetilde{v}^\varepsilon_0-P_{\le N} \widetilde{v}_0\big\|_{H^\beta_x}\\
&\qquad  +\varepsilon^2\big\|\Delta P_{\le N}  \widetilde{v}_0\big\|_{L^2_x}+\varepsilon^2 T\|\Delta^2 \widetilde{v}_N \|_{L^\infty_t L^2_x}\Big),
\end{align*}
for any $T$ verifying
\begin{align}\label{Thm3-T-condition}
T<+\infty \quad \mbox{when }d=2;\quad \varepsilon^2 T N^\frac72\le C(\|\widetilde{v}_0\|_{H^1_x}) \delta_0 \quad \mbox{when }d=3.
\end{align}
By Lemma \ref{lem:spacetime-norm-NLS}, we have that
$$
\|\Delta^2 \widetilde{v}_N \|_{L^\infty_t L^2_x}\le C(\|\widetilde{v}_0\|_{H^1_x}) \big\|\Delta^2  P_{\le N}\widetilde{v}_0\big\|_{L^2_x}
\le N^{4-\alpha} C(\|\widetilde{v}_0\|_{H^1_x}, \|\widetilde{v}_0\|_{H^\alpha_x}).
$$
Moreover,
\begin{align*}
\|\widetilde{v}^\varepsilon_0-P_{\le N}\widetilde{v}_0\|_{L^2_x}
\lesssim & \|\widetilde{v}^\varepsilon_0-\widetilde{v}_0\|_{L^2_x}+\|P_{>N}\widetilde{v}_0\|_{L^2_x}\\
\lesssim &\|\widetilde{v}^\varepsilon_0-\widetilde{v}_0\|_{L^2_x}+N^{-\alpha}\|\widetilde{v}_0\|_{H^\alpha_x};
\end{align*}
\begin{align*}
\varepsilon^\beta\big\|\widetilde{v}^\varepsilon_0-P_{\le N} \widetilde{v}_0\big\|_{H^\beta_x}
\lesssim &
\varepsilon^\beta\big\|\widetilde{v}^\varepsilon_0- \widetilde{v}_0\big\|_{H^\beta_x}+\varepsilon^\beta\|P_{>N}\widetilde{v}_0\|_{H^\beta_x}\\
\lesssim &
\varepsilon^\beta\big\|\widetilde{v}^\varepsilon_0- \widetilde{v}_0\big\|_{H^\beta_x}+\varepsilon^\beta N^{\beta-\alpha}\|P_{>N}\widetilde{v}_0\|_{H^\alpha_x}\\
\lesssim &
\varepsilon^\beta\big\|\widetilde{v}^\varepsilon_0- \widetilde{v}_0\big\|_{H^\beta_x}+\big(\varepsilon^\alpha+ N^{-\alpha}\big)\|P_{>N}\widetilde{v}_0\|_{H^\alpha_x};
\end{align*}
and when $\alpha\le 2$, 
\begin{align*}
\varepsilon^2\big\|\Delta P_{\le N}  \widetilde{v}_0\big\|_{L^2_x}
\lesssim &
\varepsilon^2N^{2-\alpha}\big\| P_{\le N}\widetilde{v}_0\big\|_{H^\alpha_x}\\
\lesssim &
\big(\varepsilon^\alpha+ N^{-\alpha}\big)\|\widetilde{v}_0\|_{H^\alpha_x};
\end{align*}
when $\alpha> 2$, 
\begin{align*}
\varepsilon^2\big\|\Delta P_{\le N}  \widetilde{v}_0\big\|_{L^2_x}
\le  &
\varepsilon^2\big\|\widetilde{v}_0\big\|_{H^\alpha_x}.
\end{align*}
Hence, it implies that
\begin{align*}
\sup\limits_{t\in [0,T]}\big\|\widetilde{v}^\varepsilon(t)-\widetilde{v}_N (t)\big\|_{L^2_x}
\le  &C\Big( \|(\widetilde{v}^\varepsilon_0-\widetilde{v}_0,\widetilde{v}_1^\varepsilon)\|_{L^2_x\times L^2_x}+\varepsilon^\beta\big\|\widetilde{v}^\varepsilon_0-\widetilde{v}_0\big\|_{H^\beta_x}\\
&\qquad +\varepsilon^{\min\{\alpha,2\}} +\varepsilon^2 T N^{4-\alpha}+ N^{-\alpha}\Big),
\end{align*}
where $C=C(\|\widetilde{v}_0\|_{H^1_x}, \|\widetilde{v}_0\|_{H^\alpha_x})>0$.
This together with \eqref{est:Thm3-w-case2-d2} infers that
\begin{align*}
\sup\limits_{t\in [0,T]}\big\|\widetilde{v}^\varepsilon(t)-v(t)\big\|_{L^2_x}
\le  &C\Big( \|(\widetilde{v}^\varepsilon_0-\widetilde{v}_0,\widetilde{v}_1^\varepsilon)\|_{L^2_x\times L^2_x}+\varepsilon^\beta\big\|\widetilde{v}^\varepsilon_0-\widetilde{v}_0\big\|_{H^\beta_x}\\
&\qquad  \varepsilon^{\min\{\alpha,2\}} +\varepsilon^2 T N^{4-\alpha}+N^{-\alpha}\Big).
\end{align*}
Choosing
$$
N=(\varepsilon^2 T)^{-\frac14},
$$
it gives the desired estimate for $T$ satisfying \eqref{Thm3-T-condition}. In particular, adjusting the value of $\delta_0$ appropriately, then we have that assumption (3) implies \eqref{Thm3-T-condition} when $d=3$.
\end{proof}
\vskip .5cm

\begin{proof}[Proof of Theorem \ref{thm:main3}]
When $\varepsilon^2 T\ge \delta_0$, multiply the equation \eqref{eq:nls-wave-N} with $\overline{\widetilde{v}^\varepsilon}$, integrate in $x$, and take the imaginary part, then we obtain
\begin{align}\label{mass-NLS-wave}
\int\big(\varepsilon^2\mbox{Im} (\partial_t \widetilde{v}^\varepsilon \overline{\widetilde{v}^\varepsilon}) + |\widetilde{v}^\varepsilon|^2\big)dx
= \int\big(\mbox{Im} (\widetilde{v}_1^\varepsilon \overline{\widetilde{v}_0^\varepsilon}) + |\widetilde{v}_0^\varepsilon|^2\big)dx.
\end{align}
This implies that
\begin{align*}
\big\|\widetilde{v}^\varepsilon\big\|_{L^2_x}^2
\lesssim \varepsilon^4 \big\|\partial_t \widetilde{v}^\varepsilon\big\|_{L^2_x}^2+\big\|(\widetilde{v}_0^\varepsilon,\widetilde{v}_1^\varepsilon)\big\|_{L^2_x\times L^2_x}^2.
\end{align*}
Note that from \eqref{energy-NLS-wave}, we have that
\begin{align*}
\big\|\partial_t  \widetilde{v}^\varepsilon\big\|_{L^2_x}^2
\lesssim \varepsilon^{-4}\big\|(\widetilde{v}_0^\varepsilon,\widetilde{v}_1^\varepsilon)\big\|_{H^1_x\times L^2_x}^2.
\end{align*}
These two estimates give that
\begin{align}\label{est:L2-uniform-vep}
\big\|\widetilde{v}^\varepsilon\big\|_{L^\infty_tL^2_x(\R)}
\lesssim \big\|(\widetilde{v}_0^\varepsilon,\widetilde{v}_1^\varepsilon)\big\|_{H^1_x\times L^2_x}.
\end{align}
Moreover, by the mass conservation law of the solution to  \eqref{eq:nls}, we have that
\begin{align*}
\big\|\widetilde{v}\big\|_{L^\infty_tL^2_x(\R)}
= \big\|\widetilde{v}_0\big\|_{L^2_x}.
\end{align*}
Therefore, we have that
\begin{align*}
\big\|\widetilde{v}^\varepsilon(t)-\widetilde{v}(t)\big\|_{L^\infty_tL^2_x([0,T])}
\lesssim & \big\|\widetilde{v}^\varepsilon\big\|_{L^\infty_tL^2_x([0,T])}+\big\|\widetilde{v}\big\|_{L^\infty_tL^2_x([0,T])}\\
\lesssim &
\big\|(\widetilde{v}_0^\varepsilon,\widetilde{v}_1^\varepsilon)\big\|_{H^1_x\times L^2_x}+ \big\|\widetilde{v}_0\big\|_{L^2_x}.
\end{align*}
It yields \eqref{est:Thm3-main} when $\varepsilon^2 T\ge \delta_0$.
Hence, we only need to consider when $\varepsilon^2 T\le \delta_0$.
This completes the proof of Theorem  \ref{thm:main3}.
\end{proof}

\subsection{Optimality}\label{sec:Thm3-optimal-1}

In this subsection, we give the proof of Theorem  \ref{thm:main3-optimal}.

First, we set the initial data 
$$
\widetilde{v}_0=\delta_0f,\quad 
\widetilde{v}_0^\varepsilon-\widetilde{v}_0=\delta_0r_0,\quad 
\widetilde{v}_1^\varepsilon=0,
$$
where $\delta_0>0$ is a small  constant determined later, $f\in \mathcal S$ is real-valued and independent of $\delta_0$.

Consider
$$
\phi=\fe^{it} \mathcal{S}_\varepsilon \widetilde{v}^\varepsilon,
$$
then it solves the following Cauchy problem:
	\EQ{
	\left\{ \aligned
	&\partial_{tt} \phi - \De \phi + \phi=- |\phi|^2\phi, \\
	& \phi(0,x) =\delta_0 \mathcal{S}_\varepsilon (f+r_0)(x),\quad  \partial_t\phi(0,x) =i\delta_0 \mathcal{S}_\varepsilon (f+r_0)(x).
	\endaligned
	\right.
}
By  the scaling invariant of $\mathcal{S}_\varepsilon$ in $\dot H^{s_c}_x$, we have that 
$$
\big\|\langle\nabla\rangle^\frac12 (\phi(0),\langle\nabla\rangle^{-1}\partial_t\phi(0))\big\|_{\dot H^{s_c}_x\times \dot H^{s_c}_x}\lesssim \delta_0\big(\|f\|_{H^1_x}+\|r_0\|_{H^1_x}\big).
$$
Choosing $\delta_0$ suitably small, 
then by Lemma \ref{lem:spacetime-norm-KG-sd}, we obtain that for some positive constant $C=C(\|f\|_{H^1_x},\|r_0\|_{H^1_x})>0$, 
\begin{align}\label{est:Sv-ep-Xs}
\big\|\widetilde{v}^\varepsilon\big\|_{X_{s_c}(\R)}
=\big\|\mathcal{S}_\varepsilon \widetilde{v}^\varepsilon\big\|_{X_{s_c}(\R)}
=\|\phi\|_{X_{s_c}(\R)}
\le C \delta_0.
\end{align}
Moreover, by Lemma \ref{lem:spacetime-norm-NLS}, we have that  for some $C=C(\|f\|_{H^1_x})>0$, 
\begin{align}\label{est:v-Xs}
\big\|\widetilde{v}\big\|_{X_{s_c}(\R)}
=  \|\widetilde{v}\|_{X_{s_c}(\R)}
\le C \delta_0.
\end{align}
Based on this estimate and the Strichartz estimates, we write 
\begin{align}\label{v-linear-app-1}
\widetilde{v}=\delta_0\fe^{-\frac12 it\Delta}f+\mathcal N(\widetilde{v}),
\end{align}
where the term $\mathcal N$ satisfies that  for some $C=C(\|f\|_{H^{s_c+\gamma}_x})>0$, 
\begin{align}\label{re-linear-app-1}
\big\||\nabla|^\gamma\mathcal N(\widetilde{v})\big\|_{X_{s_c}(\R)}\le C\delta_0^3.
\end{align}
Moreover, we also have that 
\begin{align}\label{v-linear-app-2}
\partial_{tt}\widetilde{v}=-\frac14\delta_0\fe^{-\frac12 it\Delta}\Delta^2 f+\widetilde{\mathcal N}(\widetilde{v}),
\end{align}
where  there exists some $C=C(\|f\|_{H^1_x})>0$, 
\begin{align}\label{re-linear-app-1}
\big\|\widetilde{\mathcal N}(\widetilde{v})\big\|_{L^\infty_tL^2_x(\R)}\le C\delta_0^3\big\|\Delta^2 f\big\|_{L^2_x}.
\end{align}

Now we follow by the same formulation in Section \ref{sec:Th3-formula} and adopt the same notations. 
In particular, we recall that 
$$
r=\widetilde{v}^\varepsilon-\widetilde{v},\quad 
R=\fe^{it} \mathcal{S}_\varepsilon  r,\quad 
\mathcal R_j=\langle \nabla \rangle^{-1}\big(\partial_t\mp i \langle \nabla\rangle\big)R \mbox{ for } j=1,2. 
$$
Next, we will show that  the upper bound of the {\it Leftward wave} $\mathcal R_1$, and the lower and upper bounds of the {\it Rightward wave} $\mathcal R_2$.

Denote 
$$
I(f)\triangleq 
\{t\in \R^+:  t\varepsilon^4\|\Delta^2f\|_{\dot H^{s_c}_x}\le 1\}.
$$
\begin{lem}
Let $f\in \mathcal S(\R^d)$,
then 
 there exist constants $C(\|f\|_{L^2_x})>0$ such that for any $t\in I(f)$, there hold
 \begin{align}\label{R1-conter-ubd}
\big\|\mathcal R_1(t)\big\|_{L^2_x}
\le& 
C\delta_0\varepsilon^{-s_c}\Big[\big\|r_0\big\|_{L^2_x}+\delta_0^2 \varepsilon \|r_0\|_{H^1_x}+\varepsilon^2\big\|\Delta f\big\|_{L^2_x} 
+(1+\delta_0^2t)\varepsilon^4\big\|\Delta^2 f\big\|_{L^2_x}\Big],
\end{align}
and 
\begin{align}\label{R2-conter-lbd}
&\Big|\big\|\mathcal R_2(t)\big\|_{L^2_x}-\frac14\delta_0 t\varepsilon^{4-s_c}  \big\|\Delta^2 f\big\|_{L^2_x} -\frac 1{64}\delta_0t^2 \varepsilon^{8-s_c}  \big\|\Delta^4 f\big\|_{L^2_x}\Big|\notag\\
&\le  C\delta_0\varepsilon^{-s_c}\Big[\big(\big\|r_0\big\|_{L^2_x}+\delta_0^2 \varepsilon\|r_0\|_{H^1_x}\big)
 +\varepsilon^2\big\|\Delta f\big\|_{L^2_x}\notag\\
 &\quad +\delta_0^2 t\varepsilon^4  \big\|\Delta^2 f\big\|_{L^2_x} +t\varepsilon^6  \big\|\Delta^3f\big\|_{L^2_x} + C t^2\varepsilon^{10}  \big\|\Delta^5 f\big\|_{L^2_x}
 \Big] .
\end{align}
\end{lem}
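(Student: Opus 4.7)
The plan is to expand $\mathcal R_j(t)$ via the Duhamel formula \eqref{Duh-Rj-th3} with $t_0=0$ and analyze the three contributions (free evolution of $\mathcal R_{j,0}$, the forcing $\varepsilon^4\mathcal{S}_\varepsilon(\partial_{tt}\widetilde v)$, and the cubic difference) separately. For the initial-data piece, since $\widetilde v_1^\varepsilon=0$ the definition of $r_1$ from Section \ref{sec:Th3-formula} gives $r_1=-\tfrac{\varepsilon^2}{2i}(\Delta\widetilde v_0-|\widetilde v_0|^2\widetilde v_0)$, so $\mathcal{S}_\varepsilon r_1$ carries an explicit $\varepsilon^2$-factor. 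In $\mathcal R_{1,0}$ the multiplier $i(1-\langle\nabla\rangle)\langle\nabla\rangle^{-1}=-i|\nabla|^2/[\langle\nabla\rangle(1+\langle\nabla\rangle)]$ contributes another $\Delta$, and combined with the $\mathcal{S}_\varepsilon$-scaling this produces the $\varepsilon^2\|\Delta f\|_{L^2_x}$ and $\|r_0\|_{L^2_x}$ terms on the right of \eqref{R1-conter-ubd}; in $\mathcal R_{2,0}$ the multiplier $i(1+\langle\nabla\rangle)\langle\nabla\rangle^{-1}$ is only $O(1)$, yielding the leading $\|r_0\|_{L^2_x}$ piece in \eqref{R2-conter-lbd}. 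For the cubic-difference source, a Strichartz estimate together with \eqref{est:Sv-ep-Xs}--\eqref{est:v-Xs} gives $\bigl\|\int_0^t e^{\mp i(t-s)\langle\nabla\rangle}e^{is}\langle\nabla\rangle^{-1}(|\mathcal{S}_\varepsilon\widetilde v^\varepsilon|^2\mathcal{S}_\varepsilon\widetilde v^\varepsilon-|\mathcal{S}_\varepsilon\widetilde v|^2\mathcal{S}_\varepsilon\widetilde v)\,ds\bigr\|_{L^2_x}\lesssim \delta_0^2\|R\|_{S_{s_c}([0,t])}$, and a short bootstrap via Proposition \ref{prop:Thm3-case1-1} absorbs this into the $\delta_0^2\varepsilon\|r_0\|_{H^1_x}$-type error on the right.

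The central step is the forcing $\varepsilon^4\mathcal{S}_\varepsilon(\partial_{tt}\widetilde v)$. Combining \eqref{v-linear-app-2} with the scaling identities $\mathcal{S}_\varepsilon\circ e^{-it\Delta/2}=e^{-it\varepsilon^{-2}\Delta/2}\circ\mathcal{S}_\varepsilon$ and $\mathcal{S}_\varepsilon\Delta^2 f=\varepsilon^{-4}\Delta^2\mathcal{S}_\varepsilon f$ yields
\[
\varepsilon^4\mathcal{S}_\varepsilon(\partial_{tt}\widetilde v)(s)=-\tfrac14\delta_0\,e^{-is\Delta/2}\Delta^2\mathcal{S}_\varepsilon f+\varepsilon^4\mathcal{S}_\varepsilon\widetilde{\mathcal N}(\widetilde v)(s).
\]
The second term is bounded in $L^\infty_tL^2_x$ by $C\delta_0^3\varepsilon^{4-s_c}\|\Delta^2 f\|_{L^2}$ via \eqref{re-linear-app-1} and scaling, which after integration on $[0,t]$ contributes the common $\delta_0^2 t\varepsilon^{4-s_c}\|\Delta^2 f\|_{L^2_x}$ error to both conclusions. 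The leading linear source produces
\[
\tfrac14\delta_0\,e^{\mp it\langle\nabla\rangle}\int_0^t e^{is(\pm\langle\nabla\rangle+1-\frac12\Delta)}\langle\nabla\rangle^{-1}\Delta^2\mathcal{S}_\varepsilon f\,ds,
\]
and since $\widehat{\mathcal{S}_\varepsilon f}(\xi)=\varepsilon^{1-d}\widehat f(\xi/\varepsilon)$ is essentially supported in $|\xi|\lesssim\varepsilon$ when $f\in\mathcal S$, the $P_{>1}$-portion is polynomially small in $\varepsilon$; it therefore suffices to analyze the $P_{\le 1}$-piece whose phase symbol is $\pm\langle\xi\rangle+1+|\xi|^2/2$.

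For $j=1$ the phase $\langle\xi\rangle+1+|\xi|^2/2\ge 2$ is bounded below away from zero, so a single normal-form integration by parts via $\int_0^t e^{is\Lambda}ds=(e^{it\Lambda}-1)/(i\Lambda)$ replaces the integrand by a uniformly bounded multiplier on $\Delta^2\mathcal{S}_\varepsilon f$, yielding a contribution of size $O(\delta_0\varepsilon^{4-s_c}\|\Delta^2 f\|_{L^2_x})$ and completing \eqref{R1-conter-ubd}. For $j=2$ the phase $-\langle\xi\rangle+1+|\xi|^2/2=\tfrac18|\xi|^4+O(|\xi|^6)$ vanishes to fourth order at the origin, so I Taylor-expand, in the spirit of the formula already used in the counterexample discussion,
\[
e^{is(-\langle\nabla\rangle+1-\frac12\Delta)}P_{\le 1}=P_{\le 1}+\tfrac{is}{8}\Delta^2 P_{\le 1}+\mathcal E_1(s,\nabla)+\mathcal E_2(s,\nabla),
\]
with $\|\mathcal E_1 g\|_{L^2_x}\lesssim s\||\nabla|^6 P_{\le 1}g\|_{L^2_x}$ and $\|\mathcal E_2 g\|_{L^2_x}\lesssim s^2\||\nabla|^8 P_{\le 1}g\|_{L^2_x}$. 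Integrating the first two terms in $s\in[0,t]$ and using $\langle\nabla\rangle^{-1}=1+O(\Delta)$ on $P_{\le 1}$ produces exactly the real-valued piece $\tfrac14\delta_0 t\Delta^2\mathcal{S}_\varepsilon f$ and the purely imaginary piece $\tfrac{i\delta_0}{64}t^2\Delta^4\mathcal{S}_\varepsilon f$, whose $L^2_x$-norms match the two main terms in \eqref{R2-conter-lbd}, while integrating the $\mathcal E_j$ against $\Delta^2\mathcal{S}_\varepsilon f$ produces precisely the $t\varepsilon^6\|\Delta^3 f\|_{L^2_x}$ and $t^2\varepsilon^{10}\|\Delta^5 f\|_{L^2_x}$ remainders on the right. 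The main technical obstacle will be keeping the two leading pieces well-separated in $L^2_x$: because $f$ is real the real and imaginary contributions are orthogonal, so to get the stated additive structure (rather than a Pythagorean one) one has to track the alignment of $\mathcal R_2(t)$ carefully and absorb the mixed-orientation cross-terms into the error via the reverse triangle inequality, which is where exploiting $\delta_0,\varepsilon\ll 1$ and the fact that one of the two leading pieces always dominates in the range $1\le t\le\delta_0\varepsilon^{-2}$ becomes essential.
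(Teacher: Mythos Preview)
Your overall route coincides with the paper's: Duhamel from $t_0=0$, the three-way split into (i) the free piece $\fe^{\mp it\langle\nabla\rangle}\mathcal R_{j,0}$, (ii) the forcing $\varepsilon^4\mathcal{S}_\varepsilon(\partial_{tt}\widetilde v)$ further split via \eqref{v-linear-app-2} into a linear $\Delta^2$ piece and $\widetilde{\mathcal N}$, and (iii) the cubic difference controlled by the a priori bounds of Proposition \ref{prop:Thm3-case1-1}. For $j=1$ you use the non-resonant phase $\langle\xi\rangle+1+\tfrac12|\xi|^2\ge 2$, and for $j=2$ you Taylor-expand $\fe^{is(-\langle\nabla\rangle+1-\frac12\Delta)}$; this is exactly the paper's mechanism (it packages the remainder as a single Mihlin--H\"ormander multiplier $\tilde T_t$ with $\fe^{it(-\langle\nabla\rangle+1-\frac12\Delta)}=1+\tfrac{i}{8}t\Delta^2+t\Delta^3\tilde T_t$, so no $P_{\le 1}$ cutoff is needed). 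Your final worry about the additive-versus-Pythagorean combination of the real and imaginary leading pieces is a real technical point; the paper simply invokes $f\in\R$, and in the downstream applications only the lower bound $\sqrt{A^2+B^2}\ge c(A+B)$ is actually used.

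There are two bookkeeping slips. First, your attribution of the $\varepsilon^2\|\Delta f\|_{L^2}$ contribution to the factor $i(1-\langle\nabla\rangle)\langle\nabla\rangle^{-1}$ acting on $\mathcal{S}_\varepsilon r_0$ is off: that factor hits $r_0$, not $f$, and one just uses the crude bound $\|i(1-\langle\nabla\rangle)\langle\nabla\rangle^{-1}\mathcal{S}_\varepsilon r_0\|\lesssim\|\mathcal{S}_\varepsilon r_0\|$. The $\varepsilon^2\|\Delta f\|_{L^2}$ piece comes instead from $\mathcal{S}_\varepsilon r_1$ with $r_1=-\tfrac{\varepsilon^2}{2i}(\Delta\widetilde v_0-|\widetilde v_0|^2\widetilde v_0)$, i.e.\ from the $\Delta\widetilde v_0=\delta_0\Delta f$ part. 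Second, and more substantively, your claim that integrating $\mathcal E_1,\mathcal E_2$ ``produces precisely'' the $t\varepsilon^6\|\Delta^3 f\|$ and $t^2\varepsilon^{10}\|\Delta^5 f\|$ remainders is mislabelled: with $\|\mathcal E_1 g\|\lesssim s\||\nabla|^6 g\|$ one gets
\[
\int_0^t\|\mathcal E_1(s)\,\Delta^2\mathcal{S}_\varepsilon f\|\,ds\lesssim t^2\,\||\nabla|^{10}\mathcal{S}_\varepsilon f\|=t^2\varepsilon^{10-s_c}\|\Delta^5 f\|,
\]
which is the $t^2\varepsilon^{10}\|\Delta^5 f\|$ term, while $\mathcal E_2$ yields an even smaller $t^3\varepsilon^{12}\|\Delta^6 f\|$ contribution (harmless, but not listed). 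The $t\varepsilon^6\|\Delta^3 f\|$ remainder actually arises from the $\langle\nabla\rangle^{-1}=1+O(\Delta)$ correction applied to the leading $t\,\Delta^2\mathcal{S}_\varepsilon f$ piece---you do mention this correction, you just attribute its output to $\mathcal E_1$. None of this breaks the argument (all error terms are of acceptable size on $I(f)$), but the accounting should be straightened out.
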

\begin{proof}
Then  we have the following Duhamel's formula: For $j=1,2$, 
\begin{align*}
\mathcal R_j(t)=&\fe^{\mp it\langle \nabla\rangle} \mathcal R_{j,0}\notag\\
& -\int_0^t \fe^{\mp i(t-s)\langle \nabla\rangle}\fe^{is} \langle\nabla \rangle^{-1} \Big[\varepsilon^4 \mathcal{S}_\varepsilon( \partial_{tt} \widetilde{v})+\big|\mathcal{S}_\varepsilon \widetilde{v}^\varepsilon\big|^2\mathcal{S}_\varepsilon \widetilde{v}^\varepsilon-|\mathcal{S}_\varepsilon \widetilde{v}|^2\mathcal{S}_\varepsilon \widetilde{v}\Big]\,ds.
\end{align*}
Then by \eqref{v-linear-app-2}, we rewrite it as 
\begin{align}\label{Rj-123}
\mathcal R_j(t)=&\fe^{\mp it\langle \nabla\rangle} \mathcal R_{j,0}+\fe^{\mp it\langle \nabla\rangle} I_j(t)+ \mathcal R_{j}^1(t),
\end{align}
where we denote 
\begin{align*}
I_j(t)\triangleq & \frac14\delta_0 \int_0^t \fe^{\pm  is\langle \nabla\rangle}\fe^{is} \langle\nabla \rangle^{-1} \Big[\varepsilon^4 \mathcal{S}_\varepsilon(\fe^{-\frac12 is\Delta}\Delta^2 f)\Big]\,ds\\
\mathcal R_{j}^1(t) \triangleq & -\int_0^t \fe^{\mp i(t-s)\langle \nabla\rangle}\fe^{is} \langle\nabla \rangle^{-1} \Big[\varepsilon^4 \mathcal{S}_\varepsilon\widetilde{\mathcal N}(\widetilde{v})+\big|\mathcal{S}_\varepsilon \widetilde{v}^\varepsilon\big|^2\mathcal{S}_\varepsilon \widetilde{v}^\varepsilon-|\mathcal{S}_\varepsilon \widetilde{v}|^2\mathcal{S}_\varepsilon \widetilde{v}\Big]\,ds.
\end{align*}

 {\bf Upper bound of $\mathcal R_{j,0}$.}

Under the hypotheses,  we have that 
$$
\mathcal R_{j,0}=\langle \nabla \rangle^{-1}\Big[i\big(1\mp \langle \nabla\rangle\big)\mathcal{S}_\varepsilon r_0+\mathcal{S}_\varepsilon r_1\Big],\quad 
\mbox{ with }\quad r_1=-\frac{\varepsilon^2}{2i}(\Delta \widetilde{v}_0-\mu |\widetilde{v}_0|^2\widetilde{v}_0).
$$
This gives that for some $C(\|f\|_{L^2_x})>0$,
\begin{align}\label{est:conterex-intial}
\left\|\fe^{\mp it\langle \nabla\rangle} \mathcal R_{j,0}\right\|_{L^2_x}
=\big\| \mathcal R_{j,0}\big\|_{L^2_x}
\le C \delta_0\varepsilon^{-s_c}\big(\big\|r_0\big\|_{L^2_x}+\varepsilon^2\big\|\Delta f\big\|_{L^2_x}\big).
\end{align}

For $I_j(t)$, by the scaling invariance of the linear Schr\"odinger flow, we have that 
\begin{align*}
I_j(t)= & \frac14\delta_0\varepsilon^4 \int_0^t \fe^{is(\pm\langle \nabla\rangle+1-\frac12 \Delta)} \langle\nabla \rangle^{-1}\mathcal{S}_\varepsilon(\Delta^2 f)\,ds.
\end{align*}

 {\bf Upper bound of $I_1$.}
Taking the Fourier transformation, we have that 
\begin{align*}
\hat I_1(t,\xi)= & \frac14\delta_0\varepsilon^4 \int_0^t \fe^{is(\langle \xi\rangle+1+\frac12|\xi|^2)}  \langle\xi \rangle^{-1}\widehat{\mathcal{S}_\varepsilon(\Delta^2 f)}(\xi)\,ds\\
= & \frac14\delta_0\varepsilon^4 \frac{\fe^{is(\langle \xi\rangle+1+\frac12|\xi|^2)}-1}{\langle \xi\rangle+1+\frac12|\xi|^2} \langle\xi \rangle^{-1}\widehat{\mathcal{S}_\varepsilon(\Delta^2 f)}(\xi).
\end{align*}
This gives that
\begin{align}
\label{est:I1-upp-bd}
\big\|I(t)\big\|_{L^2_x}
\lesssim & \delta_0\varepsilon^4\big\|\mathcal{S}_\varepsilon(\Delta^2 f)\big\|_{L^2_x}\notag\\
\lesssim & \delta_0\varepsilon^{4-s_c}\big\|\Delta^2 f\big\|_{L^2_x}.
\end{align}

 {\bf Approximation of $I_2$.}
 Denote the operator
$$
\tilde T_t  =\frac{\fe^{it(-\langle \nabla\rangle+1-\frac12 \Delta)}-1-\frac18 it\Delta^2}{t\Delta^3}.
$$
Then by the Mihlin-H\"ormander Multiplier Theorem,  we have the $L^p\mapsto L^p$ boundedness which is uniform in time: for any $1<p<+\infty$,
\begin{align}\label{est:Tt-tilde}
\big\|\tilde T_t h\big\|_{L^p}\le C\|h\|_{L^p},
\end{align}
where the constant $C>0$ is independent of $t$ and $h$.  Now we write
\begin{align}\label{apprx-linearKG}
\fe^{it(-\langle \nabla\rangle+1-\frac12 \Delta)}=1+\frac18 it\Delta^2+t \Delta^3 \tilde T_t.
\end{align}

 Applying \eqref{apprx-linearKG}, we write that 
 \begin{align*}
 I_2(t)=&  \frac14\delta_0 t\varepsilon^4 \mathcal{S}_\varepsilon(\Delta^2 f)
 + \frac i{64}\delta_0t^2 \varepsilon^4 \Delta^2 \mathcal{S}_\varepsilon(\Delta^2 f)\\
& + \frac14\delta_0 \int_0^t \fe^{\pm  is\langle \nabla\rangle}\fe^{is} \big(\langle\nabla \rangle^{-1}-1\big) \Big[\varepsilon^4 \mathcal{S}_\varepsilon(\fe^{-\frac12 is\Delta}\Delta^2 f)\Big]\,ds\\
 &\quad + \frac14\delta_0\varepsilon^4 \int_0^t s\tilde{T_s} \Delta^3 \langle\nabla \rangle^{-1}\mathcal{S}_\varepsilon(\Delta^2 f)\,ds.
 \end{align*}
 Since $f\in \R$, this yields that 
  \begin{align*}
\Big|\big\| I_2(t)\big\|_{L^2_x}-& \frac14\delta_0t\varepsilon^4  \big\|\mathcal{S}_\varepsilon(\Delta^2 f)\big\|_{L^2_x}
-\frac 1{64}\delta_0t^2 \varepsilon^4  \big\|\Delta^2 \mathcal{S}_\varepsilon(\Delta^2 f)\big\|_{L^2_x}\Big|\\
\le &
 \frac14\delta_0t\varepsilon^4  \big\|\big(\langle\nabla \rangle^{-1}-1\big)\mathcal{S}_\varepsilon(\Delta^2 f)\big\|_{L^2_x}\\
 &\quad +\frac14\delta_0\varepsilon^4 \int_0^t s\big\|\tilde{T_s} \Delta^3 \langle\nabla \rangle^{-1}\mathcal{S}_\varepsilon(\Delta^2 f)\big\|_{L^2_x}\,ds.
 \end{align*}
 Using \eqref{est:Tt-tilde}, we further get that there exists some constant $C>0$ such that 
   \begin{align*}
\Big|\big\| I_2(t)\big\|_{L^2_x}& - \frac14\delta_0t\varepsilon^{4-s_c}  \big\|\Delta^2 f\big\|_{L^2_x}-\frac 1{64}\delta_0t^2 \varepsilon^{8-s_c}  \big\|\Delta^4 f\big\|_{L^2_x}\Big|\\
\le &
C\delta_0t\varepsilon^{6-s_c}  \big\|\Delta^3f\big\|_{L^2_x} + C\delta_0t^2\varepsilon^{10-s_c}  \big\|\Delta^5 f\big\|_{L^2_x}.
 \end{align*}

 {\bf Upper bound of $\mathcal R_j^1$.}
By the Strichartz and H\"older inequalities, we have that 
\begin{align*}
\big\|\mathcal R_j^1\big\|_{L^2_x}
\lesssim & t\varepsilon^4\big\|\mathcal{S}_\varepsilon\widetilde{\mathcal N}(\widetilde{v})\big\|_{L^\infty_tL^2_x(\R)}
+\big(\big\|\mathcal R_1\big\|_{L^2_x}+\big\|\mathcal R_2\big\|_{L^2_x}\big)\big(\big\|\mathcal{S}_\varepsilon v^\varepsilon\big\|_{X_{s_c}(\R)}^2+\big\|\mathcal{S}_\varepsilon v\big\|_{X_{s_c}(\R)}^2\big)\\
\lesssim &  t\varepsilon^{4-s_c}\big\|\widetilde{\mathcal N}(\widetilde{v})\big\|_{L^\infty_tL^2_x(\R)}
+\big(\big\|\mathcal R_1\big\|_{L^2_x}+\big\|\mathcal R_2\big\|_{L^2_x}\big)\big(\big\|v^\varepsilon\big\|_{X_{s_c}(\R)}^2+\big\|v\big\|_{X_{s_c}(\R)}^2\big).
\end{align*}
By  the conclusion in the proof of Proposition \ref{prop:Thm3-case1-1} (see \eqref{est:R-12-upp-d2},  \eqref{est:R-12-upp-d3} and \eqref{est:L2-uniform-vep}), we have that for any $t\in I(f)$,
it holds that 
\begin{align}\label{est:R-upper-thm3}
\big\|\mathcal R_j(t)\big\|_{L^2_x}
\lesssim \delta_0 \varepsilon^{-s_c}\|r_0\|_{L^2_x}+\delta_0 \varepsilon^{1-s_c}\|r_0\|_{H^1_x}+\delta_0 t \varepsilon^{4-s_c}\big\|\Delta^2 f\big\|_{L^2_x}.
\end{align}
By \eqref{est:Sv-ep-Xs}, \eqref{est:v-Xs} and \eqref{est:R-upper-thm3}, we further get that 
\begin{align}\label{est:Rj1-1}
\big\|\mathcal R_j^1\big\|_{L^2_x}
\lesssim & \delta_0^3 \varepsilon^{-s_c}\|r_0\|_{L^2_x}+\delta_0^3 \varepsilon^{1-s_c}\|r_0\|_{H^1_x}+\delta_0^3 t \varepsilon^{4-s_c}\big\|\Delta^2 f\big\|_{L^2_x}.
\end{align}

Collecting  the estimates on three terms in \eqref{Rj-123} and choosing $\delta_0$ suitably small, we have that the desired estimates.
This finishes the proof of the lemma.
\end{proof}

\subsubsection{Regular case} \label{sec:Conter-Regular case}
In this case, we set $r_0=0$. 
Let  
$$
f(x)\triangleq A_0^d \fe^{-A_0|x|^2},
$$
where $A_0$ is a suitable large constant determined later. 
Then $f\in \mathcal S(\R^d)$. 
It implies that 
$$
I(f) =\{t\in \R^+: t\varepsilon^4\lesssim 1\}.
$$ 
Then  \eqref{R1-conter-ubd} implies that for any $t\in I(f)$, 
\begin{align*}
\big\|\mathcal R_1(t)\big\|_{L^2_x}
\le &C\delta_0A_0^2\varepsilon^{2-s_c} 
+C\delta_0(1+\delta_0^2t)A_0^4\varepsilon^{4-s_c} ,
\end{align*}
and \eqref{R2-conter-lbd} implies that there exist some constants $C_0>0,C>0$ such that  for any $t\in I(f)$,
\begin{align*} 
\big\|\mathcal R_2(t)\big\|_{L^2_x}
\ge & C_0\delta_0A_0^4t\varepsilon^{4-s_c}  
-C\delta_0A_0^6t\varepsilon^{6-s_c}   - C\delta_0A_0^8 t^2\varepsilon^{8-s_c}  
 -C\delta_0A_0^2\varepsilon^{2-s_c}.
\end{align*} 
We shrink the time interval and denote that  
$$
\tilde I(f) =\{t\in \R^+: t\varepsilon^4\le \delta_0, t\varepsilon^2\ge 1\}.
$$
Then choosing $\delta_0$ suitably small, we have that  there exists some $\varepsilon_0>0$ such that for any $\varepsilon \in (0,\varepsilon_0]$ and any $t\in \tilde I(f)$, 
\begin{align*}
\big\|\mathcal R_1(t)\big\|_{L^2_x}
\le &C\delta_0^3A_0^4t\varepsilon^{4-s_c}+C\delta_0A_0^2\varepsilon^{2-s_c},
\end{align*}
and 
\begin{align*} 
\big\|\mathcal R_2(t)\big\|_{L^2_x}
\ge & \frac12 C_0\delta_0A_0^4 t\varepsilon^{4-s_c}.
\end{align*} 
Applying \eqref{eq:R-R12} and choosing $A_0$ suitably large and then $\delta_0$ suitably small again, this infers that for any $t\in \tilde I(f)$, 
\begin{align*}
\big\| R(t)\big\|_{L^2_x}
\ge &  \frac14 C_0\delta_0A_0^4 t\varepsilon^{4-s_c}.
\end{align*}
Scaling back, this implies that for any 
$
t\in \R^+: 1\le t\le \delta_0\varepsilon^{-2}, 
$
\begin{align*}
\big\| r(t)\big\|_{L^2_x}
\ge &  \frac14 C_0\delta_0A_0^4 t\varepsilon^2.
\end{align*}
This  establishes the conclusion in regular case.

\subsubsection{Non-Regular case} 

In this case, we set $\widetilde{v}_0$ such that 
$$
\widehat{\widetilde{v}_0}(\xi)=\delta_0\langle \xi\rangle_2^{-\alpha-\frac d2}\big(\ln \langle \xi\rangle_2\big)^{-1}.
$$
Here we denote $\langle x\rangle_2=\sqrt{|x|^2+2}$. Then $f\in H^\alpha(\R^d)$. Now we need the following elementary results:
\begin{equation}\label{f-high-low-value}
\begin{aligned}
\big\|f\big\|_{H^\alpha_x}  \sim & 1; \\
\big\|P_{\le N} f\big\|_{H^\gamma}\sim  & N^{-\alpha+\gamma} \big(\ln N\big)^{-1},\quad \mbox{for any } \gamma>\alpha; \\
\big\|P_{>  N} f\big\|_{H^\gamma}\le  & CN^{-\alpha+\gamma} \big(\ln N\big)^{-1},\quad \mbox{for any } \gamma<\alpha.
\end{aligned}
\end{equation}

Arguing similarly as in Section \ref{sec:Thm3-case1-2}, for fixing large $N$, we denote $\widetilde{v}_N $ to be the solution to  the following equation
\EQ{
	\left\{ \aligned
	&2i\partial_{t} \widetilde{v} - \De \widetilde{v}= - |\widetilde{v}|^2 \widetilde{v}, \\
	& \widetilde{v}(0,x) = P_{\le N}\widetilde{v}_0(x),
	\endaligned
	\right.
}
and denote $\widetilde{v}^N =\widetilde{v}-\widetilde{v}_N $. Then  similarly as \eqref{est:Thm3-w-case2-d2}, 
we have that 
\begin{align*}
\big\|\widetilde{v}^N \big\|_{L^\infty_tL^2_x(\R)}
\le C\big\|P_{>N}\widetilde{v}_0\big\|_{L^2_x},
\end{align*}
where the constant $C>0$ only depends on $\|\widetilde{v}_0\|_{H^1_x}$.

Write
$$
\widetilde{v}^\varepsilon(t)-\widetilde{v}(t)=\widetilde{v}^\varepsilon(t)-\widetilde{v}_N (t)-\widetilde{v}^N .
$$
To consider the first term, we denote 
$$
r=\widetilde{v}^\varepsilon(t)-\widetilde{v}_N (t),\quad 
R=\fe^{it} S_\varepsilon r_N,\quad
r_0=P_{>N}\widetilde v_0. 
$$
Note that $r$ obeys the equation \eqref{eq:kg-r-3} by replacing $v$ by $v_N$ and $r_1$ in this situation is defined by 
$$
r_1=-\frac{\varepsilon^2}{2i}(\Delta P_{\le N}\widetilde{v}_0-\mu |P_{\le N}v_0|^2P_{\le N} v_0).
$$
Again, denote   
$$
\mathcal R_j=\langle \nabla \rangle^{-1}\big(\partial_t\mp i \langle \nabla\rangle\big)R \quad \mbox{ for } \quad j=1,2.
$$
Then  from \eqref{R1-conter-ubd} and \eqref{f-high-low-value}, we have that 
for any $1\le \alpha\le 4$, 
\begin{align*}
\big\|\mathcal R_1(t)\big\|_{L^2_x}
\le& 
C\delta_0\varepsilon^{-s_c}\Big[\big\|P_{>N}\widetilde v_0\big\|_{L^2_x}+\delta_0^2 \varepsilon \|P_{>N}v_0\|_{H^1_x}\\
&\quad +\varepsilon^2\big\|\Delta P_{\le N}\widetilde{v}_0\big\|_{L^2_x} 
+(1+\delta_0^2t)\varepsilon^4\big\|\Delta^2 P_{\le N}\widetilde{v}_0\big\|_{L^2_x}\Big]\\
\le& 
C\delta_0\varepsilon^{-s_c}\Big[N^{-\alpha}\big(\ln N\big)^{-1}+\delta_0^2 \varepsilon \max\{1, N^{\alpha-1}\}\\
&\quad +\varepsilon^2\max\{1, N^{2-\alpha}\big(\ln N\big)^{-1}\}
+(1+\delta_0^2t)\varepsilon^4N^{4-\alpha}\big(\ln N\big)^{-1}\Big].
\end{align*}
Setting 
\begin{align}\label{def:N}
N\triangleq A_0 \big(t\varepsilon^4\big)^{-\frac14},\quad \mbox{ and } \quad t\varepsilon^2\ge 1,
\end{align}
where $A_0$ is a large constant determined later, then we note that choosing $\varepsilon_0=\varepsilon_0(A_0)$ suitably small, it holds $N\varepsilon^\frac12\le A_0$. This gives that 
for any $\varepsilon\in (0,\varepsilon_0]$, 
\begin{align*}
\big\|\mathcal R_1(t)\big\|_{L^2_x}
\le& 
C\delta_0\varepsilon^{-s_c}\Big[N^{-\alpha}+\delta_0^2t\varepsilon^4N^{4-\alpha}\Big]\big(\ln N\big)^{-1},
\end{align*}
and  it further infers that  
\begin{align}\label{R1-upper-nonreg}
\big\|\mathcal R_1(t)\big\|_{L^2_x}
\le& 
C\delta_0\varepsilon^{-s_c}\Big[A_0^{-\alpha} \big(t\varepsilon^4\big)^{\frac14\alpha}+\delta_0^2A_0^{4-\alpha}t \varepsilon^4\big(t\varepsilon^4\big)^{-\frac14(4-\alpha)}\Big]\big|\ln \big(t\varepsilon^4\big) \big|^{-1}\notag\\
\le& 
C\delta_0\varepsilon^{-s_c}\big(A_0^{-\alpha}+\delta_0^2A_0^{4-\alpha}\big)\big(t\varepsilon^4\big)^{\frac14\alpha}\big|\ln \big(t\varepsilon^4\big) \big|^{-1}.
\end{align}
Similarly, by \eqref{R2-conter-lbd}, we have that 
\begin{align*}
&\Big|\big\|\mathcal R_2(t)\big\|_{L^2_x}- \frac14\delta_0 \varepsilon^{-s_c}t\varepsilon^4  \big\|\Delta^2 P_{\le N}\widetilde{v}_0\big\|_{L^2_x} -\frac 1{64}\delta_0t^2 \varepsilon^{8-s_c}  \big\|\Delta^4 P_{\le N}\widetilde{v}_0\big\|_{L^2_x}\Big|\\
\le & C\delta_0 \varepsilon^{-s_c}\Big[t\varepsilon^6  \big\|\Delta^3 P_{\le N}\widetilde{v}_0\big\|_{L^2_x} 
+ t^2\varepsilon^{10}  \big\|\Delta^5 P_{\le N}\widetilde{v}_0\big\|_{L^2_x}
\notag\\
& \quad  +\big\|P_{>N}\widetilde v_0\big\|_{L^2_x}+\delta_0^2 \varepsilon \|P_{>N}v_0\|_{H^1_x}
 +\varepsilon^2\big\|\Delta P_{\le N}\widetilde{v}_0\big\|_{L^2_x} \Big]\\
 \le & C\delta_0 \varepsilon^{-s_c}\Big[N^{-\alpha}+t\varepsilon^6N^{6-\alpha}+ t^2\varepsilon^{10}N^{10-\alpha}\Big]\big(\ln N\big)^{-1}.
\end{align*}
By \eqref{f-high-low-value}, it further gives that there exist constants  $c_0>0, C_0>0$ such that 
\begin{align*}
 \big\|\mathcal R_2(t)\big\|_{L^2_x}
 \ge & c_0\delta_0 \varepsilon^{-s_c}\big(t\varepsilon^4N^4+t^2\varepsilon^8N^8\big)N^{-\alpha}\big(\ln N\big)^{-1}\\
 &\quad -C\delta_0 \varepsilon^{-s_c}\Big[N^{-\alpha}+t\varepsilon^6N^{6-\alpha}+ t^2\varepsilon^{10}N^{10-\alpha}\Big]\big(\ln N\big)^{-1}.
\end{align*}
By \eqref{def:N}, it drives  that 
\begin{align*}
 \big\|\mathcal R_2(t)\big\|_{L^2_x}
 \ge & c_0A_0^{4-\alpha}\delta_0\varepsilon^{-s_c} \big(t\varepsilon^4\big)^{\frac14\alpha}\big|\ln \big(t\varepsilon^4\big) \big|^{-1}.
\end{align*}
Combining with the estimate \eqref{R1-upper-nonreg}, choosing $A_0$ suitably large  and arguing similarly as Section \ref{sec:Conter-Regular case},  we obtain that 
\begin{align*}
 \big\|R(t)\big\|_{L^2_x}
 \ge &\frac12 c_0A_0^{4-\alpha}\delta_0\varepsilon^{-s_c} \big(t\varepsilon^4\big)^{\frac14\alpha}\big|\ln \big(t\varepsilon^4\big) \big|^{-1}.
\end{align*}
Scaling back, this concludes that for any 
$
t\in \R^+: 1\le t\le \delta_0\varepsilon^{-2}, 
$
\begin{align*}
\big\| r(t)\big\|_{L^2_x}
\gtrsim &  \big(t\varepsilon^2\big)^{\frac14\alpha}\big|\ln \big(t\varepsilon^2\big) \big|^{-1}.
\end{align*}
This  establishes the conclusion in non-regular case.

   \vskip 1.5cm

\section{Schr\"odinger profile and Growth-in-time nonrelativistic limit}\label{sec:case1}
\vskip .5cm

 In this section, we shall give the proof of Theorem \ref{thm:main3-local-d2}.
%
%
%
Since the proof is much similar as the that of Theorem \ref{thm:main1-global}, we only give it briefly.

\begin{proof}[Proof of Theorem \ref{thm:main3-local-d2}]
We split it into the following two cases.

$\bullet$ Case 1: $d=2$. We write
\begin{align*}
r^\varepsilon(t)=&u^\varepsilon(t)-\fe^{\frac{it}{\varepsilon^2}}v(t)-\fe^{-\frac{it}{\varepsilon^2}}\bar v(t)\\
=&u^\varepsilon(t)-\fe^{\frac{it}{\varepsilon^2}}v^\varepsilon(t)-\fe^{-\frac{it}{\varepsilon^2}}\overline{v^\varepsilon}(t)
 +\fe^{\frac{it}{\varepsilon^2}}\big(v^\varepsilon(t)-v(t)\big)+\fe^{-\frac{it}{\varepsilon^2}}\big(\overline{v^\varepsilon}(t)-\bar v(t)\big),
\end{align*}
where $v^\varepsilon$ is the solution of \eqref{eq:nls-wave} with
$$
v_0^\varepsilon=v_0,\quad v_1^\varepsilon=v_1=0.
$$
Then by Theorem \ref{thm:main1-global}, we have that
\begin{align*}
\big\|u^\varepsilon(t)-\fe^{\frac{it}{\varepsilon^2}}v^{\varepsilon}(t)-\fe^{-\frac{it}{\varepsilon^2}}\bar v^{\varepsilon}(t) \big\|_{L^2_x}
\lesssim \varepsilon^2.
\end{align*}
Moreover, by Theorem \ref{thm:main3} and choosing $\alpha\ge2, \beta=2$, we have that
\begin{align*}
\big\|v^\varepsilon(t)-v(t)\big\|_{L^2_x}
\lesssim &\varepsilon^2+\big(\varepsilon^2 t\big)^{\frac14\alpha}.
\end{align*}
Combining with the estimates above,  we obtain \eqref{est:main3-local-d2}.

$\bullet$ Case 2: $d=3$.

Suppose that $v$ is the solution of \eqref{eq:nls}, then by \eqref{eq:r}-\eqref{eq:initial-data}, we obtain that
\EQn{
	\label{eq:kg-r-1}
	\left\{ \aligned
	&\varepsilon^2\partial_{tt} r^\varepsilon - \De r + \frac{1}{\varepsilon^2} r^\varepsilon +A_\varepsilon(v,r^\varepsilon)+B_\varepsilon(v)
	+\varepsilon^2\Big[\fe^{\frac{it}{\varepsilon^2}}\partial_{tt}v+\fe^{-\frac{it}{\varepsilon^2}}\partial_{tt}\bar v\Big]=0, \\
	& r^\varepsilon(0,x) = 0,\quad  \partial_tr^\varepsilon(0,x) = r_1(x),
	\endaligned
	\right.
}
where
\begin{align}\label{def:r1}
r_1\triangleq -2\re\big[\partial_tv(0)\big]=-2\im \big(\De v_0-|v
_0|^2v_0\big).
\end{align}

Now we use the scaling argument and denote
\begin{align}\label{scaling-phi-r}
R(t,x)=\mathcal{S}_\varepsilon r^\varepsilon(t, x),
\end{align}
then by \eqref{eq:kg-r-1}, $R$ obeys the following equation:
\begin{align}\label{eq:phi-scaling}
\partial_{tt} R - \De R + R +G\Big(\mathcal{S}_\varepsilon v,R\Big)+F_1(\mathcal{S}_\varepsilon v)+\varepsilon^4 F_2\big(\mathcal{S}_\varepsilon (v_{tt})\big)=0,
\end{align}
where  we have used the notations $G,F_1$ and $f^\varepsilon$  as in Section \ref{sec:global}, and additionally we denote
 \begin{align*}
F_2(f)=&\fe^{it}f+\fe^{-it}\bar f.
 \end{align*}
Moreover, the initial data of $R$ is that
\begin{align*}
R(0)=0,\quad \partial_{t} R(0)=\varepsilon^2 \mathcal{S}_\varepsilon r_1(x).
\end{align*}

Denote
\begin{align*}
\varphi\triangleq  \langle \nabla\rangle^{-1}\big(\partial_t +i\langle \nabla\rangle \big)R,
\end{align*}
then we have that
\begin{align*}
R= \im\> \varphi;\quad  \partial_t R= \re\big(\langle \nabla\rangle \varphi\big).
\end{align*}
Moreover, $\varphi$ obeys the following equation
\begin{align*}
\big(\partial_t -i\langle \nabla\rangle \big) \varphi +\langle \nabla\rangle^{-1}\Big[G\Big(\mathcal{S}_\varepsilon v ,R\Big)+F_1(v^\varepsilon)+\varepsilon^4 F_2\big((v_{tt})^\varepsilon)\Big]=0.
\end{align*}
with the initial data
$$
\varphi(0)=\varphi_0\triangleq  \varepsilon^2\langle \nabla\rangle^{-1}\big(\mathcal{S}_\varepsilon r_1\big).
$$
By Duhamel's formula, we have that
\begin{align*}
\varphi(t)= \fe^{it\langle\nabla\rangle}\varphi_0+\int_0^t \fe^{i(t-s)\langle\nabla\rangle} \langle \nabla\rangle^{-1}\Big[G\Big(\mathcal{S}_\varepsilon v ,R\Big)+F_1\big(\mathcal{S}_\varepsilon v\big)+\varepsilon^4 F_2\big((v_{tt})^\varepsilon)\Big]\,ds.
\end{align*}
Treating similarly as in \eqref{est:Duh-mod}, we obtain that for any $I=[t_0,t_1]\subset \R$,
\begin{align*}
\varphi(t)=& \fe^{it\langle\nabla\rangle}\varphi(t_0)
\notag\\
&\quad +\int_{t_0}^t \fe^{i(t-s)\langle\nabla\rangle} \langle \nabla\rangle^{-1}\Big[G\Big(\mathcal{S}_\varepsilon v ,R\Big)+P_{> 1}F_1\big(\mathcal{S}_\varepsilon v\big)+\varepsilon^4 F_2\big((v_{tt})^\varepsilon\big)\Big]\,ds\\
&\quad -   \frac{\fe^{3it} }{i(\langle\nabla\rangle-3)}\langle \nabla\rangle^{-1}P_{\le 1} \big(\mathcal{S}_\varepsilon v(t)\big)^3
+  \frac{\fe^{it\langle \nabla \rangle}}{i(\langle\nabla\rangle-3)}\langle \nabla\rangle^{-1}P_{\le 1} \big(\mathcal{S}_\varepsilon v(t_0)\big)^3\\
&\quad -   \frac{\fe^{3it} }{i(\langle\nabla\rangle+3)}\langle \nabla\rangle^{-1}P_{\le 1} \Big(\overline{\mathcal{S}_\varepsilon v(t)}\Big)^3
+  \frac{\fe^{it\langle \nabla \rangle}}{i(\langle\nabla\rangle+3)}\langle \nabla\rangle^{-1}P_{\le 1} \Big(\overline{\mathcal{S}_\varepsilon v(t_0)}\Big)^3\\
&\quad+3 \int_{t_0}^t \fe^{i(t-s)(\langle\nabla\rangle-3)} \frac{1}{i(\langle\nabla\rangle-3)} \langle \nabla\rangle^{-1}P_{\le 1} \Big[\big(\mathcal{S}_\varepsilon v\big)^2\> \partial_s\big(\mathcal{S}_\varepsilon v\big)\Big]\,ds\\
&\quad+3 \int_{t_0}^t \fe^{i(t-s)(\langle\nabla\rangle+3)} \frac{1}{i(\langle\nabla\rangle+3)} \langle \nabla\rangle^{-1}P_{\le 1} \Big[\big(\overline{\mathcal{S}_\varepsilon v})^2\> \partial_s\big(\overline{\mathcal{S}_\varepsilon v}\big)\Big]\,ds.
\end{align*}

First, we assume that $v_0$ is smooth, then treating similarly as in Sections \ref{sec:nonlinear} and \ref{sec:Thm3-case1-1}, and applying Lemma \ref{lem:spacetime-norm-NLS} instead, we have that for any $\gamma\in [-\frac12,0]$,
\begin{align}\label{est:varphi-local-d3}
\big\||\nabla|^\gamma \varphi\big\|_{Y_\frac12(I)}
\lesssim &
\big\||\nabla|^\gamma\langle \nabla\rangle^{\frac12}\varphi(t_0)\big\|_{H^\frac12}
+\big\|v\big\|_{S_\frac12(\varepsilon^2I)}^2\big\||\nabla|^\gamma\varphi\big\|_{S_\frac12(I)}
\notag\\
&
+\big\|\varphi\big\|_{S_\frac12(\varepsilon^2I)}\big\||\nabla|^\gamma\varphi\big\|_{S_\frac12(I)}
+\big\|\varphi\big\|_{S_\frac12(\varepsilon^2I)}^2\big\||\nabla|^\gamma\varphi\big\|_{S_\frac12(I)}
\notag\\
&
+\big\|\mathcal{S}_\varepsilon v\big\|_{S_\frac12(I)}^2\big\||\nabla|^\gamma\partial_s\big(\mathcal{S}_\varepsilon v\big)\big\|_{S_\frac12(I)}
+\varepsilon^{4+\gamma}|I| \big\|\Delta^2 v_0\big\|_{\dot H^{\gamma+\frac12}}
+\varepsilon^{2+\gamma}.
\end{align}
By Sobolev's inequality and Lemma \ref{lem:spacetime-norm-NLS}, we have that
\begin{align*}
\big\|\mathcal{S}_\varepsilon v\big\|_{S_\frac12(I)}
\le &\big\|v\big\|_{S_\frac12(\R)}
\le  C(\|v_0\|_{H^1_x});\\
\big\|\mathcal{S}_\varepsilon v\big\|_{S_\frac12(I)}
\lesssim & |I|^\frac{3}{10} \big\||\nabla|^\frac35 \mathcal{S}_\varepsilon v\big\|_{L^\infty_t\dot H^\frac12_x(I)}
\\
\le &  |I|^\frac{3}{10} \varepsilon^\frac35   C(\|v_0\|_{H^2});\\
\big\||\nabla|^\gamma\partial_s\big(\mathcal{S}_\varepsilon v\big)\big\|_{S_\frac12(I)}
\le &\varepsilon^{2+\gamma}  C(\|v_0\|_{H^1_x}) \>   \big\||\nabla|^{\gamma+\frac52}v_0\big\|_{L^2_x}.
\end{align*}
Inserting these estimates into \eqref{est:varphi-local-d3}, we obtain that
\begin{align*}
\big\||\nabla|^\gamma \varphi\big\|_{Y_\frac12(I)}
\lesssim &
\big\||\nabla|^\gamma\langle \nabla\rangle^{\frac12}\varphi(t_0)\big\|_{\dot H^\frac12}
+\big\||\nabla|^\gamma v\big\|_{S_\frac12(\varepsilon^2I)}^2\big\||\nabla|^\gamma\varphi\big\|_{S_\frac12(I)}
\\
&+\big\||\nabla|^\gamma\varphi\big\|_{S_\frac12(I)}^2
+\big\||\nabla|^\gamma\varphi\big\|_{S_\frac12(I)}^3
\\
&+\varepsilon^{2+\gamma}\Big(\min\big\{1, |I|^\frac{3}{10} \varepsilon^\frac35\big\}\big\||\nabla|^{\gamma+\frac52}v_0\big\|_{L^2_x}
+\varepsilon^2|I| \big\|\Delta^2 v_0\big\|_{\dot H^{\gamma+\frac12}}+ 1\Big),
\end{align*}
where the implicit constant is only dependent of $\|v_0\|_{H^2}$.
Repeating the same process at the end of Section  \ref{sec:Thm3-case1-1}, we have that if
\begin{align}\label{cond-T-3d-local}
\varepsilon^\frac35T^\frac{3}{10}  \big\||\nabla|^{\frac52}v_0\big\|_{L^2_x}\le \delta_0;\quad
\varepsilon^2T \big\|\Delta^2 v_0\big\|_{\dot H^{\frac12}}\le \delta_0,
\end{align}
it holds that for some constant  $C=C\big(\|v_0\|_{H^2}\big)>0$,
\begin{align*}
\big\|r^\varepsilon\big\|_{L^\infty_tL^2_x([0,T])}
\le C\Big( \varepsilon^2
+\varepsilon^2T \big\|\Delta^2 v_0\big\|_{L^2_x}\Big).
\end{align*}

If $v_0\in H^\alpha$ for some $\alpha\in [2,4]$, then arguing similarly as in Section \ref{sec:Thm3-case1-2}, we have that for any $T$ verifying
\begin{align}\label{cond-T-3d-local-2}
\varepsilon^{\frac35}T^\frac{3}{10} N^\frac12 \le   C\big(\big\|v_0\big\|_{H^2}\big) \delta_0;\quad
\varepsilon^2T N^\frac52 \le C\big(\big\|v_0\big\|_{H^2}\big) \delta_0
\end{align}
(which is from \eqref{cond-T-3d-local}), then
\begin{align*}
\big\|r^\varepsilon\big\|_{L^\infty_tL^2_x([0,T])}
\le C\Big( \varepsilon^2
+\varepsilon^2TN^{4-\alpha}+N^{-\alpha}\Big).
\end{align*}
Choosing
$$
N=\big(\varepsilon^2T\big)^{-\frac14},
$$
then \eqref{cond-T-3d-local-2} holds if $\varepsilon^2T\le \delta_0$. Thus we prove \eqref{est:main3-local-d2} when $\varepsilon^2T\le \delta_0$.

Note that we have the energy conservation law:
\begin{align*}
\int\big(\varepsilon^2|\partial_tu^\varepsilon|^2 & + |\nabla u^\varepsilon|^2  + \varepsilon^{-2}|u^\varepsilon|^2+ \frac{3}{2}|u^\varepsilon|^4\big)dx\\
= & \int\big(\varepsilon^{-2}\big\|(u_0,u_1)\big\|_{L^2\times L^2}^2+ |\nabla u_0|^2  + \frac{3}{2}|u_0|^4\big)dx.
\end{align*}
This implies that
\begin{align*}
\big\|u^\varepsilon\big\|_{L^\infty_tL^2_x(\R)}
\le C\big(\big\|(u_0,u_1)\big\|_{H^1_x\times L^2_x}\big).
\end{align*}
This yields \eqref{est:main3-local-d2} when $\varepsilon^2T\ge \delta_0$. This finishes the proof of Theorem \ref{thm:main3-local-d2}.
\end{proof}


 \vskip 1.5cm

\bigskip
\section*{Acknowledgment}

We would like to thank Professor Weizhu Bao for bringing the problem to us and for useful discussions. The authors were in part supported by NSFC (Grant No. 11725102, 12171356), Sino-German
Center Mobility Programme (Project ID/GZ M-0548) and Shanghai Science and Technology
Program (Project No. 21JC1400600 and No. 19JC1420101).

\bibliographystyle{model1-num-names}

\end{document}